\newlist{todolist}{itemize}{2}
\setlist[todolist]{label=$\square$}
\newtheorem{theorem}{Theorem}[section]
\newtheorem{proposition}[theorem]{Proposition}
\newtheorem{lemma}[theorem]{Lemma}
\newtheorem{corollary}[theorem]{Corollary}
\newtheorem{definition}[theorem]{Definition}
\newtheorem{claim}[theorem]{Claim}
\theoremstyle{definition}
\newtheorem{remark}[theorem]{Remark}
\newcommand{\m}{m}
\newcommand{\moduli}{\mathfrak{M}}
\newcommand{\mE}{\mathscr{E}}
\newcommand{\mF}{\mathscr{F}}
\newcommand{\mU}{\mathscr{U}}
\newcommand{\mW}{\mathscr{W}}
\newcommand{\mG}{\mathscr{G}}
\newcommand{\sP}{\mathbb{P}(\mathscr{F})}
\newcommand{\mK}{\mathscr{K}}
\newcommand{\mI}{\mathscr{I}}
\newcommand{\mO}{\mathscr{O}}
\newcommand{\mL}{\mathscr{L}}
\newcommand{\mH}{\mathscr{H}}
\newcommand{\mR}{\mathscr{R}}
\newcommand{\Ext}{\mathrm{Ext}}
\newcommand{\Gr}{\mathbb{G}}
\newcommand{\VV}{\mathbb{V}}
\newcommand{\UU}{\mathbb{U}}
\newcommand{\NN}{\mathbb{N}}
\newcommand{\ZZ}{\mathbb{Z}}
\newcommand{\QQ}{\mathbb{Q}}
\newcommand{\CC}{\mathbb{C}}
\newcommand{\PP}{\mathbb{P}}
\newcommand{\II}{\mathbb{I}}
\newcommand{\PD}{\check{\mathbb{P}}}
\newcommand{\GG}{\mathbb{G}}
\newcommand{\FF}{\mathbb{F}}
\newcommand{\lr}{\longrightarrow}
\newcommand{\llr}{\longrightarrow}
\newcommand{\ts}{\otimes}
\newcommand{\frf}{\mathfrak{f}}
\newcommand{\frc}{\mathfrak{c}_0}
\newcommand{\cok}{\mathrm{coker}}
\newcommand{\xr}{\xrightarrow}
\newcommand{\Hilb}{\mathrm{Hilb}}
\title{Triple planes with $p_g=q=0$}
\begin{document}
\sloppy 

\date{}
\author{Daniele Faenzi, Francesco Polizzi and Jean Vallès}

\maketitle

\begin{abstract}
We show that general triple planes with genus and irregularity zero belong to at most
12 families, that we call surfaces of type I to XII, and we prove that the corresponding Tschirnhausen
bundle is a direct sum of two line bundles in cases I, II, III,
whereas is a rank 2  Steiner bundle in the remaining cases. 

We also provide existence results and explicit descriptions for
surfaces of type I to VII, recovering all classical examples and
discovering several new ones. In particular, triple planes of type VII
provide counterexamples to a wrong claim made in 1942 by Bronowski.

Finally, in the last part of the paper we discuss some moduli problems related to our constructions.
\end{abstract}

\maketitle

\Footnotetext{{}} {\textit{Mathematics Subject Classification}: 14E20}
\Footnotetext{{}} {\textit{Keywords and phrases}: Triple plane, Steiner bundle, adjunction theory}

\setcounter{tocdepth}{2}
\tableofcontents

\section*{Introduction}

A \emph{triple plane} is a finite ramified cover $f \colon X \to
\PP^2$ of degree $3$. Let $B \subset \PP^2$ be the branch locus of $f$;
then we say that $f$ is a \emph{general} triple plane if the
following conditions are satisfied:
\begin{enumerate}[itemsep=1pt, label=\bf{\roman*)}]
\item  $f$ is unramified over $\PP^2 \backslash B$; 
\item $f^*B=2R+R_0$, where $R$ is
irreducible and non-singular and $R_0$ is reduced;
\item $f_{|R} \colon R \to B$ coincides with
the normalization map of $B$.
\end{enumerate}

The aim of this paper is to address the problem of classifying those smooth, projective
surfaces $X$ with $p_g(X)=q(X)=0$ that arise as general triple
planes. We incidentally remark that the corresponding classification problem for 
\emph{double} planes is instead easy because, by the results of \cite[\S 22]{BHPV04}, a smooth double cover $f \colon X \to \PP^2$ with $p_g(X)=q(X)=0$ has either a smooth branch locus of degree $2$ (in which case $X$ is isomorphic to a quadric surface $S_2 \subset \PP^3$ and $f$ is the projection from a point $p \notin S_2$), or a smooth branch locus of degree $4$ (in which case $X$ is the blow-up  of a cubic surface $S_3 \subset \PP^3$ at one point $p \in S_3$ and $f$ is the resolution of the projection from $p$).

Some results toward the classification in the triple cover case were obtained by Du Val in
\cite{DuVal33,DuVal35}, where he described those general triple
planes whose branch curve has degree at most $14$. Du Val's papers are
written in the ``classical'', a bit old-fashioned (and sometimes difficult to read)
language and make use of ad-hoc constructions based on synthetic
projective geometry (see Remark \ref{rem:Du-Val-I-V} and Remark \ref{rem:Du-Val-VI} for a outline on Du Val's work). The methods that
we propose here are completely different: in fact, they are a mixture
of adjunction theory and vector bundles techniques, that 
allow us to treat the problem in a unified way.

The first cornerstone in our work is the general structure theorem
for triple covers given in \cite{Mi85,CasEk96}. More
precisely, we relate the existence of a triple cover $f \colon X
\to \PP^2$ to the existence of a ``sufficiently general"
element $\eta \in H^0(\PP^2, \, S^3 \mE^{\vee} \otimes \wedge^2 \mE)$,
where $\mE$ is a rank $2$ vector bundle on $\PP^2$ such
that $f_* \mO_X = \mO_{\PP^2} \oplus \mE$. Such a bundle is called the
\emph{Tschirnhausen bundle} of the cover, and it turns out that the pair $(\mE, \,
\eta)$ completely encodes the geometry of  $f$. 
Some of the invariants depend directly on $\mE$,
for instance, setting $b:=-c_1(\mE)$ and $h:=c_2(\mE)$,
 the branch curve $B$ has degree $2b$ and contains $3h$ ordinary
cusps as only singularities, see Proposition \ref{branch-locus}.
However the $X$ and $f$ themselves also depend on $\eta$;
we call $\eta$ the \textit{building section} of the cover.

So we can try to study general triple planes with $p_g=q=0$ by analyzing
their Tschirnhausen bundles together with the building sections. In fact, we show that these triple
planes can be classified in (at most) $12$ families, that we call surfaces of type \ref{I}, \ref{II},\ldots, \ref{XII}. We are also able to show that
surfaces of type \ref{I}, \ref{II},\ldots, \ref{VII} actually exist. In the cases
\ref{I}, \ref{II},\ldots, \ref{VI} we rediscover (in the modern language) the
examples described by Du Val. On the other hand, not only the triple
planes of type \ref{VII} (which have sectional genus equal to $6$ and
branch locus of degree $16$) are completely new, but they also provide
explicit counterexamples to a wrong claim made by Bronowski in
\cite{Br42}, see Remark \ref{rmk:Bron}. 

A key point in our analysis is the fact that in cases \ref{I}, \ref{II}, \ref{III} the
bundle $\mE$ splits as the sum of two line bundles, whereas in the remaining cases
\ref{IV} to \ref{XII} it is indecomposable and it has a resolution of the form
\begin{equation*}
0  \to \mO_{\PP^2}(1-b)^{b-4} \to
\mO_{\PP^2}(2-b)^{b-2} \to \mE \to 0.
\end{equation*}
This shows that $\mE(b-2)$ is a so-called
\emph{Steiner bundle} (see \S \ref{subsec:steiner} for more details on this topic), so we can use all the known results about
Steiner bundles in order to get information on $X$. For instance,
in cases \ref{VI} and \ref{VII} the geometry of the triple plane is tightly
related to the existence of \emph{unstable lines} for $\mE$, see
\S \ref{subsec:VI}, \ref{subsec:VII}. 

The second main ingredient in our classification
procedure is adjunction theory, see \cite{SoV87,Fu90}. For example, if we write $H=f^*L$, where $L \subset \PP^2$ is a
general line, we prove that the divisor $D=K_X+2H$ is very ample
(Proposition \ref{prop.K+2H}), so we consider the corresponding adjunction mapping
\begin{equation*}
\varphi_{|K_X+D|} \colon X \to \PP(H^0(X, \, \mO_X(K_X+D))).
\end{equation*}
Iterating the adjunction process if necessary, we
can achieve further information about the geometry of $X$.
Furthermore, when $b \geq 7$ a more refined analysis of the adjunction map allows us to start the process with $D=H$, see 
Remark \ref{rem:H}.

As a by-product of our classification, it turns out that general triple planes
$f \colon X \to \PP^2$ with sectional genus $0 \leq g(H) \leq 5$
(i.e., those of type \ref{I},\ldots,\ref{VI}) can be realized via an embedding of
$X$ into $\Gr(1, \, \PP^3)$ as a surface of bidegree $(3, \, n)$, such
that the triple covering $f$ is induced by the projection from  a
general element of the family of planes of $\Gr(1, \, \PP^3)$ that are
$n$-secant to $X$. In this way, we relate our work to the work of
Gross \cite{G93}, see Remark \ref{rmk:I}, \ref{rmk:II},
\ref{rmk:III}, \ref{rmk:IV}, \ref{rmk:V}, \ref{rmk:VI}. On the other
hand, this is not true for surfaces of type \ref{VII}: here the
only case where the triple cover is induced by an embedding in the
Grassmannian is \ref{VII}.7, where $X$ is a \emph{Reye
  congruence}, namely an Enriques surface having bidegree $(3, \, 7)$
in $\Gr(1, \, \PP^3)$, see Remark \ref{rmk:Enriques}. 

We have not been able so far to use our method beyond
case \ref{VII}; thus the existence of surfaces of type \ref{VIII} to \ref{XII} is still an open problem. Furthermore, there are some interesting unsettled
questions also in case \ref{VII}, especially  regarding the number of
what we call the \emph{unstable conics} for the Tschirnhausen bundle, see \S
\ref{subsec:further-cons} for more details. 

\medskip

Let us explain now how this work is organized.
In \S \ref{sec:prel} we set up notation and terminology and we
collect the background material which is needed in the sequel of the
paper. In particular, we recall the theory of triple covers based on
the study of the Tschirnhausen bundle (Theorems \ref{thm triple:1}
and \ref{thm triple:2}) and we state the main results on adjunction
theory for  surfaces (Theorem
\ref{adjunction-thm}).

In \S \ref{sec.general} we start the analysis of general triple
planes $f \colon X \to \PP^2$ with $p_g(X)=q(X)=0$. We
compute the numerical invariants (degree of the branch locus, number of its cusps, $K_X^2$, sectional genus) for the surfaces in the $12$
families \ref{I} to \ref{XII} (Proposition
\ref{prop:numerical}) and we describe their Tschirnhausen bundle (Theorem \ref{teo:res}).

Next, \S \ref{sec:class} is devoted to the detailed description of
surfaces of type \ref{I} to \ref{VII}. This
description leads to a complete classification in cases \ref{I} to \ref{VI} (Propositions \ref{prop.TypeI}, \ref{prop.TypeII}, \ref{prop.TypeIII}, \ref{prop.TypeIV}, \ref{prop.TypeV}, \ref{prop.TypeVI}) whereas in case \ref{VII} we provide many examples, leaving only few cases unsolved (Proposition \ref{prop.typeVII}).

Finally, in \S \ref{sec:moduli} we study some moduli problems related to our constructions. 

\medskip

Part of the computations in this paper was carried out by using the Computer
Algebra System {\tt Macaulay2}, see \cite{Mac2}. The scripts are
included in the Appendix.

\bigskip

\noindent \textbf{Acknowledgments.}
D. Faenzi and J. Vallès were partially supported by ANR projects GEOLMI ANR-11-BS03-0011
and Interlow ANR-09-JCJC-0097-01.
F. Polizzi was partially supported by
Progetto MIUR di Rilevante Interesse Nazionale \emph{Geometria
delle Variet$\grave{a}$ Algebriche e loro Spazi di Moduli}, by
the \emph{Gruppo di Ricerca Italo-Francese di Geometria Algebrica}
(GRIFGA) and by GNSAGA-INdAM. He thanks the Laboratoire de
Mathématiques et leurs applications de
l'Université de Pau et des Pays de l'Adour for
the invitation and the hospitality. 
This project was started in that occasion.
Finally, we are greatly indebted with the referee, whose careful reading, constructive criticisms 
and useful observations considerably helped us improving the paper.

\section{Basic material} \label{sec:prel}

\subsection{Notation and conventions}

We work over the field $\CC$ of complex numbers. Given a complex
vector space $V$, we write $\PP(V)$ for the projective space of
$1$-dimensional quotient spaces of $V$, and $\PP^n=\PP(\CC^{n+1})$.
 Similarly, if $\mathscr{E}$ is a locally free sheaf over a scheme, we use $\PP(\mE)$ for the projective bundle of its quotients of rank $1$.
We write $\PD(V)$ for $\PP(V^\vee)$, so that $\PD^n$ is the projective
space of hyperplanes of $\PP^n$. We put $\Gr(k,\PP(V))$ for the
Grassmannian of $(k+1)$-dimensional vector subspaces of $V$.

By ``surface'' we mean a projective, non-singular surface
$S$, and for such a surface $\omega_S=\mO_S(K_S)$ denotes the
canonical class, $p_g(S)=h^0(S, \, K_S)$ is the geometric genus,
$q(S)=h^1(S, \, K_S)$ is the
 irregularity and $\chi(\mO_S)=1-q(S)+p_g(S)$ is the
 holomorphic
Euler-Poincaré characteristic. We write
$P_m(S)= h^0(S, \, m K_S)$ for the $m$-th plurigenus of $S$.
 
If $k \leq n$ are non-negative integers we denote by $S(k, \, n)$ the
rational normal scroll of type $(k, \, n)$ in $\PP^{k+n+1}$, i.e. the
image of $\PP(\mO_{\PP^1}(k) \oplus \mO_{\PP^1} (n))$ by the linear system
given by the tautological relatively
ample line bundle (see
\cite[Lecture 8]{Har92} for more details). A cone over
a rational normal curve $C \subset \PP^n$ of degree $n$ may be thought of as the
scroll $S(0, \, n) \subset \PP^{n+1}$. 

For $n \geq 1$, we write $\FF_n$ for the Hirzebruch surface
$\PP(\mO_{\PP^1} \oplus \mO_{\PP^1} (n))$; every divisor in
$\textrm{Pic}(\FF_n)$ can be written as $a \frc+ b \frf$, where $\frf$
is the fibre of the $\PP^1$-bundle map $\FF_n \to \PP^1$ and $\frc$ is
the unique section with negative self-intersection, namely
$\frc^2=-n$. Note that the morphism $\FF_n \to \PP^{n+1}$ associated
with the tautological linear system $|\frc+n \frf|$ contracts $\frc$
to a point and is an isomorphism outside $\frc$, so its image is the
cone $S(0, \, n)$. 

For $n=0$, the surface $\FF_0$ is isomorphic to $\PP^1 \times \PP^1$;
every divisor in $\textrm{Pic}(\PP^1 \times \PP^1)$ is written as
$a_1L_1+ a_2L_2$, where the $L_i$ are the two rulings. 

The blow-up of $\PP^2$ at the points $p_1, \ldots, p_k$  is
denoted by $\PP^2(p_1, \ldots, p_k)$.
If $\sigma \colon \widetilde{X} \to X$ is the blow-up of a surface $X$ at $k$ points, with exceptional divisors $E_1, \ldots, E_k$, and $L$ is a line bundle on $X$, we will write $L + \sum a_i E_i$ instead of $\sigma^*L + \sum a_i E_i$.

The Chern classes of coherent sheaves on $\PP^2$ will usually be written
as integers, namely for a sheaf $\mE$ we write $c_i(\mE)=d_i$, where $d_i \in \ZZ$ is such that $c_i(\mE)=d_i(c_1(\mO_{\PP^2}(1)))^i$.
If $\mE$ is a vector bundle, its dual vector bundle is indicated by $\mE^{\vee}$ and its $k$-th symmetric power by $S^k \mE$. 

We will use basic material and terminology on vector bundles, more
specifically on stable vector bundles on $\PP^2$, we refer to \cite{OkSchSp88}.

\subsection{Triple covers and sections of vector bundles}

A \emph{triple cover} is a finite flat morphism $f \colon X \to Y$ of
degree $3$. Our varieties $X$ and $Y$
will be smooth, irreducible projective manifolds.
With a triple cover is associated an exact sequence
\begin{equation} \label{seq:Tsch}
0 \to \mO_Y \to f_*\mO_X \to \mE \to 0,
\end{equation}
where $\mE$ is a vector bundle of rank $2$ on $Y$, called the
\emph{Tschirnhausen bundle} of $f$.

\begin{proposition} \label{bir-inv-tr}
The following holds$:$
\begin{enumerate}[itemsep=2pt, label=\bf{\roman*)}]
\item \label{p1} $f_*\mO_X =\mO_Y \oplus \mE;$
\item \label{p2} $f_* \omega_X =\omega_Y \oplus (\mE^{\vee} \otimes \omega_Y);$
\item \label{p3} $f_* \omega_X^2 = S^2 \mE^{\vee} \otimes \omega_Y^2$.
\end{enumerate}
\end{proposition}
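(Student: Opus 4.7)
The plan is to establish the three parts in sequence; (i) and (ii) reduce to standard duality facts, while (iii) requires a finer argument involving the algebra structure of $A:=f_*\mO_X$.

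\textbf{Part (i).} Since $f$ has degree $3$ and we work over $\mathbb{C}$, the trace morphism $\operatorname{tr}\colon f_*\mO_X\to\mO_Y$, divided by $3$, furnishes a retraction of the inclusion $\mO_Y\hookrightarrow f_*\mO_X$. This splits the defining exact sequence of $\mE$ and yields $f_*\mO_X=\mO_Y\oplus\mE$.

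\textbf{Part (ii).} Grothendieck–Serre duality for the finite flat morphism $f$ gives the identification $\omega_{X/Y}\cong\mH om_{\mO_Y}(f_*\mO_X,\mO_Y)$ as an $\mO_X$-module, hence $f_*\omega_{X/Y}\cong(f_*\mO_X)^\vee$. Combining this with $\omega_X=\omega_{X/Y}\otimes f^*\omega_Y$ and the projection formula yields $f_*\omega_X=(f_*\mO_X)^\vee\otimes\omega_Y$. Substituting (i) delivers the formula.

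\textbf{Part (iii).} I would first use $\omega_X^{\otimes 2}=\omega_{X/Y}^{\otimes 2}\otimes f^*\omega_Y^{\otimes 2}$ and the projection formula to reduce the claim to $f_*\omega_{X/Y}^{\otimes 2}=S^2\mE^\vee$. Since $\omega_{X/Y}$ is an invertible $\mO_X$-module, the equivalence between $\mO_X$-modules and $A$-modules turns this into the identity $A^\vee\otimes_A A^\vee=S^2\mE^\vee$. To produce this identification I would dualize the short exact sequence of $\mO_Y$-modules
\begin{equation*}
0\to S^2\mE\to S^2 A\to A\to 0,
\end{equation*}
in which the surjection is the multiplication of $A$ (well defined on $S^2A$ because $A$ is commutative) and the kernel is embedded via the algebra structure $S^2\mE\to A$; this produces a sequence $0\to A^\vee\to S^2(A^\vee)\to S^2\mE^\vee\to 0$, whose quotient one matches with $S^2_A(A^\vee)=A^\vee\otimes_A A^\vee$ by checking that the embedded $A^\vee$ coincides with the submodule encoding the $A$-linearity relations.

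\textbf{Main obstacle.} The crux is (iii): unlike (i) and (ii), which depend only on the $\mO_Y$-module splitting $f_*\mO_X=\mO_Y\oplus\mE$, the identification of $A^\vee\otimes_A A^\vee$ with $S^2\mE^\vee$ is governed by the full algebra structure on $A$, equivalently by the building section $\eta\in H^0(S^3\mE^\vee\otimes\wedge^2\mE)$. A cleaner route is to invoke the structure theorem for triple covers due to Miranda and Casnati–Ekedahl (Theorems~\ref{thm triple:1} and \ref{thm triple:2}), which provides an explicit local model that forces the matching.
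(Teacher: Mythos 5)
Your parts (i) and (ii) are exactly the paper's proof: the trace map splits the Tschirnhausen sequence, and duality for the finite flat morphism gives $f_*\omega_X=(f_*\mO_X)^\vee\otimes\omega_Y$. For part (iii) the paper gives no argument at all, it simply cites \cite[Lemma 8.2]{Pa89}; so what has to be assessed is your substitute argument, and it has a genuine gap at its decisive step.

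The reduction to showing $A^\vee\otimes_A A^\vee\simeq S^2\mE^\vee$ for $A:=f_*\mO_X$ is fine, as is the (split) exact sequence $0\to S^2\mE\to S^2A\to A\to 0$ and its dual $0\to A^\vee\to S^2(A^\vee)\to S^2\mE^\vee\to 0$, where $A^\vee$ is embedded by $\chi\mapsto\big((a,b)\mapsto\chi(ab)\big)$. But the claim that this embedded copy of $A^\vee$ coincides with the kernel of the natural surjection $S^2(A^\vee)\to A^\vee\otimes_A A^\vee$ is false. Test it on the split local model $A=\mO_Y^{\oplus 3}$, with idempotents $e_1,e_2,e_3$ and dual basis $\phi_1,\phi_2,\phi_3$: the dual of multiplication sends $\phi_k$ to a nonzero multiple of $\phi_k^2$, so the embedded $A^\vee$ is spanned by $\phi_1^2,\phi_2^2,\phi_3^2$; on the other hand $\phi_i\otimes_A\phi_j=(e_i\phi_i)\otimes_A\phi_j=\phi_i\otimes_A(e_i\phi_j)=0$ for $i\neq j$, so the submodule of ``$A$-linearity relations'' is spanned by the mixed products $\phi_i\phi_j$ with $i\neq j$. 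These two rank-$3$ submodules of $S^2(A^\vee)$ intersect trivially, so the two quotients are not identified by an equality of submodules; the same discrepancy already occurs for $A=\mO_Y[z]/(z^3+pz+q)$, so it is not an artifact of the split case. Producing an isomorphism $A^\vee\otimes_A A^\vee\simeq S^2\mE^\vee$ is precisely the nontrivial content of (iii), and your argument does not supply it.

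Your fallback route does work and is in substance the standard (Pardini/Casnati--Ekedahl) proof: by Theorem \ref{thm triple:2}, $X\in|\mO_{\PP(\mE^\vee)}(3)\otimes\pi^*\wedge^2\mE|$, and since $\omega_{\PP(\mE^\vee)/Y}\simeq\mO_{\PP(\mE^\vee)}(-2)\otimes\pi^*\wedge^2\mE^\vee$, adjunction gives $\omega_{X/Y}\simeq\mO_{\PP(\mE^\vee)}(1)|_X$; pushing forward $0\to\mO_{\PP(\mE^\vee)}(-1)\otimes\pi^*\wedge^2\mE^\vee\to\mO_{\PP(\mE^\vee)}(2)\to\mO_{\PP(\mE^\vee)}(2)|_X\to 0$ then yields $f_*\omega_{X/Y}^{\otimes 2}\simeq S^2\mE^\vee$, and (iii) follows from the projection formula. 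As submitted, however, this is only gestured at, while the argument you actually carry out fails at the submodule-matching step.
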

\begin{proof}
The trace map yields a splitting of sequence \eqref{seq:Tsch},
hence \ref{p1} follows. Duality for finite flat morphisms implies
$f_*\omega_X= (f_* \mO_X)^{\vee} \otimes \omega_Y$, hence we
obtain \ref{p2}. For \ref{p3} see \cite[Lemma 8.2]{Pa89}.
\end{proof}

In order to reconstruct $f$ from $\mE$ we need an
extra datum, namely the building section, which is a global section of $S^3
\mE^{\vee} \otimes \wedge^2 \mE$. Moreover, we can naturally see $X$ as
sitting into the $\PP^1$-bundle $\PP(\mE^\vee)$ over $Y$. This is the content of the next two results, see 
\cite[Theorem 1.5]{CasEk96}, \cite[Proposition 4]{FS01}, \cite[Theorem 1.1]{Mi85}
\begin{theorem} \label{thm triple:1}
Any triple cover $f \colon X \to Y$ is determined by a rank $2$ vector
bundle $\mE$ on $Y$ and a global section $\eta \in H^0(Y, S^3
\mE^{\vee} \otimes \wedge^2 \mE)$, and conversely. Moreover, if 
 $S^3 \mE^{\vee} \otimes \wedge^2 \mE$ is globally
generated, a general global section $\eta$ defines a triple cover $f \colon X \to Y$.
\end{theorem}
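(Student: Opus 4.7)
The strategy is to realize $X$ as an effective divisor inside the $\PP^{1}$-bundle $\pi\colon \PP(\mE^{\vee})\to Y$ and to recover $\eta$ as its equation.

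Starting from a triple cover $f\colon X\to Y$, part (i) of Proposition~\ref{bir-inv-tr} provides the splitting $f_{*}\mO_X=\mO_Y\oplus\mE$ via the trace. The inclusion $\mE\hookrightarrow f_{*}\mO_X$ corresponds by adjunction to a morphism $f^{*}\mE\to\mO_X$ of sheaves on $X$; checked on a reduced étale fiber $f^{-1}(y)=\{x_1,x_2,x_3\}$, this is the restriction of the coordinate projections $k^3\to k$ to the trace-free hyperplane, hence surjective. Its kernel is a sub-line-bundle of $f^{*}\mE$ and, by the universal property of $\PP(\mE^{\vee})$, gives a morphism $i\colon X\to\PP(\mE^{\vee})$ over $Y$. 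The same fiberwise computation shows that the three lines $L_{x_j}\subset\mE_y$ obtained in this way are pairwise distinct, so $i$ is a closed embedding.

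The image $i(X)$ is a Cartier divisor of relative degree $3$ over $Y$, hence its class is $3H+\pi^{*}\mL$ with $H=c_1(\mO_{\PP(\mE^{\vee})}(1))$ and $\mL\in\mathrm{Pic}(Y)$. To identify $\mL$, push the ideal sheaf sequence
\begin{equation*}
0\to\mO_{\PP(\mE^{\vee})}(-3)\otimes\pi^{*}\mL^{-1}\to\mO_{\PP(\mE^{\vee})}\to\mO_{i(X)}\to 0
\end{equation*}
forward via $\pi$. One has $\pi_{*}\mO_{\PP(\mE^{\vee})}(-3)=0$, and relative Serre duality on the $\PP^{1}$-bundle (with $\omega_{\pi}=\pi^{*}\wedge^{2}\mE^{\vee}\otimes\mO(-2)$) yields $R^{1}\pi_{*}\mO_{\PP(\mE^{\vee})}(-3)=\mE\otimes\wedge^{2}\mE$. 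The long exact sequence reduces to
\begin{equation*}
0\to\mO_Y\to f_{*}\mO_X\to\mE\otimes\wedge^{2}\mE\otimes\mL^{-1}\to 0,
\end{equation*}
and comparing with $f_{*}\mO_X=\mO_Y\oplus\mE$ forces $\mL=\wedge^{2}\mE$. Consequently $i(X)$ is cut out by a section of $\mO_{\PP(\mE^{\vee})}(3)\otimes\pi^{*}\wedge^{2}\mE$, which by the projection formula is an element
\begin{equation*}
\eta\in H^{0}(Y,\, S^{3}\mE^{\vee}\otimes\wedge^{2}\mE).
\end{equation*}

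For the converse, given $(\mE,\eta)$, set $X:=Z(\eta)\subset\PP(\mE^{\vee})$ and $f:=\pi|_X$. On the open locus $U\subset Y$ where the restriction $\eta_y\in H^{0}(\pi^{-1}(y),\mO(3))$ does not vanish identically, $\eta_y$ cuts out a length-$3$ subscheme of $\pi^{-1}(y)\cong\PP^{1}$, and these glue into an $\mO_U$-flat triple cover. When $S^{3}\mE^{\vee}\otimes\wedge^{2}\mE$ is globally generated, the set of sections vanishing identically on some fiber is a proper closed subset of $H^{0}(Y,S^{3}\mE^{\vee}\otimes\wedge^{2}\mE)$, so a general $\eta$ gives $U=Y$; a Bertini argument on $\PP(\mE^{\vee})$ additionally ensures that the total space $X$ is smooth. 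The main obstacle is pinning down the twist $\mL=\wedge^{2}\mE$: it is exactly this twist that encodes the compatibility between the Tschirnhausen bundle and the relative dualizing sheaf of $\pi$ in the rank-$2$/degree-$3$ setting. Once it is in place, the correspondence $(\mE,\eta)\leftrightarrow f$ is formal and the last assertion reduces to Bertini.
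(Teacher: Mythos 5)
Your overall plan (realize $X$ as a divisor in $\PP(\mE^{\vee})$, identify its class as $\mO(3)\otimes\pi^{*}\wedge^{2}\mE$ by pushing forward the ideal sequence, then get the converse and the genericity statement from flatness plus Bertini) is exactly the structure of the theory the paper is quoting — note the paper gives no proof of its own here, it cites Miranda and Casnati--Ekedahl. But the step on which everything rests, the construction of the embedding $i\colon X\to\PP(\mE^{\vee})$, has a genuine gap. The adjoint map $f^{*}\mE\to\mO_X$ of $\mE\hookrightarrow f_{*}\mO_X$ is \emph{not} surjective in general: at a point $x$ where $f$ is totally ramified the fiber algebra is $k[t]/(t^{3})$, the trace-zero part $\mE_y\otimes k(y)$ is spanned by $t$ and $t^{2}$, and both vanish at $x$, so the evaluation map is identically zero on that fiber of $\mE$. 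Hence the kernel is not a sub-line-bundle, the universal property of $\PP(\mE^{\vee})$ only yields a rational map defined away from the totally ramified locus, and your "three distinct lines on an \'etale fiber" argument at best gives injectivity there, not a closed immersion. This is not a pathological case one can ignore: in this paper the totally ramified points are precisely the $3h$ preimages of the cusps of $B$ (Proposition \ref{branch-locus}), and $h\geq 1$ in every case \ref{I}--\ref{XII}. The standard repair — and what the cited sources do — is to use duality instead of the structure sheaf: from $f_{*}\omega_{X/Y}=\mO_Y\oplus\mE^{\vee}$ (Proposition \ref{bir-inv-tr}) one gets $f^{*}\mE^{\vee}\to\omega_{X/Y}$, which \emph{is} surjective at every point because the dualizing module of a Gorenstein length-$3$ fiber is generated by the summand dual to the trace-zero part (check it on $k[t]/(t^{3})$: the functional dual to $t^{2}$ kills $1$ and generates); this relative canonical map gives the closed embedding of Theorem \ref{thm triple:2}, after which your pushforward computation and the converse can be run.

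Two smaller points. First, your determinant comparison only forces $\wedge^{2}\mE\otimes\mL^{-1}$ to be a $2$-torsion line bundle $\mathscr{N}$ with $\mE\otimes\mathscr{N}\simeq\mE$; that pins down $\mL=\wedge^{2}\mE$ over $\PP^{2}$, but for the theorem as stated (arbitrary smooth projective $Y$) you need an extra word. Second, in the converse the "proper closed subset" of sections vanishing on some fiber comes from a codimension-$4$ condition per fiber, so the count works only for $\dim Y\leq 3$, and Bertini gives smoothness of a general member of $|\mO_{\PP(\mE^{\vee})}(3)\otimes\pi^{*}\wedge^{2}\mE|$ but not by itself irreducibility of $X$; both points need (brief) separate arguments or a citation, as the paper in fact does.
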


\begin{theorem} \label{thm triple:2}
Let $ f \colon X \to Y$ be a triple cover.
 Then there exists a unique embedding $i \colon X \to
\PP(\mE^{\vee})$ such that the following diagram commutes$:$
\begin{equation*}
\xymatrix@C-4ex@R-3ex{
X \ar[rrrr]^-{f} \ar[rrd]_-{i} & & & &  Y \\
& & \PP(\mE^{\vee}) \ar[rru]_-{\pi} & & }
\end{equation*}
According to \emph{Theorem \ref{thm triple:1}}, this embedding induces an
isomorphism of $X$ with the zero-scheme $D_0(\eta)\subset \PP (\mE^\vee)$ of a global section $\eta$ of
the line bundle $\mO_{\PP (\mE^\vee)}(3) \ts \pi^*( \wedge^2 \mE)$.
\end{theorem}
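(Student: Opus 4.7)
The plan is to realize $X$ as the zero scheme of the building section inside $\PP(\mE^\vee)$ and read off the embedding from this identification.

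Starting from the triple cover $f\colon X \to Y$, Theorem \ref{thm triple:1} supplies the Tschirnhausen bundle $\mE$ together with the building section $\eta_0 \in H^0(Y, S^3\mE^\vee \otimes \wedge^2\mE)$. The projection formula $\pi_*(\mO_{\PP(\mE^\vee)}(3) \otimes \pi^*\wedge^2\mE) = S^3\mE^\vee \otimes \wedge^2\mE$ turns $\eta_0$ into a canonical global section $\eta$ of $\mO_{\PP(\mE^\vee)}(3) \otimes \pi^*\wedge^2\mE$; its restriction to each fibre of $\pi$ is a cubic in $\PP^1$, so $\widetilde X := D_0(\eta) \to Y$ is finite of degree $3$.

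The key step is then to identify $\widetilde X$ with $X$. I would push forward the ideal sheaf sequence of $\widetilde X$ in $\PP(\mE^\vee)$ by $\pi$, noting that $\mO_{\PP(\mE^\vee)}(-\widetilde X) = \mO(-3) \otimes \pi^*(\wedge^2\mE)^{-1}$ has vanishing $\pi_*$, and computing by relative Serre duality (with $\omega_\pi = \mO(-2)\otimes\pi^*(\wedge^2\mE)^{-1}$) that $R^1\pi_*\mO_{\PP(\mE^\vee)}(-\widetilde X) = \mE$. The long exact sequence collapses to
\begin{equation*}
0 \to \mO_Y \to \pi_*\mO_{\widetilde X} \to \mE \to 0,
\end{equation*}
which is the Tschirnhausen sequence, so $\widetilde X \to Y$ is a triple cover with the same Tschirnhausen bundle as $X$. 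A local calculation in a trivialization where $\mE \cong \mO_Y^{\oplus 2}$ and $\eta$ is an explicit cubic in fibre coordinates then shows that the building section of $\widetilde X$ is precisely $\eta$; Theorem \ref{thm triple:1} thus yields a canonical isomorphism $X \cong \widetilde X$, and composing it with $\widetilde X \hookrightarrow \PP(\mE^\vee)$ gives the sought embedding $i$ satisfying $f = \pi \circ i$. Uniqueness of $i$ follows from the canonicity of each step.

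The main obstacle I anticipate is the local matching of multiplication tables, especially over branch points of $f$: at a totally ramified fibre the local algebra degenerates to $\CC[u]/(u^3)$, so the naive picture of $i$ sending the three preimages of $y$ to three linear functionals on $\mE_y$ breaks down (the trace-zero subspace is annihilated by evaluation at the unique preimage). The identification must therefore be verified scheme-theoretically, working with $\pi_*\mO_{\widetilde X}$ as a sheaf of algebras rather than merely as an $\mO_Y$-module; this is essentially the content of \cite[Theorem 1.1]{Mi85} and \cite[Theorem 1.5]{CasEk96}.
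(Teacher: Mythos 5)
The paper does not actually prove Theorem \ref{thm triple:2}: it is quoted from \cite[Theorem 1.5]{CasEk96}, \cite[Proposition 4]{FS01} and \cite[Theorem 1.1]{Mi85}, so there is no internal argument to compare yours with. Your route is the standard one from those references, and the bookkeeping you do write down is correct under the paper's quotient convention: $\pi_*(\mO_{\PP(\mE^\vee)}(3)\otimes\pi^*\wedge^2\mE)\simeq S^3\mE^\vee\otimes\wedge^2\mE$, $\omega_\pi\simeq\mO_{\PP(\mE^\vee)}(-2)\otimes\pi^*(\wedge^2\mE)^{-1}$, and relative duality does give $R^1\pi_*\mO_{\PP(\mE^\vee)}(-\widetilde X)\simeq\mE$, so the pushed-forward ideal sequence is the Tschirnhausen sequence of $\widetilde X=D_0(\eta)$ (flatness of $\widetilde X\to Y$, which you do not mention, follows since $\widetilde X$ is a Cartier divisor in a smooth variety, hence Cohen--Macaulay, and is finite over the smooth $Y$).

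Nevertheless, as a proof the proposal is incomplete at exactly the decisive points. First, finiteness of $D_0(\eta)\to Y$ needs the cubic cut by $\eta$ on \emph{every} fibre of $\pi$ to be nonzero; this is where one must use that the fibres of $f$ are length-$3$ local complete intersections (because $X$ and $Y$ are smooth), hence Gorenstein, and you only assert it. Second, the heart of the matter --- that the building section of the cover $\widetilde X\to Y$, read off through Miranda's correspondence from the algebra structure on $\pi_*\mO_{\widetilde X}$, is again $\eta$ --- is the step you explicitly delegate to \cite{Mi85} and \cite{CasEk96}; without it the appeal to the uniqueness direction of Theorem \ref{thm triple:1} cannot be made, and with it you have proved nothing beyond what the paper already cites. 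Third, ``uniqueness of $i$ by canonicity'' is not an argument: canonicity produces one embedding but does not exclude others (composing $i$ with a nontrivial $Y$-automorphism of $\PP(\mE^\vee)$, which exists for instance whenever $\mE$ splits, yields another embedding making the diagram commute), so the uniqueness claim has to be formulated and proved as in the cited references, where the embedding is pinned down by the normalization $i^*\mO_{\PP(\mE^\vee)}(1)\simeq\omega_{X/Y}$ compatibly with the trace splitting. Your closing remark about totally ramified fibres correctly locates where the scheme-theoretic verification is needed, but flagging the difficulty is not the same as resolving it.
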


\subsection{Adjunction theory} \label{subsec:adjunction theory}
We refer to  \cite[Chapter 10]{BeSo95}, \cite[Theorem 1.10]{DES93}, \cite[Theorem 2.5]{LaPa84}, \cite[Proposition 1.5]{So79},\cite[\S 0]{SoV87} for basic material on adjunction theory.

\begin{theorem} \label{adjunction-thm}
Let $X \subset \PP^n$ be a smooth surface and $D$ its
hyperplane class. Then $|K_X+D|$ is non-special and has dimension
$N=g(D)+p_g(X)-q(X)-1$. Moreover
\begin{enumerate}[itemsep=2pt, label=\bf{\Alph*)}]
\item $|K_X+D|= \emptyset$ if and only if
\begin{enumerate}[itemsep=2pt, label=\bf{\arabic*)}]
\item $X \subset \PP^n$ is a scroll over a curve of
genus $g(D)=q(X)$ or
\item $X= \PP^2$,
$D=\mO_{\PP^2}(1)$ or $D=\mO_{\PP^2}(2)$.
\end{enumerate}
\item If $|K_X+D| \neq \emptyset$ then $|K_X+D|$ is base-point
free. In this case $(K_X+D)^2=0$ if and only if
\begin{enumerate}[itemsep=2pt, label=\bf{\arabic*)}]\setcounter{enumii}{2}
\item $X$ is a Del Pezzo surface and $D=-K_X$ $($in particular $X$ is
rational$\,)$ or 
\item $X \subset \PP^n$
is a conic bundle.
\end{enumerate}
If $(K_X+D)^2>0$ then the adjunction map
\begin{equation*}
\varphi_{|K_X+D|} \colon X \to X_1 \subset \PP^N
\end{equation*}
defined by the complete linear system $|K_X+D|$ is birational onto a
smooth surface $X_1$ of degree $(K_X+D)^2$ and blows down precisely the
$(-1)$-curves $E$ on $X$ with $DE=1$, unless
\begin{enumerate}[itemsep=2pt, label=\bf{\arabic*)}]\setcounter{enumii}{4}
\item $X=\PP^2(p_1, \ldots, p_7), \quad D=6L-\sum_{i=1}^7 2E_i$, 
\item $X=\PP^2(p_1, \ldots, p_8), \quad D=6L-\sum_{i=1}^7 2E_i -E_8$,
\item $X=\PP^2(p_1, \ldots, p_8), \quad D=9L-\sum_{i=1}^8 3E_i$,
\item $X= \PP(\mE)$, where $\mE$ is an
indecomposable rank $2$ vector bundle over an elliptic curve and
$D=3B$, where $B$ is an effective divisor on $X$ with $B^2=1$.
\end{enumerate}
\end{enumerate}
\end{theorem}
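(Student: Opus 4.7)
The plan is to assemble the statement from its classical ingredients, since this theorem collects several independent results of adjunction theory rather than admitting a single streamlined argument. I would split the proof into the four claims: dimension and non-speciality; the emptiness list A); base-point freeness and the $(K_X+D)^2=0$ list B)1)--4); and birationality of $\varphi_{|K_X+D|}$ together with the exceptional list B)5)--8).

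First I would establish non-speciality by Kodaira vanishing: since $D$ is very ample, $H^i(X,K_X+D)=0$ for $i>0$. Combined with Riemann-Roch for surfaces and adjunction $D(K_X+D)=2g(D)-2$ applied to a smooth hyperplane section, this gives
\[
\chi(K_X+D)=\chi(\mO_X)+\tfrac12 D(K_X+D)=p_g(X)-q(X)+g(D),
\]
whence $N=h^0(K_X+D)-1=g(D)+p_g(X)-q(X)-1$.

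For part A), emptiness forces $N<0$, i.e.\ $g(D)\le q(X)$. On the other hand the restriction map on $H^0$ of the Albanese forms yields $g(D)\ge q(X)$, so $g(D)=q(X)$. A classical theorem (Enriques-Nagata, see \cite{SoV87}) then identifies $X$ as a scroll over a curve of genus $q(X)$, except for the low-genus degenerations where $g(D)=0=q(X)$ which, by direct inspection of the possible hyperplane classes, can only be $(\PP^2,\mO(1))$ or $(\PP^2,\mO(2))$. For B), base-point freeness when $|K_X+D|\ne\emptyset$ follows from Reider's theorem applied to $D$ very ample with $D^2\ge 5$, the finite list of small cases being checked directly. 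The classification of pairs with $(K_X+D)^2=0$ uses the Hodge index theorem and the equality case of $(K_X+D)^2\cdot D^2\le((K_X+D)D)^2$: if $K_X+D\equiv 0$ then $D=-K_X$ is Del Pezzo, otherwise $K_X+D$ is nef of numerical dimension one and defines a $\PP^1$-fibration making $(X,D)$ into a conic bundle.

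Finally, when $(K_X+D)^2>0$ I would invoke the structure theorem of Sommese-Van de Ven \cite{SoV87}: the morphism $\varphi_{|K_X+D|}$ is birational and its non-isomorphism locus consists exactly of the $(-1)$-curves $E$ with $DE=1$, because such $E$ satisfy $(K_X+D)E=0$, while Reider-type bounds together with very-ampleness of $D$ exclude all other numerical configurations. The image $X_1$ is then smooth of degree $(K_X+D)^2$. The eight-item exceptional list consists of the polarized pairs where $\varphi_{|K_X+D|}$ fails to be birational onto a smooth surface, and these are characterized by $(K_X+D)^2>0$ coexisting with an extra negative curve configuration; they are obtained by a finite case analysis via the Mori cone of the minimal model. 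The main obstacle is precisely this last bookkeeping — verifying that items 5)--8) exhaust the exceptions — for which I would lean on the detailed treatment in \cite[Chapter 10]{BeSo95} and \cite[\S0]{SoV87}, rather than reproducing it.
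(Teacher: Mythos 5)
Your proposal is fine, and it matches the paper's treatment: the paper does not prove this theorem at all, but quotes it as background from the adjunction-theory literature (\cite[Chapter 10]{BeSo95}, \cite[\S 0]{SoV87}, etc.), which is exactly where your argument also lands, after correctly deriving the elementary part (Kodaira vanishing plus Riemann--Roch giving $N=g(D)+p_g(X)-q(X)-1$) and deferring the structural classifications A), B) and the exceptional list 5)--8) to those same sources. So you take essentially the same route as the paper, with a bit more detail on the numerically easy steps.
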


We can apply Theorem \ref{adjunction-thm} repeatedly, obtaining a
 sequence of surfaces and adjunction maps
\begin{equation*}
X =:X_0 \stackrel{\varphi_1}{\llr} X_1 \stackrel{\varphi_2}{\llr} X_2
\stackrel{}{\llr} \cdots \stackrel{}{\llr} X_{n-1}
\stackrel{\varphi_{n}}{\llr} X_n \to \cdots
\end{equation*}
At each step we must control the numerical data arising from the
adjunction process. We have
\begin{equation*}
(D_{i+1})^2=(K_{X_i}+D_i)^2, \quad K_{X_{i+1}}D_{i+1}=(K_{X_i}+D_i)K_{X_i}.
\end{equation*}
For the computation of
\begin{equation*}
(K_{X_{i+1}})^2=(K_{X_i})^2+\alpha_i
\end{equation*}
we also need to know the number $\alpha_i$ of \emph{exceptional
lines} on $X_i$, i.e. the number of smooth curves $E \subset X_i$ such that
$K_{X_i}E=E^2=-1,\; ED_{i}=1$. Notice that by the Hodge Index Theorem
(see \cite[Exercise 1.9 p. 368]{H77}) we have
\begin{equation*}
\det \left( \begin{array}{cc} (D_i)^2 & K_{X_i}D_i \\
 K_{X_i}D_i & (K_{X_i})^2\\
\end{array} \right) \leq 0
\end{equation*}
and the equality holds if and only if $K_{X_i}$ and $D_i$ are
numerically dependent.

\begin{proposition} \label{prop:contracted}
Let $E \subset X_{n-1}$ be a curve contracted by the $n$-th
adjunction map $\varphi_{n} \colon X_{n-1} \to X_n$. Then, setting
$\psi:=\varphi_{n-1} \circ
\varphi_{n-2} \circ \cdots \circ \varphi_1$ and $E^*:=\psi^*E$, we have
\begin{equation*}
(E^*)^2=-1, \quad K_XE^*=-1, \quad DE^*=n.
\end{equation*}
\end{proposition}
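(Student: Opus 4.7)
The plan is to peel the adjunction tower one step at a time, using the defining identity $\varphi_i^*D_i = K_{X_{i-1}} + D_{i-1}$ together with the projection formula for birational morphisms of smooth surfaces. For $0 \le i \le n-1$, set $\psi_i := \varphi_{n-1}\circ\cdots\circ\varphi_{i+1} \colon X_i \to X_{n-1}$ (with $\psi_{n-1}=\mathrm{id}$) and $E^{(i)} := \psi_i^*E$, so that $E^{(n-1)}=E$, $E^{(0)}=E^*$ and $E^{(i-1)} = \varphi_i^* E^{(i)}$. By hypothesis each $\varphi_j$ is a birational morphism between smooth surfaces (this is precisely where I use that $E$ is genuinely contracted, so that the $(-1)$-curve alternative of part B) of Theorem \ref{adjunction-thm} applies at step $n$, hence a fortiori at every previous step), and $E$ is a $(-1)$-curve on $X_{n-1}$ with $D_{n-1}\cdot E = 1$.

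First I establish the self-intersection and the intersection with $K_X$. Iterated application of the identity $(\varphi^*A)\cdot(\varphi^*B)=A\cdot B$, valid for any birational morphism $\varphi$ between smooth projective surfaces, yields $(E^*)^2 = (E^{(0)})^2 = E^2 = -1$. For the canonical class, the projection formula gives $(\varphi_i)_*K_{X_{i-1}} = K_{X_i}$, because the extra exceptional components in $K_{X_{i-1}} = \varphi_i^*K_{X_i} + \sum\text{(exc.)}$ push forward to zero. Hence
\[ K_{X_{i-1}}\cdot E^{(i-1)} \;=\; K_{X_{i-1}}\cdot \varphi_i^*E^{(i)} \;=\; (\varphi_i)_*K_{X_{i-1}}\cdot E^{(i)} \;=\; K_{X_i}\cdot E^{(i)}, \]
so by descending induction $K_X\cdot E^* = K_{X_{n-1}}\cdot E = -1$.

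The interesting computation is $D\cdot E^*$, and here the adjunction relation enters. Rewriting it as $D_{i-1} = \varphi_i^*D_i - K_{X_{i-1}}$ and intersecting with $E^{(i-1)} = \varphi_i^*E^{(i)}$, I get
\[ D_{i-1}\cdot E^{(i-1)} \;=\; \varphi_i^*D_i\cdot \varphi_i^*E^{(i)} - K_{X_{i-1}}\cdot E^{(i-1)} \;=\; D_i\cdot E^{(i)} + 1, \]
where the first term collapses via the birational-pullback identity and the second was just shown to equal $-1$. Starting from $D_{n-1}\cdot E = 1$, a descending induction on $i$ accumulates one unit per step and gives $D\cdot E^* = 1 + (n-1) = n$, as required.

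There is no real conceptual obstacle: once the picture of $E^*$ as a tower of total transforms is in place, all three assertions follow from projection formula bookkeeping, and the only subtlety worth flagging is ensuring each $\varphi_i$ is actually a birational morphism onto a smooth surface (rather than one of the exceptional cases 1)--4) of Theorem \ref{adjunction-thm}), which is guaranteed by the hypothesis that a $(-1)$-curve gets contracted at step $n$.
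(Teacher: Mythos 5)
Your proof is correct and takes essentially the same route as the paper's: birational invariance of intersection numbers under total transforms, the projection formula, and the adjunction relation linking $D_{i-1}$, $K_{X_{i-1}}$ and $D_i$. The paper merely packages your step-by-step induction into the single push-forward identities $\psi_*K_X=K_{X_{n-1}}$ and $\psi_*D=D_{n-1}-(n-1)K_{X_{n-1}}$ and then intersects with $E$ once, whereas you accumulate the $n-1$ unit increments one blow-down at a time; the content is the same.
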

\begin{proof}
Since $E$ is contracted by $\varphi_n$, we have $E^2=-1$,
$K_{X_{n-1}}E=-1$, $D_{n-1}E=1$. The map $\psi$ is
birational, so $(E^*)^2=E^2=-1$. Moreover
\begin{equation*}
\psi_*K_X=K_{X_{n-1}}, \quad \psi_*D = D_{n-1}-(n-1)K_{X_{n-1}}.
\end{equation*}
Applying the projection formula we obtain
\begin{equation*}
\begin{split}
K_XE^*&=(\psi_*K_X)E=K_{X_{n-1}}E=-1; \\
DE^*&=(\psi_*D)E=(D_{n-1}-(n-1)K_{X_{n-1}})E=n.
\end{split}
\end{equation*}
This completes the proof.
\end{proof}

\subsection{Steiner bundles} \label{subsec:steiner}

We collect here some material about coherent sheaves presented by
matrices of linear forms, usually called Steiner sheaves, and more
specifically about those that are locally free (Steiner bundles).
We refer to \cite{ancona-ottaviani:steiner} for basic results on this
topic.

\subsubsection{Steiner sheaves and their projectivization} \label{a}

Let $U$, $V$ and $W$ be finite-dimensional $\CC$-vector spaces. Consider the
projective spaces $\PP(V)$ and
$\PP(U)$, and identify $V$ and $U$ with
$H^0(\PP(V), \, \mO_{\PP(V)}(1))$ and $H^0(\PP(U), \, \mO_{\PP(U)}(1))$,
respectively. 
Any element $\phi \in U \ts V \ts W$ gives rise to two maps
\begin{equation} \label{eq:M-N}
   W^{\vee} \ts \mO_{\PP(V)}(-1) \xrightarrow{M_\phi} U \ts \mO_{\PP(V)}, \qquad W^{\vee} \ts \mO_{\PP(U)}(-1) \xrightarrow{N_\phi} V \ts \mO_{\PP(U)}.
\end{equation}
Set $\mF:= \cok \, M_\phi$.
We say that $\mF$ is a {\it Steiner sheaf}, and we denote its projectivization by $\PP(\mF)$; this
 is a projective bundle precisely when
$\mF$ is locally free (and in this case $\dim(U) \geq \dim(V)+\dim(W)-1)$. Let $\mathfrak{p} \colon \sP \to \PP(V)$ be the bundle map
and $\mO_{\sP}(\xi)$ be the tautological, relatively ample line
bundle on $\sP$, so that
\begin{equation*}
H^0(\sP, \mO_{\sP}(\xi)) \simeq H^0(\PP(V),\mF) \simeq  U.
\end{equation*}
Since $\mF$ is a quotient of $U \otimes \mO_{\PP(V)}$, we get a natural
embedding
\begin{equation*}
\sP \subset \PP(U \otimes \mO_{\PP(V)}) \simeq  \PP(V) \times \PP (U).
\end{equation*}

The map $\mathfrak{q}$ associated with the linear system $|\mO_{\sP}(\xi)|$ is just the restriction to $\sP$ of the second
projection from $\PP(V) \times \PP (U)$. On the other hand, setting $\ell :=
\mathfrak{p}^*(\mO_{\PP(V)}(1))$, the linear system $|\mO_{\sP}(\ell)|$ is
naturally associated with the map $	\mathfrak{p}$. In this procedure the roles of $U$ and $V$ can be reversed. In other words, 
setting $\mG = \cok \,N_\phi$, we get a second Steiner sheaf, this time
on $\PP(U)$, and a second projective bundle $\PP(\mG)$ with maps
$\mathfrak{p}'$ and $\mathfrak{q}'$ to $\PP(U)$ and $\PP(V)$, respectively. 
So we have two incidence diagrams
\begin{equation*}
  \xymatrix@-3ex{
    & \sP \ar[dr]^{\mathfrak{q}} \ar[dl]_{\mathfrak{p}} & \\
    \PP(V) & & \PP(U),
  } \quad
  \xymatrix@-3ex{
    & \PP(\mG) \ar[dr]^{\mathfrak{q}'} \ar[dl]_{\mathfrak{p}'} & \\
    \PP(U) & & \PP(V).
  }
\end{equation*}

The link between $\sP$ and $\PP(\mG)$ is provided by the
following result.

\begin{proposition} \label{prop:P-and-P'}
  Let $\phi \in U \ts V \ts W$ and set $\m=\dim W$.
  Then:
  \begin{enumerate}[itemsep=2pt, label=\bf{\roman*)}]
  \item \label{primofatto} the schemes $\sP$ and $\PP(\mG)$ are both
    identified with the same $\m$-fold linear section of $\PP(V) \times \PP(U)$. Moreover, under this identification, $\mathfrak{q} =\mathfrak{p}'$ and $\mathfrak{p}=\mathfrak{q}';$
  \item \label{secondofatto} for any non-negative integer $k$, there are natural isomorphisms
    \begin{equation*}
    \mathfrak{p}_*\mathfrak{q}^*(\mO_{\PP(U)}(k)) \simeq S^k \mF, \qquad
    \mathfrak{q} _*\mathfrak{p}^*(\mO_{\PP(V)}(k)) \simeq S^k \mG.
    \end{equation*}
\end{enumerate}
\end{proposition}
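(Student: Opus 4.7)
The plan is to realize both $\sP$ and $\PP(\mG)$ as the same complete intersection inside the Segre variety $\PP(V) \times \PP(U)$, and then to read off the assertions about the projections and the push-forwards.

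First I would view $\phi$ as a linear map $W^{\vee}\to U\ts V$. Via the natural identification $V \ts U \simeq H^0(\PP(V) \times \PP(U), \mO(1,1))$, this yields $\m = \dim W$ sections of $\mO(1,1)$; let $Z$ be their common zero locus, which is precisely the $\m$-fold linear section of the Segre variety appearing in the statement. To identify $\sP$ with $Z$, I use that the surjection $U \ts \mO_{\PP(V)} \twoheadrightarrow \mF$ coming from \eqref{eq:M-N} induces the closed embedding $\sP \hookrightarrow \PP(U \ts \mO_{\PP(V)}) = \PP(V) \times \PP(U)$. Regarding $\phi$ as the trilinear form $U^{\vee} \ts V^{\vee} \ts W^{\vee} \to \CC$, a pointwise calculation shows that $([v],[u])$ belongs to $\sP$ iff $\phi(u^{\vee}, v^{\vee}, w^{\vee}) = 0$ for every $w^{\vee}\in W^{\vee}$, which is exactly the condition defining $Z$. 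Since this condition is symmetric in $U$ and $V$, the parallel argument with $N_\phi$ in place of $M_\phi$ identifies $\PP(\mG) \subset \PP(U) \times \PP(V)$ with the same subscheme after swapping factors. The equalities $\mathfrak{q}=\mathfrak{p}'$ and $\mathfrak{p}=\mathfrak{q}'$ then follow at once, since all four morphisms are restrictions of the two Segre projections.

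For \ref{secondofatto}, the embedding above identifies $\mO_{\sP}(\xi)$ with $\mathfrak{q}^*\mO_{\PP(U)}(1)$: pulling back $U \ts \mO_{\PP(V)} \twoheadrightarrow \mF$ via $\mathfrak{p}$ and composing with the tautological quotient $\mathfrak{p}^*\mF \twoheadrightarrow \mO_{\sP}(\xi)$ produces a surjection $U \ts \mO_{\sP} \twoheadrightarrow \mO_{\sP}(\xi)$, i.e.\ the morphism to $\PP(U)$ determined by $|\mO_{\sP}(\xi)|$, which by construction is $\mathfrak{q}$. The first isomorphism now follows from the standard push-forward formula $\mathfrak{p}_*\mO_{\sP}(k\xi) \simeq S^k\mF$ for $k \geq 0$ in the quotient convention, and swapping the roles of $U$ and $V$ yields the second.

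The main delicate point will be upgrading the pointwise identification $\sP = Z$ to a scheme-theoretic one. In the locally free (Steiner bundle) case this is painless: $\sP$ is smooth and is contained in $Z$, and both have the expected dimension $\dim V + \dim U - \m - 2$, so the inclusion must be an equality; in the more general Steiner sheaf setting I would instead appeal to an Eagon--Northcott type resolution attached to \eqref{eq:M-N} to exhibit the ideal of $\sP$ and compare it with that of $Z$ directly.
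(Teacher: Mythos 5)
Your argument is essentially the paper's own proof: both identify $\sP$ and $\PP(\mG)$ with the common zero locus of the $\m$ bilinear sections $u \ts v \ts w_i(\phi)$ of $\mO_{\PP(V)\times\PP(U)}(1,1)$, observe that the four projections are then restrictions of the two Segre projections, and conclude \ref{secondofatto} from $\mathfrak{q}^*\mO_{\PP(U)}(k)\simeq\mO_{\sP}(k\xi)$ together with the canonical isomorphism $\mathfrak{p}_*\mO_{\sP}(k\xi)\simeq S^k\mF$ (and its mirror for $\mG$).

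One caveat about your final paragraph: the dimension-count patch you propose for the scheme-theoretic identification is both unnecessary and not actually conclusive as stated, since equality of dimensions of $\sP$ and its ambient zero locus $Z$ does not rule out extra components or non-reducedness of $Z$. No such argument is needed: for any presentation $\mathcal{A}\to\mathcal{B}\to\mF\to 0$ with $\mathcal{B}$ locally free, $\PP(\mF)\subset\PP(\mathcal{B})$ is, scheme-theoretically, the vanishing locus of the composite $\pi^*\mathcal{A}\to\pi^*\mathcal{B}\to\mO_{\PP(\mathcal{B})}(1)$; applied to $M_\phi$ this gives exactly $Z$, with no local-freeness or Eagon--Northcott input, and symmetrically for $N_\phi$. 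Note also that the paper only claims the section has codimension $\m$ \emph{or smaller}, so the ``expected dimension'' plays no role there either.
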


\begin{proof}
Set $M:=M_{\phi}$. By construction, the scheme $\sP$ is defined as the set
\begin{equation*}
\sP=\{([v], \, [u]) \in \PP(V) \times \PP (U) \; | \; u \in \cok \, M(v) \},
\end{equation*}
where $v \colon V \to \CC$ (resp.  $u \colon U \to \CC$) is a $1$-dimensional quotient of $V$ (resp. of $U$) and $M(v) \colon W^\vee \to U$ is the evaluation of $M$ at the point $[v]$.
Now, we get that $u$ is defined on $\cok \, M(v)$ if and only if $u
\circ M(v) = 0$. This clearly amounts to require $(u\circ M(v) )(w) = 0$ for all $w\in W^\vee$, that is $u \ts v \ts w (\phi)=0$ for all $w \in W^\vee$.
Summing up, we have
\begin{equation}
  \label{PEphi}
  \sP = \{([v], \, [u]) \; | \; u \ts v \ts w
  (\phi)=0 \; \textrm{for all} \; w \in W^\vee\}.
\end{equation}

The same argument works for $\PP(\mG)$ by interchanging the roles
of $v$ and $u$, hence $\sP$ and $\PP(\mG)$ are 
both identified with the same subset of $\PP(V) \times \PP(U)$.
Since each element $w_i$ of a basis of $W^\vee$ gives a linear
equation of the form $u \ts v \ts w_i(\phi)=0$, we have that
$\sP$ is an $\m$-fold linear section (of codimension $\m$ or
smaller) of $\PP(V) \times \PP(U)$.

Note that, in view of the identification above, the map $\mathfrak{p}$ is just
the projection from $\PP(V) \times \PP(U)$ onto $\PP(V)$, restricted to
the set given by \eqref{PEphi}. The same holds for $\mathfrak{q}'$, hence we are
allowed to identify $\mathfrak{p}$ and $\mathfrak{q}'$. Analogously, both $\mathfrak{q}$ and $\mathfrak{p}'$ are
given as projections onto the factor $\PP(V)$.  We have thus proved \ref{primofatto}.
Now let us check \ref{secondofatto}. For any non-negative integer $k$ we have 
\begin{equation*}
\begin{split}
\mathfrak{q}^*(\mO_{\PP(U)}(k)) & \simeq \mO_{\sP}(k \xi), \\
\mathfrak{p}^*(\mO_{\PP(V)}(k)) & \simeq (\mathfrak{q}')^*(\mO_{\PP(V)}(k)) \simeq \mO_{\PP(\mG)}(k \xi'),
\end{split}
\end{equation*}
where $\xi'$ is the  tautological relatively ample line
bundle on $\PP(\mG)$.
Therefore the claim follows from the canonical isomorphisms
\begin{equation*}
\mathfrak{p}_*(\mO_{\sP}(k \xi)) \simeq S^k \mF, \qquad
\mathfrak{p}'_*(\mO_{\PP(\mG)}(k \xi')) \simeq S^k \mG.
\end{equation*}
\end{proof}

\begin{remark} \label{rem:trilinear-form}
We can rephrase the content of Proposition \ref{prop:P-and-P'} by using coordinates as follows. Take bases 
\begin{equation*}
\{z_i\}, \, \{x_j \}, \, \{y_k\}
\end{equation*}
for $U$, $V$, $W$, respectively. With respect to these bases, the tensor $\phi \in U \otimes V \otimes W$ will correspond to a trilinear form
\begin{equation*}
\phi = \sum a_{ijk}z_i x_j y_k,
\end{equation*}
for a certain table of coefficients $a_{i j k} \in \CC$. 
Write $\VV$ and $\UU$ for the symmetric algebras on $V$ and $U$.
Then $\phi$ induces two linear maps of graded vector spaces:
\begin{equation*}
W^\vee \otimes \VV(-1) \to U \otimes \VV, \quad
W^\vee \otimes \UU(-1) \to V \otimes \UU,
\end{equation*}
both defined as
 \begin{equation*}
  w \otimes \Psi \to \bigg(\sum a_{ijk}z_i x_j y_k(w) \bigg) \Psi,
 \end{equation*}
where $\Psi$ lies in $\VV$ or in $\UU$.
The sheafification of these maps gives precisely the two maps of
vector bundles $M_{\phi}$ and $N_{\phi}$ written in \eqref{eq:M-N},
whose defining matrices of linear forms are, respectively:
\begin{equation*}
\bigg(\sum \nolimits_j a_{ijk} x_j \bigg)_{ik} \quad \textrm{and} \quad  \bigg(\sum \nolimits_i a_{ijk}z_i \bigg)_{jk}.
\end{equation*}

An important observation is that $\PP(\mF)$ and $\PP(\mG)$ are both
identified with the zero locus of the same set of $m$ bilinear equations
in $\PP(V) \times \PP(U)$, namely 
\begin{equation*}
\PP(\mF)=\PP(\mG)=\left\{(x, \, z) \; \Big\lvert  \; \sum \nolimits_{i, \, j} a_{ij1}
  z_i x_j=\ldots=\sum\nolimits_{i, \, j} a_{ijm}  z_ix_j=0\right\}. 
\end{equation*}
This shows that $\PP(\mF)=\PP(\mG)$ is the intersection
of $m$ divisors of bidegree $(1,1)$ in $\PP(V) \times \PP(U)$. We can
thus write a presentation of the form:
\begin{equation}
  \label{prekoszul}
\cdots \to W^\vee \otimes \mO_{\PP(V) \times \PP(U)}(-1,-1) \to \mO_{\PP(V) \times \PP(U)} \to \mO_{\PP(\mF)} \to 0.
\end{equation}
\end{remark}

We will mostly use this setup when $\PP(V)=\PP^2$, in order to study
the geometry of a Steiner bundle $\mF$ of rank $2$ admitting the resolution
\begin{equation} \label{basic}
0 \to W^\vee \ts \mO_{\PP^2}(-1) \xrightarrow{M} U \ts \mO_{\PP^2} \to \mF \to 0,
\end{equation}
and to compare it with the geometry of the sheaf $\mG$ obtained by ``flipping'' the tensor $\phi$ as
explained above and whose presentation is
\begin{equation} \label{basic-flipped}
W^\vee \ts \mO_{\PP(U)}(-1) \xrightarrow{N} \mO_{\PP(U)}^3 \to \mG \to 0.
\end{equation}

\subsubsection{Unstable lines} \label{b}

Let us assume  now $\dim V=3$ and consider a Steiner bundle $\mF$ of rank $2$ on
$\PP^2=\PP(V)$. To be consistent with the notation that will appear later, we set 
 $\dim U = b-2$ and
$\dim W = b-4$, for $b \ge 4$, and
we write $\mF_b$ instead of $\mF$. The sheafified minimal graded free resolution of $\mF_b$ is then
\begin{equation} \label{eq:FF}
  0  \to \mO_{\PP^2}(-1)^{b-4} \stackrel{M}{\lr} \mO_{\PP^2}^{b-2} \to  \mF_b \to 0,
\end{equation}
where $M$ is a $(b-2) \times (b-4)$ matrix of linear forms. 

\medskip

Given a line $L \subset \PP^2$, there is an integer $a$ such that
\begin{equation*}
 \mF_b|_L = \mO_L(a)\oplus \mO_L(b-4-a).
\end{equation*}
Since $\mF_b$ is globally generated, the same is true for  $\mF_b|_L$ and so 
\begin{equation*}
0 \le a \le b-4.
\end{equation*}

\begin{definition}
A line $L \subset \PP^2$ is said to be \emph{unstable} for $\mF_b$ if $a=0$, i.e.
\begin{equation*}
 \mF_b|_L \simeq \mO_L\oplus \mO_L(b-4).
\end{equation*}
\end{definition}
Here are some useful characterizations of unstable lines.
\begin{lemma} \label{lem:unstable-line}
The following are equivalent$:$
\begin{enumerate}[itemsep=2pt, label=\bf{\roman*)}]
\item \label{u:1} the line $L \subset \PP^2$ is unstable line $\mF_b;$
\item \label{u:2} the cohomology group $H^0(L, \, \mF^{\vee}_b|_L)$ is
  non-zero$;$
\item \label{u:3} there is a nonzero global section of $\mF_b$ whose
  vanishing locus contains $b-4$ points of $L$ $($counted with multiplicity$)$. 
\end{enumerate}
\end{lemma}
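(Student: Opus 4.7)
The plan is to combine a short-exact-sequence computation on $\PP^2$ with the Grothendieck splitting on $L \simeq \PP^1$. I would first extract from the resolution \eqref{eq:FF} that $h^0(\mF_b) = b-2$, and after twisting the same resolution by $\mO_{\PP^2}(-1)$ and using the vanishings $H^0(\mO_{\PP^2}(-1)) = H^1(\mO_{\PP^2}(-1)) = H^1(\mO_{\PP^2}(-2)) = 0$, I would obtain $H^0(\mF_b(-1)) = 0$ and $H^1(\mF_b(-1)) = 0$. Feeding these into the long exact sequence attached to $0 \to \mF_b(-1) \to \mF_b \to \mF_b|_L \to 0$ gives an isomorphism $r \colon H^0(\PP^2, \mF_b) \xrightarrow{\sim} H^0(L, \mF_b|_L)$; this is the main technical tool, letting me move sections freely between $\PP^2$ and $L$.

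For \ref{u:1} $\Leftrightarrow$ \ref{u:2}, I would use that $\mF_b$, hence $\mF_b|_L$, is globally generated, which forces the splitting to have the form $\mF_b|_L = \mO_L(a) \oplus \mO_L(b-4-a)$ with $0 \le a \le b-4$. Then $\mF_b^{\vee}|_L = \mO_L(-a) \oplus \mO_L(a-b+4)$ admits a nonzero global section exactly when $a = 0$ or $a = b-4$, both cases coinciding with the unstable decomposition $\mO_L \oplus \mO_L(b-4)$.

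For the implication \ref{u:1} $\Rightarrow$ \ref{u:3}, I would pick a section of the form $(0, s_2)$ in $\mF_b|_L \simeq \mO_L \oplus \mO_L(b-4)$ with $s_2 \in H^0(\mO_L(b-4))$ nonzero; its zero scheme on $L$ has length $b-4$, and lifting it along $r^{-1}$ produces a global section of $\mF_b$ whose vanishing locus contains those $b-4$ points.

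The converse \ref{u:3} $\Rightarrow$ \ref{u:1} is the main step, and where I expect the only real obstacle to lie. Given a nonzero global section $s$ with zero locus meeting $L$ in a length-$(b-4)$ subscheme, I would first rule out $s|_L = 0$: such an $s$ would be divisible by the linear form cutting out $L$, yielding a nonzero element of $H^0(\mF_b(-1)) = 0$, contradiction. Hence $s|_L$ is a nonzero section of $\mF_b|_L$, and saturating the induced inclusion $\mO_L \hookrightarrow \mF_b|_L$ gives a sub-line-bundle $\mO_L(Z) \hookrightarrow \mF_b|_L$ whose degree equals the length of the zero scheme of $s|_L$, hence is at least $b-4$. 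Since any sub-line-bundle of $\mO_L(a) \oplus \mO_L(b-4-a)$ has degree at most $\max(a, b-4-a)$ (at least one of its two projections is nonzero), I would conclude $\max(a, b-4-a) \ge b-4$, forcing $a \in \{0, b-4\}$, i.e.\ \ref{u:1}. The slightly delicate ingredient is the standard identification of the scheme-theoretic zero locus of a section of a rank-$2$ bundle on a curve with the degree of the saturated sub-line-bundle it spans, together with ruling out the degenerate case $s|_L \equiv 0$.
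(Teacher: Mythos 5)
Your proof is correct, and while the preliminaries coincide with the paper's (the restriction isomorphism $H^0(\PP^2,\mF_b)\xrightarrow{\ \sim\ }H^0(L,\mF_b|_L)$ obtained from the vanishings $H^0(\PP^2,\mF_b(-1))=H^1(\PP^2,\mF_b(-1))=0$, the argument for \ref{u:1} $\Leftrightarrow$ \ref{u:2}, and the lift of a section of the $\mO_L(b-4)$ summand for the forward implication), your treatment of the converse \ref{u:3} $\Rightarrow$ \ref{u:1} is genuinely different. The paper stays on $\PP^2$: it uses $H^0(\PP^2,\mF_b(-1))=0$ to see that the zero scheme $Z$ of $s$ is purely $0$-dimensional, forms the sequence $0\to\mO_{\PP^2}\to\mF_b\to\mI_{Z/\PP^2}(b-4)\to 0$, restricts it to $L$ with a $\mathscr{T}or$ computation, splits the resulting sequence via an $\Ext^1$ vanishing, and extracts a surjection $\mF_b|_L\to\mO_L(b-c-4)$ whose dual yields \ref{u:2}. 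You instead work entirely on $L\simeq\PP^1$: injectivity of the restriction map rules out $s|_L=0$, and then the elementary facts that the saturation of the line subsheaf spanned by a nonzero section of a rank-$2$ bundle on a smooth curve has degree equal to the length of its zero scheme, and that a sub-line-bundle of $\mO_L(a)\oplus\mO_L(b-4-a)$ has degree at most $\max(a,b-4-a)$, force $a\in\{0,b-4\}$. Your route is shorter and avoids the Tor/Ext bookkeeping and the splitting argument, landing directly on \ref{u:1} rather than passing through \ref{u:2}; the paper's route, in exchange, produces the extra information recorded at the end of its proof, namely that $Z\cap L$ has length exactly $b-4$ and exhibits the explicit splitting $\mI_{Z/\PP^2}(b-4)|_L\simeq\mO_L(b-c-4)\oplus\mO_{Z'}$, which is in the same spirit as the ideal-theoretic manipulations used elsewhere in the paper. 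Both arguments are complete; just make sure, as you did, to justify the identification of the zero scheme of $s|_L$ with $Z(s)\cap L$ so that the hypothesis on multiplicities transfers to the curve.
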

\begin{proof}
We first prove \ref{u:1} $\Leftrightarrow$ \ref{u:2}. The restriction
$\mF_b|_L$ splits, so there is an integer $a$ such
that $\mF_b|_L = \mO_L(a)\oplus\mO_L(b-4-a)$, and since $\mF_b$ is
globally generated we have $0 \le a \le b-4$. Condition \ref{u:1}
corresponds to $a=0$ or $a=b-4$, and this clearly implies
\ref{u:2}. Conversely, if \ref{u:2} holds, then  $a
\le 0$ or $a \ge b-4$; this implies either $a=0$ or $a=b-4$, hence \ref{u:1} holds.

\bigskip

In order to check \ref{u:2} $\Leftrightarrow$ \ref{u:3}, we
first claim that, given a line $L \subset \PP^2$, the restriction map induces an isomorphism
\begin{equation} \label{eq:iso-F}
H^0(\PP^2,\mF_b) \stackrel{\simeq}{\longrightarrow} H^0(L, \mF_b|_L). 
\end{equation}
Indeed, looking at \eqref{eq:FF}, we see that we have
\begin{equation*}
H^0(\PP^2,\mF_b(-1))=H^1(\PP^2,\mF_b(-1))=0,
\end{equation*}
so our claim follows by taking cohomology in 
\begin{equation*}
0 \to \mF_b(-1) \to \mF_b \to \mF_b|_L \to 0.
\end{equation*}

\bigskip
Now let us prove \ref{u:2} $\Rightarrow$ \ref{u:3}. 
Assuming \ref{u:2}, we get a short exact sequence
\begin{equation*}
0 \to \mO_L \to \mF^{\vee}_b|_L \to \mO_L(4-b) \to 0,
\end{equation*}
so by dualizing we have
\begin{equation*}
0 \to \mO_L(b-4) \stackrel{\iota}{\to} \mF_b|_L \to \mO_L \to 0.
\end{equation*}
Composing $\iota$ with a non-zero map $\mO_L \to \mO_L(b-4)$, we
obtain a global section of $\mF_b|_L$ vanishing at $b-4$ points
counted with multiplicity. Using the isomorphism \eqref{eq:iso-F} we
can lift this section to a global section of
$\mF_b$ and we get \ref{u:3}. 

Conversely, assume that \ref{u:3} holds. 
Then there is a global
section $s$ of $\mF_b$ whose vanishing locus $Z$ contains a subscheme 
of $L$ of length $b-4$.
Put $Z'=Z\cap L$, so that $Z'$ has length $c \ge b-4$.
 Since $H^0(\PP^2,\mF_b(-1))=0$ it follows that $Z$ contains
no divisors, i.e. it has pure dimension $0$, so we have an exact sequence
\begin{equation*}
0 \to \mO_{\PP^2} \to \mF_b \to \mI_{Z/\PP^2}(b-4) \to 0.
\end{equation*}
Applying  $-\otimes_{\mO_{\PP^2}} \mO_L$ to the exact sequence 
\begin{equation*}
0 \to \mI_{Z/\PP^2}(b-4) \to \mO_{\PP^2}(b-4) \to \mO_Z \to 0
\end{equation*}
and using $\mathscr{T}or^{\mO_{\PP^2}}_1( \mO_Z, \, \mO_L) \simeq \mO_{Z'}$, we get
\begin{equation*} 
0 \to \mO_{Z'} \to \mI_{Z/\PP^2} (b-4)|_L \to \mO_L(b-4) \to \mO_{Z'} \to 0.
\end{equation*}
The image of the middle map is $\mI_{Z'/L}(b-4) \simeq \mO_L(b-c-4)$, so from the above sequence we obtain
\begin{equation}
  \label{accorcio}
   0 \to \mO_{Z'} \to \mI_{Z/\PP^2} (b-4)|_L \to \mO_L(b-c-4) \to 0.  
\end{equation}
The scheme $Z'$ is $0$-dimensional, so we infer
\begin{equation*}
\Ext^1(\mO_L(b-4-c),\mO_{Z'}) \simeq H^1(L,\mO_{Z'} \ts \mO_L(c-b+4))=0
\end{equation*}
and this means that \eqref{accorcio} splits, i.e.
\begin{equation} 
\mI_{Z/\PP^2}(b-4)|_L \simeq \mO_L(b-c-4) \oplus \mO_{Z'}.
\end{equation}
Therefore, we have a surjection $\mF_b|_L \to \mO_L(b-c-4)$. Since
$b-c-4 \le 0$, the dual of this surjection gives a non-zero global section of
$\mF_b^\vee|_L$ and the proof is finished.
Note that, since we have now proved $\mF_b|_L \simeq \mO_L \oplus
\mO_L(b-4)$, the existence of a surjection $\mF_b|_L \to \mO_L(b-c-4)$
actually gives $c=b-4$, i.e. $Z'=Z\cap L$ has length precisely $b-4$.
\end{proof}

The set of unstable lines of $\mF_b$ has a
natural structure of subscheme of $\PD^2$, given as follows. First
define the point-line incidence $\II$ in $\PP^2 \times \PD^2$ by the
condition that the point lies in the line. One has $\II \simeq
\PP(T_{\PP^2}(-1))$ and $T_{\PP^2}(-1)$ is a Steiner bundle. By Lemma \ref{lem:unstable-line}, 
a line $L$ is unstable for $\mF$ if and only if $H^0(L,\,\mF^{\vee}_b|_L)\neq 0$,
 i.e., by Serre duality, 
if and only if $H^1(L, \,\mF_b(-2)|_L)\neq 0$, which happens if and
only if $L$ lies in the support of $R^1\mathfrak{q}_* (\mathfrak{p}^* \mF_b (-2) \ts \mO_{\II})$.
We denote the set of unstable lines, endowed with this scheme structure, by $\mW(\mF_b)$.

Let us now give a summary of the behaviour of the unstable
lines of $\mF_b$ for small values of $b$.

\begin{itemize}
\item[$b=4$.] We have $\mF_{4} \simeq \mO_{\PP^2}^2$, so $\mW(\mF_4)$ is empty.
\item[$b=5$.] There is an isomorphism $\mF_5 \simeq
  T_{\PP^2}(-1)$. Therefore $\mW(\mF_5)=\PD^2$, because $T_{\PP^2}$ is a uniform bundle of splitting type $(1, \, 2)$, see \cite[\S 2]{OkSchSp88}.
\item[$b=6$.]
The scheme $\mW(\mF_6)$ is a smooth conic in $\PD^2$, and the unstable
lines of $\mF_6$ are the tangent lines to the dual conic, see
\cite[Proposition 6.8]{DK} and \cite[Proposition 2.2]{Val00b}.
\item[$b=7$.] The 
scheme $\mW(\mF_7)$ is either a set of $6$ points in general linear
position and contained in no conic or consists of a smooth conic in
$\PD^2$, see \cite[Théorème 3.1]{Val00b}. The former case is the
general one, and when it occurs $\mF_7$ is a so-called
\emph{logarithmic bundle}. Instead, the latter case occurs if and only
if $\mF_7 $ is a so-called \emph{Schwarzenberger bundle}, whose matrix
$M$, up to a linear change of coordinates, has the form 
 \begin{equation} \label{eq:Schw-7}
      M= \ltrans{\left(
        \begin{array}{ccccc}
          x_0 & x_1 & x_2 & 0 & 0 \\
          0 & x_0 & x_1 & x_2 & 0 \\
          0 & 0 & x_0 & x_1 & x_2 \\
        \end{array}
      \right)},
    \end{equation}
see \cite[Theorem 3]{faenzi-matei-valles}, \cite[Théorème 3.1]{Val00b}.
\item[$b\ge 8$.]
Unstable lines do not always exist in this range.
The scheme $\mW(\mF_b)$ is either finite of length $\le b-1$ or consists of a smooth conic in $\PD^2$. In the latter case, $\mF_b$ is a Schwarzenberger bundle, whose matrix $M$, up to a linear change of coordinates, is a $(b-2) \times (b-4)$ matrix having the same form as \eqref{eq:Schw-7}.
We can actually state a more precise result, see again \cite[Proposition 3.11 and proof of Theorem 5.3]{ancona-ottaviani:steiner}.

\begin{proposition} \label{prop:unstable-b-8}
If $\mF_b$ contains a finite number $\alpha_1$
of unstable lines, then $0 \leq \alpha_1 \leq b-1$. More precisely, the following holds.
\begin{enumerate}[itemsep=2pt, label=\bf{\roman*)}]
\item If $0 \leq \alpha_1 \leq b-2$ then, up to a linear change of coordinates, the matrix $M$ is of type
 \begin{equation*}
      M=\ltrans{\left(
        \begin{array}{ccc|ccc}
          a_{1,1} H_1 & \cdots & a_{1, \alpha} H_{\alpha} &  \\
   \vdots & \vdots & \vdots & M' \\
          a_{b-4,1} H_1 & \cdots & a_{b-4, \alpha} H_{\alpha} &  \\
        \end{array}
      \right)},
\end{equation*}
for some $(b-2-\alpha) \times (b-4)$ matrix $M'$ of linear forms. In this case the unstable lines are given by
\begin{equation*}
H_1=0, \quad H_2=0, \ldots,  H_{\alpha_1}=0.
\end{equation*}
\item If $\alpha_1=b-1$ then $\mF_b$ is a logarithmic bundle. In this case, the matrix $M$ is of type
\begin{equation*}
      M=\ltrans{\left(
        \begin{array}{cccc}
          a_{1,1} H_1 & a_{1,2}H_2 & \cdots & a_{1, b-2} H_{b-2}\\
   \vdots & \vdots & \vdots & \vdots  \\
         a_{b-4,1} H_1 & a_{b-4,2}H_2 & \cdots & a_{b-4, b-2} H_{b-2}\\
      \end{array}
      \right)},
 \end{equation*}
where $H_1, \ldots, H_{b-2}$ are such that the linear form
\begin{equation*}
H_{b-1} := \sum_{j=1}^{b-2} a_{i,j} H_j
\end{equation*}
does not depend on $i \in  \{1, \ldots, b-4\}$. The unstable lines are given by
\begin{equation*}
H_1=0, \quad H_2=0, \ldots,  H_{b-1}=0.
\end{equation*}
  \end{enumerate}
 \end{proposition}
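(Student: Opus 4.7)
The plan is to use Lemma \ref{lem:unstable-line} to associate to each unstable line a distinguished direction in $U = H^0(\mF_b)$, and then to perform simultaneous row operations on the presentation matrix $M$ from \eqref{eq:FF} to bring it into the stated form. First I would treat a single unstable line $L = V(H)$: by Lemma \ref{lem:unstable-line} we have $\mF_b|_L \simeq \mO_L \oplus \mO_L(b-4)$, and the cohomology sequence of $0 \to \mF_b(-1) \to \mF_b \to \mF_b|_L \to 0$ yields an isomorphism $U \xrightarrow{\sim} H^0(\mF_b|_L)$. Pick $s \in U$ lifting the generator of the $\mO_L$-summand, and extend $s$ to a basis $u_1 = s, u_2, \ldots, u_{b-2}$ of $U$ whose remaining vectors map into the $\mO_L(b-4)$-summand. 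In this basis, the projection $\mF_b|_L \twoheadrightarrow \mO_L$ is the first coordinate projection, and since its composition with the surjection $\mO_L^{b-2} \twoheadrightarrow \mF_b|_L$ kills the image of $M|_L$, the first row of $M|_L$ is zero. Hence every entry of the first row of $M$ is a scalar multiple of $H$, and minimality of the resolution guarantees these scalars are not all zero.

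Then I would apply this construction simultaneously to all $\alpha_1$ unstable lines $L_i = V(H_i)$, obtaining sections $s_i \in U$ and linear functionals $\phi_i \in U^\vee$ (projection onto the $\mO_{L_i}$-summand after restriction). The technical core is to prove that $\phi_1, \ldots, \phi_{\alpha_1}$ are linearly independent whenever $\alpha_1 \leq b-2$. I would run an inductive argument: a nontrivial relation $\sum c_i \phi_i = 0$ translates, via the identity $M^T(\phi_i) = H_i \otimes \alpha_i$ in $V \otimes W$ (where $\alpha_i \in W$ is the scalar vector of the normalized $i$-th row), into the identity $\sum c_i H_i \otimes \alpha_i = 0$, which in turn exhibits a nowhere-vanishing section of $\mF_b^\vee|_{V(H')}$ for some new linear form $H'$, hence a further unstable line, contradicting the enumeration. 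Granted independence, we choose a basis $u_1, \ldots, u_{b-2}$ of $U$ with $\phi_j(u_i) = \delta_{ij}$ for $i, j \leq \alpha_1$; applying the first step to each of the first $\alpha_1$ basis vectors then forces row $r$ of $M$ (for $r \leq \alpha_1$) to be of the form $H_r \cdot (a_{1, r}, \ldots, a_{b-4, r})$, yielding the block decomposition of case (i), with $M'$ the bottom $(b-2-\alpha_1) \times (b-4)$ block of general linear forms.

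For case (ii) I would use that when $\alpha_1 = b-1$ the functionals $\phi_i$ live in the $(b-2)$-dimensional space $U^\vee$ and must therefore satisfy a unique (up to scalar) linear relation, which after relabelling I rescale to $\sum_{i=1}^{b-2} \phi_i = \phi_{b-1}$. Applying the simultaneous normalization above to $\phi_1, \ldots, \phi_{b-2}$ makes every row of $M$ of the prescribed form, and the relation translates via $M^T$ into $\sum_{j=1}^{b-2} a_{k, j} H_j = H_{b-1}$ independently of $k$, which is exactly the displayed compatibility condition. Conversely this identity makes the vector $(1, \ldots, 1) \in U^\vee$ restrict to a nowhere-vanishing section of $\mF_b^\vee|_{V(H_{b-1})}$, so $V(H_{b-1})$ is the $(b-1)$-th unstable line. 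The sharp bound $\alpha_1 \leq b-1$ then follows from \cite[Proposition 3.11]{ancona-ottaviani:steiner}: any additional unstable line would force a second independent linear relation among the $\phi_i$, which, chased through the flip to $\mG$ on $\PP(U)$ via Proposition \ref{prop:P-and-P'}, would force $\mF_b$ to be Schwarzenberger, contradicting the finiteness of $\mW(\mF_b)$.

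The main obstacle is the linear independence of the functionals $\phi_i$ in the second step. Although the inductive sketch above is clean, making the identification of the new unstable line arising from a putative linear dependence precise, and verifying it is distinct from the already-listed $L_i$, requires careful tracking of the scalar vectors $\alpha_i$ and the linear forms $H_i$; this bookkeeping is essentially the content of \cite[Proposition 3.11]{ancona-ottaviani:steiner}, and the Schwarzenberger dichotomy used to exclude $\alpha_1 \geq b$ is of the same nature.
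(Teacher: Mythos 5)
The first thing to note is that the paper does not actually prove this proposition: it is quoted from \cite[Proposition 3.11 and proof of Theorem 5.3]{ancona-ottaviani:steiner}, so there is no in-paper argument to measure you against. Your proposal is, in the end, of the same nature, since you defer the two genuinely hard points (linear independence of the functionals $\phi_1,\dots,\phi_{\alpha_1}$ when $\alpha_1\le b-2$, and the bound $\alpha_1\le b-1$ together with the logarithmic identification) to that same reference. The parts you do argue are correct: for a single unstable line $L=V(H)$ the isomorphism $H^0(\PP^2,\mF_b)\simeq H^0(L,\mF_b|_L)$ from Lemma \ref{lem:unstable-line} and the projection onto the $\mO_L$-summand produce, after a constant change of basis of $U$, a row of $M$ of the form $H\cdot(\text{constants})$; equivalently, each unstable line gives (up to scalar) a functional $\phi_i\in U^\vee$ with $\phi_i M=H_i\otimes\alpha_i$ of rank one, and conversely such a functional forces $V(H_i)$ to be unstable. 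Granting independence, the normal form in (i) and the compatibility identity in (ii) follow exactly as you say.

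The weak point is the one step you sketch yourself: the claim that a nontrivial relation $\sum c_i\phi_i=0$ ``exhibits a nowhere-vanishing section of $\mF_b^\vee|_{V(H')}$ for some new linear form $H'$, hence a further unstable line.'' This cannot be a valid general mechanism, because in the logarithmic case $\alpha_1=b-1$ the functionals do satisfy a relation while $\mW(\mF_b)$ remains finite; so a relation does not per se produce a line outside the given list. What a minimal relation actually yields is a dichotomy: for instance, a minimal relation of length $3$ forces the vectors $\alpha_i$ to be proportional and $H_3\in\langle H_1,H_2\rangle$, whence every $\psi=s\phi_1+t\phi_2$ satisfies $\psi M=(sH_1+tH_2)\otimes\alpha_1$ and the whole pencil through $H_1=H_2=0$ consists of unstable lines (so infinitely many, i.e.\ the Schwarzenberger case); for longer relations one must show that either such a positive-dimensional family appears or the relation is the single ``logarithmic'' one involving all $b-1$ functionals with nonzero coefficients. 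That dichotomy, together with the uniqueness of the relation and the non-vanishing of its coefficients in case (ii) (which you assert from a dimension count, but which is not automatic), is precisely the content of \cite[Proposition 3.11 and Theorem 5.3]{ancona-ottaviani:steiner}. So as a citation-based treatment your proposal matches the paper's, but it is not self-contained, and the inductive argument it does sketch would not go through as stated.
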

\end{itemize}

\begin{remark} 
Using Proposition \ref{prop:unstable-b-8}, we can give another proof of
the implication \ref{u:1} $\Rightarrow$ \ref{u:3} in
Lemma \ref{lem:unstable-line}.  
Indeed, we can take a basis $s_1,\ldots,s_{b-2}$ of $H^0(\PP^2,\,
\mF_b)$ such that the homogeneous ideal $I_k$ of the vanishing locus
of $s_k$ is defined by the maximal minors of the matrix obtained by
deleting the $k$-th row of $M$, namely by $b-3$ forms of degree
$b-4$. Assume now that the unstable line $L$ is defined by the
equation $H_i=0$. Then, if $k \neq i$, all the minors defining $I_k$
are divisible by $H_i$, except the one obtained by deleting the $k$-th
and $i$-th rows of $M$; so $s_k$ vanishes at $b-4$ points on $L$.  
\end{remark}

\begin{remark} 
In Proposition \ref{prop:unstable-b-8} 
we denoted the number of unstable lines of
$\mathscr{F}_b$ by $\alpha_1$. Further on, the notation $\alpha_1$
will be reserved to the number of exceptional lines contracted by the first adjunction map $\varphi_{|K_X + D|} \colon X \to X_1$, see \S
\ref{subsec:adjunction theory}. The reason is that when we consider a
general triple plane $f \colon X \to \PP^2$ whose (twisted)
Tschirnhausen bundle is isomorphic to $\mathscr{F}_b$, with $b \geq
7$, these two numbers are in fact the 
same (see \S \ref{subsub:adj}, in particular Proposition \ref{prop:adjunction}).
\end{remark}

\subsection{Criteria for a rank-2 vector bundle to be Steiner}

Here we present two simple criteria to check whether a vector bundle
of rank 2 on $\PP^2$ is a Steiner one. Both of them consist in fixing the numerical
data and adding a single cohomology vanishing. In the second one, the
condition is on a zero-dimensional subscheme from which the bundle is
constructed via the Serre correspondence, provided that the
Cayley-Bacharach property is satisfied.

To state the first result, fix an integer $b \ge 4$ and note that, if $\mF$ is
a Steiner bundle of type $\mF_b$, then 
  \begin{align}     \label{allorasteiner1}
    c_1(\mF)=b-4, \qquad c_2(\mF)={{b-3}\choose 2}
  \end{align}    
and $H^i(\PP^2,\mF(-1))=0$ for all $i$. Likewise, for $b \le 2$
assume that $\mF$ fits into
  \begin{equation}
    \label{costeiner}
  0 \to \mF \to \mO_{\PP^2}(-1)^{4-b} \to \mO_{\PP^2}^{2-b} \to 0.
  \end{equation}
Then,  using the
standard convention on binomial coefficients with negative arguments, we see that 
\eqref{allorasteiner1} still holds; furthermore, we have $H^i(\PP^2, \, \mF(-1))=0$ for
all $i$. Note that $\mF$ fits into \eqref{costeiner} if and
only if $\mF^\vee(-1)$ is of type $\mF_b$.
One may extend the notation $\mF_b$ to all $b$ in $\ZZ$ as a bundle fitting into the long exact sequence
\begin{equation*}
0 \to \mO_{\PP^2}(-1)^{\max(b-4,0)} \to \mO_{\PP^2}^{\max(b-2,0)}
\to \mF_b \to \mO_{\PP^2}(-1)^{\max(4-b,0)} \to
\mO_{\PP^2}^{\max(2-b,0)} \to 0,
\end{equation*} 
where the value $b=3$ corresponds to $\mF_3=\mO_{\PP^2}(-1)
\oplus \mO_{\PP^2}$. 

\begin{proposition} \label{Fsteiner}
  Fix an integer $b \in \ZZ$ and let $\mF$ be a vector bundle of rank
  $2$ on $\PP^2$ satisfying \eqref{allorasteiner1}. Then the following holds$:$
  \begin{enumerate}[itemsep=2pt, label=\bf{\roman*)}]
  \item \label{o:1}   for $b\ge 4$,  the bundle $\mF$ is of type $\mF_b$ if and only if
    $H^0(\PP^2,\mF(-1))=0$. If this happens, then $\mF$ is stable for $b\ge 5;$
  \item \label{o:2} for $b\le 2$, the bundle $\mF^\vee(-1)$ is of type $\mF_b$
    if and only if $H^2(\PP^2,\mF(-1))=0$. If this happens, then $\mF$ is stable for $b \le 1;$
  \item \label{o:3} for $b=3$,  we have $\mF\simeq \mO_{\PP^2}(-1) \oplus
    \mO_{\PP^2}$ if and only if $H^0(\PP^2, \, \mF(-1))=0$
    or, equivalently, $H^2(\PP^2, \, \mF(-1))=0$.
  \end{enumerate}
\end{proposition}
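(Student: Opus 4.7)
In each of the three parts the forward implication, from the given shape of $\mF$ to the cohomology vanishing, is immediate by twisting the defining resolution by $\mO(-1)$ and taking cohomology, so I concentrate on the converses. My plan is to treat (i) by a Beilinson spectral sequence argument, to derive (ii) from (i) by dualising via Serre duality, and to settle (iii) through the Serre correspondence applied to a section of $\mF$.

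For (i), assume $b \ge 4$ and $H^0(\PP^2, \mF(-1)) = 0$. The first step is to collect all the relevant cohomology. Since $\mF$ is locally free, multiplication by a non-zero linear form embeds $\mF(-k-1)$ into $\mF(-k)$, so $H^0(\mF(-k)) = 0$ for every $k \ge 1$. Using $\mF^\vee \simeq \mF(4-b)$, Serre duality then yields $H^2(\mF(-j)) \simeq H^0(\mF(j+1-b))^\vee = 0$ for $j = 1, 2$. The Chern class assumption gives $\chi(\mF(-1)) = 0$ and $\chi(\mF(-2)) = 4-b$ by Riemann--Roch, so $H^i(\mF(-1)) = 0$ for every $i$ and $h^1(\mF(-2)) = b-4$. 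I now plug these data into the Beilinson spectral sequence for $\mF(-1)$: tensoring the Euler sequence with $\mF$ and using $H^{\ast}(\mF(-1)) = 0$ identifies $H^q(\mF \otimes \Omega^1)$ with $H^{q-1}(\mF)$, so the $E_1$-page has non-zero entries only at $(p,q) = (-2,1), (-1,1), (-1,2)$, respectively equal to $\mO(-2)^{b-4}$, $H^0(\mF)\otimes \mO(-1)$ and $H^1(\mF)\otimes \mO(-1)$. Convergence to $\mF(-1)$ in total degree zero forces the off-diagonal entries to vanish at infinity: this gives $H^1(\mF) = 0$ and makes the surviving differential $\mO(-2)^{b-4} \to H^0(\mF)\otimes \mO(-1)$ injective with cokernel $\mF(-1)$. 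Combined with the vanishing $H^2(\mF) = H^0(\mF(1-b))^\vee = 0$, Riemann--Roch then gives $h^0(\mF) = b-2$, and twisting the resulting resolution by $\mO(1)$ yields the Steiner presentation \eqref{eq:FF}. For stability, $H^0(\mF(-k)) = 0$ for every $k \ge 1$ rules out any line subbundle $\mO(a) \hookrightarrow \mF$ with $a \ge 1$; since $b \ge 5$ gives $\lceil (b-4)/2 \rceil \ge 1$, this beats the slope $\mu(\mF) = (b-4)/2$. The main obstacle is the Beilinson bookkeeping above---everything else is formal.

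Part (ii) reduces to (i) applied to $\mG := \mF^\vee(-1) \simeq \mF(3-b)$, whose Chern classes match a Steiner bundle indexed by $6-b \ge 4$, so the hypothesis $b \le 2$ translates into the range of (i) for $\mG$; Serre duality gives $H^0(\mG(-1)) \simeq H^2(\mF(-1))^\vee$, so the cohomology hypotheses match. Stability is preserved under $\mathcal{H} \mapsto \mathcal{H}^\vee(-1)$, so the stability statement of (i) for $6-b \ge 5$ transports to $\mF$ for $b \le 1$. For (iii), the identity $\mF^\vee \simeq \mF(1)$ (since $c_1(\mF) = -1$) together with Serre duality gives $H^2(\mF(-1)) \simeq H^0(\mF(-1))^\vee$, so the two vanishings are equivalent. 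Assuming $H^0(\mF(-1)) = 0$, Riemann--Roch yields $\chi(\mF) = 1$, and Serre duality combined with the injection argument gives $H^2(\mF) = 0$, so $h^0(\mF) \ge 1$. Any non-zero section cannot vanish along a divisor---else $\mF$ would contain $\mO(d)$ with $d \ge 1$, contradicting the hypothesis---so its zero scheme is $0$-dimensional of length $c_2(\mF) = 0$, hence empty. The resulting short exact sequence $0 \to \mO \to \mF \to \mO(-1) \to 0$ represents an element of $\Ext^1(\mO(-1), \mO) = H^1(\mO(1)) = 0$ and therefore splits, giving $\mF \simeq \mO(-1) \oplus \mO$.
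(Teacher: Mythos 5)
Your proof is correct, and for parts (i) and (ii) it is essentially the paper's strategy: collect the vanishings forced by \eqref{allorasteiner1}, $H^0(\PP^2,\mF(-1))=0$, Serre duality and Riemann--Roch, feed them into Beilinson's theorem, and deduce (ii) from (i) by passing to $\mF^\vee(-1)$ with $b'=6-b$. There are, however, two genuine variations worth noting. First, you run the Beilinson spectral sequence on the twist $\mF(-1)$ (in the version with entries $H^q(\mF(-1)\otimes\Omega^{-p}(-p))\otimes\mO_{\PP^2}(p)$), so the only inputs are $H^\ast(\PP^2,\mF(-1))=0$ and $h^1(\PP^2,\mF(-2))=b-4$; convergence then forces $H^1(\PP^2,\mF)=0$ and delivers the presentation \eqref{eq:FF} in one stroke. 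The paper instead applies Beilinson to $\mF$ itself, and therefore must first prove $h^1(\PP^2,\mF)=0$ by restricting to a line and inducting on twists; your choice of twist makes that step unnecessary, which is a small but real simplification. Likewise, for stability you rederive Hoppe's criterion (cited in the paper as \cite[Lemma 2.6]{Hop84}) directly: $H^0(\PP^2,\mF(-k))=0$ for $k\ge 1$ excludes line subsheaves of positive degree, and since on a smooth surface any rank-one subsheaf saturates to a line bundle of at least the same degree, the positivity of $\mu(\mF)=(b-4)/2$ for $b\ge5$ finishes the argument --- it would be worth stating that saturation step explicitly, but it is standard. Second, in part (iii) the paper concludes by quoting \cite[Lemma 3.3]{FV12}, whereas you give a short self-contained proof: $\chi(\mF)=1$ together with $h^2(\PP^2,\mF)=0$ yields a section, which cannot vanish along a divisor and hence is nowhere vanishing because $c_2(\mF)=0$, and the resulting extension of $\mO_{\PP^2}(-1)$ by $\mO_{\PP^2}$ splits since $H^1(\PP^2,\mO_{\PP^2}(1))=0$. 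This makes your treatment of (iii) independent of the external reference at no extra cost, while the paper's citation keeps its proof shorter.
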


\begin{proof}
  In each case, only one direction needs to be proved. \medskip
  
  \ref{o:1} Let us assume $b\ge 4$ and $H^0(\PP^2,\mF(-1))=0$ and let us show that
  $\mF$ is of the form $\mF_b$.
  First, since $\mF$ is locally free of rank $2$ and $c_1(\mF)=b-4$,
  there is the canonical isomorphism
  \begin{equation*}
  \mF^\vee \simeq \mF(4-b).
  \end{equation*}
  Then, for any integer $t \le 2$, by Serre duality we have
  \begin{equation}
    \label{annullot}
  h^2(\PP^2, \, \mF(-t))=  h^0(\PP^2, \, \mF^\vee(t-3))=h^0(\PP^2, \,\mF(t-b+1))=0,    
  \end{equation}
  because by our assumptions $t-b+1 \le -1$ and already $h^0(\PP^2, \, \mF(-1))=0$.

  Now, using \eqref{allorasteiner1} and the Riemann-Roch theorem we deduce
  $\chi(\PP^2, \, \mF(-1))=0$, so $h^1(\PP^2, \, \mF(-1)) = 0$ because
  we know that $h^0(\PP^2, \, \mF(-1)) = h^2(\PP^2, \, \mF(-1)) = 0$.
  Again by Riemann-Roch, using \eqref{annullot} with $t=2$ we obtain
  $h^1(\PP^2, \,\mF(-2))=b-4$.

Let us look at $h^i(\PP^2, \, \mF)$. First, by using 
  \eqref{annullot} with $t=0$, we see that this vanishes for $i=2$. 
 Now take a line $L$ in $\PP^2$, tensor with $\mF(t)$ the short exact sequence
  \begin{equation} \label{usolaretta}
  0 \to \mO_{\PP^2}(-1) \to \mO_{\PP^2} \to \mO_{L} \to 0
  \end{equation}
  and pass to cohomology. Since we proved that $h^1(\PP^2, \, \mF(-1))=h^2(\PP^2, \, \mF(-2))= 0$, we deduce   $h^1(L, \, \mF(-1)|_L)=0$. Then, considering the short exact sequence 
  \begin{equation*}
  0 \to \mF(t-1)|_L \to \mF(t)|_L \to \mO_{x} \oplus \mO_{x} \to 0
  \end{equation*}
  and using induction on $t$, we obtain
  $h^1(L, \, \mF(t)|_L)=0$ for any $t \ge 0$. Therefore we get
  $h^1(\PP^2, \, \mF)=0$, that in turn yields, again by Riemann-Roch, $h^0(\PP^2, \, \mF)=b-2$.

\medskip
  We can now use Beilinson's theorem, see for instance \cite[Chapter
  2, \S 3.1.3]{OkSchSp88}. The Beilinson table of $\mF$, displaying the values of $h^j(\PP^2,\mF(-i))$, is 
  \begin{table} [H] 
    \begin{center}
      \begin{tabular}{c|c|c|c}
        $ $ & $\mF(-2)$ & $\mF(-1)$ & $\mF$ \\
        \hline $h^2$ & $ 0 $ & $ 0 $ & $ 0 $ \\
        \hline $h^1$ & $b-4$ & $ 0 $ & $ 0 $  \\
        \hline $h^0$ & $ 0 $ & $ 0 $ & $ b-2$ \\
      \end{tabular} \caption{The Beilinson table of $\mF$}
    \end{center} 
  \end{table}
This gives in turn the resolution of $\mF$
\begin{equation} \label{eq:reso-F}
0 \to H^1(\PP^2, \, \mF(-2)) \otimes \mO_{\PP^2}(-1) \to H^0(\PP^2, \, \mF) \otimes \mO_{\PP^2} \to \mF \to 0,
\end{equation}
which has the desired form. In fact, \eqref{eq:reso-F} becomes
\eqref{basic} if we set
\begin{equation} \label{eq:W-U}
W:= H^1(\PP^2, \, \mF(-2))^\vee, \quad U := H^0(\PP^2, \, \mF), \quad \PP^2 = \PP(V).
\end{equation}
The stability of $\mF$ for $b\ge 5$ follows from Hoppe's criterion, see \cite[Lemma 2.6]{Hop84}.
\medskip \medskip

\ref{o:2} Assume now $b \le 2$. Set
$\mF'=\mF^\vee(-1)$ and $b'=6-b$, so that $b'\ge 4$. The Chern classes of$\mF'$ are 
\begin{equation*}
c_1(\mF')=-c_1(\mF)-2=b'-4, \qquad
c_2(\mF')=c_2(\mF)+c_1(\mF)+1={b'-3\choose 2}.
\end{equation*}
Using the assumption $H^2(\PP^2,\mF(-1))=0$ and Serre duality, we get
\begin{equation*}
H^0(\PP^2,\mF'(-1))=H^0(\PP^2,\mF^\vee(-2)) \simeq H^2(\PP^2,\mF(-1))^\vee=0,
\end{equation*}
so by part \ref{o:1} it follows that $\mF'$ is a Steiner bundle of the form $\mF_{b'}$.
\medskip \medskip

\ref{o:3} Finally, assume $b=3$. 
From $H^0(\PP^2, \, \mF(-1))=0$ we deduce
$H^2(\PP^2, \, \mF(-1))=0$ and conversely, because \eqref{annullot} still holds when $(t, \, b)=(1, \,3)$.
We can now conclude by applying \cite[Lemma 3.3]{FV12} to $\mF$.


\end{proof}

\begin{proposition} \label{propsteiner}
  Fix integers $b \ge 5$ and $t\ge 0$, and let $Z \subset \PP^2$ be a $0$-dimensional, local complete intersection subscheme
   of length $l$. Then the following holds$:$
  \begin{enumerate}[itemsep=2pt, label=\bf{\roman*)}]
  \item \label{CB1}  a locally
    free sheaf $\mF$ fitting into
    \begin{equation} \label{eq:Z-fit}
    0 \to \mO_{\PP^2} \xrightarrow{s} \mF(t) \to \mI_{Z/\PP^2}(2t+b-4) \to 0
    \end{equation}
    exists if and only if $Z$ satisfies the Cayley-Bacharach property with
    respect to $\mO_{\PP^2}(2t+b-7)$, i.e. for any subscheme $Z' \subset Z$ of
    length $l-1$ we have
    \begin{equation*}
    h^0(\PP^2, \, \mI_{Z/\PP^2}(2t+b-7))=  h^0(\PP^2, \, \mI_{Z'/\PP^2}(2t+b-7));
    \end{equation*}
  \item \label{CB2} a locally free sheaf $\mF$ as in \ref{CB1} is a Steiner bundle of the form $\mF_b$ if and only if
    \begin{equation}
      \label{Zsteiner}
    l = {{b-3}\choose 2}+t(t+b-4), \qquad H^0(\PP^2, \, \mI_{Z/\PP^2}(t+b-5))=0;      
    \end{equation}
  \item \label{CB3} if \ref{CB1} and \ref{CB2} are satisfied and in addition 
    $h^1(\PP^2,\mI_{Z/\PP^2}(t+b-7))=1$, then the extension \eqref{eq:Z-fit}
  and the proportionality class of the global section $s$ of $\mF(t)$ vanishing at $Z$ are uniquely determined by $Z$.
  \end{enumerate}
\end{proposition}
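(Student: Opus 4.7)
The plan is to address the three parts in order, using the Serre construction, Proposition~\ref{Fsteiner}, and Serre duality.

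For part \ref{CB1} I would invoke the classical Hartshorne--Serre correspondence. Recall that, for a $0$-dimensional local complete intersection subscheme $Z$ on a smooth projective surface $X$ and a line bundle $L$ on $X$, a rank-$2$ locally free extension
\begin{equation*}
0 \to \mO_X \to \mE \to \mI_Z \otimes L \to 0
\end{equation*}
exists if and only if $Z$ satisfies the Cayley--Bacharach condition with respect to $\omega_X \otimes L$; see e.g.\ Arrondo, \emph{A home-made Hartshorne--Serre correspondence}. Specializing to $X = \PP^2$, $L = \mO_{\PP^2}(2t+b-4)$, and $\mE = \mF(t)$, the twist $\omega_{\PP^2} \otimes L = \mO_{\PP^2}(2t+b-7)$ yields exactly the condition stated in \ref{CB1}.

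For part \ref{CB2} I would combine Proposition \ref{Fsteiner} with a direct computation of the invariants of $\mF$ from the extension \eqref{eq:Z-fit}. The determinant gives $c_1(\mF) = b-4$ automatically, while $c_2(\mF(t)) = \mathrm{length}(Z) = l$ together with the twist formula $c_2(\mF(t)) = c_2(\mF) + t\, c_1(\mF) + t^2$ yields $c_2(\mF) = l - t(t+b-4)$, which matches $\binom{b-3}{2}$ exactly under the first condition in \eqref{Zsteiner}. For the cohomological vanishing $H^0(\PP^2, \mF(-1))=0$, I would twist \eqref{eq:Z-fit} by $\mO_{\PP^2}(-t-1)$ to obtain
\begin{equation*}
0 \to \mO_{\PP^2}(-t-1) \to \mF(-1) \to \mI_{Z/\PP^2}(t+b-5) \to 0,
\end{equation*}
and exploit $H^0(\PP^2, \mO(-t-1)) = H^1(\PP^2, \mO(-t-1)) = 0$ for $t \ge 0$ to conclude that $H^0(\mF(-1)) \simeq H^0(\mI_{Z/\PP^2}(t+b-5))$. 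This makes the second condition in \eqref{Zsteiner} equivalent to $H^0(\mF(-1))=0$, and the result follows from Proposition \ref{Fsteiner}.

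For part \ref{CB3}, isomorphism classes of locally free extensions of $\mI_{Z/\PP^2}(2t+b-4)$ by $\mO_{\PP^2}$ are parametrized up to rescaling by the projective space $\PP(\Ext^1(\mI_{Z/\PP^2}(2t+b-4), \mO_{\PP^2}))$, and the section $s$ (being the image of $1 \in H^0(\mO_{\PP^2})$) is automatically recovered up to scalar from the extension class. Thus the uniqueness claim reduces to showing that this $\Ext^1$ group is one-dimensional. I would compute its dimension via Serre duality on $\PP^2$, which gives
\begin{equation*}
\Ext^1(\mI_{Z/\PP^2}(2t+b-4), \mO_{\PP^2}) \simeq H^1(\PP^2, \mI_{Z/\PP^2}(2t+b-7))^\vee,
\end{equation*}
and then relate this to the stated hypothesis on $H^1(\mI_{Z/\PP^2}(t+b-7))$ through the auxiliary sequence obtained by twisting \eqref{eq:Z-fit} by $\mO_{\PP^2}(-t-3)$, namely
\begin{equation*}
0 \to \mO_{\PP^2}(-t-3) \to \mF(-3) \to \mI_{Z/\PP^2}(t+b-7) \to 0,
\end{equation*}
using the cohomology of $\mF(-3)$ read off from the Steiner resolution of $\mF$.

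The main technical obstacle I foresee is in part \ref{CB3}: bridging the stated hypothesis $h^1(\mI_{Z/\PP^2}(t+b-7))=1$ with the one-dimensionality of $\Ext^1(\mI_{Z/\PP^2}(2t+b-4), \mO_{\PP^2})$, which by Serre duality is governed a priori by $h^1(\mI_{Z/\PP^2}(2t+b-7))$. Matching these two cohomological quantities requires careful bookkeeping of the cohomology of $\mF(-3)$ and of the Steiner vanishings established in \ref{CB2}, and this is where the bulk of the work in the proof of part \ref{CB3} will lie.
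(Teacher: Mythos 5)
Parts \ref{CB1} and \ref{CB2} of your proposal are correct and take the same route as the paper: \ref{CB1} is exactly the Hartshorne--Serre/Cayley--Bacharach correspondence (the paper simply quotes it from Huybrechts--Lehn), and \ref{CB2} is the same Chern-class bookkeeping combined with Proposition \ref{Fsteiner}, the identification $H^0(\PP^2,\mF(-1))\simeq H^0(\PP^2,\mI_{Z/\PP^2}(t+b-5))$ being obtained, as you say, by twisting \eqref{eq:Z-fit} by $\mO_{\PP^2}(-t-1)$ and using $h^0(\PP^2,\mO_{\PP^2}(-t-1))=h^1(\PP^2,\mO_{\PP^2}(-t-1))=0$.

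The genuine issue is the one you flag yourself in \ref{CB3}: you reduce everything to $\dim\Ext^1(\mI_{Z/\PP^2}(2t+b-4),\mO_{\PP^2})=h^1(\PP^2,\mI_{Z/\PP^2}(2t+b-7))=1$ but do not prove it, so as written the proof of \ref{CB3} is incomplete. For comparison, the paper's proof of \ref{CB3} consists of nothing more than the Serre-duality chain you already wrote, with the hypothesis read at the twist $2t+b-7$ (the two twists coincide when $t=0$, and in the paper's application in Proposition \ref{prop.TypeIV} the condition actually verified is $h^1(\mI_{Z}(4))=1$, i.e. the twist $2t+b-7$). If you want to close the gap starting from the hypothesis literally as stated, note first that your auxiliary sequence is aimed at the wrong side: twisting \eqref{eq:Z-fit} by $\mO_{\PP^2}(-t-3)$ controls $\mI_{Z/\PP^2}(t+b-7)$, i.e. the hypothesis, while the $\Ext$ group is governed by the twist by $\mO_{\PP^2}(-3)$, namely $0\to\mO_{\PP^2}(-3)\to\mF(t-3)\to\mI_{Z/\PP^2}(2t+b-7)\to 0$. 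The quickest bridge is two lines and needs no bookkeeping with $\mF(-3)$: (a) since \ref{CB1} holds with $\mF$ locally free and $Z\neq\emptyset$ (by \eqref{Zsteiner} one has $l\geq\binom{b-3}{2}\geq 1$ for $b\geq 5$), the extension \eqref{eq:Z-fit} is non-split, so $h^1(\PP^2,\mI_{Z/\PP^2}(2t+b-7))=\dim\Ext^1(\mI_{Z/\PP^2}(2t+b-4),\mO_{\PP^2})\geq 1$; (b) $h^1(\PP^2,\mI_{Z/\PP^2}(k))$ is non-increasing in $k\geq -2$ (multiply by the equation of a general line $L$ avoiding $Z$ and use $h^1(L,\mO_L(k+1))=0$), so $h^1(\mI_{Z/\PP^2}(2t+b-7))\leq h^1(\mI_{Z/\PP^2}(t+b-7))=1$. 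Hence the $\Ext$ space is a line, all locally free (hence non-split) extensions are proportional, and the proportionality class of $s$ is determined by $Z$, which is exactly the paper's conclusion.
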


\begin{proof}
The statement \ref{CB1} follows from \cite[Part II, Theorem
5.1.1]{HuyLehn10}.
\medskip

For \ref{CB2}, take $\mF$ to be a Steiner bundle of
the form $\mF_b$. Then
$c_1(\mF(t))=2t+b-4$ and 
\begin{equation*}
l=c_2(\mF(t))=c_2(\mF)+c_1(\mF)+t^2={{b-3}\choose 2}+t(t+b-4).
\end{equation*}
Also, we have $H^0(\PP^2,\, \mF(-1))=0$, which yields
$H^0(\PP^2,\, \mI_{Z/\PP^2}(t+b-5))=0$.
Conversely, if $Z$ satisfies \eqref{Zsteiner}, then Proposition
\ref{Fsteiner} implies that $\mF$ is of the form $\mF_b$.
\medskip

For \ref{CB3}, by Serre duality we have
\begin{equation} \label{eq:ext-Z}
\Ext^1(\mI_{Z/\PP^2}(2t+b-4), \, \mO_{\PP^2})^\vee \simeq \Ext^1(\mO_{\PP^2},\, \mI_{Z/\PP^2}(2t+b-7)) \simeq H^1(\PP^2, \, \mI_{Z/\PP^2}(2t+b-7)) \simeq \mathbb{C}.
\end{equation}
Since we are assuming that $\mF$ is locally free, the
extension \eqref{eq:Z-fit} has to be non-trivial, and by \eqref{eq:ext-Z} all such non-trivial extensions are equivalent up to a
multiplicative scalar. 
\end{proof}

\section{General triple planes with $p_g=q=0$} \label{sec.general}

\subsection{General triple planes} 

Given a triple plane $f \colon X \to \PP^2$, we denote by $H$ the pullback $H:=f^*L$, where $L \subset \PP^2$ is a
line. The divisor $H$ is ample, as $L$ is ample and $f$ is finite. 

Recall that the Tschirnhausen bundle $\mE$ of $f$ 
is a rank $2$ vector bundle on $\PP^2$ such that $f_* \mO_X \simeq 
\mO_{\PP^2} \oplus \mE$. Proposition \ref{bir-inv-tr} allows us to relate
the invariants of $X$ and $\mE$ as follows.

\begin{proposition} \label{bir-inv}
Let $f \colon X \to \PP^2$ be a triple plane with
Tschirnhausen bundle $\mE$. Then we have:
\begin{align*}
 p_g(X) & =h^0(\PP^2, \, \mE^{\vee}(-3)), \\
 q(X) & = h^1(\PP^2, \, \mE^\vee(-3)), \\
 P_2(X) & = h^0(X, \, 2 K_X)=h^0(\PP^2, \, S^2 \mE^{\vee}(-6)).
\end{align*}
\end{proposition}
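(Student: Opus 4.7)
The plan is to reduce each computation to cohomology on $\PP^2$ by pushing forward via $f$, then apply Proposition \ref{bir-inv-tr} together with standard vanishings on $\PP^2$.

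First, since $f \colon X \to \PP^2$ is finite and flat, we have $R^i f_* \mF = 0$ for all $i \ge 1$ and any coherent sheaf $\mF$ on $X$, so the Leray spectral sequence collapses and gives $H^i(X, \mF) = H^i(\PP^2, f_* \mF)$ for every $i \ge 0$. I will use this together with $\omega_{\PP^2} \simeq \mO_{\PP^2}(-3)$ throughout.

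For $p_g(X)$, I use part \ref{p2} of Proposition \ref{bir-inv-tr}:
\begin{equation*}
p_g(X) = h^0(X, \, \omega_X) = h^0(\PP^2, \, f_*\omega_X) = h^0(\PP^2, \, \mO_{\PP^2}(-3)) + h^0(\PP^2, \, \mE^{\vee}(-3)),
\end{equation*}
and the first summand vanishes, giving the desired formula. For $P_2(X)$, I analogously apply part \ref{p3}:
\begin{equation*}
P_2(X) = h^0(X, \, \omega_X^2) = h^0(\PP^2, \, f_*\omega_X^2) = h^0(\PP^2, \, S^2\mE^{\vee}(-6)),
\end{equation*}
since $f_*\omega_X^2 \simeq S^2\mE^{\vee} \otimes \omega_{\PP^2}^2$.

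For $q(X)$, I would pass again through $f_*\omega_X$, using Serre duality on the smooth surface $X$ to write $q(X) = h^1(X, \, \mO_X) = h^1(X, \, \omega_X)$. Then by part \ref{p2} again,
\begin{equation*}
q(X) = h^1(\PP^2, \, f_*\omega_X) = h^1(\PP^2, \, \mO_{\PP^2}(-3)) + h^1(\PP^2, \, \mE^{\vee}(-3)) = h^1(\PP^2, \, \mE^{\vee}(-3)),
\end{equation*}
using $h^1(\PP^2, \, \mO_{\PP^2}(-3))=0$. (Alternatively, one could start from $f_*\mO_X$ to compute $q(X) = h^1(\PP^2, \mE)$ and then apply Serre duality on $\PP^2$ to identify this with $h^1(\PP^2, \mE^\vee(-3))$; both routes work.)

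There is no real obstacle in this proof: the only thing to keep track of is the interplay between Serre duality on $X$ and the pushforward formulas of Proposition \ref{bir-inv-tr}, and the vanishing of $H^i(\PP^2, \mO_{\PP^2}(k))$ for $k = -3$ and $k = -6$ in the relevant degrees. Everything else is formal.
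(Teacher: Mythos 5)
Your proof is correct and is precisely the argument the paper intends (the paper states Proposition \ref{bir-inv} without proof, as an immediate consequence of Proposition \ref{bir-inv-tr}): push forward along the finite map $f$, split off the $\omega_{\PP^2}$-summand, and use the vanishing of $h^0$ and $h^1$ of $\mO_{\PP^2}(-3)$. The Serre-duality step for $q(X)$ is fine (and in fact unnecessary, since the paper defines $q(X)=h^1(X,K_X)$ directly).
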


\begin{definition} Let $f \colon X \to \PP^2$ be a triple plane and $B \subset
\PP^2$ its branch locus. We say that $f$ is a
\emph{general triple plane} if the following conditions are
satisfied$:$
\begin{enumerate}[itemsep=2pt, label=\bf{\roman*)}]
\item $f$ is unramified over $\PP^2 \backslash B;$
\item $f^*B=2R+R_0$, where $R$ is
irreducible and non-singular and $R_0$ is reduced$;$
\item $f_{|R} \colon R \to B$ coincides with
the normalization map of $B$.
\end{enumerate}
\end{definition}

A useful criterion to check that a triple plane is a general one is provided by the following 
\begin{proposition} 
Let $f \colon X \to \PP^2$ be a triple plane with $X$ smooth. Then either $f$ is general or $f$ is a Galois cover. In the last case, $f$ is totally ramified over a smooth branch locus.
\end{proposition}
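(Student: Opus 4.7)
My plan is to distinguish cases according to the Galois group $G$ of the Galois closure $\widetilde{f}\colon \widetilde{X}\to\PP^2$ of $f$: since $\deg f = 3$, the group $G$ is a transitive subgroup of $S_3$, so either $G\simeq\ZZ/3\ZZ$, in which case $f$ itself is Galois, or $G\simeq S_3$, in which case I would need to verify that $f$ is general.

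I would first treat the Galois case. The $\ZZ/3\ZZ$-action on $f_*\mO_X$ splits its Tschirnhausen summand $\mE$ into a sum $\mL^{-1}\oplus\mL^{-2}$ for some line bundle $\mL$ on $\PP^2$, and the algebra structure is encoded by a section $s \in H^0(\PP^2,\mL^3)$ whose zero locus coincides with $B$. Locally $X$ is cut out by $t^3 = s$, so at a point where $s$ vanishes the smoothness of $X$ forces $s$ to be a local parameter; hence $B$ is smooth. Since $\ZZ/3\ZZ$ has no proper nontrivial subgroups, the inertia at any branch point is the whole group and $f$ is totally ramified over $B$, with $f^*B = 3R$ for a ramification divisor $R$ mapping isomorphically onto $B$. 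This yields the second alternative of the statement.

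I would then treat the non-Galois case. Here (i) is automatic, and for (ii) and (iii) I would appeal to the local classification of triple covers with smooth source (see \cite{Mi85, CasEk96}): analytically around any point of $\PP^2$ the cover $f$ is of one of three types, namely (a) \'etale, (b) simple ramification of the form $(x,t) \mapsto (x,t^2)$, or (c) non-Galois total ramification, admitting, over suitable local parameters $(p,q)$ on $\PP^2$, the normal form $X=\{t^3 - 3pt + 2q = 0\}$, with $B=\{p^3 - q^2 = 0\}$ an ordinary cusp. In type (a) there is no contribution to $f^*B$; in type (b) one sees at once that $f^*B = 2R$ locally with $R=\{t=0\}$ smooth and $f|_R$ an isomorphism onto $B$; in type (c), substituting $q=(3pt-t^3)/2$ into $p^3-q^2$ yields, up to the scalar $-1/4$, the factorization $(t^2-p)^2(t^2-4p)$. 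Thus locally $f^*B = 2R + R_0$ with $R=\{t^2=p\}$ smooth, $R_0=\{t^2=4p\}$ reduced and sharing no component with $R$, and the restriction $f|_R\colon t\mapsto (t^2,t^3)$ is precisely the normalization map of the cusp.

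Piecing the local pictures together, the divisor $R \subset X$ is smooth and $f^*B = 2R + R_0$ with $R_0$ reduced, while $f|_R\colon R\to B$ is a finite birational morphism from a smooth curve, hence factors as the normalization of $B$. When $B$ is irreducible, so is $R$, and conditions (ii)--(iii) hold as stated. The main obstacle in this plan is the input from the local classification of triple covers: one must know that when $X$ is smooth and $G = S_3$ the only admissible local models at a branch point are of types (b) and (c), so that $B$ acquires at worst ordinary cusps along the total ramification locus; granting this, the remaining argument reduces to the explicit factorizations displayed above.
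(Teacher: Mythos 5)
The paper gives no argument for this proposition at all: its ``proof'' is the citation of Theorems 2.1 and 3.2 of [Tan02], so your plan amounts to reproving Tan's theorem from scratch. Your Galois half is essentially fine. One small inaccuracy: asserting that $\mE$ splits as $\mL^{-1}\oplus\mL^{-2}$ with $X=\{t^3=s\}$ tacitly assumes the cover is \emph{simple} cyclic, whereas a $\ZZ/3\ZZ$-cover is in general given by building data with two branch divisors $D_1,D_2$; this is harmless here, since smoothness of $X$ forces $D_1,D_2$ to be smooth and disjoint, which is impossible for two nonempty curves in $\PP^2$, and in any case your purely local argument (inertia is all of $\ZZ/3\ZZ$, and $t^3=x$ with $x$ a parameter is the only smooth local model) already yields ``totally ramified over a smooth branch locus''.

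The genuine gap is exactly at the step you propose to grant in the non-Galois case: smoothness of $X$ does \emph{not} restrict the local models at a total ramification point to your type (c). The germ $(u,t)\mapsto(u,\,t^3+u^2t)$ is a finite flat triple cover of smooth surface germs, totally ramified over the origin with local monodromy $S_3$, whose discriminant $4u^6+27v^2=0$ is an $A_5$-point (not an ordinary cusp) and whose ramification curve $\{3t^2+u^2=0\}$ is nodal; so the trichotomy (a)/(b)/(c) is false as a local statement. Nor can it be rescued by global smoothness alone: the smooth cubic surface $S=\{z^3-3x_0^2z+x_1^3+x_2^3=0\}\subset\PP^3$ projected from $(0:0:0:1)\notin S$ is a triple plane with smooth total space which is not Galois (its discriminant $27(2x_0^3-x_1^3-x_2^3)(2x_0^3+x_1^3+x_2^3)$ is squarefree, hence not a square in $\CC(\PP^2)$) and not general, since its ramification divisor is the union of the two smooth cubics $\{z=\pm x_0,\; x_1^3+x_2^3=\pm 2x_0^3\}$, reducible and singular at the three points where they meet, so condition (ii) fails. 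Thus the dichotomy cannot be extracted from the explicit normal-form computations you display (correct though they are in cases (b) and (c)); the entire content of the statement lies in the finer hypotheses and analysis of Tan's cited Theorems 2.1 and 3.2, which your proof would have to reproduce, and whose precise formulation should be checked against examples such as the one above rather than assumed.
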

\begin{proof}
See \cite[Theorems 2.1 and 3.2]{Tan02}.
\end{proof}

Hence Theorem \ref{thm triple:1} shows that, if $S^3 \mE^{\vee} \otimes \wedge^2 \mE$ is globally generated, the cover associated with a general section $\eta \in H^0(\PP^2, \, S^3 \mE^{\vee} \otimes \wedge^2 \mE)$ is a general triple plane as soon as it is not totally ramified.

Since the curve $R$ is the ramification divisor of $f$ and  the
ramification is simple, we have 
\begin{equation} \label{eq:Hurwitz-X}
K_X = f^*K_{\PP^2}+R=-3H+R.
\end{equation}
Moreover, by \cite[Proposition 4.7 and Lemma 4.1]{Mi85}, we obtain

\begin{proposition} \label{branch-locus}
Let $f \colon X \to \PP^2$ be a general triple plane with Tschirnhausen bundle $\mE$ and define
\begin{equation*}
b:=-c_1(\mE), \quad h:=c_2(\mE).
\end{equation*}
Then the branch curve $B$ has degree $2b$ and contains $3h$
ordinary cusps and no further singularities. Moreover the cusps are
exactly the points where $f$ is totally ramified.
\end{proposition}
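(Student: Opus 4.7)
The plan is to split the three assertions (degree of $B$, description of the singularities, and count of the cusps) into a global Chern-class computation, a local analysis of the fibers of $f$, and a Porteous-formula degeneracy count.

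First I would handle the degree of $B$. For any finite flat cover $f\colon X \to Y$ the branch divisor is cut out by the discriminant of the trace form of the $\mO_{Y}$-algebra structure on $f_{*}\mO_{X}$, and this discriminant is a section of the line bundle $(\det f_{*}\mO_{X})^{\otimes -2}$. By Proposition \ref{bir-inv-tr}\ref{p1} we have $f_{*}\mO_{X} \simeq \mO_{\PP^{2}} \oplus \mE$, hence $\det f_{*}\mO_{X} \simeq \det \mE \simeq \mO_{\PP^{2}}(-b)$, so $B \in |\mO_{\PP^{2}}(2b)|$ and $\deg B = 2b$.

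Next, I would analyze the singularities of $B$ by looking fibrewise at the cubic form $\eta$. Theorem \ref{thm triple:2} identifies $X$ with the zero-locus in $\PP(\mE^{\vee})$ of a section $\eta$ of $\mO_{\PP(\mE^\vee)}(3)\otimes \pi^{*}(\wedge^{2}\mE)$, so the scheme-theoretic fiber of $f$ over $p \in \PP^{2}$ equals the zero-scheme of the binary cubic $\eta(p) \in S^{3}\mE_{p}^{\vee}\otimes \wedge^{2}\mE_{p}$ on $\PP(\mE_{p}^{\vee})\cong\PP^{1}$. Conditions \textbf{i)}--\textbf{iii)} in the definition of a general triple plane force the only three possibilities for $\eta(p)$ to be: three distinct roots ($p \notin B$), one double plus one simple root ($p \in B$ smooth, $f$ simply ramified), or one triple root ($f$ totally ramified over $p$). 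In the third case I would bring the cubic into depressed form $z^{3}+a(s,t)z+b(s,t)=0$ on suitable local coordinates, so that $B$ has local equation $-4a^{3}-27b^{2}=0$; the smoothness of $R$ and the fact that $R \to B$ is the normalization, encoded in \textbf{ii)}--\textbf{iii)}, translate exactly into the linear parts of $a$ and $b$ being independent, so that $B$ has an ordinary cusp at $p$ and no worse singularity can occur.

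For the count, I would realize the totally ramified locus as the degeneracy locus of the contraction map
\begin{equation*}
\delta \colon \mE \lr S^{2}\mE^{\vee} \otimes \wedge^{2}\mE, \qquad v \mapsto \iota_{v}\eta,
\end{equation*}
because a point $p$ is totally ramified iff $\eta(p)$ admits a triple root $v \in \mE_{p}$, iff $\delta(p)$ has a nontrivial kernel. The expected codimension of $D_{1}(\delta)$ in $\PP^{2}$ is $(2-1)(3-1)=2$, so we expect finitely many cusps. By the Porteous formula
\begin{equation*}
[D_{1}(\delta)] = c_{2}(S^{2}\mE^{\vee} \otimes \wedge^{2}\mE - \mE),
\end{equation*}
and a short computation with Chern roots $\alpha_{1},\alpha_{2}$ of $\mE$ gives the roots $\alpha_{2}-\alpha_{1},\, 0,\, \alpha_{1}-\alpha_{2}$ for $S^{2}\mE^{\vee}\otimes\wedge^{2}\mE$, so that its total Chern class is $1+(4h-b^{2})H^{2}$; expanding $c(S^{2}\mE^{\vee}\otimes\wedge^{2}\mE)/c(\mE)$ modulo $H^{3}$ yields the degree-two term $3h$. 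Since each totally ramified point has a single preimage in $X$ and contributes one cusp to $B$ by Step 2, this gives exactly $3h$ cusps.

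The main obstacle is the local step: one has to verify carefully that at a triple-root point the coefficients $a,b$ of the depressed cubic have independent linear parts, so that $B$ really has an \emph{ordinary} cusp, and dually that the degeneracy of $\delta$ is transverse so that Porteous produces a true count rather than a cycle with multiplicities. Both facts reduce to the genericity assumption \textbf{iii)} together with the irreducibility and smoothness of the ramification divisor $R$; this is essentially the content of Miranda's \cite[Lemma 4.1]{Mi85}, and my plan is to recast his argument in the language above.
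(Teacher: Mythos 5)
Your proposal is correct, but it is worth pointing out that the paper does not actually prove this proposition: it is obtained there as a direct quotation of Miranda's results (\cite[Proposition 4.7 and Lemma 4.1]{Mi85}), so what you have written is essentially a reconstruction of the argument the paper delegates to the literature. The comparison is therefore: the paper buys brevity by citing, while your route gives a self-contained proof whose three steps are all sound — the discriminant of the trace form is indeed a section of $(\det f_*\mO_X)^{\otimes -2}\simeq \mO_{\PP^2}(2b)$, the fibrewise trichotomy for a nonzero binary cubic is exhaustive, and the Porteous computation is right (the Chern roots $\alpha_2-\alpha_1,0,\alpha_1-\alpha_2$ give $c(S^2\mE^\vee\otimes\wedge^2\mE)=1+(4h-b^2)H^2$ and $c_2\bigl(S^2\mE^\vee\otimes\wedge^2\mE-\mE\bigr)=3h$), with $\ker\delta(p)\neq 0$ exactly when $\eta(p)$ is a perfect cube. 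The ``main obstacle'' you flag can in fact be closed directly from the definition rather than by re-reading Miranda: writing the fibre as $z^3+a z+b$ at a total ramification point, smoothness of $X$ there forces $db\neq 0$, and if $da$ were proportional to $db$ then the ramification curve, which is locally $\{a=-3z^2,\ b=2z^3\}$, would have no linear term in its local equation and hence be singular, contradicting condition \textbf{ii)}; thus $da,db$ are independent, $B$ is the ordinary cusp $4a^3+27b^2=0$, and the ideal of $D_1(\delta)$ at that point is $(a,b)$, so the degeneracy scheme is reduced and Porteous counts each cusp once. One small assertion you state without proof — that $B$ is smooth at points of simple ramification — also follows from smoothness of $X$: étale-locally the cover splits there as a double cover plus a single sheet, and the double cover $w^2=\delta(u,v)$ is smooth only if the local equation $\delta$ of $B$ has nonzero differential. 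With these verifications your plan yields exactly the cited statement.
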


Moreover, in view of \cite[Lemma 5.9]{Mi85} and \cite[Corollary 2.2]{CasEk96},
we have the following information on $R$ and $R_0$.

\begin{proposition} \label{RR_0} 
The curves $R$ and $R_0$ are both smooth and isomorphic to the normalization of $B$. Furthermore, they are tangent
at the preimages of the cusps of $B$ and they do not meet elsewhere. Finally, the ramification divisor $R$ is very ample on $X$.
\end{proposition}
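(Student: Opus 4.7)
The plan is to work locally near points of the branch curve $B$, exploiting the embedding $X \hookrightarrow \PP(\mE^\vee)$ provided by Theorem \ref{thm triple:2} so as to reduce everything to a computation with the cubic polynomial determined by the building section $\eta$.

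First I would analyze the generic situation. Over a smooth point $p$ of $B$, the cover $f$ is simply ramified, so $f^{-1}(p)$ consists of two points: one on $R$ (a double point of the scheme $f^{-1}(B)$) and one on $R_0$. Locally around such a point, the cubic defining $X$ in $\PP(\mE^\vee)$ factors as the product of a linear and a squared linear factor; this shows at once that $R$ and $R_0$ are smooth there, that each maps isomorphically onto $B$ in a neighborhood of $p$, and that they are disjoint. Hence the only possible singular points of $R_0$, and the only possible intersection points of $R$ with $R_0$, lie over the cusps of $B$.

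Next I would examine the totally ramified locus. By Proposition \ref{branch-locus}, the cusps of $B$ are precisely the points over which $f$ is totally ramified. Over such a cusp $p$ we have $f^{-1}(p) = \{q\}$, so $2R + R_0$ must pass through $q$ with total multiplicity $3$ along $f^*B$. Choosing analytic coordinates near $q$ and trivializing $\mE$ there, the building section $\eta$ can be written as a Weierstrass cubic $z^3 + a(x,y)z + b(x,y)$, and the branch curve $B$ acquires locally the equation $4a^3 + 27b^2 = 0$. Imposing that this equation has an ordinary cusp at the origin pins down the vanishing orders of $a$ and $b$ and forces $R$ (the locus $3z^2 + a = 0$ restricted to $X$) and $R_0$ (the remaining component of $f^{-1}(B)$) to be smooth analytic branches at $q$; a direct computation with this normal form shows that they share a tangent line at $q$, giving the tangency statement. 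This is essentially the content of \cite[Lemma 5.9]{Mi85}.

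From this local analysis the normalization statement follows formally: since a general fibre of $f|_{R_0} \colon R_0 \to B$ consists of a single point, the map has degree $1$; as $R_0$ is smooth everywhere and $B$ has only ordinary cusps as singularities (Proposition \ref{branch-locus}), $R_0$ is isomorphic to the normalization of $B$, and the same argument applies to $R$. For the very ampleness of $R$ I would invoke \cite[Corollary 2.2]{CasEk96}: under the embedding $i \colon X \hookrightarrow \PP(\mE^\vee)$ of Theorem \ref{thm triple:2}, the divisor $R$ is cut out on $X$ by a relatively linear condition with respect to $\pi$ which, combined with the ampleness of $H = f^*L$, yields very ampleness. The main technical obstacle is the local analysis at the cusps, which is the only place where the tangency of $R$ and $R_0$ really needs to be verified; the remaining assertions are then either formal consequences or direct references to the cited literature.
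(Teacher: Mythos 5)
Your proposal is correct and follows essentially the same route as the paper, which gives no independent argument but simply quotes \cite[Lemma 5.9]{Mi85} and \cite[Corollary 2.2]{CasEk96}; your local Weierstrass computation at the cusps (normal form $z^3+az+b$, discriminant $4a^3+27b^2$, tangency of the two smooth branches) is exactly the content of Miranda's lemma, and you invoke Casnati--Ekedahl for very ampleness just as the paper does. The only slight imprecision is the claim that near a simple branch point the cubic factors as a linear times a squared linear form in a whole neighborhood (it does so only fibrewise over $B$; off $B$ the quadratic factor is not a square), but this does not affect the conclusions you draw there.
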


This allows us to compute the intersection numbers of $R$ and $R_0$ as follows.

\begin{proposition} \label{inters-ram}
We have
\begin{equation} \label{eq:inters-ram}
R^2 = 2b^2 - 3h, \quad RR_0 = 6h, \quad R_0^2= 4b^2 - 12h.
\end{equation}
\end{proposition}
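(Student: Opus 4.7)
The plan is to deduce the three intersection numbers from two basic ingredients: the relation $f^*B = 2R + R_0$, and the identification of $f_{|R}$ with the normalization of $B$ (Proposition \ref{RR_0}), which via Proposition \ref{branch-locus} gives full control of the genus of $R$ as a curve of geometric genus $g(R) = \binom{2b-1}{2} - 3h$.

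\smallskip

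First I would record the elementary projection-formula consequences. Since $f_{|R} \colon R \to B$ is birational, one has $f_* R = B$, hence $H \cdot R = f^*L \cdot R = L \cdot f_* R = L \cdot B = 2b$. Pushing $f^*B = 2R + R_0$ forward and using $f_* f^*B = 3B$ gives $f_* R_0 = B$ as well, so $H \cdot R_0 = 2b$ similarly.

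\smallskip

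Next, I would compute $R^2$ by adjunction on $X$. Combining $K_X = -3H + R$ from \eqref{eq:Hurwitz-X} with $H \cdot R = 2b$ yields
\begin{equation*}
2g(R) - 2 = R(R + K_X) = 2R^2 - 6b.
\end{equation*}
Plugging in $g(R) = (2b-1)(b-1) - 3h$ immediately gives $R^2 = 2b^2 - 3h$, as claimed.

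\smallskip

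Finally, I would extract the remaining two intersection numbers by intersecting the pullback relation $f^*B = 2R + R_0$ with $R$ and with itself. Using $f_* R = B$ and the projection formula,
\begin{equation*}
f^*B \cdot R = B \cdot f_* R = B^2 = 4b^2,
\end{equation*}
so $2R^2 + R R_0 = 4b^2$ yields $R R_0 = 4b^2 - 2(2b^2 - 3h) = 6h$. For the last formula, squaring $f^*B = 2R + R_0$ and using $(f^*B)^2 = 3 B^2 = 12 b^2$ gives
\begin{equation*}
4R^2 + 4 R R_0 + R_0^2 = 12 b^2,
\end{equation*}
and substituting the two values already obtained produces $R_0^2 = 4b^2 - 12h$.

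\smallskip

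There is no real obstacle here: the whole argument reduces to the identification $g(R) = p_a(B) - 3h$, which is precisely the content of Propositions \ref{branch-locus} and \ref{RR_0} (the only singularities of $B$ are $3h$ ordinary cusps, each contributing $\delta = 1$, and $R$ is smooth isomorphic to the normalization of $B$). Everything else is bookkeeping via the projection formula.
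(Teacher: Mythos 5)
Your proof is correct, but it takes a different route from the paper's. The paper obtains $RR_0 = 6h$ directly from the geometric statement in Proposition \ref{RR_0} that $R$ and $R_0$ are \emph{tangent} at the $3h$ preimages of the cusps and disjoint elsewhere (each tangency contributing $2$ to the intersection number), and then extracts $R^2$ from the projection-formula identity $R \cdot f^*B = B^2 = 4b^2$, i.e. $2R^2 + RR_0 = 4b^2$; the computation of $R_0^2$ from $(f^*B)^2 = 3B^2$ is the same in both arguments. You instead compute $R^2$ first, via adjunction on $X$ using $K_X = -3H + R$, $HR = 2b$, and the identification $g(R) = p_a(B) - 3h$ coming from the fact that $R$ normalizes a degree-$2b$ curve whose only singularities are $3h$ ordinary cusps (each with $\delta = 1$), and then recover $RR_0$ from the same identity $2R^2 + RR_0 = 4b^2$. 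The trade-off: your argument never uses the precise local behaviour of $R$ and $R_0$ at the cusps (only the cusp count and the normalization statement), but it does invoke the ramification formula \eqref{eq:Hurwitz-X} and the genus formula, which the paper's proof of this particular proposition avoids; conversely, the paper's proof is shorter but leans on the tangency assertion of Proposition \ref{RR_0}. Both are complete; your preliminary observations $f_*R_0 = B$ and $HR_0 = 2b$ are correct but not actually needed.
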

\begin{proof}
Projection formula yields
\begin{equation*}
 R(2R+R_0)=R(f^*B)=(f_*R)B=B^2=4b^2.
\end{equation*}
By Proposition \ref{RR_0} it follows $RR_0=6h$.
So $2R^2=4b^2-RR_0=4b^2-6h$, which gives the first equality.
From $f^*B=2R+R_0$ we deduce $(2R+R_0)^2=3B^2=12 b^2$, so $R_0^2=12b^2-4R^2-4RR_0=4b^2-12h$.
\end{proof}

\begin{corollary} \label{cor:num-cusps}
We have $3h \geq \frac{2}{3}b^2$.
\end{corollary}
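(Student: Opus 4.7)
The plan is to apply the Hodge Index Theorem on $X$, exploiting the fact that by Proposition \ref{RR_0} the ramification divisor $R$ is very ample, hence ample. Once this is set up, the inequality reduces to a short algebraic manipulation of the three intersection numbers computed in Proposition \ref{inters-ram}.

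Concretely, I would take $R$ as the ample divisor and $R_0$ as the test divisor. The Hodge Index inequality then reads
\begin{equation*}
(R^2)(R_0^2) \;\leq\; (R \cdot R_0)^2,
\end{equation*}
and substituting the values $R^2 = 2b^2-3h$, $R \cdot R_0 = 6h$, $R_0^2 = 4b^2-12h$ from \eqref{eq:inters-ram} gives
\begin{equation*}
(2b^2-3h)(4b^2-12h) \;\leq\; 36h^2,
\end{equation*}
which after expansion collapses to $4b^2(2b^2-9h)\leq 0$. Since the triple cover is non-trivial its branch curve is non-empty, so $b>0$, and dividing by $4b^2$ yields $9h \geq 2b^2$, equivalent to the claim $3h \geq \tfrac{2}{3}b^2$.

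I expect no real obstacle: the only thing to be slightly careful about is the sign/positivity input. Ampleness of $R$ is exactly what is needed to invoke Hodge Index on the surface $X$ (see \cite[Exercise 1.9 p.~368]{H77}, as already referenced in \S\ref{subsec:adjunction theory}), and the hypothesis $b>0$ is immediate because otherwise $f$ would be étale over $\PP^2$ and therefore trivial. No subtlety about equality cases is needed, since a non-strict inequality suffices for the corollary.
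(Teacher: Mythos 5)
Your proof is correct and is essentially the paper's own argument: the paper likewise invokes the Hodge Index Theorem using the very ampleness of $R$ to get $R^2R_0^2 \leq (RR_0)^2$ and then substitutes the intersection numbers from Proposition \ref{inters-ram}. The algebraic simplification to $9h \geq 2b^2$ is exactly right, so nothing further is needed.
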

\begin{proof}
Since the divisor $R$ is very ample, the Hodge Index theorem implies
$R^2R_0^2 \leq (RR_0)^2$ and the claim follows.
\end{proof}

\begin{remark} \label{rmk:Bron}
Proposition \ref{inters-ram} and Corollary \ref{cor:num-cusps} were
already established by Bronowski in \cite{Br42}. Note that the (very) ampleness of $R$ implies $R^2 >0$, that is $3h < 2b^2$. In \cite{Br42}, it is also stated that the stronger inequality $3h \leq b^2$, or
equivalently $R_0^2 \geq 0$, holds. This is actually false, and counterexamples will
be provided by our surfaces of type \ref{VII}, see \S \ref{subsec:VII}. Bronowski's mistake is at page $28$ of his paper, where he assumes that one can find a curve algebraically equivalent to $R_0$ and distinct from
it; of course, when $R_0^2 <0$ such a curve cannot exist.
\end{remark}

\begin{proposition} \label{prop.K+2H}
Let $f \colon X \to \PP^2$ be a general triple plane with $q(X)=0$. If $K_X^2 \neq 8$ then $D:=K_X + 2H$ is very ample.
\end{proposition}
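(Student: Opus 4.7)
I would apply Reider's theorem to the divisor $N := 2H$. Since $H = f^*L$ with $L$ ample on $\PP^2$ and $f$ finite, $H$ is ample, so $N$ is ample with $N^2 = 4H^2 = 12 \geq 10$. If $D = K_X + N$ is not very ample, Reider produces an effective divisor $E \subset X$ with
\begin{equation*}
(N \cdot E,\, E^2) \in \{(0,-1),\,(0,-2),\,(1,-1),\,(1,0),\,(2,0)\}.
\end{equation*}
By the projection formula, $H \cdot C = L \cdot f_*C$ is a positive integer for every effective curve $C \subset X$, so $N \cdot E = 2H \cdot E$ is even and at least $2$. This leaves only the case $H \cdot E = 1$, $E^2 = 0$.

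The core step is then to show that such $E$ forces $X$ to be a Hirzebruch surface, hence $K_X^2 = 8$, contradicting the hypothesis. Since $H \cdot C \geq 1$ for each component $C$ of $E$ and $H \cdot E = 1$, the divisor $E$ is irreducible and reduced, and $f|_E$ is birational onto a line $L_0 \subset \PP^2$. Composing $f|_E$ with the normalization $\nu \colon \widetilde{E} \to E$ yields a birational, hence iso, morphism $\widetilde{E} \to L_0 \simeq \PP^1$ between smooth rational curves; this forces $\nu$ to be iso as well (a singular point of $E$ would produce two points of $\widetilde{E}$ collapsing to one on $L_0$, or a non-surjective map of local rings, contradicting the iso). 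Thus $E \simeq \PP^1$ and $K_X \cdot E = -2$. A Riemann--Roch computation gives $\chi(\mO_X(E)) = 2 + p_g(X)$, and Serre duality combined with the vanishing $H^0(K_X) \to H^0(\mO_E(K_X)) = H^0(\mO_{\PP^1}(-2)) = 0$ yields $h^2(\mO_X(E)) = h^0(K_X - E) = p_g(X)$; hence $h^0(\mO_X(E)) \geq 2$. Since $E^2 = 0$, two distinct members of $|E|$ are disjoint, so $|E|$ is base-point free and defines a morphism $\pi \colon X \to \PP^1$ (the base is $\PP^1$ since $q(X) = 0$).

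To conclude, every fiber of $\pi$ is numerically equivalent to $E$ and hence of $H$-degree $1$; no such fiber can be reducible (two distinct components would each contribute at least $1$ to $H \cdot F$). Thus $\pi$ is a $\PP^1$-bundle, $X \simeq \FF_n$ for some $n \geq 0$, and $K_X^2 = 8$, contradicting the hypothesis; therefore $D$ is very ample. The hardest step I expect is the smoothness of $E$: both the Riemann--Roch vanishing and the irreducibility of the fibers of $\pi$ rely on it, and once $E \simeq \PP^1$ is in place, the rest is a standard pencil argument.
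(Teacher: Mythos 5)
Your proof is correct and, at its core, follows the same strategy as the paper: reduce very ampleness of $K_X+2H$ to excluding an effective divisor $E$ with $HE=1$, $E^2=0$, and then show that such an $E$ would force $X$ to be a Hirzebruch surface, contradicting $K_X^2\neq 8$. The one substantive difference is the source of the dichotomy: the paper simply cites Fujita's Theorem 18.5, which for $D=K_X+2H$ with $(2H)^2=12$ directly yields the exceptional configuration $HZ=1$, $Z^2=0$, whereas you apply Reider's theorem to $N=2H$ (using $N^2=12\geq 10$) and then use ampleness of $H$ together with the parity of $N\cdot E=2H\cdot E$ to eliminate all exceptional pairs except $(2,0)$ — a clean and correct reduction that trades the specialized adjunction-theoretic reference for the more standard Reider criterion. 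Your endgame is the same as the paper's but with the implicit steps spelled out: the paper asserts in one line that $HZ=1$ makes $f|_Z$ an isomorphism onto a line (so $Z\simeq\PP^1$) and that $Z^2=0$ then makes $X$ birationally ruled with $Z$ in the ruling; your normalization argument for smoothness of $E$ and the Riemann--Roch computation $h^0(\mO_X(E))\geq\chi(\mO_X(E))-h^2(\mO_X(E))=2$ (using $q(X)=0$ and $h^0(K_X-E)=p_g$) are exactly the justifications behind those assertions, and the final step (all members of the pencil have $H$-degree $1$, hence are irreducible reduced rational fibers, so $X\simeq\FF_n$ and $K_X^2=8$) matches the paper's conclusion.
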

\begin{proof}
Since $(2H)^2=12$, by \cite[Theorem 18.5]{Fu90} $D$ is very ample,
unless there exists an effective divisor $Z$ such that $HZ$=1 and
$Z^2=0$. By the projection formula we have
\begin{equation*}
1=HZ=(f^*L)Z=L(f_*Z),
\end{equation*}
hence $f_*Z \subset \PP^2$ is a line. On the other hand,
$HZ=1$ implies that the restriction of $f$ to $Z$ is an isomorphism,
 so $Z$ is a smooth and irreducible rational curve. Since $Z^2=0$, the surface $X$
 is birationally ruled and $Z$ belongs to the ruling. Moreover, all the curves in the ruling are irreducible: in fact, if $Z$ were algebraically equivalent to $Z_1+ Z_2$, then we would obtain
\begin{equation*}
1=HZ = HZ_1+HZ_2,
\end{equation*} 
contradicting the ampleness of $H$. Summing up, $X$ is a minimal,
geometrically ruled surface over a smooth curve; since $q(X)=0$, this
curve is isomorphic to $\PP^1$, that is $X$ is isomorphic to $\FF_n$
for some $n$ and, in particular, $K_X^2=8$. 
\end{proof}

When $D=K_X+2H$ is very ample on $X$ we can study the adjunction maps associated with $D$. Using Proposition \ref{prop:contracted}, we obtain

\begin{proposition} \label{prop:phi_n}
Assume $q(X)=0$ and $K_X^2 \neq 8$ and let $\varphi_n \colon X_{n-1} \to X_n$ be the $n$-th adjunction map with respect to the very ample divisor $D=K_X+2H$. Then $\varphi_n$ is an isomorphism when $n$ is even, whereas when $n$ is odd $\varphi_n$ contracts exactly the $(-1)$-curves $E \subset X$ such that $HE=(n+1)/2$.
\end{proposition}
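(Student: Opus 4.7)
The plan is to combine Proposition \ref{prop:contracted} with the specific shape $D = K_X + 2H$ of the polarization in order to extract a parity constraint on the curves contracted by $\varphi_n$, and then to interpret the resulting identity in both directions.

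Let $E \subset X_{n-1}$ be a curve contracted by $\varphi_n$, and set $E^* := \psi^*E$ where $\psi := \varphi_{n-1} \circ \cdots \circ \varphi_1$. Proposition \ref{prop:contracted} yields $(E^*)^2 = K_X E^* = -1$ and $D E^* = n$. Substituting $D = K_X + 2H$ we obtain
\begin{equation*}
2 H E^* \;=\; D E^* - K_X E^* \;=\; n+1,
\end{equation*}
hence $H E^* = (n+1)/2$. Since intersection numbers are integers, this is impossible when $n$ is even, so in that case $\varphi_n$ contracts no curves. Being a birational morphism onto a smooth surface (case B) of Theorem \ref{adjunction-thm} with $(K_{X_{n-1}} + D_{n-1})^2 > 0$, which persists along the adjunction sequence starting from the very ample $D$ by the Hodge Index bookkeeping already discussed), $\varphi_n$ must then be an isomorphism.

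For $n$ odd, the same calculation identifies the contracted curves with the $(-1)$-curves $E \subset X$ satisfying $HE = (n+1)/2$: each curve contracted by $\varphi_n$ pulls back to such an $E^*$ by the computation above, and conversely any $(-1)$-curve $E$ on $X$ with $HE = (n+1)/2$ satisfies $DE = -1 + 2HE = n$, so Proposition \ref{prop:contracted} pinpoints $n$ as the unique adjunction step at which such a curve can be contracted. The main obstacle will be to make this converse direction fully rigorous: one needs to verify that a $(-1)$-curve on $X$ with $HE = (n+1)/2$ really does survive all previous adjunction maps $\varphi_1,\ldots,\varphi_{n-1}$ as a $(-1)$-curve on $X_{n-1}$ of $D_{n-1}$-degree one, i.e. that it is not absorbed through intersections with the $(-1)$-curves contracted at earlier stages. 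This should follow by induction on $n$, using at each stage the disjointness of the $(-1)$-curves simultaneously contracted by $\varphi_i$ together with the numerical data $(K_{X_i}+D_i)\cdot - $ transported along the sequence.
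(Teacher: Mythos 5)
Your proposal is correct and takes essentially the same route as the paper: the paper deduces the proposition directly from Proposition \ref{prop:contracted} by exactly your parity computation $2HE^*=DE^*-K_XE^*=n+1$, so that for $n$ even no curve can be contracted and the birational adjunction morphism is an isomorphism, while for $n$ odd the contracted curves are the $(-1)$-curves with $HE^*=(n+1)/2$. The converse refinement you flag for odd $n$ (that every such $(-1)$-curve on $X$ really survives to $X_{n-1}$ and is contracted at step $n$) is not spelled out in the paper either -- it is left implicit in the identification, via Theorem \ref{adjunction-thm}, of the curves contracted at each step with the $(-1)$-curves of $D_i$-degree one -- so your honest caveat points at a subtlety the paper glosses over rather than at a defect of your argument relative to it.
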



\subsection{The Tschirnhausen bundle in case $p_g=q=0$} 

Let $f \colon X \to \PP^2$ be a general triple plane
with Tschirnhausen bundle $\mathscr{E}$ and let $B$ be the branch locus of $f$. Recall that, by
Proposition \ref{branch-locus}, the curve $B$ has degree
$2b$ and contains $3h$ ordinary cusps as only singularities.

\newcounter{cover}
\renewcommand{\thecover}{\textnormal{\Roman{cover}}}

\begin{proposition} \label{prop:numerical}
If $\chi(\mO_X)=1$, that is
$p_g(X)=q(X)$, then we have at most the following 
possibilities for the numerical invariants $b, \, h, \, K_X^2, \, g(H):$
\begin{table}[H]
\begin{center}
\begin{tabular}{c|c|c|c|c}
Case & $b$ & $h$ & $K_X^2$ & $g(H)$ \\
 \hline
\refstepcounter{cover} \thecover \label{I} & $2$ & $1$ & $8$ & $0$\\
\hline
\refstepcounter{cover} \thecover \label{II} & $3$ & $2$ & $3$& $1$ \\
\hline
\refstepcounter{cover} \thecover \label{III} & $4$ & $4$ & $-1$ & $2$\\
\hline
\refstepcounter{cover} \thecover \label{IV} & $5$ & $7$ & $-4$ & $3$\\
\hline
\refstepcounter{cover} \thecover \label{V} & $6$ & $11$ & $-6$ & $4$ \\
\hline
\refstepcounter{cover} \thecover \label{VI} & $7$ & $16$ & $-7$ & $5$ \\
\hline
\refstepcounter{cover} \thecover \label{VII} & $8$ & $22$ & $-7$ & $6$\\
\hline
\refstepcounter{cover} \thecover  \label{VIII} & $9$ & $29$ & $-6$ & $7$\\
\hline
\refstepcounter{cover} \thecover 
& $10$ & $37$ & $-4$ & $8$\\
\hline
\refstepcounter{cover} \thecover 
& $11$ & $46$ & $-1$ & $9$\\
\hline
\refstepcounter{cover} \thecover  
& $12$ & $56$ & $3$ & $10$ \\
\hline
\refstepcounter{cover} \thecover \label{XII} & $13$ & $67$ & $8$ & $11$
\end{tabular}
\end{center}
\caption{Possible numerical invariants for a general triple plane with $\chi(\mO_X)=1$}
\end{table}
\end{proposition}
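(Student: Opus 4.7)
My plan is to express $h$, $K_X^2$ and $g(H)$ as explicit polynomials in $b$ using $\chi(\mO_X)=1$, and then to bound the integer $b$ by two elementary inequalities.

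\medskip

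First, I would compute intersection numbers and apply Riemann--Roch. Since $\deg f=3$ we have $H^2=3$, and the projection formula together with $f_*R=B$ (Proposition~\ref{RR_0}) gives $HR=L\cdot B=2b$. Plugging $K_X=-3H+R$ and $R^2=2b^2-3h$ (Proposition~\ref{inters-ram}) into $K_X^2=9H^2-6HR+R^2$ yields
\[
K_X^2=27-12b+2b^2-3h,
\]
while adjunction gives $g(H)=1+\tfrac{1}{2}(H^2+K_XH)=b-2$. On the other hand, $f_*\mO_X=\mO_{\PP^2}\oplus\mE$ implies $\chi(\mO_X)=1+\chi(\mE)$, and Hirzebruch--Riemann--Roch for a rank~$2$ bundle on $\PP^2$ with $c_1=-b$, $c_2=h$ gives
\[
\chi(\mE)=2+\tfrac{1}{2}(b^2-3b-2h).
\]
Imposing $\chi(\mO_X)=1$ therefore forces $h=\tfrac{1}{2}(b^2-3b+4)$, which is a positive integer since $b(b-3)$ is even; substituting back one finds $K_X^2=\tfrac{1}{2}(b^2-15b+42)$.

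\medskip

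Next, I would bound $b$. The lower bound $b\ge 2$ follows from $g(H)\ge 0$: indeed $H$ is ample and base-point-free (as the pullback of a general line), so by Bertini a general member of $|H|$ is a smooth irreducible curve. For the upper bound I would invoke the general inequality $K_X^2\le 9\chi(\mO_X)=9$, valid for every smooth projective surface with $\chi(\mO_X)=1$: passing to the minimal model, this is Miyaoka--Yau if $X$ is of general type, and it follows case by case from the explicit values of $K_{\min}^2$ for the other Enriques--Kodaira classes compatible with $\chi=1$ (rational, Enriques, or properly elliptic). The resulting inequality $b^2-15b+24\le 0$ has integer solutions exactly $b\in\{2,\ldots,13\}$, so reading off the quadruples $(b,h,K_X^2,g(H))$ from the formulas above produces the twelve rows of the table.

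\medskip

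The only delicate step is the upper bound on $b$, since it relies on Miyaoka--Yau (or, equivalently, the Enriques--Kodaira classification) rather than purely Riemann--Roch and intersection-theoretic input; everything else is routine.
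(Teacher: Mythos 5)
Your proposal is correct, and it reaches the same two key identities as the paper ($2h=b^2-3b+4$ and $2K_X^2=b^2-15b+42$, hence the table), but by a somewhat different route. The paper quotes the \emph{formule di corrispondenza} of Iversen to produce the system $9h+3=4b^2-6b+K_X^2$, $2h-4=b^2-3b$, whereas you derive the second relation intrinsically from $\chi(\mO_X)=\chi(\mO_{\PP^2})+\chi(\mE)$ together with Riemann--Roch for the rank-$2$ bundle $\mE$ on $\PP^2$, and the first from $K_X=-3H+R$ combined with $R^2=2b^2-3h$ (Proposition \ref{inters-ram}, which is available at this point); these are equivalent, and your version has the advantage of using only material already established in the paper rather than an external reference. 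The endgame also differs slightly: the paper first bounds $-7\le K_X^2\le 9$ (the lower bound via the discriminant of the quadratic in $b$, the upper bound via Enriques--Kodaira plus Miyaoka--Yau) and then runs a case-by-case analysis, while you plug $K_X^2=\tfrac12(b^2-15b+42)$ directly into $K_X^2\le 9$ and read off $b\in\{2,\dots,13\}$ from $b^2-15b+24\le 0$; both hinge on the same classification/Miyaoka--Yau input, and your justification that $K^2\le 9\chi$ holds for every smooth surface with $\chi(\mO_X)=1$ (general type via BMY on the minimal model, the remaining classes compatible with $\chi=1$ by inspection) is sound. Your separate lower bound $b\ge 2$ from $g(H)\ge 0$ is correct but redundant, since the quadratic inequality already excludes $b\le 1$.
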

\begin{proof}
Using the projection formula we obtain
\begin{equation} \label{eq:HR}
HR=(f^*L)R=L(f_*R)=LB=2b.
\end{equation}
Since $K_X= -3H +R$ and $H^2=3$ it follows $K_X H=2b-9$, hence
$g(H)=b-2$. Using the \emph{formule di corrispondenza}
(cf. \cite[\S V]{Iv70}) we infer
\begin{equation*}
\left \{ \begin{array}{l}
9h+3 = 4b^2 -6b +K_X^2 \\
2h - 4 = b^2 - 3b.
\end{array} \right.  
\end{equation*}
Therefore $h = \frac{1}{2}(b^2-3b+4)$ and $b^2 -15b +42 - 2K_X^2=0$.
Imposing that the discriminant of this  quadratic equation is
non-negative, we get $K_X^2 \geq -7$; on the 
other hand, the Enriques-Kodaira classification and the Miyaoka-Yau inequality imply that any surface with $p_g=q$ satisfies $K_X^2 \leq
9$, see \cite[Chapter VII]{BHPV04}, so  $-7 \leq K_X^2 \leq 9$. Now a case-by-case analysis
concludes the proof.
\end{proof}
Note that the previous proof shows that
\begin{equation} \label{eq:c1-c2-E}
c_1(\mE)=-b, \quad c_2(\mE)=\frac{1}{2}(b^2-3b+4).
\end{equation}
Moreover, using \eqref{eq:Hurwitz-X}, \eqref{eq:HR} and the first equality in \eqref{eq:inters-ram}, we obtain
\begin{equation} \label{eq:KR}
K_XR = -3HR + R^2 =2b^2-6b-3h.
\end{equation}
From now on, we will restrict ourselves to the case $p_g(X)=q(X)=0$,
 that is, in terms of the Tschirnhausen bundle $\mE$, we suppose
 $h^1(\PP^2, \,\mE)=0$ and $h^2(\PP^2, \, \mE)=0$. 
 Furthermore,
 we will use without further mention the natural isomorphism
\begin{equation*} 
\mE^\vee \simeq \mE(b).
\end{equation*}

\begin{theorem} \label{teo:res}
  Let $f \colon X \to \PP^2$ be a general triple plane with $p_g=q=0$ and let $\mE$ be the corresponding Tschirnhausen bundle. With the notation of Proposition \emph{\ref{prop:numerical}}, the following holds$:$
\begin{enumerate}[itemsep=2pt, label=\bf{\roman*)}]
\item in case \ref{I}, $\mE \simeq \mO_{\PP^2}(-1) \oplus
  \mO_{\PP^2}(-1);$ 
\item
  in case \ref{II}, $\mE \simeq \mO_{\PP^2}(-1) \oplus \mO_{\PP^2}(-2);$
\item
  in case \ref{III}, $\mE \simeq \mO_{\PP^2}(-2) \oplus \mO_{\PP^2}(-2);$
\item
in cases \ref{IV} to \ref{XII}, the vector bundle $\mE$ is stable and
has a sheafified minimal graded free resolution of the form 
  \begin{equation*}
  0  \to \mO_{\PP^2}(1-b)^{b-4} \to \mO_{\PP^2}(2-b)^{b-2} \to \mE \to 0.
  \end{equation*}
In particular, $\mE(b-2)$ is a rank $2$ Steiner bundle on $\PP^2$, see \S \emph{\ref{subsec:steiner}}.
\end{enumerate}
\end{theorem}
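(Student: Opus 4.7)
The plan is to apply Proposition \ref{Fsteiner} to the twist $\mF := \mE(b-2)$, after verifying its hypotheses uniformly across the three regimes $b \le 2$, $b = 3$, and $b \ge 4$. Using \eqref{eq:c1-c2-E} together with the standard formulas for Chern classes under line bundle twists, a short computation shows that $c_1(\mF) = b-4$ and $c_2(\mF) = \binom{b-3}{2}$, so $\mF$ satisfies \eqref{allorasteiner1} exactly.

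The cohomological input required by Proposition \ref{Fsteiner} is then supplied by the hypotheses $p_g(X) = q(X) = 0$. Indeed, Proposition \ref{bir-inv} yields $h^0(\PP^2, \mE^\vee(-3)) = p_g(X) = 0$; invoking the isomorphism $\mE^\vee \simeq \mE(b)$, this rewrites as $h^0(\PP^2, \mF(-1)) = 0$, which is precisely the hypothesis required in parts \ref{o:1} and \ref{o:3} of Proposition \ref{Fsteiner}. For the range $b \le 2$, I would instead use Serre duality to convert the required vanishing $h^2(\PP^2, \mF(-1)) = 0$ into $h^0(\PP^2, \mE) = 0$; the latter follows from the connectedness of $X$ together with the splitting $f_*\mO_X = \mO_{\PP^2} \oplus \mE$, which forces $h^0(\PP^2, \mE) = h^0(X, \mO_X) - 1 = 0$.

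With the hypotheses of Proposition \ref{Fsteiner} established, the four structural conclusions follow mechanically. For $b = 2$, part \ref{o:2} identifies $\mF^\vee(-1)$ with a Steiner bundle of type $\mF_4$, which is forced to be $\mO_{\PP^2}^{\oplus 2}$ by its own resolution; untwisting yields $\mE \simeq \mO_{\PP^2}(-1)^{\oplus 2}$. For $b=3$, part \ref{o:3} gives $\mF \simeq \mO_{\PP^2}(-1) \oplus \mO_{\PP^2}$, hence $\mE \simeq \mO_{\PP^2}(-2) \oplus \mO_{\PP^2}(-1)$. For $b=4$, part \ref{o:1} gives $\mF$ of type $\mF_4 \simeq \mO_{\PP^2}^{\oplus 2}$, so $\mE \simeq \mO_{\PP^2}(-2)^{\oplus 2}$. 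Finally, for $b \ge 5$, part \ref{o:1} delivers both the stability of $\mF$ and the Steiner resolution $0 \to \mO_{\PP^2}(-1)^{b-4} \to \mO_{\PP^2}^{b-2} \to \mF \to 0$, which upon twisting by $\mO_{\PP^2}(2-b)$ becomes the asserted resolution of $\mE$; stability is preserved by the line bundle twist.

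No substantial obstacle arises, as the nontrivial content has been packaged into Proposition \ref{Fsteiner}. The main care required is the bookkeeping: selecting the twist $b-2$ so that the Chern classes of $\mF$ land precisely on \eqref{allorasteiner1}, and correctly translating both the input vanishings (via $\mE^\vee \simeq \mE(b)$) and the output structure (by untwisting back to $\mE$) in each of the four numerical regimes.
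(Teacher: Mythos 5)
Your proposal is correct and follows essentially the same route as the paper: set $\mF=\mE(b-2)$, compute $c_1(\mF)=b-4$, $c_2(\mF)=\binom{b-3}{2}$ from \eqref{eq:c1-c2-E}, obtain the key vanishing from Proposition \ref{bir-inv} and $p_g=q=0$, and conclude via Proposition \ref{Fsteiner}, which yields the Steiner resolution and stability in cases \ref{IV}--\ref{XII}. The only (harmless) difference is in cases \ref{I}--\ref{III}: the paper applies \cite[Lemma 3.3]{FV12} directly to $\mE(1)$, resp.\ $\mE(2)$, whereas you funnel these through parts \ref{o:2}, \ref{o:3}, \ref{o:1} of Proposition \ref{Fsteiner} (supplying $h^2(\PP^2,\mF(-1))=0$ for $b=2$ via Serre duality and $h^0(\PP^2,\mE)=0$), which is equally valid and gives the same splittings.
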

\begin{proof}
  Setting $\mF:= \mE(b-2)$, by  using \eqref{eq:c1-c2-E} we obtain
  \begin{equation} \label{eq:c1-c2-F}
    c_1(\mF)=b-4, \quad c_2(\mF)= {b-3 \choose 2}.
  \end{equation}
  Now Proposition \ref{bir-inv} allows us to calculate the cohomology
  groups of $\mF(-i)$, for $i=0,1,2$. We have 
  \begin{align}
    \label{keyvanishing} & h^0(\PP^2, \, \mF(-1)) = h^0(\PP^2, \, \mE(b-3))=h^0(\PP^2, \, \mE^{\vee}(-3))=p_g(X)=0, \\
     \nonumber & h^1(\PP^2, \mF(-1))=h^0(\PP^2, \, \mE(b-3))= h^1(\PP^2, \, \mE^{\vee}(-3))=q(X)=0.
  \end{align}

  Let us now check cases \ref{I} to \ref{III}. By
  \eqref{keyvanishing}, we can apply 
  \cite[Lemma 3.3]{FV12} to $\mE(1)$ in cases \ref{I}
  and \ref{II}, and to $\mE(2)$ in case \ref{III}. The result then 
  follows. \medskip

  In the cases \ref{IV} to \ref{XII}, the conditions
  \eqref{eq:c1-c2-F} and \eqref{keyvanishing} say that Proposition
  \ref{Fsteiner} applies, so $\mF$ is a Steiner bundle of the form
  $\mF_b$. This gives the desired resolution of $\mE$.
\end{proof}

\begin{corollary} \label{cor:Tsch1}
  In cases $\ref{I}$ to $\ref{III}$, general triple planes
  $f \colon X \to \PP^2$ do exist and $X$ is a rational surface.
\end{corollary}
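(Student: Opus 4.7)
The plan is to combine the structure theorem for triple covers (Theorems \ref{thm triple:1} and \ref{thm triple:2}) with Bertini's theorem, and then derive rationality via Castelnuovo's criterion. In all three cases \ref{I}, \ref{II}, \ref{III} the bundle $\mE$ splits as a direct sum of two line bundles by Theorem \ref{teo:res}, so a direct calculation shows that $S^3\mE^{\vee} \otimes \wedge^2 \mE$ decomposes as a sum of line bundles of non-negative degree: explicitly $\mO_{\PP^2}(1)^{\oplus 4}$ in case \ref{I}, $\mO_{\PP^2} \oplus \mO_{\PP^2}(1) \oplus \mO_{\PP^2}(2) \oplus \mO_{\PP^2}(3)$ in case \ref{II}, and $\mO_{\PP^2}(2)^{\oplus 4}$ in case \ref{III}. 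In particular this bundle is globally generated, so by Theorem \ref{thm triple:1} any general section $\eta$ produces a triple cover $f \colon X \to \PP^2$ with Tschirnhausen bundle isomorphic to $\mE$.

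Next I would verify that $X$ is smooth and that $f$ is actually a general triple plane. Using Theorem \ref{thm triple:2}, $X$ sits in the smooth threefold $\PP(\mE^{\vee})$ as the zero locus of a section of the line bundle $\mL := \mO_{\PP(\mE^{\vee})}(3) \otimes \pi^{*}(\wedge^{2} \mE)$. Since $\pi_{*}\mL \simeq S^3\mE^{\vee} \otimes \wedge^2 \mE$ is globally generated on $\PP^2$, and $\mL$ restricts to $\mO_{\PP^1}(3)$ on each fibre of $\pi$, the bundle $\mL$ is itself base-point-free on $\PP(\mE^{\vee})$. Bertini's theorem then guarantees smoothness of $X$ for a general choice of $\eta$. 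By the criterion recalled immediately after Proposition \ref{branch-locus}, a smooth cover obtained in this way is a general triple plane as soon as it is not Galois; a Galois (i.e.\ totally ramified) cover would have smooth branch curve, so it suffices to exhibit even a single non-Galois section, from which an open-dense family of such sections follows by upper semicontinuity. The explicit constructions of Section \ref{sec:class} will in fact produce such sections case by case.

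For the rationality assertion, I would apply Castelnuovo's criterion: since $q(X)=0$, it suffices to check $P_2(X)=0$. Proposition \ref{bir-inv} identifies $P_2(X)$ with $h^0(\PP^2, \, S^2\mE^{\vee}(-6))$. A direct computation shows that in each of the three split cases $S^2\mE^{\vee}(-6)$ is a sum of line bundles of strictly negative degree, namely $\mO_{\PP^2}(-4)^{\oplus 3}$, $\mO_{\PP^2}(-4) \oplus \mO_{\PP^2}(-3) \oplus \mO_{\PP^2}(-2)$, and $\mO_{\PP^2}(-2)^{\oplus 3}$ respectively. Hence $P_2(X)=0$ and $X$ is a rational surface.

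The main technical point is the Bertini step: one needs base-point-freeness of $\mL$ on the total space of $\PP(\mE^{\vee})$, rather than merely on the base $\PP^2$. This will be handled by combining the global generation of $\pi_{*}\mL$ on $\PP^2$ with the fibrewise very ampleness of $\mO_{\PP(\mE^{\vee})}(3)$, so that no point of $\PP(\mE^{\vee})$ is forced to lie in the base locus. The remaining arguments are routine cohomological calculations with split bundles on $\PP^2$.
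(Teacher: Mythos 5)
Your proposal is correct and follows essentially the same route as the paper: global generation of $S^3\mE^{\vee}\otimes\wedge^2\mE$ (computed from the split form of $\mE$) combined with Theorem \ref{thm triple:1} gives existence, and $P_2(X)=h^0(\PP^2, S^2\mE^{\vee}(-6))=0$ together with Castelnuovo's criterion gives rationality. The additional Bertini/smoothness and non-Galois discussion merely expands what the paper packages into Theorem \ref{thm triple:1} and the remark following Proposition \ref{branch-locus}, so there is no substantive difference.
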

\begin{proof}
  Let us consider case $\ref{I}$. By Theorem \ref{teo:res} we
  have $S^3\mE^{\vee} \otimes \wedge^2 \mE \simeq 
  \mO_{\PP^2}(1)^4$ which is globally generated, so the triple
  cover exists by Theorem \ref{thm triple:1}. Using Proposition
  \ref{bir-inv} we obtain
  \begin{equation*}
    P_2(X)=h^0(\PP^2, \, S^2\mE^{\vee}(-6))=h^0(\PP^2, \,
    \mO_{\PP^2}(-4)^3)=0,
  \end{equation*}
  hence Castelnuovo's Theorem (cf. \cite[Chapter VI, \S 3]{BHPV04}) implies that
  $X$ is a rational surface.
  The argument in cases \ref{II} and \ref{III} is the same.
\end{proof}

\subsection{The projective bundle associated with a triple plane} \label{sub:proj.bundle}

\subsubsection{Triple planes and direct images}
\label{L}

Let $f \colon X \to \PP^2$  be a general triple plane with
$p_g=q=0$ and Tschirnhausen bundle  $\mE$. We assume $b \geq 5$ and we write  $\mF$ as before in order to denote the bundle $\mE(b-2)$. Sometimes, if we want to emphasize the role of $b$, we will use the notation $\mF_b$ instead of $\mF$.
The rest of the notation in this paragraph is borrowed from \S \ref{subsec:steiner}.

 As shown in Theorem
\ref{teo:res}, $\mF$ is a Steiner bundle of rank $2$. Theorem \ref{thm triple:2} implies that $X$ can be realized as a Cartier divisor in
$\sP$, such that the restriction of $\mathfrak{p}\colon \PP(\mF) \to \PP^2$ to $X$ is our covering map $f$.
More precisely, recall that we denote by  $\xi$ the tautological
relatively ample line bundle on $\PP(\mF)$ and by $\ell$ the pull-back
to $\PP(\mF)$ of a line in $\PP^2$. Then the identification
\begin{equation} \label{eq:S3F(6-b)}
S^3 \mE^\vee \otimes \wedge ^2 \mE \simeq S^3 \mF \otimes \mO_{\PP^2}(6-b)
\end{equation}
shows that $X$ lies in the complete linear system $|\mL|$, with
\begin{equation} \label{eq:linear-L}
\mL = \mO_{\sP}(3\xi+(6-b)\ell).
\end{equation}
Recall also the notation $U= H^0(\PP^2,\mF)$, and consider the morphism $\mathfrak{q} \colon \PP(\mF) \to
\PP(U)\simeq \PP^{b-3}$ associated with $|\mO_{\PP(\mF)}(\xi)| \simeq \PP (U)$.
Setting
\begin{equation*}
\mR := \mathfrak{q} _*(\mO_{\sP}((6-b)\ell)),
\end{equation*}
the projection formula yields natural identifications
\begin{equation} \label{eq:S3E}
\begin{split}
 H^0(\PP^2,S^3 \mE^\vee \otimes \wedge^2 \mE
 ) &  \simeq H^0(\PP^2, \, S^3 \mF(6-b)) \\
 & \simeq H^0(\sP, \, \mL)\simeq H^0(\PP^{b-3}, \,\mR(3)).
\end{split}
\end{equation}
In order to get information on the sheaf $\mR$, it is useful to
consider the Koszul resolution of $\sP$ in $\PP(V) \times \PP(U)
\simeq \PP^2 \times \PP^{b-3}$, which is given taking exterior powers
of \eqref{prekoszul}. This reads
\begin{equation}
  \label{KOSZUL}
  \wedge^\bullet (W^\vee \ts \mO_{\PP^2 \times \PP^{b-3}}(-1, \, -1 )) \to \mO_{\sP} \to 0
\end{equation}
with $W^\vee=H^1(\PP^2, \, \mF(-2))$, see Proposition \ref{prop:P-and-P'} and \eqref{eq:W-U}.  
We will write $\mK_i$ for the image of the $i$-th differential 
  \begin{equation*}
  d_i : (\wedge^i W^\vee) \otimes \mO_{\PP^2 \times \PP^{b-3}}(-i, \, -i) \to
  (\wedge^{i-1} W^\vee) \otimes \mO_{\PP^2 \times \PP^{b-3}}(-i+1, \, -i+1)
  \end{equation*}
of the complex \eqref{KOSZUL}. Moreover, we will often use the relation
\begin{equation} \label{eq:Riq}
R^i \mathfrak{q} _* (\mO_{\PP^2 \times \PP^{b-3}}(n_1, \, n_2)) = H^i(\PP^2,
\, \mO_{\PP^2}(n_1)) \otimes \mO_{\PP^{b-3}}(n_2), \quad i \in \NN, \; n_1, \, n_2 \in \ZZ.
\end{equation}

We finally define $Y \subset \PP^{b-3}$ as the image of $\mathfrak{q}$; then the support of $\mR$ is contained in $Y$. 
In \S \ref{subsub:adj} we shall see that, if $b \geq 6$, the morphism $\mathfrak{q}   \colon \sP \to
\PP^{b-3}$ is
generically injective, so $Y \subset \PP^{b-3}$ is a (possibly
singular) irreducible threefold which is generated by the $3$-secant lines to
the canonical curves of genus $g(H)$ representing in $\PP^{b-3}$ the net $|H|$ inducing the triple cover.
The threefold $Y$ is defined by the $3\times 3$ minors of the matrix 
$N$ appearing in the resolution of $\mathfrak{q} _*(\mO_{\PP(\mF)}(\ell))$, namely
\begin{equation*}
 \mO_{\PP^{b-3}}(-1)^{b-4} \xrightarrow{N} \mO_{\PP^{b-3}}^3 \to
 \mathfrak{q}_*(\mO_{\PP(\mF)}(\ell)) \to 0.
\end{equation*}

\subsubsection{Adjunction maps and projective bundles} \label{subsub:adj}

We use the notation of \S \ref{a}. Recall that the canonical line bundle of $\PP(\mF)$ is 
\begin{equation} \label{eq.canPF}
  \omega_{\sP} \simeq \mO_{\sP}(-2 \xi + (b-7)\ell),
\end{equation}
see for instance \cite[Ex. 8.4 p. 253]{H77}. The following result provides a link between the adjunction theory and the  vector bundles techniques used in this paper.

\begin{lemma} \label{lemma.adj}
 Let $f \colon X \to \PP^2$ be a general triple
  plane with $p_g(X)=q(X)=0$. Then $\mathfrak{q} |_X$ coincides with the first
  adjoint map $\varphi_{|K_X + H|} \colon X \to \PP^{b-3}$ 
  associated with the ample divisor $H$.
\end{lemma}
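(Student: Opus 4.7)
The strategy is to compute $\mO_{\sP}(\xi)|_X$ by adjunction on the projective bundle $\sP$, identify $\mathfrak{q}|_X$ as the morphism defined by the image of $H^0(\sP,\mO_{\sP}(\xi))$ inside $H^0(X,\mO_{\sP}(\xi)|_X)$, and then check by a dimension count combined with a cohomological vanishing that this image actually spans the whole $H^0(X,K_X+H)$.

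First, I would use that $X \in |\mL| = |3\xi+(6-b)\ell|$ together with \eqref{eq.canPF} to compute, by the adjunction formula,
\begin{equation*}
\omega_X \simeq (\omega_{\sP}\otimes \mL)|_X \simeq \mO_{\sP}(\xi-\ell)|_X.
\end{equation*}
Since $\mathfrak{p}|_X = f$, we have $\ell|_X = f^*\mO_{\PP^2}(1) = \mO_X(H)$, and therefore $\mO_{\sP}(\xi)|_X \simeq \mO_X(K_X+H)$.

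Next, recall that $\mathfrak{q}\colon \sP \to \PP(U) \simeq \PP^{b-3}$ is the morphism associated with the base-point-free linear system $|\mO_{\sP}(\xi)|$, the base-point freeness being a consequence of the fact that the tautological quotient $\mathfrak{p}^*\mF \to \mO_{\sP}(\xi)$ is surjective and $\mF$ itself is globally generated by $U$. Consequently, $\mathfrak{q}|_X$ is the morphism defined by the linear subsystem of $|K_X+H|$ cut out by the image of the restriction map
\begin{equation*}
\rho\colon H^0(\sP,\mO_{\sP}(\xi)) \lr H^0(X,\mO_X(K_X+H)).
\end{equation*}
To conclude that $\mathfrak{q}|_X = \varphi_{|K_X+H|}$ it then suffices to show that $\rho$ is an isomorphism.

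For this I would do a dimension count together with an injectivity check. The Beilinson-type resolution of $\mF$ obtained in the proof of Theorem \ref{teo:res} gives $\dim H^0(\sP,\mO_{\sP}(\xi)) = \dim U = b-2$. On the target side, since $H$ is ample and $p_g=q=0$, Kodaira vanishing (or directly Theorem \ref{adjunction-thm}) shows that $|K_X+H|$ is non-special, and then Riemann-Roch together with $H^2=3$ and $K_XH=2b-9$ yields $h^0(X,K_X+H) = b-2$ as well. Thus it is enough to verify that $\rho$ is injective, i.e.\ that $H^0(\sP,\mO_{\sP}(\xi-X))=0$. But $\xi - X \equiv -2\xi + (b-6)\ell$, and pushing forward along $\mathfrak{p}$ one has $\mathfrak{p}_*\mO_{\sP}(-2\xi+(b-6)\ell) = 0$ by the standard formula for the direct image of $\mO(n\xi)$ on a $\PP^1$-bundle with $n<0$; the desired vanishing then follows at once by the Leray spectral sequence.

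I do not foresee any serious obstacle: the only arguably delicate point is checking the dimensional equality, which boils down to matching the Beilinson count for $H^0(\PP^2,\mF)$ with the Riemann-Roch output for $K_X+H$, and this is a routine computation once Theorem \ref{teo:res} is in place.
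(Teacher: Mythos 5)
Your proposal is correct and takes essentially the same route as the paper: adjunction on $\sP$ using $X \in |3\xi+(6-b)\ell|$ and $\omega_{\sP}\simeq\mO_{\sP}(-2\xi+(b-7)\ell)$ to get $\mO_{\sP}(\xi)|_X\simeq\mO_X(K_X+H)$, combined with Kodaira vanishing and Riemann--Roch giving $h^0(X,K_X+H)=g(H)=b-2=\dim U$. The only difference is that you make explicit the injectivity of the restriction map via $H^0(\sP,\mO_{\sP}(-2\xi+(b-6)\ell))=0$, a point the paper's proof leaves implicit; this is a welcome (and correct) extra detail rather than a divergence in method.
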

\begin{proof}
  Since $H$ is ample, by Kodaira vanishing theorem we have $h^1(X, \, K_X+H)=h^2(X, \, K_X+H)=0$, so
  Riemann-Roch theorem gives $h^0(X, \, K_X+H)=g(H)=b-2$. Therefore it suffices to show that
  \begin{equation*}
    \omega_X \otimes \mO_X(H) \simeq \mO_{\sP}(\xi)|_X.
  \end{equation*}
 The adjunction formula, together with \eqref{eq:linear-L}and \eqref{eq.canPF},
  yields
  \begin{equation*}
      \omega_X = (\omega_{\sP} \otimes \mL)|_X
       \simeq  \mO_{\sP}(\xi-\ell)|_X.
  \end{equation*}
Since $\ell|_X = \mO_X(H)$, the claim follows.
\end{proof}

\begin{lemma} \label{contracts}
  The morphism $\mathfrak{q}  \colon \sP \to \PP^{b-3}$ contracts precisely the negative sections
  of the Hirzebruch surfaces of the form $\PP(\mF|_L)$, where $L$ is an unstable
  line of $\mF$. Moreover, if $b \ge 6$ then $\mathfrak{q}$ is birational onto its image $Y
  \subseteq \PP^{b-3}$, which is a birationally ruled threefold of degree ${b-4 \choose 2}$.
\end{lemma}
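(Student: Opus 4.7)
My approach is to analyze the morphism $\mathfrak{q}$ via the embedding $\sP \subset \PP^2 \times \PP^{b-3}$ established in Remark \ref{rem:trilinear-form}, under which $\sP$ is cut out by $b-4$ divisors of bidegree $(1,1)$ and $\mathfrak{q}$ is the restriction of the second projection. Since $\mathfrak{q}$ is associated with the complete linear system $|\mO_{\sP}(\xi)|$, a curve $C \subset \sP$ is contracted by $\mathfrak{q}$ if and only if $\xi \cdot C = 0$, so the plan is to describe all fibers of $\mathfrak{q}$ directly.

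For fixed $z \in \PP^{b-3}$, the fiber $\mathfrak{q}^{-1}(z)$ is cut out in $\PP^2 \times \{z\}$ by $b-4$ linear forms in the $\PP^2$-coordinates (the bilinear equations specialized at $z$), hence it is a linear subspace of $\PP^2$, equal to $\emptyset$, a point, a line, or all of $\PP^2$. The last possibility would yield a surjection $\mF \to \mO_{\PP^2}$, which is excluded because $\mF$ is a stable bundle (Theorem \ref{teo:res}) of positive slope $(b-4)/2$ for $b \ge 5$, while $\mO_{\PP^2}$ has slope $0$. So every positive-dimensional fiber of $\mathfrak{q}$ is a line $L \subset \PP^2$ sitting inside $\sP$ as $L \times \{z\}$, and this $L \times \{z\}$ is a section of the $\PP^1$-bundle $\mathfrak{p} \colon \PP(\mF|_L) \to L$. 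Sections of this bundle correspond bijectively to rank-$1$ quotient line bundles of $\mF|_L$, and $\mO_{\sP}(\xi)$ restricts along the section to the quotient bundle; since the section is contracted, the quotient must have degree $0$, i.e.\ be isomorphic to $\mO_L$. Writing $\mF|_L \simeq \mO_L(a) \oplus \mO_L(b-4-a)$ with $0 \le a \le b-4-a$, such an $\mO_L$-quotient exists if and only if $a = 0$, equivalently $L$ is unstable. In that case $\PP(\mF|_L) \simeq \FF_{b-4}$ and the contracted section is its unique negative section; conversely, that negative section has $\xi$-degree $0$ and so is contracted. This proves the first assertion.

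For the second assertion, assume $b \ge 6$. Then the generic splitting type of $\mF$ on a line is $(a, b-4-a)$ with $a = \lfloor (b-4)/2 \rfloor \ge 1$, so a general line is not unstable and the generic fiber of $\mathfrak{q}$ is a zero-dimensional linear subspace of $\PP^2$, i.e.\ a single point; therefore $\mathfrak{q}$ is birational onto its image $Y$. The image $Y$ is a threefold, and it is birationally ruled because $\sP$ is a $\PP^1$-bundle over $\PP^2$. Finally, the degree of $Y$ equals the self-intersection $\xi^3$ on $\sP$, computed by the standard Segre-class identities on the rank-$2$ projective bundle:
\[
\deg Y \;=\; \xi^3 \;=\; c_1(\mF)^2 - c_2(\mF) \;=\; (b-4)^2 - \binom{b-3}{2} \;=\; \binom{b-4}{2}.
\]
The main technical nuisance is ruling out the two-dimensional fiber case for $\mathfrak{q}$; once this is done, the rest of the argument reduces to the clean dictionary between contracted sections of $\PP(\mF|_L)$ and degree-$0$ quotients of $\mF|_L$, together with a routine Chern-class computation for the degree of $Y$.
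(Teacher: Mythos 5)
Your proof is correct, but it takes a genuinely different route from the paper's in both main steps, so it is worth comparing. For the first claim the paper never analyzes fibres of $\mathfrak{q}$: it argues by separation of points, observing that two points not separated by $\mathfrak{q}$ must lie over distinct points of $\PP^2$ (because $\xi$ is very ample on the fibres of $\mathfrak{p}$), hence over a unique line $L$, and that $\xi$ restricted to $\PP(\mF|_L)$ is very ample unless $L$ is unstable; this gives the slightly stronger conclusion that $\mathfrak{q}$ is injective, even on infinitely near points, away from the negative sections. You instead use the presentation of $\sP$ as an intersection of $b-4$ divisors of bidegree $(1,1)$ in $\PP^2\times\PP^{b-3}$, so that every fibre of the second projection is a linear subspace of $\PP^2$; you exclude the $\PP^2$-fibre by stability of $\mF$ (equivalently by $H^0(\PP^2,\mF^\vee)=H^0(\PP^2,\mF(4-b))=0$, which follows from $H^0(\PP^2,\mF(-1))=0$), and then a one-dimensional fibre $L\times\{z\}$ is a constant section, i.e. an $\mO_L$-quotient of $\mF|_L$, forcing $L$ unstable and the section to be $\frc$. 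This is a clean alternative, and it actually recovers the paper's injectivity statement for free, since a fibre containing two distinct points, being linear, contains the whole line through them. For birationality the paper quotes the structure of $\mW(\mF_b)$ from \S\ref{b} (positive codimension in $\PD^2$ for $b\ge 6$), whereas you invoke balancedness of the generic splitting type; this is exactly the Grauert--M\"ulich theorem for the semistable bundle $\mF$ and should be cited as such (see \cite{OkSchSp88}), and you should add the one-line remark that the positive-dimensional fibres map to a locus of dimension at most $\dim\mW(\mF_b)\le 1<3=\dim Y$, so that the fibre over a general point of $Y$ is indeed a single (reduced) point and $\mathfrak{q}$ is birational onto $Y$. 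The final degree computation $\deg Y=\xi^3=c_1(\mF)^2-c_2(\mF)=\binom{b-4}{2}$ coincides with the paper's.
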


\begin{proof} We first show that $\mathfrak{q}$ contracts the negative sections.
If $L$ is an unstable line of $\mF$, then $\mF|_L \simeq \mO_L \oplus
\mO_L(b-4)$, so $\PP(\mF|_L)$ is isomorphic to the Hirzebruch surface $\FF_{b-4}$. The divisor $\mO_{\PP(\mF)}(\xi)$ cuts on $\PP(\mF|_L)$ the complete linear system $|\frc+(b-4) \frf|$; therefore $\mO_{\PP(\mF)}(\xi) \cdot \frc=0,$ that is $\mathfrak{q}$ contracts $\frc$. In particular, this means that the image of $\PP(\mF|_L)$ via $\mathfrak{q}$ is a cone $S(0, \, b-4) \subset \PP^{b-3}$.

Conversely, we now show that $\mathfrak{q}$ is injective on the
complement of the set of negative sections over unstable lines. More precisely, assuming 
that $x_1$ and $x_2$ are points of $\sP$ not
separated by $\mathfrak{q}$, we will prove that $x_1$ and $x_2$ lie in one of such sections. In fact, since $\mO_{\PP(\mF)}(\xi)$ is very ample when
restricted to the fibres of $\mathfrak{p} \colon \PP(\mF) \to \PP^2$, the points
$\mathfrak{p}(x_1)$ and $\mathfrak{p}(x_2)$ are distinct. Let $L$ be the unique line through
$\mathfrak{p}(x_1)$ and $\mathfrak{p}(x_2)$ and let us restrict  $\mathfrak{q}$  to $\PP(\mF|_L)$. If 
$L$ were not unstable for $\mF$, then $\mF|_L\simeq
\mO_L(a) \oplus \mO_L(b-4-a)$ with $a > 0$ and $b-4-a >0$ (cf. the
proof of Lemma \ref{lem:unstable-line}), and in this situation the restriction of $\mO_{\PP(\mF)}(\xi)$ to $\PP(\mF|_L)$ would be very ample, hence $\mathfrak{q}$ would separate 
$x_1$ and $x_2$, contradiction. This shows that $L$ is
necessarily an unstable line for $\mF$ and that moreover $x_1$ and $x_2$ must both lie on the unique negative section of $\PP(\mF|_L) \simeq \FF_{b-4}$. The same argument also works  if $x_1$ and $x_2$ are infinitely near, and this ends the proof of the first statement.

Regarding the second statement, the subscheme $\mW(\mF_b)$ of
unstable lines has positive codimension in  $\PD^2$ for $b \geq 6$, see \S
\ref{b}. Then $\mathfrak{q}$ is birational onto its image $Y \subset \PP^{b-3}$, and this in particular says that 
$Y$ is a birationally ruled threefold in $\PP^{b-3}$ (of course for $b=6$ the
image is the whole $\PP^3$).

We can now use \eqref{eq:c1-c2-F} and the Chern equation for $\PP(\mF_b)$ in order to compute the degree of $Y$, obtaining
\begin{equation*}
\deg Y= \xi^3=\mathfrak{p}^*(c_1(\mF_b)^2-c_2(\mF_b)) \xi = (b-4)^2-{{b-3}\choose 2}={{b-4}\choose 2}.
\end{equation*}
\end{proof}

\begin{lemma} \label{lem:base-locus}
Let $\mL = \mO_{\PP(\mF)}(3 \xi + (6-b) \ell)$ and let $\frc$ be the negative section of 
the Hirzebruch surface $\PP(\mF|_L)$, where $L$ is an unstable line for $\mF$. If $b \geq 7$, then $\frc$ is contained in 
the base locus of $|\mL|$. 
\end{lemma}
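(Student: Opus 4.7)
The plan is to restrict the linear system $|\mL|$ to the Hirzebruch surface $\PP(\mF|_L) \simeq \FF_{b-4}$ and show that, for $b \geq 7$, the restricted divisor class pairs negatively with the negative section $\frc$. Since $\frc$ is irreducible with $\frc^2 < 0$, this forces $\frc$ to be a component of the restriction, so $\frc$ lies on every member of $|\mL|$.

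More concretely, I would first compute the restriction of $\xi$ and $\ell$ to $\PP(\mF|_L)$. Since $\mF|_L \simeq \mO_L \oplus \mO_L(b-4)$ by the unstability of $L$, the tautological class satisfies $\xi|_{\PP(\mF|_L)} = \frc + (b-4)\frf$, while $\ell|_{\PP(\mF|_L)} = \frf$ as $L$ is a line. Substituting into $\mL = \mO_{\sP}(3\xi + (6-b)\ell)$ gives
\begin{equation*}
\mL|_{\PP(\mF|_L)} = 3\frc + (3(b-4) + (6-b))\frf = 3\frc + (2b-6)\frf.
\end{equation*}
Intersecting with $\frc$ and using $\frc^2 = -(b-4)$, $\frc \cdot \frf = 1$, I get
\begin{equation*}
\frc \cdot \mL|_{\PP(\mF|_L)} = -3(b-4) + (2b-6) = 6-b,
\end{equation*}
which is strictly negative for $b \geq 7$.

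To conclude, pick any $D \in |\mL|$. If $\PP(\mF|_L) \subset D$, then $\frc \subset D$ trivially. Otherwise, $D \cap \PP(\mF|_L)$ is a proper intersection and defines an effective divisor $D'$ on $\PP(\mF|_L)$ in the class of $\mL|_{\PP(\mF|_L)}$. Since $\frc$ is an irreducible curve with $D' \cdot \frc = 6-b < 0$, the curve $\frc$ must appear as a component of $D'$; hence $\frc \subset D$. Therefore $\frc$ lies in every member of $|\mL|$, and so in its base locus. The argument is essentially numerical, and there is no serious obstacle: the only point worth care is the standard fact that on a smooth surface an irreducible curve with negative self-intersection lies in every effective divisor with which it intersects negatively.
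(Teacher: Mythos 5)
Your proof is correct and follows essentially the same route as the paper: restrict $\mL$ to $\PP(\mF|_L)\simeq \FF_{b-4}$, obtain the class $3\frc+(2b-6)\frf$, compute $\frc\cdot\mL|_{\PP(\mF|_L)}=6-b<0$ for $b\geq 7$, and conclude that $\frc$ is a component of every restricted divisor. Your explicit handling of the degenerate case $\PP(\mF|_L)\subset D$ is a small added precision, but the argument is the same as the paper's.
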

\begin{proof}
By restricting any element of $|\mL|$ to  $\PP(\mF|_L)$ we obtain a divisor $\mL'$ linearly equivalent to 
\begin{equation*}
3(\frc + (b-4) \frf)+ (6-b) \frf = 3\frc + (2b-6) \frf.
\end{equation*}
We have $\mL' \frc = 3 (4-b) + (2b-6) =6-b$, so if $b \geq 7$ we have
$\mL'  \frc <0$ and this in turn implies that $\frc$ is a component of $\mL'$.
Hence $\frc$ is contained in every element of the linear system $|\mL|$.
\end{proof}
Let us come back now to our general triple planes $f \colon X \to \PP^2$.

\begin{proposition} \label{prop:adjunction}
If $b \geq 7$ then the first adjoint map $\varphi_{|K_X + H|} \colon X
\to \PP^{b-3}$ is a birational morphism onto its image $X_1 \subset
\PP^{b-3}$. Furthermore, $X_1$ is a smooth surface and $\varphi_{|K_X
  +H|}$ contracts precisely the $(-1)$-curves $E$ in $X$ such that
$HE=1$. There is one, and only one, curve with this property for each unstable line of $\mF$.
\end{proposition}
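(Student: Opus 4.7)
The strategy is to identify the adjoint map $\varphi_{|K_X+H|}$ with the restriction to $X$ of the morphism $\mathfrak{q} \colon \PP(\mF) \to \PP^{b-3}$ analyzed in Lemma \ref{contracts}, and then combine this with Lemma \ref{lem:base-locus}. By Lemma \ref{lemma.adj}, $\varphi_{|K_X+H|} = \mathfrak{q}|_X$, where $\mathfrak{q}$ is the morphism defined by the globally generated linear system $|\mO_{\PP(\mF)}(\xi)|$. Since $b \geq 7 \geq 6$, Lemma \ref{contracts} ensures that $\mathfrak{q}$ is birational onto $Y \subseteq \PP^{b-3}$, hence so is its restriction $\mathfrak{q}|_X \colon X \to X_1 := \mathfrak{q}(X)$.

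Next I would identify the contracted curves. Lemma \ref{contracts} states that $\mathfrak{q}$ contracts precisely the negative sections $\frc_L$ of the Hirzebruch surfaces $\PP(\mF|_L) \simeq \FF_{b-4}$, where $L$ runs over the unstable lines of $\mF$. By Lemma \ref{lem:base-locus}, for $b \geq 7$ each such $\frc_L$ lies in the base locus of $|\mL|$ and is therefore contained in the divisor $X \in |\mL|$. Using $K_{\PP(\mF)} \sim -2\xi + (b-7)\ell$ from \eqref{eq.canPF} together with $X \sim 3\xi + (6-b)\ell$ from \eqref{eq:linear-L}, adjunction gives $K_X \sim (\xi - \ell)|_X$, and I would then compute
\begin{equation*}
K_X \cdot \frc_L = (\xi - \ell) \cdot \frc_L = 0 - 1 = -1, \qquad H \cdot \frc_L = \ell \cdot \frc_L = 1,
\end{equation*}
where $\xi \cdot \frc_L = 0$ and $\ell \cdot \frc_L = 1$ come from restricting $\xi \sim \frc + (b-4)\frf$ to the Hirzebruch surface $\PP(\mF|_L)$ and from the fact that $\mathfrak{p}$ sends $\frc_L$ isomorphically onto $L$. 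The genus formula then forces $\frc_L^2 = -1$, so each $\frc_L$ is a $(-1)$-curve on $X$ with $H \cdot \frc_L = 1$.

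For the reverse bijection I would observe that $|K_X + H|$ is base-point free, being the restriction of the base-point-free $|\mO_{\PP(\mF)}(\xi)|$. Hence any $(-1)$-curve $E$ on $X$ with $HE=1$ satisfies $(K_X+H) \cdot E = 0$, so $E$ is contracted by $\mathfrak{q}|_X$, hence by $\mathfrak{q}$ itself, and Lemma \ref{contracts} forces $E = \frc_L$ for some unstable line $L$. The assignment $L \mapsto \frc_L$ is visibly injective, since different unstable lines project from different fibres of $\mathfrak{p}$, so this yields the announced bijection. Finally, $X_1$ inherits smoothness from $X$: the morphism $\mathfrak{q}|_X$ is birational, is a local isomorphism away from $\bigcup_L \frc_L$ (by the point-separation and tangent-separation arguments sketched in the proof of Lemma \ref{contracts}), and contracts each $\frc_L$ to a single point. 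Standard contraction theory then identifies $X_1$ with the smooth surface obtained from $X$ by simultaneous Castelnuovo blow-down of the $\frc_L$.

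The main obstacle I expect is the very last step, namely certifying smoothness of $X_1$. This requires checking that the $(-1)$-curves $\frc_L$ are pairwise disjoint inside $X$—equivalently, that distinct unstable lines of $\mF$ give rise to distinct quotients of $\mF$ at their intersection point, so that the corresponding negative sections do not collide in $\PP(\mF)$—and also that $\mathfrak{q}|_X$ is an immersion outside $\bigcup_L \frc_L$. Both properties are visible from the explicit local analysis of $\mathfrak{q}$ underlying Lemma \ref{contracts}, but making the argument fully rigorous is the delicate point.
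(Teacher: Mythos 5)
Your proposal is correct and follows essentially the same route as the paper: identify $\varphi_{|K_X+H|}$ with $\mathfrak{q}|_X$ via Lemma \ref{lemma.adj}, invoke Lemma \ref{contracts} for birationality and for the description of the contracted curves, and Lemma \ref{lem:base-locus} to place the negative sections on $X$ (the paper obtains $HE=1$ by the projection formula $HE=L\cdot f_*E$ instead of your intersection computation on the Hirzebruch surface, and it leaves the $(-1)$-curve check and the smoothness of $X_1$ just as implicit as the points you worry about). The two issues you flag are in fact already within reach: the immersion property away from the exceptional curves is exactly the ``infinitely near'' clause in the proof of Lemma \ref{contracts}, while pairwise disjointness of the contracted sections follows from the Hodge index theorem, since $K_X+H=\mO_{\sP}(\xi)|_X$ is nef with $(K_X+H)^2=\tfrac{1}{2}(b-3)(b-4)>0$ and $(K_X+H)\cdot E=(K_X+H)\cdot E'=0$ for two such sections $E\neq E'$, forcing $(E+E')^2<0$ and hence $E\cdot E'=0$.
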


\begin{proof}
 By Lemma \ref{contracts} the map $\mathfrak{q}  \colon \PP(\mF) \to \PP^{b-3}$  is birational onto its image and contracts precisely the negative sections of
  $\PP(\mF|_L)$, where $L$ is an unstable line of $\mF$; let $E$ be
  one of these sections. In view of Lemma \ref{lemma.adj} we have  $\varphi_{|K_X + H|}=\mathfrak{q} |_X$, and moreover by Lemma \ref{lem:base-locus} the curve $E$ is contained in $X$, because $X \in |\mL|$ by construction (see \S \ref{sub:proj.bundle}).
We have $f = \mathfrak{p}|_{X}$, hence $f|_{E}=\mathfrak{p}|_{E}$ and, since $\mathfrak{p}|_{E} \colon E \to L$ is an isomorphism, by the projection formula we obtain 
\begin{equation*}
HE = f^*L \cdot E = L \cdot f_* E = L^2=1.
\end{equation*}
Finally, each Hirzebruch surface $\PP(\mF|_L)$ contains precisely one negative section,  
so we are done.
\end{proof}

\begin{remark} \label{rem:H}
When $b \geq 7$, Proposition \ref{prop:adjunction} will allow us to apply the iterated adjunction process described in \S \ref{subsec:adjunction theory} starting from $D=H$, even if $H$ is ample but \emph{not} very ample. 
\end{remark}

\begin{remark} \label{rem:cubic}
Proposition \ref{prop.TypeV} will show that  $\varphi_{|K_X +H|}$ is birational also for $b=6$: more precisely, in this case $X$ is the blow-up at nine points of a cubic surface $S \subset \PP^3$, and $\varphi_{|K_X +H|}$ is the blow-down morphism.
In fact, $\mW(\mF_6)$ is a smooth conic in $\PD^2$,
cf. \S \ref{b}. If $L$ is an unstable line of $\mF_6$, namely a line tangent to this conic, we have
$\PP(\mF_6|_L)\simeq \mathbb{F}_2$ and $\mathfrak{q} \colon \PP(\mF) \to \PP^3$ contracts the unique negative section of this Hirzebruch surface to a point. The locus of points in $\PP^3$
constructed in this way is a twisted cubic $C$, the map
$\mathfrak{q}$ is the blow-up of $\PP^3$ at $C$ and the nine points that we blow-up in $S$ consist of the subset $S \cap C$. 
\end{remark}

\section{The classification in cases \ref{I} to \ref{VII}}
\label{sec:class}

Since all the  triple planes considered in the sequel are general, for the sake of brevity the 
word \emph{general} will be from now on omitted. 

\subsection{Triple planes of type \ref{I}}

In this case the invariants are
\begin{equation*}
  K_X^2=8, \quad b=2, \quad h=1, \quad g(H)=0
\end{equation*}
and the Tschirnhausen bundle splits as $\mE=\mO_{\PP^2}(-1) \oplus \mO_{\PP^2}(-1)$.
The existence of these triple planes follows from Corollary
\ref{cor:Tsch1}, whereas Proposition \ref{prop.TypeI} below provides their complete classification.

\begin{proposition} \label{prop.TypeI} 
  Let $f \colon X \to \PP^2$ be a triple plane of type
  \emph{\ref{I}}. Then $X$ is isomorphic to the cubic scroll $S(1, \,
  2) \subset \PP^4$ and $f$ is the projection of this scroll from a
  general line of $\PP^4$.  
\end{proposition}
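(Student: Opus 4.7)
The plan is to combine Theorem~\ref{thm triple:2} with the splitting $\mE \simeq \mO_{\PP^2}(-1)^{\oplus 2}$ to realize $X$ explicitly as a divisor in $\PP^2 \times \PP^1$, and then identify the resulting surface with the cubic scroll. Twisting relatively yields $\PP(\mE^\vee) \simeq \PP(\mO_{\PP^2}^{\oplus 2}) \simeq \PP^2 \times \PP^1$, and a direct computation gives $\mO_{\PP(\mE^\vee)}(\xi) \simeq \mO_{\PP^2 \times \PP^1}(1,1)$ while $\pi^*(\wedge^2\mE) \simeq \mO_{\PP^2 \times \PP^1}(-2,0)$. Theorem~\ref{thm triple:2} then exhibits $X$ as a smooth divisor cut out by a section of $\mO(3\xi) \otimes \pi^*(\wedge^2\mE) \simeq \mO_{\PP^2 \times \PP^1}(1, 3)$, and the triple cover $f$ is identified with the restriction of the first projection $p_1$.

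Next I would exploit the second projection $p_2|_X \colon X \to \PP^1$: its fibres are the lines of $\PP^2$ defined by the $(1, 0)$-specialisations of the defining equation, so $X$ is a $\PP^1$-bundle over $\PP^1$. Since $X$ is rational by Corollary~\ref{cor:Tsch1}, it is a Hirzebruch surface $\FF_n$. Writing $H = a \frc + b \frf$, the identity $H \cdot \frf = 1$ (each ruling maps isomorphically onto a line of $\PP^2$ via $f$), together with $H^2 = 3$ and the ampleness conditions $a \ge 1$, $b > an$, leaves only the solution $a = 1$, $n = 1$, $b = 2$. Hence $X \simeq \FF_1$ and $H \sim \frc + 2\frf$.

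By Riemann--Roch and Kodaira vanishing one gets $h^0(X, H) = 5$, and $|H| = |\frc + 2\frf|$ is the classical very ample linear system on $\FF_1$ (it corresponds to the conics in $\PP^2$ through the blown-up point). Its image in $\PP^4$ is a smooth, non-degenerate surface of degree $H^2 = 3$, which by the classification of varieties of minimal degree (see e.g.~\cite[Theorem~19.9]{Har92}) must be the cubic scroll $S(1, 2)$.

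Finally, the triple cover $f$ is encoded by the $3$-dimensional subspace $f^*H^0(\PP^2, \mO(1)) \subset H^0(X, H)$, which has codimension $2$; it is therefore the restriction to $X \subset \PP^4$ of the linear projection $\PP^4 \dashrightarrow \PP^2$ from the line $\ell \subset \PP^4$ cut out by the complementary pencil of hyperplanes. The requirement that $f$ be a (finite) morphism translates precisely into $\ell \cap X = \emptyset$, which is a generic condition on lines since $\dim X + \dim \ell = 3 < 4$. The main (and only mild) obstacle in the whole argument is carrying out the initial identification of line bundles on $\PP(\mE^\vee)$ carefully; once this is in place, everything reduces to routine intersection theory on $\FF_1$ and a standard citation on minimal-degree surfaces.
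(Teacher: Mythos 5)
Your argument is correct, but it follows a genuinely different route from the paper. The paper never uses the explicit splitting of $\mE$ at this point: it applies adjunction theory to the ramification divisor $R$, which is very ample with $R^2=5$ and $g(R)=0$ (being the normalization of the tricuspidal quartic), so that $|K_X+R|=\emptyset$; Theorem \ref{adjunction-thm} then forces $X$ to be a rational normal scroll of degree $5$ under $|R|$, and the two candidates $S(1,4)$ and $S(2,3)$ are distinguished by the (non-)ampleness of $H=\frc+3\frf$ versus $H=\frc+2\frf$, landing on $\FF_1$ embedded by $|\frc+2\frf|$. You instead exploit $\mE\simeq\mO_{\PP^2}(-1)^{\oplus 2}$ and Theorem \ref{thm triple:2} to realize $X$ explicitly as a smooth divisor of bidegree $(1,3)$ in $\PP(\mE^\vee)\simeq\PP^2\times\PP^1$ (the line-bundle identifications $\xi\mapsto\mO(1,1)$, $\pi^*\wedge^2\mE\mapsto\mO(-2,0)$ are right), read off the ruling from the second projection, and pin down $n=1$, $H=\frc+2\frf$ by intersection theory and ampleness; the identification of the image of $|H|$ with $S(1,2)$ then comes from the classification of smooth surfaces of minimal degree rather than from the adjunction theorem. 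Your approach is more constructive -- it gives an equation-level model of $X$ and of $f$ -- but it only works because the Tschirnhausen bundle splits (cases \ref{I}--\ref{III}), whereas the paper's adjunction argument is the template reused in the later, indecomposable cases. Two small points you should make explicit: that no fibre of $p_2|_X$ can degenerate to the whole $\PP^2$ (this follows from irreducibility of $X$, since such a fibre would split off a component of the $(1,3)$ divisor), so that $p_2|_X$ really is a $\PP^1$-bundle; and that smoothness is what excludes the cone $S(0,3)$ in the minimal-degree classification -- you do mention smoothness, so this is only a matter of emphasis. The final identification of $f$ with the projection from the line cut out by the net $f^*H^0(\PP^2,\mO_{\PP^2}(1))\subset H^0(X,H)$, with $\ell\cap X=\emptyset$ forced by base-point-freeness, matches the paper's conclusion.
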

\begin{proof}
By Proposition \ref{RR_0} we know that $R$ is very ample, and by
\eqref{eq:KR} we have $K_XR=-7$. Therefore no multiple of $K_X$ can be
effective and $X$ is a rational surface, as predicted by Corollary
\ref{cor:Tsch1}.  The curve $R$ is the normalization of $B$
(Proposition \ref{RR_0}), which is a tricuspidal quartic curve
(Proposition \ref{branch-locus}), hence $g(R)=0$. Then by the first
statement in Theorem \ref{adjunction-thm} we  get   
\begin{equation*}
    \textrm{dim\,}|K_X +R| = g(R)+p_g(X)-q(X)-1 =-1,
  \end{equation*}
that is $|K_X+R|= \emptyset$. The condition $K_X^2=8$ implies that the
$X$ is not isomorphic to $\PP^2$ so, again by Theorem
\ref{adjunction-thm}, part (A), it must be a rational normal scroll,
with the scroll structure arising from the embedding given by
$|R|$. By the first equality in \eqref{eq:inters-ram} we have $R^2=5$,
and there are two different kind of smooth rational normal scrolls of
dimension $2$ and degree $5$, namely 
\begin{itemize}
\item $S(1, \, 4)$, that is $\FF_3$ embedded in $\PP^6$ via $|\frc + 4\frf|$;
\item $S(2, \, 3)$, that is $\FF_1$ embedded in $\PP^6$ via $|\frc + 3\frf|$.
\end{itemize}
In the former case, using \eqref{eq:Hurwitz-X} we obtain $H=\frc+3
\frf$, which is not ample on $\FF_3$; so this case cannot occur. 
In the latter case we have $H=\frc+2 \frf$, that is very ample and
embeds $\FF_1$ in $\PP^4$ as a cubic scroll $S(1, \, 2)$. The triple
plane is now obtained by taking the morphism to $\PP^2$ associated
with a general net of curves inside $|H|$, which corresponds to the
projection of $S(1, \, 2)$ from a general line of $\PP^4$. 
\end{proof}

\begin{remark} \label{rmk:I}
Another description of triple planes of type \ref{I} is the following.  Let
$X'$ be the Veronese surface, embedded in the Grassmannian $\Gr(1, \,
\PP^3)$ as a surface of bidegree $(3, \,1)$, see \cite[Theorem 4.1
$(a)$]{G93}. There is a family of $1$-secant planes to $X'$;
projecting from one of these planes, we 
  obtain a birational model of a triple plane $f \colon X \to \PP^2$
  of type \ref{I} (in fact, $X$ 
  is the blow-up of $X'$ at one point).
\end{remark}

\subsection{Triple planes of type \ref{II}} 

In this case the invariants are
\begin{equation*}
  K_X^2=3, \quad b=3, \quad h=2, \quad g(H)=1
\end{equation*}
and the Tschirnhausen bundle splits as $\mE=\mO_{\PP^2}(-1) \oplus \mO_{\PP^2}(-2)$.
The existence of these triple planes follows from Corollary
\ref{cor:Tsch1}, whereas Proposition \ref{prop.TypeII} below provides their complete classification.

\begin{proposition} \label{prop.TypeII} 
  Let $f \colon X \to \PP^2$ be a triple plane of type \emph{\ref{II}}. Then
  $X$ is isomorphic to a smooth cubic surface  $S \subset \PP^3$ and
  $f$ is the projection of $S$ from a general point of $\PP^3$. The
  branch locus $B$ is a 
  sextic plane curve with six cusps lying on a conic. 
\end{proposition}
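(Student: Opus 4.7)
The plan is to identify $X$ with a smooth cubic via adjunction together with del Pezzo theory, realize $f$ as a linear projection, and analyze the cuspidal configuration through classical polar geometry.

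First I would record the basic numerical data. From Proposition \ref{inters-ram} and \eqref{eq:KR} we have $HR=6$, $R^2=12$, $K_XR=-6$, together with $H^2=3$ and $K_XH=2b-9=-3$. In particular $(K_X+H)^2=0$ and $(K_X+H)H=0$.

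The key step is to prove that $-K_X\sim H$. Kodaira vanishing applied to the ample divisor $H$ gives $h^i(X,K_X+H)=0$ for $i\ge 1$, while Riemann-Roch yields $\chi(K_X+H)=1$. Hence $|K_X+H|$ consists of a unique effective divisor $D$; since $DH=(K_X+H)H=0$ and $H$ is ample, every irreducible component of $D$ would intersect $H$ positively, so $D=0$, i.e.\ $K_X+H\sim 0$. Therefore $X$ is a smooth del Pezzo surface of degree $K_X^2=3$ with $-K_X=H$ ample, and by standard del Pezzo theory $|H|=|-K_X|$ is very ample and embeds $X$ as a smooth cubic $S\subset\PP^3$ (one checks that $h^0(H)=4$ by Riemann-Roch together with Kodaira vanishing $H^i(-K_X)=H^i(K_X+(-2K_X))=0$ for $i\ge 1$). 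Since $f$ is induced by a $3$-dimensional linear subsystem of $|H|$, under this embedding it coincides with the projection of $S$ from a point $p\in\PP^3$, which must lie off $S$ and be chosen generally in order for $f$ to be a general triple plane.

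The branch curve $B$ has degree $2b=6$ and exactly $3h=6$ ordinary cusps by Proposition \ref{branch-locus}. To see that the six cusps lie on a conic, I would use the classical polar description: writing $S=\{F=0\}$, the preimages on $S$ of the cusps of $B$ are the points $q\in S$ where the line $\overline{pq}$ has contact of order $\ge 3$ with $S$, which by Taylor expansion of $F$ along the line are cut out on $S$ by $\nabla F(q)\cdot p=0$ (the first polar quadric $Q_p$) and by $p^{T}\mathrm{Hess}\,F(q)\,p=0$ (the second polar plane $\Pi_p$), giving six points on the conic $Q_p\cap\Pi_p\subset\Pi_p$. Since $p\notin S$ forces $p\notin\Pi_p$ via Euler's identity, projection from $p$ restricts to a linear isomorphism $\Pi_p\xrightarrow{\sim}\PP^2$ and maps this conic to a conic in $\PP^2$ containing the six cusps.

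The main obstacle I anticipate is the last step: the polar computation is elementary but requires carefully tracking the generality assumptions on $p$ to guarantee that the six points of $S\cap Q_p\cap\Pi_p$ are distinct and indeed correspond to the six cusps of $B$ (rather than to some degenerate ramification). Everything else reduces to numerical checks together with standard vanishing theorems and the classical very ampleness of $-K_X$ on a degree-$3$ del Pezzo surface.
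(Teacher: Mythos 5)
Your proof is correct, but it follows a genuinely different route from the paper's. The paper invokes Proposition \ref{prop.K+2H} to get very ampleness of the adjoint divisor $D=K_X+2H$, computes $D^2=3$, and concludes at once that $\varphi_{|D|}$ embeds $X$ as a smooth cubic in $\PP^3$ (the identification $H=-K_X=D$, and hence the projection description of $f$, is left implicit); for the position of the cusps it simply cites Zariski. You instead prove $K_X+H\sim 0$ directly: Kodaira vanishing plus Riemann--Roch give $h^0(K_X+H)=1$, and $(K_X+H)H=0$ together with ampleness of $H$ forces the unique effective member to be $0$. This identifies $X$ as a degree-$3$ Del Pezzo surface with $H=-K_X$, so standard very ampleness of the anticanonical system yields the cubic model, and the fact that $f$ comes from a $3$-dimensional subspace of $H^0(X,H)=H^0(\PP^3,\mO(1))$ makes the projection-from-a-point statement explicit. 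For the cusps you replace the citation by the classical polar computation (first polar quadric $Q_p$, second polar plane $\Pi_p$, with $p\notin\Pi_p$ via Euler's identity), which is essentially the argument the paper attributes to Du Val in Remark \ref{rem:Du-Val-I-V}; your counting remark is easily closed, since the length-$6$ scheme $S\cap Q_p\cap\Pi_p$ must be finite (otherwise $f$ would be totally ramified along a curve) and it contains the six distinct preimages of the cusps, hence coincides with them. What each approach buys: the paper's argument is shorter and uniform with the treatment of the other types, since $K_X+2H$ and adjunction are the engine of the whole classification; yours is more self-contained and elementary, avoiding both the very-ampleness criterion behind Proposition \ref{prop.K+2H} and the external reference for the six cusps on a conic. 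A cosmetic point only: the net defining $f$ is a $2$-dimensional linear system (a $3$-dimensional space of sections), so "$3$-dimensional linear subsystem" should be rephrased.
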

\begin{proof}
  By Proposition \ref{prop.K+2H}, the divisor $D:=K_X+2H$ is very
  ample. Using $K_X H=2b-9=-3$ (see the proof of Proposition \ref{prop:numerical}), we obtain 
  \begin{equation*}
 D^2=(K_X+2H)^2= K_X^2+4K_X H+4H^2=3-12+12 =3,   
     \end{equation*}
  hence the map $\varphi_{|D|} \colon X \to  \PP^3$
  is an isomorphism onto a smooth cubic surface $S$. The statement
  about the position of the cusps in the branch locus is a well-known
  classical result, see \cite[p.320]{Zar29}. 
\end{proof}

\begin{remark} \label{rmk:II}
  Other descriptions of triple planes of type \ref{II} are the following.
\begin{itemize}  
 \item Let $X'$ be a smooth Del Pezzo surface of degree $5$, embedded
   in $\Gr(1, \, \PP^3)$ as a surface of 
  bidegree $(3, \,2)$, see \cite[Theorem 4.1 $(b)$]{G93}. There is a
  family of $2$-secant planes to $X'$; projecting from one of 
  these planes, we obtain a birational model of a triple plane
  $f\colon X \to \PP^2$ of type \ref{II} (in fact, $X$ is the blow-up
  of $X'$ at two points). 
\item Let $X'$ be a smooth Del Pezzo surface of degree $6$, embedded
  in $\Gr(1, \, \PP^3)$ as a surface of 
  bidegree $(3, \,3)$, see \cite[Theorem 4.1 $(d)$]{G93}. There is a
  family of $3$-secant planes to $X'$; projecting from one of 
  these planes, we obtain a birational model of a triple plane
  $f\colon X \to \PP^2$ of type \ref{II} (in fact, $X$ is the blow-up of $X'$ at three points).

\end{itemize}
\end{remark}

\subsection{Triple planes of type \ref{III}}

In this case the invariants are
\begin{equation*}
  K_X^2=-1, \quad b=4, \quad h=4, \quad g(H)=2
\end{equation*}
and the Tschirnhausen bundle splits as $\mE=\mO_{\PP^2}(-2) \oplus \mO_{\PP^2}(-2)$.
The existence of these triple planes follows from Corollary
\ref{cor:Tsch1}, whereas Proposition \ref{prop.TypeIII} below provides
their complete classification. 

\begin{proposition} \label{prop.TypeIII}
  Let $f \colon X \to \PP^2$ be a triple plane of type \emph{\ref{III}}.
  Then $X$ is a blow-up at $9$ points $\sigma \colon X \to \FF_n$  of
  a Hirzebruch surface $\FF_n$, with $n \in \{0, \, 1, \, 2, \, 3 \}$,
  and 
  \begin{equation} \label{eq:type-III}
    H= 2\frc + (n+3) \frf-\sum_{i=1}^{9} E_i.
  \end{equation}
\end{proposition}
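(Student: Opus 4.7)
The proof would proceed in four main steps.

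First, I would establish that $X$ is rational. By Proposition \ref{bir-inv} one has $P_2(X) = h^0(\PP^2, S^2\mE^\vee(-6))$, and substituting $\mE \simeq \mO_{\PP^2}(-2)^{\oplus 2}$ yields $S^2\mE^\vee(-6)\simeq \mO_{\PP^2}(-2)^{\oplus 3}$, so $P_2(X)=0$; Castelnuovo's criterion then gives that $X$ is rational.

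Second, I would produce a conic bundle structure on $X$ via adjunction. Since $K_X^2 = -1\neq 8$, Proposition \ref{prop.K+2H} tells us that $D:= K_X+2H$ is very ample. Using $K_X H = 2b-9 = -1$, a direct computation yields $D^2 = 7$, $K_X\cdot D = -3$, $(K_X+D)^2 = 0$ and $g(D) = 3$; thus $|K_X+D|$ is base-point free of dimension $g(D)+p_g-q-1 = 2$. By Theorem \ref{adjunction-thm}, case B), either $X$ is a Del Pezzo surface with $D = -K_X$---which would force $H = -K_X$ and hence $H^2 = K_X^2$, contradicting $H^2 = 3 \neq -1$---or $X$ is a conic bundle $\pi \colon X \to \widetilde{C}$ over a smooth curve. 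Since $q(X) = 0$, necessarily $\widetilde{C} \simeq \PP^1$.

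Third, I would identify $X$ as a blow-up of some Hirzebruch surface and determine $H$. A general fiber $F$ of $\pi$ is a smooth rational curve with $F^2=0$, $K_X F=-2$ and, from $(K_X+D)\cdot F = 0$, $H\cdot F = 2$. Contracting successively $(-1)$-curves lying in fibers yields a relatively minimal model $\sigma\colon X\to \FF_n$; since $K_{\FF_n}^2=8$ and $K_X^2=-1$, the morphism $\sigma$ is a composition of $9$ blow-ups. Letting $C=\sigma^*\frc$, $F=\sigma^*\frf$ and denoting by $E_1,\ldots,E_9$ the total transforms of the exceptional divisors, $\textrm{Pic}(X)$ is generated by these classes with intersections $C^2=-n$, $C\cdot F=1$, $F^2=0$, $E_i\cdot E_j=-\delta_{ij}$, $C\cdot E_i=F\cdot E_i=0$. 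Writing $H = aC+bF-\sum m_i E_i$, the conditions $H\cdot F=2$, $K_X\cdot H=-1$ (with $K_X=-2C-(n+2)F+\sum E_i$) and $H^2=3$ become $a=2$, $\sum m_i = 2(b-n)+3$ and $4(b-n)-\sum m_i^2 = 3$. Setting $k := b-n$, the Cauchy-Schwarz inequality $(\sum m_i)^2 \le 9\sum m_i^2$ becomes $(2k+3)^2 \le 9(4k-3)$, equivalent to $(k-3)^2 \le 0$; hence $k=3$ and the equality case forces $m_1=\cdots=m_9=1$, giving $H = 2C+(n+3)F-\sum E_i$.

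Fourth, to bound $n$, I would exploit the ampleness of $H = f^*L$. The strict transform $\tilde\frc$ of $\frc$ is an effective irreducible curve whose class has the form $C - \sum_{i\in I}\mu_i E_i$ with $\mu_i \ge 0$, so the ampleness (hence nefness) of $H$ yields $0 \le H\cdot \tilde\frc \le H\cdot C = 3-n$, and therefore $n\le 3$. The technically sharp point is the Cauchy-Schwarz step in the third paragraph, where the three numerical equations must be manipulated into the degenerate form $(k-3)^2\le 0$ in order to pin down simultaneously the ruling coefficient $b=n+3$ and all multiplicities $m_i=1$; once this is achieved, the bound $n\le 3$ is an immediate consequence of the nefness of $H$ against the negative section.
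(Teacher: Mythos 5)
Your proof is correct, but it follows a genuinely different route from the paper's. The paper computes $h^0(X,\,D)=6$ for $D=K_X+2H$, embeds $X$ by $|D|$ into $\PP^5$ as a smooth surface of degree $7$ with $K^2=-1$, and then simply invokes Ionescu's classification of embedded varieties of degree at most $7$ (the table in \cite{Io84}) to read off that $X$ is a $9$-point blow-up of $\FF_n$, $n\in\{0,1,2,3\}$, with $D=2\frc+(n+4)\frf-\sum E_i$, whence $H=\tfrac12(D-K_X)$. You instead push the adjunction one step further: from $(K_X+D)^2=0$ and the dichotomy in Theorem \ref{adjunction-thm} you rule out the Del Pezzo case numerically ($H^2=3\neq K_X^2$) and obtain a conic bundle over $\PP^1$ (using $q=0$), pass to a relatively minimal model $\FF_n$ via $K^2$-bookkeeping ($8-(-1)=9$ blow-ups), and then pin down $H=2C+(n+3)F-\sum E_i$ by the three numerical conditions $H\cdot F=2$, $K_XH=-1$, $H^2=3$ together with the Cauchy--Schwarz equality case $(k-3)^2\le 0$; finally you bound $n$ by pairing $H$ with the strict transform of the negative section. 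All the computations check out ($\sum m_i=2k+3$, $\sum m_i^2=4k-3$, equality forcing $m_i=1$), and the nefness step does give $n\le 3$ as claimed (in fact ampleness of $H$ gives the slightly sharper $H\cdot\tilde\frc\ge 1$, hence $n\le 2$, which is still compatible with the statement since the proposition only lists possible values of $n$). What your approach buys is self-containedness: it replaces the external citation of the degree-$7$ classification by an elementary lattice argument, at the cost of a somewhat longer case analysis; the paper's argument is shorter and also records the explicit very ample model $D$ on $X_1$, which it reuses in the later, harder cases.
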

\begin{proof}
  By Proposition \ref{prop.K+2H}, the divisor $D:=K_X+2H$ is very ample. We have
  \begin{equation*}
    \left( \begin{array}{cc} D^2 & K_X D \\
        K_X D & K_X^2\\
      \end{array} \right) =
    \left( \begin{array}{cc} \; 7 & -3 \\
        -3 &  -1\\
      \end{array} \right),
  \end{equation*}
  in particular $K_X D<0$ shows that $X$ is a rational surface.
  By Serre duality and Kodaira vanishing theorem we have $h^1(X, \, D) = h^1(X, \, -2H)=0$, 
  and analogously $h^2(X, \, D)=h^0(X, \, -2H)=0$, so by the Riemann-Roch theorem we obtain
\begin{equation*}  
h^0(X, \, D)=\chi(X, \, D)=\frac{D(D-K_X)}{2}+ \chi(\mO_X)=6.   
\end{equation*}
  The morphism $\varphi_{|D|} \colon X \to X_1 \subset \PP^5$ is an
  isomorphism of $X$ onto its image $X_1$, which is a surface of
  degree $7$ with $K_{X_1}^2=-1$. Embedded projective varieties of
  degree at most $7$ are classified in \cite{Io84}; in particular, the
  table at page 148 of that paper shows that $X_1$ is a blow-up  at 9
  points $\sigma \colon X_1 \to \FF_n$, with $n \in \{0, \, 1,
  \, 2, \, 3 \}$, and that 
  \begin{equation*} 
  D= 2\frc + (n+4) \frf-\sum_{i=1}^{9} E_i.
  \end{equation*}
  Using $2H=D-K_X$, we obtain \eqref{eq:type-III}. 
  \end{proof}

\begin{remark} \label{rmk:III}
  When $n=0$, the surface $X$ is the blow-up of $\PP^1
  \times \PP^1$ at $9$ points and a birational model of the
  triple plane $f \colon X \to \PP^2$ is obtained by
  using the curves of bidegree $(2, \, 3)$ passing through these points, since \eqref{eq:type-III} becomes $H= 2L_1+3L_2 - \sum_{i=1}^9 E_i$.

When $n=1$, since $\FF_1$ is the blow-up of the plane at one point, we see from \eqref{eq:type-III} that $X$ can be also seen as the blow-up of $\PP^2$ at $10$ points and that $H= 4L - 2 E_{10} - \sum_{i=1}^9 E_i $.

Another description of triple planes of type \ref{III} is the
following. Let $X'$ be a Castelnuovo surface with $K_{X'}^2=2$,
embedded in $\Gr(1, \, \PP^3)$ as a surface of 
  bidegree $(3, \,3)$, see \cite[Theorem 4.1 $(e)$]{G93}. There is
  a family of $3$-secant planes to $X'$; projecting from one of these
  planes, we obtain a birational model of a triple plane $f\colon X
  \to \PP^2$ of type \ref{III} (in fact, $X$ is the blow-up of $X'$ at
  three points). 
\end{remark}

\subsection{Triple planes of type \ref{IV}} 
In this case the invariants are
\begin{equation*}
  K_X^2=-4, \quad b=5, \quad h=7, \quad g(H)=3.
\end{equation*}
By Theorem
\ref{teo:res}, the resolution of
  $\mF=\mE(3)$ is
\begin{equation*}
  0 \to \mO_{\PP^2} (-1) \to \mO_{\PP^2}^3 \to \mF \to 0,
\end{equation*}
hence $\mF \simeq T_{\PP^2}(-1)$ and  \eqref{eq:S3F(6-b)} implies that
$S^3 \mE^{\vee} \otimes \wedge^2 \mE$ is isomorphic to $S^3
(T_{\PP^2}(-1)) \otimes \mO_{\PP^2}(1)$, which is 
globally generated.
By Theorem \ref{thm triple:1} this ensures the existence of  triple
planes of type \ref{IV}, whereas Proposition \ref{prop.TypeIV} below
provides their complete classification. 

\begin{proposition} \label{prop.TypeIV}
  Let $f \colon X \to \PP^2$ be a triple plane of type
  \emph{\ref{IV}}. Then$:$
  \begin{enumerate}[itemsep=2pt, label=\bf{\roman*)}]
  \item \label{IV-i} the surface $X$ is isomorphic to the blow-up of
    the plane at
  a subset $Z$ of $13$ points imposing only $12$ conditions on
  quartic curves, and $|H|$ is the complete linear system of quartics
  passing through $Z;$
  \item \label{IV-ii} $Z$ can be naturally identified with a $0$-dimensional subscheme of $\PD^2$, that we call again $Z$, arising as the zero locus of a global section of $T_{\PD^2}(2)$ canonically associated with the building section $\eta \in H^0(\PP^2,S^3 \mE^{\vee} \otimes \wedge^2 \mE)$ of the triple plane. Furthermore, the subscheme $Z\subset \PD^2$ determines $\eta$ up to a multiplicative constant.
  \end{enumerate}
\end{proposition}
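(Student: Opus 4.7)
The plan is to identify $\PP(\mF)$ with the point-line incidence $\II \subset \PP^2 \times \PD^2$ using $\mF := \mE(3) \simeq T_{\PP^2}(-1)$ (Theorem \ref{teo:res}), and then to analyze $X$ as a divisor in $\II$ through the second projection $\mathfrak{q} : \II \to \PD^2$. Applying the flipping of Proposition \ref{prop:P-and-P'} with $\dim W = 1$ and $\dim U = \dim V = 3$, one finds $\mG \simeq T_{\PD^2}(-1)$ on $\PD^2 = \PP(U)$, so the two tautological classes satisfy $\ell = \xi'$ and $\xi = \ell'$. By Theorem \ref{thm triple:2} together with \eqref{eq:linear-L} the cover $X$ sits in $|\mL|$ with $\mL = \mO_{\II}(3\xi + \ell)$. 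Pushing $\mL$ forward along $\mathfrak{q}$ and invoking Proposition \ref{prop:P-and-P'}, \ref{secondofatto}, one gets $\mathfrak{q}_* \mL \simeq T_{\PD^2}(-1) \otimes \mO(3) = T_{\PD^2}(2)$, hence a canonical isomorphism
\begin{equation*}
H^0(\PP^2, S^3 \mE^\vee \otimes \wedge^2 \mE) \simeq H^0(\PD^2, T_{\PD^2}(2))
\end{equation*}
sending the building section $\eta$ to a distinguished section $\tilde{\eta}$ of $T_{\PD^2}(2)$.

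Since $\mL$ has degree one on each $\mathfrak{q}$-fiber $L \simeq \PP^1$, the generic fiber meets $X$ at a single point (the zero of $\tilde{\eta}$ restricted to $L$), whereas above the zero scheme $Z := \{\tilde{\eta} = 0\} \subset \PD^2$ of length $c_2(T_{\PD^2}(2)) = 13$ the whole fiber lies in $X$. By Lemma \ref{lemma.adj} the restriction $\mathfrak{q}|_X : X \to \PD^2$ coincides with the first adjoint map $\varphi_{|K_X + H|}$, which is a regular birational morphism between smooth projective surfaces and therefore a composition of blow-downs of $(-1)$-curves. As precisely the 13 disjoint smooth rational fibers $L_y$ above $Z$ are contracted, each $L_y$ is a $(-1)$-curve and the map identifies $X$ with $\mathrm{Bl}_Z \PD^2 \simeq \mathrm{Bl}_Z \PP^2$. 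Writing $H = a L - \sum b_i E_i$ and imposing $H^2 = 3$, $K_X H = 1$, $K_X = -3L + \sum E_i$ forces $a = 4$ and $b_i = 1$ for all $i$, so $|H|$ is the net of plane quartics through $Z$. Finally, Proposition \ref{bir-inv-tr} yields $f_* \mO_X(H) = \mO_{\PP^2}(1) \oplus T_{\PP^2}(-3)$, and since $h^0(\PP^2, T_{\PP^2}(-3)) = 0$ by stability of $T_{\PP^2}$ we obtain $h^0(X, H) = 3$; thus $Z$ imposes only 12 conditions on quartics, proving (i).

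For (ii), the correspondence $\eta \leftrightarrow \tilde{\eta}$ just constructed is canonical. To see that $\tilde{\eta}$ is determined by $Z$ up to a multiplicative scalar, I invoke the Serre correspondence: nontrivial extensions $0 \to \mO \to T_{\PD^2}(2) \to \mI_Z(7) \to 0$ are parametrized, modulo scalar, by
\begin{equation*}
\Ext^1(\mI_Z(7), \mO) \simeq H^1(\PD^2, \mI_Z(4))^\vee
\end{equation*}
via Serre duality (using $\omega_{\PD^2} = \mO(-3)$ and $\det T_{\PD^2}(2) = \mO(7)$). Feeding the already computed $h^0(\mI_Z(4)) = h^0(X, H) = 3$ into the long exact sequence of $0 \to \mI_Z(4) \to \mO(4) \to \mO_Z \to 0$ gives $h^1(\mI_Z(4)) = 13 - 15 + 3 = 1$, which provides the desired uniqueness.

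The main technical obstacle is the geometric step in the second paragraph: establishing rigorously that $Z$ is reduced (so that $X \simeq \mathrm{Bl}_Z \PP^2$ is genuinely a smooth blow-up at 13 distinct points) and that the 13 fibers $L_y$ are pairwise disjoint in $X$. These properties should follow from the hypothesis that $X$ is smooth as a general triple plane, via a careful local analysis of the vanishing of $\tilde{\eta}$ ruling out that $X$ contains a fiber with higher multiplicity or that two distinct zeros of $\tilde{\eta}$ give fibers meeting inside $\II$.
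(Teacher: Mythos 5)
Your route to part \ref{IV-ii} is essentially the paper's: you identify $\PP(\mF)$ with the incidence variety, compute $\mathfrak{q}_*\mL \simeq T_{\PD^2}(2)$ (the paper gets the same thing as $\mR(3)$ via the Koszul/Euler sequence), and prove uniqueness of $\eta$ from $h^1(\PD^2,\mI_{Z}(4))=1$ by the Serre correspondence, exactly as in Proposition \ref{propsteiner}. For part \ref{IV-i} you take a genuinely different route: the paper never looks at the fibres of $\mathfrak{q}$ here, but instead runs adjunction starting from the very ample $D=K_X+2H$, uses Theorem \ref{adjunction-thm} to see that the adjoint image is either a quartic scroll or $\PP^2$, and kills the scroll cases by a parity argument on $2H$; your fibrewise analysis of $X\in|\mO_{\II}(3\xi+\ell)|$ is more in the spirit of the paper's proof of \ref{IV-ii} and, if completed, gives \ref{IV-i} and \ref{IV-ii} in one stroke.

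However, as written two steps do not hold up. First, the assertion that $H^2=3$, $K_XH=1$, $K_X=-3L+\sum E_i$ ``force'' $a=4$, $b_i=1$ is false: $a=5$ with three $b_i=2$ and ten $b_i=1$ also satisfies $K_XH=-15+16=1$ and $H^2=25-22=3$. The correct (and shorter) derivation is available to you: since $\mathfrak{q}|_X=\varphi_{|K_X+H|}$ is the contraction onto $\PD^2$, one has $K_X+H=(\mathfrak{q}|_X)^*\mO_{\PD^2}(1)=L$, hence $H=L-K_X=4L-\sum E_i$ with no lattice gymnastics. Second, the point you flag as ``the main technical obstacle'' is a genuine gap in your write-up, but it does not require any local analysis of $\tilde{\eta}$: disjointness of the contracted curves is automatic (they are fibres of $\mathfrak{q}$ over distinct points), and each such fibre $L_y\subset X$ satisfies $(K_X+H)\cdot L_y=0$ and $H\cdot L_y=\ell\cdot L_y=1$, so $K_X\cdot L_y=-1$ and, by adjunction, $L_y^2=-1$. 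Contracting these pairwise disjoint $(-1)$-curves and applying Zariski's main theorem to the induced birational morphism onto $\PD^2$ shows that $\mathfrak{q}|_X$ is precisely the blow-up of $\PD^2$ at the distinct points of $\mathrm{Supp}(Z)$; then $K_X^2=-4=9-\#\mathrm{Supp}(Z)$ gives $\#\mathrm{Supp}(Z)=13=\mathrm{length}(Z)$, so $Z$ is reduced and $X\simeq$ the blow-up of the plane at $13$ distinct points. With these two repairs your argument is complete and matches the statement.
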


\begin{proof}
Let us show \ref{IV-i}. By  Proposition \ref{prop.K+2H} the divisor
$D:=K_X+2H$ is very ample. Therefore, the first adjunction map
\begin{equation*}
\varphi_1:=\varphi_{|K_X+D|} \colon X \to X_1 \subset
\PP^{5}
\end{equation*}
is a birational morphism onto a smooth surface $X_1$. Moreover, the intersection matrix of $X_1$ is 
\begin{equation*}
\left( \begin{array}{cc} (D_1)^2 & K_{X_1}D_1 \\
 K_{X_1}D_1 & (K_{X_1})^2\\
 \end{array} \right) =
\left( \begin{array}{cc} \; 4 & -6 \\
 -6 &  -4 + \alpha_1\\
 \end{array} \right),
\end{equation*}
where $D_1$ and $\alpha_1$ are defined in \S \ref{subsec:adjunction theory}.
 In particular $K_{X_1}D_1 <0$ shows that $X_1$ (and so $X$) is a rational surface. We have $g(D_1)=0,$ thus by Theorem \ref{adjunction-thm}  the adjoint linear system $|K_{X_1}+D_1|$ has dimension $-1$, i.e. it is empty. By the same result, it follows that the surface $X_1$ is either a
rational normal scroll (and in this case $\alpha_1=12$) or
$\PP^2$ (and in this case $\alpha_1=13$). Let us exclude
 the former case. There are two types of smooth quartic rational normal scroll surfaces: $S(2, \, 2)$, namely $\PP^1 \times \PP^1$ embedded in
$\PP^5$ by $|L_1+ 2 L_2|$, and $S(1, \, 3)$, namely $\FF_2$
embedded in $\PP^5$ by $|\frc + 3 \frf|$. The equality 
$D_1 = 2K_X + 2H$ implies that if $X_1 = \PP^1 \times \PP^1$ we have
\begin{equation*}
2H= 5 L_1 + 6 L_2 - \sum_{i=1}^{12} 2E_i,
\end{equation*}
whereas if $X_1 = \FF_2$ we have
\begin{equation*}
2H= 5\frc + 11 \frf - \sum_{i=1}^{12} 2E_i.
\end{equation*}
In both cases we obtain
a contradiction, since $H$ must be a divisor with integer
coefficients.

It follows that  $(X_1, \,D_1)=(\PP^2, \, \mO_{\PP^2}(2))$,
hence $\alpha_1=13$ and $\varphi_1$ contracts exactly $13$
exceptional lines, i.e. $X$ is isomorphic to the blow-up of $\PP^2$
at $13$ points. Therefore we get
\begin{equation*}
X=\PP^2(p_1, \ldots, p_{13}), \quad
D=5L-\sum_{i=1}^{13}E_i,
\end{equation*}
which implies $H=4L-\sum_{i=1}^{13}E_i$.
Since $h^0(X, \, \mO_X(H))=3$,  the points in the set
$Z:=\{p_1, \ldots, p_{13}\}$ impose only $12$ conditions on plane quartic
curves.

\bigskip

We now prove \ref{IV-ii}.
We use the notation of \S \ref{a}, so that the vector bundle $\mF \simeq
T_{\PP^2}(-1)$ has a resolution of the form \eqref{basic}, with
the $3$-dimensional vector space $U=H^0(\PP^2, \, \mF)$ being naturally
identified with $V^\vee$. By the results in  \S
\ref{sub:proj.bundle},  in this case $\PP(\mF)$ is the point-line incidence correspondence in
$\PP^2 \times \PD^2$, namely a smooth hyperplane section of $\PP^2
\times \PD^2$, so we have
\begin{equation} \label{eq:Kos-IV}
0 \to \mO_{\PP^2 \times \PD^2}(-1, \, -1) \to \mO_{\PP^2 \times \PD^2} \to \mO_{\sP} \to 0.
\end{equation}
Twisting \eqref{eq:Kos-IV} by $\mathfrak{p}^* (\mO_{\PP^2}(1))=\mO_{\PP^2 \times \PD^2}(1, \, 0)$, applying the functor $\mathfrak{q}_*$ and using \eqref{eq:Riq} we obtain 
\begin{equation*}
0 \to \mO_{\PP^2}(-1) \to H^0(\PP^2, \, \mO_{\PP^2}(1)) \otimes \mO_{\PD^2} \to \mathfrak{q}_* \big(\mathfrak{p}^*(\mO_{\PP^2}(1)) \otimes \mO_{\PP(\mF)}) \big) \to 0,
\end{equation*}
so the Euler sequence yields
\begin{equation*}
\mR = \mathfrak{q}_*(\mO_{\sP}(\ell))=\mathfrak{q}_* \big(\mathfrak{p}^* (\mO_{\PP^2}(1)) \otimes \mO_{\sP} \big) \simeq
T_{\PD^2}(-1)
\end{equation*}
and equality \eqref{eq:S3E} implies
\begin{equation*}
H^0(\PP^2, \, S^3 \mE^{\vee} \otimes \wedge ^2 \mE ) =H^0(\PD^2, \, \mR(3))= H^0(\PD^2, \, T_{\PD^2}(2)).
\end{equation*}
This shows that the building section $\eta$ of our triple plane is naturally associated with a global section of $T_{\PD^2}(2)$ that we call $\eta$, too, and whose vanishing locus will be denoted by $Z=D_0(\eta)$. Note that $Z$ is a zero-dimensional subscheme of $\PD^2$ such $\mathrm{length}(Z)=c_2(T_{\PD^2}(2))=13$. 

Furthermore we have $\mR(3)=\mathfrak{q}_* \mL$, where $\mL=\mO_{\sP}(3\xi+\ell)$, and our triple plane $X$ is a smooth divisor in the complete linear system $|\mL|$, see \eqref{eq:linear-L}. Since a global section of $\mL$ corresponds to a non-zero morphism $\mO_{\sP} \to \mL$, we obtain a short exact sequence
\begin{equation} \label{eq:IV-1}
0 \to \mO_{\sP}(-3\xi) \to \mL(-3\xi) \to \mO_X(H) \to 0,
\end{equation}
and so, taking the direct image via $\mathfrak{q}$, we get
\begin{equation} \label{eq:IV-2}
0 \to \mO_{\PD^2}(-3) \to T_{\PD^2}(-1) \to \mI_{Z/\PD^2}(4) \to 0.
\end{equation}
The inclusion $X \simeq \PP(\mO_X(H)) \hookrightarrow \sP$
 corresponds to the surjection $\mL(-3\xi) \to \mO_X(H)$ in
 \eqref{eq:IV-1}; then \eqref{eq:IV-2} shows that $X$ can be
 identified with $\PP(\mI_{Z/\PD^2}(4))$, embedded in  in
 $\PP(T_{\PD^2}(-1))$ via the surjection $T_{\PD^2}(-1) \to
 \mI_{Z/\PD^2}(4)$. 
Hence a model of the triple cover map $f \colon X \to
\PP^2$ is the rational map $\PD^2 \dashrightarrow \PP^2$ given by the
linear system of (dual) quartics through $Z$. This identifies $X$ with the blow-up of $\PD^2$ at $Z$.

Finally, let us show that the subscheme $Z$ determines
$\eta \in H^0(\PD^2, \, T_{\PD^2}(2))$ up to a multiplicative
constant. To this purpose, we use Proposition \ref{propsteiner} with
$t=0$, so we only have to check that
$h^1(\PP^2,\mI_{Z/\PP^2}(4))=1$. But this is clear since
$\chi(\PP^2,\mI_{Z/\PP^2}(4))=-2$ and $h^0(\PP^2,\mI_{Z/\PP^2}(4))=3$.
\end{proof}

\begin{remark} \label{rmk:IV}
  A \emph{Bordiga surface} is a smooth surface of degree $6$ in
  $\PP^4$, given by the blow up of $\PP^2$ at $10$
  points embedded by the linear system of plane quartics through them, see
  \cite[Capitolo 5]{Ott95}. Then Proposition \ref{prop.TypeIV} shows that a
  birational model of a triple plane $f \colon X \to \PP^2$ of type \ref{IV} can be realized as the projection of a Bordiga surface from a $3$-secant line.

Furthermore, contracting one of the exceptional divisors in the Bordiga surface, we obtain a rational surface $X'$ with $K_{X'}^2=0$ that can be embedded
  in $\Gr(1, \, \PP^3)$ as a surface of bidegree $(3,
  \,4)$, see \cite[Theorem 4.1 $(f)$]{G93}. There is a family of
  $4$-secant planes to $X'$; projecting from one of these planes, we
  obtain another birational model of a triple plane $f\colon X \to \PP^2$ of type \ref{IV} (in fact,
  $X$ is the blow-up of $X'$ at four points).
\end{remark}

\subsection{Triple planes of type \ref{V}} 

In this case the invariants are
\begin{equation*}
  K_X^2=-6, \quad b=6, \quad h=11, \quad g(H)=4,
\end{equation*}
and by Theorem \ref{teo:res} the twisted Tschirnhausen bundle
$\mF$ has a resolution of the form
\begin{equation} \label{eq:M-V}
  0 \longrightarrow \mO_{\PP^2}(-1)^2 \stackrel{M}{\longrightarrow} \mO_{\PP^2}^4 \longrightarrow
  \mF \longrightarrow 0.
\end{equation}

Since $\mF$ is globally generated, it follows that $S^3 \mE^\vee \otimes \wedge^2 \mE=S^3
\mF$ is globally generated, too.
Hence triple planes $f \colon X \to
\PP^2$ of type \ref{V} do exist by Theorem \ref{thm triple:1}. The
next result provides their  classification.

\begin{proposition} \label{prop.TypeV}
  Let $f \colon X \to \PP^2$ be a triple plane of type
  \emph{\ref{V}}. Then$:$
  \begin{enumerate}[itemsep=2pt, label=\bf{\roman*)}]
  \item \label{V-1} the surface $X$ is isomorphic to the blow-up
    $\PP^2(p_1, \ldots, p_{15})$ of  $\PP^2$ at $15$ points and the
    triple plane map is induced by the linear system of plane sextics 
    singular at $p_1,\ldots,p_6$ and passing through $p_7,\ldots,p_{15};$
  \item \label{V-2} the nine points $p_7,\ldots,p_{15}$ consists of  
    the intersection $S \cap C$, where $S=\PP^2(p_1,\ldots,p_6)$ is a cubic surface in $\PP^3$, naturally associated with the
    building section  $\eta \in
  H^0(\PP^2,S^3 \mE^{\vee} \otimes \wedge^2 \mE)$, whereas $C$ is a twisted cubic such that $\PP(\mF)$ is the blow-up of $\PP^3$ at $C$.
  \end{enumerate}
\end{proposition}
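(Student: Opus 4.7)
The plan is to follow the outline sketched in Remark \ref{rem:cubic}, combining the identification $\varphi_{|K_X+H|}=\mathfrak{q}|_X$ from Lemma \ref{lemma.adj} with the description of $\mathfrak{q}\colon \sP\to \PP^{b-3}$ from \S \ref{sub:proj.bundle}. When $b=6$ we have $\mL=\mO_\sP(3\xi)$ (since $6-b=0$), so $X\sim 3\xi$ in $\sP$ and the first adjoint map sends $X$ into $\PP^3$. The idea is to understand $\mathfrak{q}$ as a blow-up of $\PP^3$ along a twisted cubic, and then restrict everything to $X$.

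I would first analyse $\mathfrak{q}$ globally. By Lemma \ref{contracts}, $\mathfrak{q}$ is birational onto a threefold of degree $\binom{b-4}{2}=1$ in $\PP^3$, hence onto all of $\PP^3$, and its exceptional locus is swept by the negative sections $\frc_L$ of $\PP(\mF_6|_L)\simeq \FF_2$ as $L$ varies along the smooth conic $\mW(\mF_6)\subset \PD^2$ described in \S \ref{b}. These sections fill out an irreducible divisor $E_\mathfrak{q}\subset \sP$, fibered over $\mW(\mF_6)\simeq \PP^1$, whose image $C=\mathfrak{q}(E_\mathfrak{q})\subset \PP^3$ is a smooth rational curve; a direct Chern-class computation using $\xi\cdot \frc_L=0$ on $\FF_2$ gives $\deg C=3$, so $C$ is a twisted cubic. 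Contraction of a smooth irreducible divisor to a smooth curve in a smooth threefold then identifies $\mathfrak{q}\colon \sP\to \PP^3$ with the blow-up of $\PP^3$ along $C$, matching the classical description of this blow-up as a $\PP^1$-bundle over $\PP^2$ associated with a rank-$2$ Steiner bundle of Chern classes $(2,3)$.

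Next I would restrict to $X$ to prove \ref{V-2}. Since $\mathfrak{q}^*\mO_{\PP^3}(1)=\mO_\sP(\xi)$ and $X\sim 3\xi$, the image $S:=\mathfrak{q}(X)\subset \PP^3$ is a surface of degree $3$. The curves contracted by $\mathfrak{q}|_X$ are precisely the $\frc_L$ that happen to be contained in $X$; for each such $\frc_L$ the normal-bundle exact sequence $0\to N_{\frc_L/X}\to N_{\frc_L/\sP}\to N_{X/\sP}|_{\frc_L}\to 0$, combined with $\xi\cdot \frc_L=0$ and the standard computation $N_{\frc_L/\FF_2}\oplus N_{\FF_2/\sP}|_{\frc_L}\simeq \mO_{\frc_L}(-2)\oplus \mO_{\frc_L}(1)$, yields $\frc_L^2=-1$ in $X$, so these are pairwise disjoint $(-1)$-curves. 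They are in bijection with the points of the scheme-theoretic intersection $S\cap C$, and Bezout gives $|S\cap C|=\deg S\cdot \deg C=9$. Thus $\mathfrak{q}|_X$ is the simultaneous blow-down of nine disjoint $(-1)$-curves onto a smooth cubic surface $S\subset \PP^3$, and $X$ is the blow-up of $S$ at the nine points of $S\cap C$. Since any smooth cubic in $\PP^3$ is isomorphic to $\PP^2(p_1,\ldots,p_6)$, this gives $X=\PP^2(p_1,\ldots,p_{15})$ and establishes \ref{V-2}.

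To conclude \ref{V-1} I would compute $H$. Combining $K_X+H=\mathfrak{q}|_X^*\mO_{\PP^3}(1)=\mathfrak{q}|_X^*(-K_S)=3L-\sum_{i=1}^6 E_i$ with $K_X=-3L+\sum_{i=1}^{15}E_i$ yields $H=6L-2\sum_{i=1}^6 E_i-\sum_{j=7}^{15}E_j$, so the triple cover is induced by the linear system of plane sextics singular at $p_1,\ldots,p_6$ and simply passing through $p_7,\ldots,p_{15}$. The hardest part will be to justify rigorously the identification of $\mathfrak{q}$ with the blow-up of $\PP^3$ along a smooth twisted cubic (and not only with a divisorial birational contraction), and to verify that the scheme $S\cap C$ is reduced, so that the nine contracted $(-1)$-curves are indeed disjoint and $X\to S$ is a smooth simultaneous blow-up; this then gives $K_X^2=K_S^2-9=3-9=-6$, in agreement with the numerical data of type \ref{V}.
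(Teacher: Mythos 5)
Your strategy (prove \ref{V-2} first by analysing $\mathfrak{q}$ as a birational contraction and then deduce \ref{V-1}) is genuinely different from the paper's, which proves \ref{V-1} by iterated adjunction starting from the very ample divisor $D=K_X+2H$ (the second adjoint surface is shown to be a smooth cubic in $\PP^3$, the quadric being excluded), and proves \ref{V-2} by pushing the Koszul resolution of $\mO_{\sP}$ in $\PP^2\times\PP^3$ forward along $\mathfrak{q}$, obtaining $\mathfrak{q}_*(\mO_{\sP}(\ell))\simeq\mI_{C/\PP^3}(2)$ presented by the flipped $3\times 2$ matrix $N$ of linear forms; this exhibits $C$ as a determinantal twisted cubic and identifies $\sP\simeq\PP(\mI_{C/\PP^3}(2))$, i.e.\ the blow-up of $\PP^3$ along $C$, with no appeal to the structure theory of threefold contractions. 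As written, however, your argument has genuine gaps, and they sit exactly where you defer the work. The identification of $\mathfrak{q}$ with the blow-up of $\PP^3$ along a \emph{smooth} twisted cubic is the substance of statement \ref{V-2}, not a technicality: you give no reason why the image of the conic $\mW(\mF_6)$ is smooth and non-degenerate (a rational space curve of degree $3$ could a priori be a singular plane cubic), and the announced ``direct Chern-class computation'' of $\deg C=3$ is circular as stated, since the natural formula ($E^2\cdot\xi=-\deg C$ for the exceptional divisor $E$) presupposes either the blow-up structure or the class of $E$ in $\mathrm{Pic}(\sP)$, which you have not determined. Even granting that, passing from ``birational morphism of smooth threefolds contracting an irreducible divisor onto a smooth curve'' to ``blow-up of that curve'' needs a further input (the universal property after showing $\mI_{C/\PP^3}\cdot\mO_{\sP}$ is invertible, or Mori's classification of divisorial contractions), none of which is indicated.

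Second, your count of the nine blown-up points needs $S$ smooth and $S\cap C$ reduced; you assert the former and postpone the latter. Bezout only gives that the scheme $S\cap C$ has length $9$ once $C\not\subset S$ is known; if the intersection were non-reduced, or if $S$ were singular, $X$ would not be the blow-up of a smooth cubic at nine distinct points and both \ref{V-1} and \ref{V-2} would fail as stated. One can hope to rule this out using the smoothness of $X$ (the strict transform), but that local analysis is precisely what is missing, and no genericity is available since $S$ is determined by the given triple plane. The paper avoids this because \ref{V-1} is proved independently: the adjunction process with $D=K_X+2H$ shows directly that the image is a smooth cubic and that exactly nine disjoint $(-1)$-curves are contracted, and part \ref{V-2} only has to match that picture with the blow-up description coming from the flipped Steiner matrix. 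If you want to keep your order of argument, the quickest rigorous route is to supply the determinantal description of $C$ via $N$, and then carry out the local computation at the points of $S\cap C$ showing that smoothness of $X$ forces $S$ to be smooth there and the intersection transverse.
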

\begin{proof} Let us show \ref{V-1}.
By Proposition \ref{prop.K+2H} the divisor $D:=K_X+2H$ is very
ample. We have $K_X H=2b-9=3$, and the genus formula yields $g(D)=10$,  so by Theorem \ref{adjunction-thm} we deduce that the first adjoint system $|K_X+D|$ has dimension $9$.  Therefore the first adjunction map
\begin{equation*}
\varphi_1=\varphi_{|K_X+D|} \colon X \to X_1 \subset
\PP^{9}
\end{equation*}
is birational onto its image $X_1$, whose intersection
matrix is
\begin{equation*}
\left( \begin{array}{cc} (D_1)^2 & K_{X_1}D_1 \\
 K_{X_1}D_1 & (K_{X_1})^2\\
 \end{array} \right) =
\left( \begin{array}{cc} \; 12 & -6 \\
 -6 &  -6 + \alpha_1\\
 \end{array} \right).
\end{equation*}
In particular $K_{X_1}D_1 <0$ shows that $X_1$ (and so $X$) is a rational
surface. Now we consider the
second adjunction map $\varphi_2 \colon X_1 \to X_2 \subset
\PP^3$, which is an isomorphism onto its image $X_2$ (Proposition \ref{prop:phi_n}), whose intersection matrix is
\begin{equation*}
\left( \begin{array}{cc} (D_2)^2 & K_{X_2}D_2 \\
 K_{X_2}D_2 & (K_{X_2})^2\\
 \end{array} \right) =
\left( \begin{array}{cc} \; -6+\alpha_1 & -12 + \alpha_1 \\
 -12 + \alpha_1 &  -6 + \alpha_1\\
 \end{array} \right).
\end{equation*}
This shows that $X_2$ is a non-degenerate, smooth rational surface in
$\PP^3$, hence it is either a quadric surface or a cubic surface. If $X_2$
were a quadric then $(D_2)^2=2$, hence $\alpha_1=8$ and the
intersection matrix would give $(K_{X_2})^2=2$, which is a
contradiction. Therefore $X_2$ is a cubic surface $S$, hence
$\alpha_1=9$.
 Moreover $X_1$ is isomorphic to $X_2$, so $X$ is the blow-up of $S$ at $9$ points. 
 It follows
\begin{equation*}
X=\PP^2(p_1, \ldots, p_{15}), \quad D=9L-\sum_{i=1}^{6}3E_i -
\sum_{i=7}^{15} E_j,
\end{equation*}
which implies $H=6L-\sum_{i=1}^{6}2E_i - \sum_{i=7}^{15} E_j$. 

\bigskip 
We turn to \ref{V-2}. Here we use the approach developed in \S \ref{a},  in particular we consider again the resolution \eqref{basic},
where in this case $U=H^0(\PP^2,\mF)$ is a $4$-dimensional vector space.
Set $\PP^3 = \PP (U)$.
By Proposition \ref{prop:P-and-P'}, the projective bundle $\PP(\mF)$
is the complete intersection of two divisors of bidegree $(1, \, 1)$
in $\PP^2 \times \PP^3$, so the corresponding Koszul resolution is 
\begin{equation} \label{eq:Kos-V}
0 \to \mO_{\PP^2 \times \PP^3}(-2, \, -2) \to \mO_{\PP^2 \times \PP^3}(-1, \, -1)^2 \stackrel{d_1} \to \mO_{\PP^2 \times \PP^3} \to \mO_{\sP} \to 0.
\end{equation}
Twisting \eqref{eq:Kos-V} by $\mathfrak{p}^* (\mO_{\PP^2}(1))= \mO_{\PP^2 \times \PP^3 }(1, \, 0)$ and splitting it into short exact sequences, we get 
\begin{equation*}
\begin{split}
0 & \longrightarrow \mO_{\PP^2 \times \PP^3}(-1, \, -2) \longrightarrow \mO_{\PP^2 \times \PP^3}(0, \, -1)^2 \longrightarrow \widetilde{\mK}_1 \longrightarrow 0, \\
0  & \to \widetilde{\mK}_1 \to \mO_{\PP^2 \times \PP^3}(1, \, 0) \to \mO_{\sP}(\ell) \to 0,
\end{split}
\end{equation*}
where $\widetilde{\mK}_1:= \mK_1 \otimes \mO_{\PP^2
  \times \PP^3}(1, \, 0) $ and $\mK_1$ is the image of the
first differential $d_1$ of the Koszul complex, see \S
\ref{L}. Applying the functor $\mathfrak{q}_*$ and using \eqref{eq:Riq}, we infer 
\begin{equation*}
\mathfrak{q}_* \widetilde{\mK}_1  = \mO_{\PP^3}(-1)^2, \quad
R^1 \mathfrak{q}_* \widetilde{\mK}_1 =0,
\end{equation*}
obtaining
\begin{equation} \label{eq:N-V}
0 \longrightarrow \mO_{\PP^3}(-1)^2 \stackrel{N}{\longrightarrow} \mO_{\PP^3}^3 \longrightarrow \mathfrak{q} _*(\mO_{\sP}(\ell)) \longrightarrow 0.
\end{equation}
Hence we can identify $\mathfrak{q}_*(\mO_{\sP}(\ell))$ with
$\mI_{C/\PP^3}(2)$, the ideal sheaf of quadrics in $\PP^3$ containing
a twisted cubic $C$, which is precisely the image in $\PP^3$ of the
conic parametrizing the unstable lines of $\mF$ (Remark \ref{rem:cubic}).
Note that $C$ is given by the vanishing of the
three $2 \times 2$ minors of the matrix of linear forms $N$ appearing
in $\eqref{eq:N-V}$; this matrix coincides 
with the one obtained by ``flipping'' the matrix $M$ in \eqref{eq:M-V} as explained in \S \ref{a}, see in particular 
Remark \ref{rem:trilinear-form}.
Then 
$\mG=\mathfrak{q}_*(\mO_{\sP}(\ell))$, and by Proposition
\ref{prop:P-and-P'} we infer 
\begin{equation*}
\PP(\mF) \simeq \PP(\mG) \simeq \PP(\mI_{C/\PP^3}(2)),
\end{equation*}
that is, $\PP(\mF)$ is isomorphic to the blow-up of $\PP^3$ along the
twisted cubic $C$ and the morphism $\mathfrak{p} \colon \PP(\mF) \to
\PP^2$ is induced by the net $|\mI_{C/\PP^3}(2)|$. 

We also get $\mR \simeq \mathfrak{q}_*\mO_{\sP} \simeq \mO_{\PP^3}$, so \eqref{eq:S3E} yields
\begin{equation*}
H^0(\PP^2, S^3 \mE^\vee \otimes \wedge^2 \mE) = H^0(\PP^3, \, \mR(3)) = H^0(\PP^3, \, \mO_{\PP^3}(3)).
\end{equation*}
This means that the choice of the (proportionality class of the) building section $\eta$ in Theorem
\ref{thm triple:1} is given by the choice of a 
cubic surface $S \subset \PP^3$. Moreover, from the exact sequence
\begin{equation*}
0 \to \mO_{\sP}(-3\xi + \ell) \to \mO_{\sP}(\ell) \to \mO_{X}(H) \to 0
\end{equation*}
it follows that $X \simeq \PP(\mO_{X}(H))$ is the  strict transform of
$S$ in $\PP(\mF)$.
Also, the triple cover map $f \colon X \to \PP^2$ is associated with
$|\mO_{X}(H)|$, so that it is induced on $S$ by the linear system of quadrics 
that contain the intersection $S \cap C$. This intersection consists
of $9$ points $p_7,\ldots,p_{15}$. Identifying $S$ with $\PP^2(p_1,\ldots,p_6)$ 
with exceptional divisors $E_1,\ldots,E_6$, we
get thus $9$
exceptional divisors $E_7,\ldots,E_{15}$ on $X$ corresponding to this
intersection, and
\begin{equation*}
H=2H_S-\sum_{j=7}^{15} E_j=6L -\sum_{i=1}^6 2E_i -\sum_{j=7}^{15} E_j.
\end{equation*}
This identifies the sets $\{p_1, \ldots, p_6 \}$ and $\{p_7, \ldots, p_{15} \}$ with those in part \ref{V-1}.
\end{proof}

\begin{remark} \label{rmk:V}
  A birational model of the triple plane $f \colon X \to \PP^2$ is
  the projection of a hyperplane section $T$ of a Palatini scroll from a $4$-secant line. In fact, $T$ is a surface of degree $7$ in $\PP^4$ and with $K_T^2=-2$ (see \cite[Capitolo
  5]{Ott95}), which is isomorphic to $\PP^2$ blown-up at $11$ points and
  embedded in $\PP^4$ by the complete linear system $|6L -
  \sum_{i=1}^6 2E_i - \sum_{j=7}^{11} E_j|$. Actually, this is the unique non-degenerate,
  rational surface of degree $7$ in $\PP^4$, see \cite[Theorems 4 and 6]{Ok84}.

  Contracting one of the exceptional divisors $E_j$ in $T$, we obtain a rational surface
  $X'$ with $K_{X'}^2=-1$  that can be embedded in $\Gr(1, \,
  \PP^3)$ as a surface of bidegree $(3, \,5)$, see
  \cite[Theorem 4.1 $(g)$]{G93}. So there is a family of $5$-secant
  planes to $X'$; projecting from one of these planes, we obtain a
  birational model of a triple plane $f\colon X \to \PP^2$ of type \ref{V} (in fact, $X$ is the
  blow-up of $X'$ at five points).
\end{remark}

\begin{remark}  \label{rem:Du-Val-I-V}
  Triple planes of types \ref{I} to \ref{V} were
  previously considered via ``classical" methods by Du
  Val in \cite{DuVal33}. For the reader's convenience, let us shortly
  describe in modern language and using our notation Du Val's nice
  geometric constructions. They use part of the mass of results
  on particular rational surfaces proven by nineteenth century  algebraic geometers; the classical, a bit old-fashioned monograph on the subject (in Italian)
  is \cite{Conf45}, for a modern exposition see
  \cite{dolgachev:classical-AG}.    
  \begin{itemize}
  \item[\ref{I})] 
    We have $g(H)=0$, and from this one sees that the net
  $|H|$ is the pull-back of the net of lines $|L|$ in $\PP^2$
  via the projection  of the cubic scroll $S(1, \, 2) \subset
  \PP^4$ from a general line. The generators of the scroll
  become a $\infty^1$ family of lines of index $3$ in $\PP^2$,
  i.e. such that for a general point of the plane pass three lines of
  the family. The envelop of this family is a tricuspidal quartic
  curve, namely the branch locus $B$ of the triple plane.    
\item[\ref{II})] This time $g(H)=1$, so that the surface $X$ is either
  rational or ruled. When $p_g(X)=q(X)=0$ we are in the first case,
  and the only possibility for the triple plane is the projection of a
  smooth cubic surface $S_3 \subset \PP^3$ from an external
  point $p$. Then the ramification locus $R$ is given by the
  intersection of $S_3$ with the polar hypersurface $P_p (S_3)$, which
  is a quadric $Q$. Hence $R$ is a smooth curve of degree $6$ and
  genus $4$ in $\PP^3$, and the six cusps of the branch locus
  $B$ arise from the intersection of $R$ with the second polar of $p$,
  which is a plane $\Pi$. In particular, the cusps of $B$ are
  contained in the projections of both the curves $Q \cap \Pi$ and
  $S_3 \cap \Pi$, namely they are the complete intersection of a conic
  and a plane cubic.
\item[\ref{III})] In this case $g(H)=2$, and a surface $X$ with a net of
  genus $2$ curves is either a double plane with a branch curve of
  order $6$ (i.e., a $K3$ surface) or a rational surface. In the last
  case, a detailed analysis of the possible linear systems
  representing $X$ on $\PP^2$ shows that the only possibility
  in order to have a net $|H|$ inducing a triple plane is that $X$ is the blow-up of $\PP^2$ at $10$ points, so that the curves of $|H|$
  corresponds to quartic with one double and nine simple base
  points. We recovered by modern methods this result, see Remark
  \ref{rmk:III} (since Du Val only works with representative linear
  systems on $\PP^2$, he does not consider the birational
  models of these triple planes arising from linear systems on
  $\FF_n$). It can be observed that this construction
  corresponds to the projection to $\PP^2$ of a quartic surface
  $S_4 \subset \PP^3$, having a double line, from a general
  point $p \in S_4$. In fact, $S_4$ is represented on the plane by
  quartic curves with one double and $8$ simple base points. On the
  surface $S_4$ there is a pencil of conics, corresponding to the
  pencil of lines on $\PP^2$ through the double base point; in
  the triple plane representation, this becomes a family $\infty^1$ of
  conics of index $3$, whose envelop is a curve $B$ of degree $8$
  with $12$ cusps, which is precisely the branch locus of our triple
  plane.    
\item[\ref{IV})] In this case we have $g(H)=3$, and a detailed analysis of the
  linear systems $|H|$ and $|K_X + H|$ shows that a birational model
  of the triple plane is given from the projection of a quintic
  surface $S_5 \subset \PP^2$ having a double twisted cubic
  from a point of the double curve. From this fact one recovers the
  plane representation of the linear system $|H|$ as a net of quartics
  with thirteen simple base points, and the representation of the
  branch curve $B$ as the Jacobian curve of this net. According to
  Proposition \ref{prop.TypeIV}, the base points are not in general
  position. In fact, eleven of them, say $p_1, \ldots, p_{11}$, can be
  taken at random, whereas the remaining two must belong to the
  $g_2^1$ of the unique hyperelliptic curve of degree $7$ having nodes
  at $p_1, \ldots, p_{11}$.   
\item[\ref{V})] 
  In this case $g(H)=4$, and the assumption $p_g(X)=q(X)=0$
  shows that the adjoint linear system $|K_X+H|$ cuts on the general
  curve of the net $|H|$ the complete canonical system $|K_X|$. Then
  the image of $|H|$ via the first adjoint map $\varphi_{|K_X+H|}
  \colon X \to \PP^3$ is a net of canonical curves of genus $4$
  and degree $6$. So there is precisely one quadric surface containing
  each of these curves, and one system of generators of each of these
  quadrics traces a system of $\infty^2$ trisecant lines to the image
  of $X$, that together define a degree $3$ ``involution" (Du Val,
  like his contemporaries, use this term also when dealing with finite
  covers of degree $>2$)  which gives a birational model of our triple
  plane. Pushing this analysis further, it is possible to show that
  such a system of trisecant lines is actually the system of chords of
  a twisted cubic $C$, and this implies that the net $|H|$ can be
  represented on a cubic surface $S \subset \PP^3$ by means
  of sections by quadrics passing through $C$. Correspondingly, $X$ is
  a rational surface that can be represented on the plane by sextic
  curves with six double and nine simple base points, the latter
  corresponding to the intersections of $S$ with $C$. Part \ref{V-2} of Proposition \ref{prop.TypeV} is a modern rephrasing of
  this argument that uses completely different techniques based on
  vector bundles. Finally, by using envelops one computes that the
  branch locus $B$ of the triple plane has degree $12$; its cusps
  arise from the chords of $C$ that are also inflectional tangents of
  $S$, and a Schubert calculus computation shows that their number
  equals $33$.   
  \end{itemize}
\end{remark}

\subsection{Triple planes of type \ref{VI}} \label{subsec:VI}

In this case the invariants are
\begin{equation*}
  K^2=-7, \quad b=7, \quad h=16, \quad g(H)=5
\end{equation*}
and by Theorem \ref{teo:res} the twisted Tschirnhausen bundle $\mF$ has a resolution of the form
\begin{equation} \label{eq:M-VI}
  0 \longrightarrow \mO_{\PP^2}(-1)^3 \stackrel{M}{\longrightarrow} \mO_{\PP^2}^5 \longrightarrow
  \mF \longrightarrow 0.
\end{equation}
The existence and classification of triple planes of type \ref{VI}
 are established in Proposition \ref{prop.TypeVI} below.

\begin{proposition} \label{prop.TypeVI}
  Let $f \colon X \to \PP^2$ be a triple plane of type
  \emph{\ref{VI}}. Then the following holds$:$
  \begin{enumerate}[itemsep=2pt, label=\bf{\roman*)}]
  \item \label{loga} 
    the vector bundle $\mF$ is a logarithmic bundle associated with $6$
    lines in general position in $\PP^2;$
  \item \label{detcubic} the morphism $\mathfrak{q} \colon \PP(\mF) \to \PP^4$ is
  birational onto its image, which is a determinantal cubic threefold
  $Y \subset \PP^4$, which has exactly six nodes as singularities$;$
\item \label{blowbordiga} the surface $X$ is the blow-up of a Bordiga
  surface $X_1 \subset Y$ at the six nodes of $Y$, that belong to
  $X_1$.
  So $X$ is the blow-up $\PP^2$ at $16$ points and the net $|H|$ defining the
  triple cover $f$ is given by 
  \begin{equation} \label{eq:H-VI}
    H=7L - \sum_{i=1}^{10} 2E_i - \sum_{j=11}^{16} E_j.
  \end{equation}
  \end{enumerate}
\end{proposition}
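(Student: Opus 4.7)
The plan is to derive all three parts from a combined study of the adjunction map $\varphi_{|K_X+H|}$ and the flipped presentation of the Steiner bundle $\mF = \mF_7$.

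First, I would apply Proposition \ref{prop:adjunction} to the map $\varphi_{|K_X+H|} \colon X \to X_1 \subset \PP^{4}$: this is a birational morphism onto a smooth surface $X_1$, contracting one $(-1)$-curve for each unstable line of $\mF$. Since only finitely many $(-1)$-curves can be contracted by a morphism, $\mF$ has only finitely many unstable lines, which by the trichotomy for $\mW(\mF_7)$ recalled in \S \ref{b} excludes the Schwarzenberger case. Therefore $\mW(\mF_7)$ consists of exactly $6$ points in general position in $\PD^2$, not lying on any conic, and $\mF$ is a logarithmic bundle associated with $6$ lines in general position in $\PP^2$. This proves \ref{loga}.

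For \ref{detcubic}, Lemma \ref{contracts} gives that $Y = \mathfrak{q}(\PP(\mF)) \subset \PP^4$ is a birationally ruled threefold of degree $\binom{b-4}{2} = 3$. The flipped presentation (Remark \ref{rem:trilinear-form} and \S \ref{L}) yields that $\mathfrak{q}_*\mO_{\PP(\mF)}(\ell)$ is the cokernel of a $3 \times 3$ matrix $N$ of linear forms on $\PP^{4}$, so $Y = \{\det N = 0\}$ is a determinantal cubic threefold. By Lemma \ref{contracts} again, $\mathfrak{q}$ contracts precisely the negative section of each of the $6$ Hirzebruch surfaces $\PP(\mF|_{L_i}) \simeq \FF_3$ (one for each unstable line $L_i$); each contraction produces an isolated singular point $y_i$ of $Y$, and $\mathfrak{q}$ is an isomorphism elsewhere. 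A local analysis of the $3\times 3$ determinantal structure at the $y_i$ (equivalently, of the rank of $N$ dropping to $1$ at $y_i$) then shows that each $y_i$ is an ordinary double point, completing \ref{detcubic}.

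Finally, for \ref{blowbordiga}, the numerical data of $X_1$ are $D_1^2 = (K_X+H)^2 = -7 + 10 + 3 = 6$, $K_{X_1} D_1 = K_X(K_X+H) = -2$, and $K_{X_1}^2 = K_X^2 + \alpha_1 = -7 + 6 = -1$. Hence $X_1$ is a smooth non-degenerate rational surface of degree $6$ in $\PP^4$ with $K_{X_1}^2 = -1$; by the classification of surfaces of low degree in $\PP^4$ (see \cite{Io84, Ott95} and Remark \ref{rmk:IV}), it must be a Bordiga surface. Therefore $X_1 \simeq \PP^2(p_1, \ldots, p_{10})$ with $D_1 = 4L - \sum_{i=1}^{10} E_i$, and $X$ is obtained from $X_1$ by blowing up the $6$ points $y_1, \ldots, y_6 \in X_1$, which coincide with the six nodes of $Y$ (they lie on $X_1$ because they are the images of the six contracted $(-1)$-curves of $X$). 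Writing $H = D_1 - K_X$ on $X = \PP^2(p_1, \ldots, p_{16})$ then yields the formula \eqref{eq:H-VI}.

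The main obstacle is the local analysis required for \ref{detcubic}: verifying that the six singular points $y_i$ are ordinary double points and that no other singularity appears. The global data (degree $3$, determinantal by a $3\times 3$ matrix, exactly $6$ contracted negative sections, $\mathfrak{q}$ an isomorphism elsewhere) strongly suggest this picture, but a careful study of $\mathfrak{q}$ along each $\frc_i$, or of the local rank stratification of $N$ at $y_i$, is needed to make it rigorous.
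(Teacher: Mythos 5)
Your proposal is correct, but it reaches the conclusion by a route that genuinely differs from the paper's in two essential places. For \ref{loga}, the paper rules out the Schwarzenberger alternative via the cohomological argument of Claim \ref{twoclaim}: $h^0(\PP^2,S^2\mF(-2))=1$ together with the count $h^0(\PP^2,\mF(1))=h^0(\PP^2,S^3\mF(-1))=12$ forces every member of $|\mO_{\sP}(3\xi-\ell)|$ to split off the relative quadric, contradicting irreducibility of $X$. You instead observe that, by Proposition \ref{prop:adjunction} (whose statement and proof impose no restriction on the number of unstable lines) and Lemma \ref{lem:base-locus}, each unstable line contributes a distinct curve of $X$ contracted by the birational morphism $\varphi_{|K_X+H|}$, and a birational morphism from an irreducible surface contracts only finitely many curves; this is shorter, while the paper's claim yields the extra information that in the Schwarzenberger case the entire linear system consists of reducible divisors. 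For \ref{blowbordiga}, the paper identifies $X_1$ through the direct-image/Koszul computation of Claim \ref{oneclaim}, obtaining the explicit $3\times 4$ determinantal presentation of the Bordiga surface (reused later, e.g. in \S \ref{subsec:moduli-VI} and in the type \ref{VII} constructions), whereas you recover it numerically ($\deg X_1=6$, $K_{X_1}D_1=-2<0$ so $X_1$ is rational, $K_{X_1}^2=-7+\alpha_1=-1$) and invoke Ionescu's classification \cite{Io84} --- exactly the move the paper itself makes inside Proposition \ref{prop.typeVII}; this suffices for the statement but loses the determinantal description. Finally, concerning \ref{detcubic}, the ``obstacle'' you flag (that the six points are nodes and that $Y$ has no further singularities) is left at essentially the same level in the paper, which appeals to Porteous' formula for the expected count of six singular points; so this is not a gap relative to the paper's own proof, and note that singularity of $Y$ at the six images of the contracted sections already follows from purity of the exceptional locus of the birational morphism $\mathfrak{q}$ from the smooth threefold $\PP(\mF)$.
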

\begin{proof}[Proof of \ref{detcubic}]
  We use again the approach and notation of \S \ref{subsec:steiner}. We look at the exact
  sequence \eqref{basic} and we consider the projective space $\PP^4 = \PP(U)$, that coincides with the space of global sections of the Steiner bundle $\mF$.
  By \eqref{basic-flipped}, the $5 \times 3$ matrix $M$ of linear forms presenting $\mF$ is naturally associated with a  $3 \times 3$ matrix $N$, generically of maximal rank, defining a Steiner sheaf $\mG$ over $\PP^4$, namely
  \begin{equation} \label{eq:twisted-VI}
    0 \longrightarrow \mO_{\PP^4}(-1)^3 \stackrel{N}{\longrightarrow} \mO_{\PP^4}^3 \longrightarrow
    \mG \longrightarrow 0.
  \end{equation}

 Now recall that the morphism $\mathfrak{q}$ is birational onto its image by Lemma
  \ref{contracts}, and that $\PP(\mG) \simeq
  \PP(\mF)$ by Proposition \ref{prop:P-and-P'}, so that $\mathfrak{q}$ maps $\PP(\mF)$ to the support of $\mG$, which is the determinantal hypersurface $Y \subset \PP^4$ defined by $\det(N)=0$. 
Note that Porteous formula says that the threefold $Y$ is singular, expectedly at six points, see \cite[Chapter II]{ACGH}.
  \end{proof}

 \begin{claim} \label{oneclaim}
    The surface $X_1 \subset \PP^4$, image of the first adjunction
    map $\varphi_{|K_X + H|} \colon X \to \mathbb{P}^4$, is a Bordiga surface of degree $6$. It is defined by the vanishing of the maximal minors of a $3 \times
    4$ matrix obtained by stacking a row to the transpose of $N$.
  \end{claim}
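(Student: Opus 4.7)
The plan is to push the defining sequence of $X \subset \PP(\mF)$ down to $\PP^4$ via $\mathfrak{q}$, thereby deriving a Hilbert--Burch style resolution of $\mO_{X_1}$ that characterizes $X_1$ as a Bordiga surface. By Proposition \ref{prop:adjunction}, $X_1$ is smooth of degree $(K_X+H)^2 = 6$ and $\mathfrak{q}|_X \colon X \to X_1$ is birational, with exceptional fibers isomorphic to $\PP^1$. Hence $(\mathfrak{q}|_X)_* \mO_X = \mO_{X_1}$ and $R^1(\mathfrak{q}|_X)_* \mO_X = 0$, so applying $R\mathfrak{q}_*$ to
\[
0 \to \mO_{\PP(\mF)}(-3\xi + \ell) \to \mO_{\PP(\mF)} \to \mO_X \to 0
\]
will yield a short exact sequence $0 \to \mG(-3) \to \mO_Y \to \mO_{X_1} \to 0$ on $\PP^4$, once the relevant direct images are computed.

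These direct images can be obtained by twisting the Koszul resolution of $\PP(\mF) \subset \PP^2 \times \PP^4$ (of length $\dim W = 3$, from the three bilinear equations defining the complete intersection) and analyzing the hypercohomology spectral sequence with $E_1^{p,q} = R^q \mathfrak{q}_*(\text{Koszul term at } p)$. For the untwisted case a single higher differential $d_3 \colon \mO_{\PP^4}(-3) \to \mO_{\PP^4}$, which must coincide with multiplication by $\det N$, produces $\mathfrak{q}_* \mO_{\PP(\mF)} = \mO_Y$ and $R^{\geq 1} = 0$. Twisting by $\mO(1,-3)$ only the $q=0$ row of the $E_1$-page survives and one obtains $\mathfrak{q}_* \mO_{\PP(\mF)}(-3\xi + \ell) = \mG(-3)$ with presentation $0 \to \mO(-4)^3 \xrightarrow{N} \mO(-3)^3 \to \mG(-3) \to 0$, together with the vanishing of higher direct images.

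To conclude, I will realize the morphism $\mG(-3) \to \mO_Y$ in $D^b(\PP^4)$ as a chain map between the two-term resolutions $[\mO(-4)^3 \xrightarrow{N} \mO(-3)^3] \to [\mO(-3) \xrightarrow{\det N} \mO]$: its degree $-1$ component is a row $v = (v_1, v_2, v_3)$ of linear forms, and its degree $0$ component is a triple of cubics $(c_1, c_2, c_3)$, related by the commutativity relation $\mathbf{c}\,N = (\det N)\, v$. Cramer's rule then identifies $c_1, c_2, c_3, \det N$, up to sign, with the four maximal minors of the $3 \times 4$ matrix $M_X$ obtained by adjoining $v$ to $N^T$, and the mapping cone of the chain map gives the desired Hilbert--Burch resolution
\[
0 \to \mO_{\PP^4}(-4)^3 \to \mO_{\PP^4}(-3)^4 \to \mO_{\PP^4} \to \mO_{X_1} \to 0,
\]
with middle map defined by these maximal minors. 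Combined with the smoothness of $X_1$ and $\deg X_1 = 6$, this identifies $X_1$ with a Bordiga surface in $\PP^4$ as defined in Remark \ref{rmk:IV}. The main obstacle will be to identify the linear matrix produced by the Koszul pushforward for $\mathfrak{q}_*\mO_{\PP(\mF)}(-3\xi + \ell)$ with the presentation matrix $N$ from \eqref{eq:twisted-VI} up to transposition--both come from the trilinear tensor $\phi$ defining $\PP(\mF)$ but enter the computation through different contractions--and to interpret $v$ as the data determined by the building section $\eta$ of the triple cover.
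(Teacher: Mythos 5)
Your plan is correct, and it reaches the determinantal description by a route that genuinely differs in execution from the paper's, although both live in the same Koszul-pushforward framework. The paper never touches the structure sequence of $X$ in $\sP$: it computes $\mR=\mathfrak{q}_*(\mO_{\sP}(-\ell))$ by pushing down the Koszul complex twisted by $\mathfrak{p}^*\mO_{\PP^2}(-1)$ (which forces the bookkeeping with $\widetilde{\mK}_1,\widetilde{\mK}_2$ and an appeal to Koszul self-duality/Grothendieck duality to recognize $\mR\simeq\mG^\vee$, presented by ${}^tN$), views the building section as $\eta\in H^0(\PP^4,\mR(3))$, lifts it to $\mO_{\PP^4}\to\mO_{\PP^4}(1)^3$, and reads off $X_1$ as the support of $\cok(\eta)$ via the four-term complex with matrix ${}^t(N,\eta)$. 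You instead push forward $0\to\mO_{\sP}(-3\xi+\ell)\to\mO_{\sP}\to\mO_X\to 0$, which needs only the easy twists of the Koszul complex (your computation of $\mathfrak{q}_*\mO_{\sP}=\mO_Y$ via the single $d_3$, and of $\mG(-3)$ with vanishing higher direct images, both check out), together with the geometric input $(\mathfrak{q}|_X)_*\mO_X=\mO_{X_1}$ coming from Proposition \ref{prop:adjunction} and Lemma \ref{lemma.adj}; the mapping cone plus Cramer then yields the Hilbert--Burch resolution. What your route buys: no duality statement is required, and the exact sequence $0\to\mG(-3)\to\mO_Y\to\mO_{X_1}\to 0$ makes fully explicit that the ideal sheaf of $X_1$ is generated by the four maximal minors, a point the paper leaves implicit when it says $\mH$ is ``supported on'' $X_1$. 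What it costs: you must invoke the birational structure of the adjunction morphism and state the vanishing $R^1\mathfrak{q}_*(\mO_{\sP}(-3\xi+\ell))=0$ (it does follow from your spectral-sequence computation) to get right-exactness. Finally, your two flagged obstacles are not real gaps: the identification of the pushforward presentation matrix with $N$ is exactly Proposition \ref{prop:P-and-P'}\ref{secondofatto} with $k=1$, the $-3\xi$ twist being handled by the projection formula; and the chain map $(v,\mathbf{c})$ lifting the pushforward of multiplication by $\eta$ is unique up to homotopies that change $v$ by constant combinations of the rows of $N$, which do not alter the ideal of maximal minors, while $\mathbf{c}$ is determined by $v$ because $\det N\neq 0$, so Cramer's rule applies as you claim.
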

  \begin{proof}
  By the results of \S \ref{L}, the surface $X$ corresponds to a global section
  \begin{equation*}
    \eta \in H^0(\sP, \, \mO_{\sP}(3\xi-\ell)) \simeq H^0(\PP^4,\, \mR(3)),
  \end{equation*}
  where $\mR= \mathfrak{q}_*(\mO_{\sP}(-\ell))$. 
  The idea is to directly relate $\mR$ to the sheaf $\mG$ appearing in \eqref{eq:twisted-VI} or, equivalently, to the matrix $N$.

  By Proposition \ref{prop:P-and-P'} the projective bundle
  $\sP $ is a $3$-fold linear section of $\PP^2 \times \PP^4$, i.e.
  the complete intersection of three divisors of bidegree $(1, \, 1)$ in
  $\PP^2 \times \PP^4$. Tensoring the Koszul resolution \eqref{KOSZUL} of $\mO_{\sP}$
  inside $\mO_{\PP^2 \times \PP^4}$ with $\mathfrak{p}^*(\mO_{\PP^2}(-1))=\mO_{\PP^2 \times \PP^4}(-1, \, 0)$ and splitting it into short exact sequences, we obtain
  \begin{gather}
    0  \to \mO_{\PP^2 \times \PP^4} (-4, \, -3) \to \mO_{\PP^2 \times
      \PP^4} (-3, \, -2)^3 \to \widetilde{\mK}_2 \to 0, \label{eq.VI.1} \\
    0  \to \widetilde{\mK}_2  \to \mO_{\PP^2 \times \PP^4} (-2, \, -1)^3
    \to \widetilde{\mK}_1 \to 0, \label{eq.VI.2} \\
    0  \to \widetilde{\mK}_1 \to \mO_{\PP^2 \times \PP^4} (-1, \, 0) \to \mO_{\PP(\mF)}(-\ell) \to 0 \label{eq.VI.3},
  \end{gather}
  where $\widetilde{\mK}_i:=\mK_i \otimes \mO_{\PP^2 \times \PP^4}(-1, \, 0)$ and $\mK_i$ denotes the image of the $i$-th differential of the Koszul complex, see \S \ref{sub:proj.bundle}. Applying the functor $\mathfrak{q}_*$ to \eqref{eq.VI.1} and using \eqref{eq:Riq},
  we deduce $\mathfrak{q}_*
  \widetilde{\mK}_2=0$ and we get
\begin{equation} \label{eq.VI.4}
0 \to R^1 \mathfrak{q}_* \widetilde{\mK}_2 \to \mO_{\PP^4}(-3)^3 \to \mO_{\PP^4}(-2)^3 \to R^2 \mathfrak{q}_* \widetilde{\mK}_2 \to 0.
\end{equation}
By \eqref{eq.VI.3} the sheaf $\widetilde{\mK}_1$ injects into
$\mO_{\PP^2 \times \PP^4}(-1, \, 0)$, so  we have $\mathfrak{q}_*
\widetilde{\mK}_1=0$. Therefore, applying $\mathfrak{q}_*$ to \eqref{eq.VI.2}, we get
\begin{equation} \label{eq.VI.A}
R^1 \mathfrak{q}_* \widetilde{\mK}_2=0, \quad R^1 \mathfrak{q}_* \widetilde{\mK}_1\simeq R^2 \mathfrak{q}_*
\widetilde{\mK}_2.
\end{equation}
Finally, applying the functor $\mathfrak{q}_*$ to \eqref{eq.VI.3} we infer
\begin{equation} \label{eq.VI.B}
\mR=\mathfrak{q}_*(\mO_{\PP(\mF)}(-\ell)) \simeq R^1 \mathfrak{q}_* \widetilde{\mK}_1.
\end{equation}
Using \eqref{eq.VI.A} and \eqref{eq.VI.B}, the exact sequence
\eqref{eq.VI.4} becomes
\begin{equation*}
0 \to \mO_{\PP^4}(-3)^3 \to \mO_{\PP^4}(-2)^3 \to \mR\to 0,
\end{equation*}
that can be rewritten as
\begin{equation} \label{eq.VI.5}
0 \to \mO_{\PP^4}^3 \xrightarrow{{}^t N} \mO_{\PP^4}(1)^3 \to \mR(3) \to 0.
\end{equation}
Indeed, the self-duality of the Koszul complex implies 
\begin{equation*}
\mR \simeq \mG^\vee \simeq \mathscr{E}xt^1_{\mO_{\PP^4}}(\mG(3), \, \mO_{\PP^4}),
\end{equation*}
where the second isomorphisms is Grothendieck duality, see
\cite[Chapter III, Proposition 7.2]{hartshorne:residues-duality}. 

Let us consider now a non-zero global section $\eta \colon
\mO_{\PP^4} \to \mR(3)$ of $\mR(3)$, whose cokernel we denote by
$\mH$. The section $\eta$ lifts to a map $\mO_{\PP^4} \to
\mO_{\PP^4}(1)^3$, so by \eqref{eq.VI.5} we get an  exact sequence
\begin{equation} \label{eq:bordiga}
  0 \longrightarrow \mO_{\PP^4}(-3) \longrightarrow \mO_{\PP^4}^4
  \stackrel{{}^t (N,\eta)}{\longrightarrow} \mO_{\PP^4}(1)^3 \to \mH
  \longrightarrow 0. 
\end{equation}
The sheaf $\mH$ is supported on the surface $X_1 \subset 
\PP^4$. More precisely, this surface is defined by the vanishing
of the $3 \times 3$ minors of the $3 \times 4$ matrix ${}^t(N,\eta)$ of linear
forms appearing in \eqref{eq:bordiga}, hence it is a Bordiga
surface of degree $6$, see \cite[Capitolo 5]{Ott95}. 
\end{proof}

By the results of \S \ref{b} it follows that the bundle $\mF$ has either six or infinitely many
unstable lines. Let us give the proof of \ref{blowbordiga} in the former case.
\begin{proof}[Proof of \ref{blowbordiga}]
  We assume that $\mF$ has six unstable lines. 
  Using Claim \ref{oneclaim} and Remark \ref{rmk:IV}, we can see $X_1$ as the blow-up
  of $\PP^2$ at $10$ points, with exceptional divisors
  $E_1,\ldots,E_{10}$, embedded in $\PP^4$ by the linear system $|4L
  -\sum_{i=1}^{10} E_i|$. On the other hand, by Proposition
  \ref{prop:adjunction} the first adjoint map
  $\varphi:=\varphi_{|K_X+H|} \colon X \to X_1$ is a birational
  morphism, contracting precisely the six exceptional divisors
  $E_{11}, \ldots, E_{16}$ on $X$ coming from the blow-up of $X_1$
  at the six nodes of $Y$.  Hence we obtain
  \begin{equation*}
\begin{split}
& K_X  = \varphi^* K_{X_1}+ \sum_{j=11}^{16} E_j = \varphi^* \bigg(-3L + \sum _{i=1}^{10} E_i \bigg)+ \sum_{j=11}^{16} E_j \quad \textrm{and} \\
& K_X + H  = \varphi^* \mO_{X_1}(1) = \varphi^* \bigg(4L - \sum _{i=1}^{10} E_i \bigg),
\end{split}
\end{equation*}
so \eqref{eq:H-VI} follows.
\end{proof}
If, instead, $\mF$ has infinitely many unstable lines then it is of Schwarzenberger type. The next result shows that this case cannot occur, proving \ref{loga} 
and so completing the proof of Proposition
\ref{prop.TypeVI}.

\begin{claim} \label{twoclaim}
  If $\mF$ is a Schwarzenberger bundle, then the vanishing locus of any non-zero global section $\eta \in
  H^0(\sP, \, \mO_{\sP}(3\xi-\ell))$ is a reducible surface. In particular, if $f \colon X \to \mathbb{P}^2$ is a triple plane of type $\ref{VI}$, then its Tschirnhausen bundle is a logarithmic one.
\end{claim}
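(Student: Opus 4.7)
The plan is to show that when $\mF=\mF_7$ is Schwarzenberger, every divisor in $|\mL|$ decomposes as $\Sigma + D'$ for a distinguished irreducible divisor $\Sigma$ and a non-zero effective residual $D'$. Once this is established, the ``in particular'' statement is immediate: a triple plane of type \ref{VI} must be a smooth, hence irreducible, divisor in $|\mL|$, so its Tschirnhausen bundle cannot be Schwarzenberger; by the dichotomy for $b=7$ recalled in \S\ref{subsec:steiner}, it must be logarithmic with six unstable lines.

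First I would construct $\Sigma$ geometrically. Since $\mW(\mF_7)$ is a smooth conic in $\PD^2$, the unstable lines form a one-parameter family $\{L_t\}_{t\in\PP^1}$; I would set $\Sigma$ to be the Zariski closure of $\bigcup_t \frc_{L_t}$, where $\frc_{L_t}$ denotes the negative section of $\PP(\mF|_{L_t}) \simeq \FF_3$. A direct parametric analysis based on the Schwarzenberger normal form \eqref{eq:Schw-7} should show that $\Sigma \simeq \PP^1 \times \PP^1$ (one factor parametrizing the conic of unstable lines, the other running along the line), that $\Sigma$ is an irreducible reduced divisor in $\PP(\mF)$, and that it coincides with the exceptional divisor of the contraction $\mathfrak{q}\colon \PP(\mF) \to Y \subset \PP^4$ over the rational normal quartic $C_4 = \mathrm{Sing}(Y)$. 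Lemma \ref{lem:base-locus} places each $\frc_{L_t}$ in the base locus of $|\mL|$, so the reduced irreducible divisor $\Sigma$ is a fixed component of this linear system.

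The main computational step is the determination of the class $[\Sigma] = a\xi + b\ell$ in $\mathrm{Pic}(\PP(\mF))$. Intersecting $\Sigma$ with a general fibre $\ell^2$ of $\mathfrak{p}$ counts the unstable lines through a general point of $\PP^2$, which equals two (the two tangents from a general point to the envelope of the family $\{L_t\}$); hence $a=2$. To find $b$, I would restrict to a general $D_2 \in |\ell|$: since the generic splitting type of $\mF_7$ is $(1,2)$, one has $D_2 \simeq \FF_1$ with $\xi|_{D_2} = \frc + 2\frf$. The trace $\Sigma \cap D_2$ is a smooth rational curve that projects $2{:}1$ onto $L$ via $\mathfrak{p}$ and isomorphically onto $C_4 \subset \PP^4$ via $\mathfrak{q}$, which pins down its class on $\FF_1$ as $2\frc + 2\frf$. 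Pairing this with $\xi|_{D_2}$ yields $[\Sigma]\cdot \xi\cdot\ell = 4$, and comparing with the algebraic expression $2\xi^3 + b\xi^2\ell = 6+b$ (using $\xi^3 = \xi^2\ell = 3$) gives $b=-2$, so $[\Sigma] = 2\xi - 2\ell$.

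Therefore the residual class is $\mL - [\Sigma] = \xi + \ell$, and $\mathfrak{p}_* \mO_{\PP(\mF)}(\xi+\ell) = \mF(1)$ has positive-dimensional space of global sections since $\mF$ is globally generated. Moreover, the alternative that $\eta$ vanishes to order $\ge 2$ along $\Sigma$ is ruled out because the would-be residual class $-\xi + 3\ell$ satisfies $\mathfrak{p}_*\mO_{\PP(\mF)}(-\xi + 3\ell) = 0$. Hence every non-zero $\eta \in H^0(\mL)$ produces $D_0(\eta) = \Sigma + D'$ with $D'$ a non-zero effective divisor in the class $\xi+\ell \neq [\Sigma]$, which proves the reducibility. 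The main obstacle I expect is the careful identification of $\Sigma$ and the intersection-theoretic computation of its class $[\Sigma]$; once this is in hand, the non-vanishing of the residual and the resulting reducibility follow immediately.
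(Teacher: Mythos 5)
Your proposal is correct in substance, but it reaches the decomposition by a genuinely different mechanism than the paper. The paper's argument is cohomological: it quotes \cite[Proposition 1.2]{Val00a} to get $h^0(\PP^2, S^2\mF(-2))=1$, which produces the distinguished divisor $Q\in|\mO_{\sP}(2\xi-2\ell)|$ (identified with the exceptional divisor of $\mathfrak{q}$ over the rational normal quartic $C_4$), and then computes $h^0(\PP^2,\mF(1))=h^0(\PP^2,S^3\mF(-1))=12$, so that the injective multiplication map $H^0(S^2\mF(-2))\otimes H^0(\mF(1))\to H^0(S^3\mF(-1))$ is forced to be an isomorphism; hence \emph{every} section of $\mO_{\sP}(3\xi-\ell)$ is divisible by the equation of $Q$, and the vanishing locus is exactly $Q$ plus a relative plane. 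You instead argue through base loci: Lemma \ref{lem:base-locus} puts the negative section over each unstable line in the base locus of $|\mL|$, the Schwarzenberger conic of unstable lines makes these sections sweep an irreducible divisor $\Sigma$ (your $\Sigma$ is the paper's $Q$), so $\Sigma$ is a fixed component, and the class computation $[\Sigma]=2\xi-2\ell$ yields a non-trivial residual class $\xi+\ell$, hence reducibility. Your route avoids the symmetric-power cohomology and the appeal to \cite{Val00a}, at the price of the geometric verifications you flag (that $\Sigma$ is a reduced irreducible divisor, and the transversality needed to get $[\Sigma]\cdot\ell^2=2$ rather than just a set-theoretic count of the two unstable lines through a general point; in characteristic zero generic smoothness of the degree-two map $\Sigma\to\PP^2$ settles this); note also that for reducibility you only need the coefficient $2$ of $\xi$ — it already rules out $[\Sigma]=[\mL]$ and a residual supported on $\Sigma$ — so the computation of $b=-2$ and the exclusion of vanishing to order two along $\Sigma$ are dispensable. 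What the paper's dimension count buys beyond your conclusion is the precise statement that the moving part is the full system $|\xi+\ell|$, i.e.\ every section factors as the fixed quadric section times a section of $\mF(1)$; what yours buys is a more elementary, purely geometric proof using only ingredients already established in the paper (Lemma \ref{lem:base-locus}, the conic of unstable lines, and the normal form \eqref{eq:Schw-7} for the parametric check).
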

\begin{proof}
If $\mF$ is a Schwarzenberger bundle then, up to
    a change of coordinates, the matrix $M$ defining it is given by \eqref{eq:Schw-7} and so, using Remark
    \ref{rem:trilinear-form}, one easily finds that the matrix $N$ is 
    \[
      N=\left(
        \begin{array}{ccc}
          z_0 & z_1 & z_2 \\
          z_1 & z_2 & z_3 \\
          z_2 & z_3 & z_4 \\
        \end{array}
      \right).
      \]
The singular locus of $Y$ is the determinantal variety given by the
vanishing of the $2 \times 2$ minors of $N$, and this is
a rational normal curve of degree four $C_4 \subset \PP^4$. This
curve is also the base locus of the net $|T_L|$ generated the three determinantal surfaces
$T_i$ defined by the $2\times 2$ minors of the matrix $N_i$ obtained
from $N$ by removing the $i$-th line.
By \cite[Proposition 1.2]{Val00a} we have
\begin{equation} \label{eq:S2-6}
h^0(\PP^2, \, S^2 \mF (-2))=1.
\end{equation}
This global section gives a relative quadric $Q$ in
$|\mO_{\sP}(2\xi-2\ell)|$ over $\PP(\mF)$. 
The morphism $\mathfrak{q} \colon
\PP(\mF) \to Y$ is the blow-up along $C_4$, and $Q$ is its exceptional
divisor.

The divisor $Q \in |\mO_{\sP}(2\xi-2\ell)|$ gives a sheaf map
$\mO_{\sP}(\xi+\ell) \to \mO_{\sP}(3\xi-\ell)$, which is injective on
global sections. 
Since $h^0(\PP^2, \, S^2 \mF (-2))=1$, this gives an inclusion
\begin{equation} \label{eq:inclusion-6}
H^0(\PP^2, \, S^2 \mF (-2)) \otimes H^0(\PP^2, \,
\mF(1)) \subseteq H^0(\PP^2, \, S^3 \mF (-1)).
\end{equation}
On the other hand,  we can compute
\begin{equation} \label{eq:S3-6}
h^0(\PP^2, \, \mF (1))=12, \quad h^0(\PP^2, \, S^3 \mF (-1))=12.
\end{equation}
Indeed, the first equality in \eqref{eq:S3-6} is just obtained twisting \eqref{eq:M-VI} by
$\mO_{\PP^2}(1)$ and taking global sections. For the second equality,
we tensor the third symmetric power of the exact sequence
\eqref{eq:M-VI} with $\mO_{\PP^2}(-1)$, obtaining 
\begin{equation*}
0 \to \mO_{\PP^2}(-4) \to \mO_{\PP^2}(-3)^{15} \to  \mO_{\PP^2}(-2)^{45} \xr{r_1} \mO_{\PP^2}(-1)^{35}
\xr{r_0} S^3 \mF(-1) \to 0.
\end{equation*}
Taking cohomology, we get
\begin{equation*}
H^i(\PP^2, \, S^3\mF(-1))  \simeq H^{i+1}(\PP^2, \,\ker r_0)  \simeq H^{i+2}(\PP^2, \, \ker r_1)
\end{equation*}
for all $i$, which implies $H^i(\PP^2,S^3\mF(-1))=0$ for $i > 0$. Then
\begin{equation*}
h^0(\PP^2, \, S^3\mF(-1))=\chi(\PP^2, \, S^3\mF(-1))=12.
\end{equation*}
By \eqref{eq:S2-6} and \eqref{eq:S3-6} it follows that the inclusion
in \eqref{eq:inclusion-6} is actually an equality. Geometrically,
this means that any non-zero global section of $S^3 \mF (-1)$ vanishes
along the relative quadric $Q$, that is its vanishing locus is the
union of this relative quadric and a relative plane. This proves Claim \ref{twoclaim}.
\end{proof}

\begin{remark} \label{rmk:VI}
Another way to describe triple planes of type \ref{VI} is the following.
Let $X'$ be the blow-up of $\PP^2$ at $10$ points, embedded in $\Gr(1, \, \PP^3)$ as a
surface of bidegree $(3, \,6)$ via the complete linear system $|7L -
\sum_{i=1}^{10} 2E_i|$, see \cite[Theorem 4.2 $(i)$]{G93}.
There is a family of $6$-secant planes to $X'$; projecting from
one of these planes, we obtain a birational model of a the triple
plane $f\colon X \to \PP^2$ of type $\mathrm{VI}$ (in fact, $X$ is the blow-up of $X'$ at six points).
\end{remark}

\begin{remark}\label{rem:Du-Val-VI}
Triple planes of types \ref{VI} were previously considered (using
methods of synthetic projective geometry) by Du Val in
\cite[page 72]{DuVal35}. Let us give a short description of his construction. 

We have $g(H)=5$, and the assumption $p_g(X)=q(X)=0$ shows that the
adjoint linear system $|K_X+H|$ cuts on the general curve of the net
$|H|$ the complete canonical system $|K_X|$. Then the image of $|H|$
via the first adjoint map $\varphi_{|K_X+H|} \colon X \to \PP^4$ is a
net of canonical curves of genus $5$ and degree $10$. There is a
$\infty^2$ system of trisecant lines to these curves,  that together
give a degree $3$ ``involution" on the image of $X$. Such  trisecant
lines generate a threefold $Y \subset \PP^3$, that Du Val recognizes
as a determinantal cubic threefold. At this point, the triple cover is
constructed by blowing up a Bordiga surface $X_1 \subset Y$ at the six
nodes of $Y$, that belong to $X_1$. Part \ref{blowbordiga} of Proposition
\ref{prop.TypeVI} is a modern rephrasing of this argument that uses
completely different techniques based on vector bundles. By using his
remarkable knowledge of ``classical" algebraic geometry, at the end of
his analysis Du Val is also able to identify $X$ as a congruence of
type $(3, \, 6)$ inside $\Gr(1, \, \PP^3)$, see Remark \ref{rmk:VI}. 
\end{remark}

\subsection{Triple planes of type \ref{VII}} \label{subsec:VII}

In this case we have
\begin{equation*}
K_X^2=-7, \quad b=8, \quad h=22, \quad g(H)=6
\end{equation*}
and by Theorem \ref{teo:res} the twisted Tschirnhausen bundle $\mF$ has a resolution of the form
\begin{equation} \label{eq:M-VII}
  0 \longrightarrow \mO_{\PP^2}(-1)^4 \stackrel{M}{\longrightarrow} \mO_{\PP^2}^6 \longrightarrow
  \mF \longrightarrow 0.
\end{equation}
By Remark \ref{rem:H}, we can start the adjunction process on $X$ by
using the first adjoint divisor $K_X + H$. According to \S
\ref{subsec:adjunction theory}, we denote by $\alpha_n$ the number of
exceptional curves contracted by the $n$-th adjunction map $\varphi_n
\colon X_{n-1} \to X_n$. Recall that  $\alpha_1$, the number of lines
contracted by the first adjunction map, is precisely the
number of unstable lines of the twisted Tschirnhausen bundle $\mF$,
see Proposition \ref{prop:adjunction}.

\subsubsection{The occurrences for triple planes of type VII} 

\begin{proposition} \label{prop.typeVII}
If $f \colon X \to \PP^2$ is a triple plane of type \emph{\ref{VII}}, then $X$ belongs to the following list. The cases marked with $(*)$ do actually exist.
\begin{itemize}
\item[$\mathbf{(VII.1a)}$] $\alpha_1=1, \, \alpha_2=14:$ $X$ is the blow-up at $15$ points of a Hirzebruch surface $\FF_n$, with $n \in \{0, \, 2 \}$, and
\begin{equation*}
H=5\frc+ \bigg(\frac{5}{2} n + 6 \bigg) \frf - \sum_{i=1}^{14} 2E_i - E_{15};
\end{equation*}
\item[$\mathbf{(VII.1b) (*)}$] $\alpha_1=1, \, \alpha_2=15:$ $X$ is the blow-up of $\PP^2$ at $16$ points and
\begin{equation*}
H=8L - \sum_{i=1}^{15}2E_i-E_{16};
\end{equation*}
\item[$\mathbf{(VII.2) (*)}$] $\alpha_1=2:$ $X$ is the blow-up of $\PP^2$ at $16$ points and
\begin{equation*}
H=9L -\sum_{i=1}^4 3E_i - \sum_{j=5}^{14} 2E_j - \sum_{k=15}^{16} E_k;
\end{equation*}
\item[$\mathbf{(VII.3)(*)}$] $\alpha_1=3:$ $X$ is the blow-up of $\PP^2$ at $16$ points and
\begin{equation*}
H=10L - 4E_1 -\sum_{i=2}^7 3E_i - \sum_{j=8}^{13} 2E_j - \sum_{k=14}^{16} E_k;
\end{equation*}
\item[$\mathbf{(VII.4a)}$] $\alpha_1=4, \,
\alpha_2=2:$ $X$ is the blow-up of $\FF_n$ $($with $n \in \{0, \, 1, \, 2, \, 3 \})$ at $15$ points and
\begin{equation*}
H= 6\frc+(3n+8)\frf - \sum_{i=1}^9 3E_i - \sum_{j=10}^{11}2E_j - \sum_{k=12}^{15} E_k;
\end{equation*}
\item[$\mathbf{(VII.4b) (*)}$] $\alpha_1=4, \,
\alpha_2=3:$ $X$ is the blow-up of $\PP^2$ at $16$ points and
\begin{equation*}
H=10L -\sum_{i=1}^9 3E_i - \sum_{j=10}^{12} 2E_j - \sum_{k=13}^{16} E_k;
\end{equation*}
\item[$\mathbf{(VII.4c)}$] $\alpha_1=4, \,
\alpha_2=4:$ $X$ is the blow-up of $\PP^2$ at $16$ points and
\begin{equation*}
H=12L -\sum_{i=1}^7 4E_i - 3E_8- \sum_{j=9}^{12} 2E_j-\sum_{k=13}^{16} E_k;
\end{equation*}
\item[$\mathbf{(VII.5a)}$] $\alpha_1=5, \, \alpha_2=0:$ $X$ is the blow-up of
$\PP^1 \times \PP^1$ at $15$ points, and
\begin{equation*}
H=7L_1+7L_2 - \sum_{i=1}^{10} 3E_i - \sum_{j=11}^{15} E_j;
\end{equation*}
\item[$\mathbf{(VII.5b)(*)}$] $\alpha_1=5, \,
\alpha_2=1:$ $X$ is the blow-up of $\PP^2$ at $16$
points and
\begin{equation*}
H=12L -\sum_{i=1}^6 4E_i - \sum_{j=7}^{10} 3E_j - 2E_{11} -\sum_{k=12}^{16} E_k;
\end{equation*}
\item[$\mathbf{(VII.6)(*)}$] $\alpha_1=6:$
$X$ is the blow-up of $\PP^2$ at $16$ points and
\begin{equation*}
H=13L -\sum_{i=1}^{10} 4E_i -  \sum_{j=11}^{16} E_j;
\end{equation*}
\item[$\mathbf{(VII.7)(*)}$] $\alpha_1=7:$ $X$ is the blow-up of an Enriques surface at $7$ points.
\end{itemize}
\end{proposition}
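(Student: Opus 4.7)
The strategy is to combine Proposition \ref{prop:adjunction}, applicable because $b = 8 \ge 7$, with an iterated adjunction analysis based on Theorem \ref{adjunction-thm} and the structural information on unstable lines given by Proposition \ref{prop:unstable-b-8}. By Proposition \ref{prop:adjunction}, the first adjoint map $\varphi_{|K_X + H|} \colon X \to X_1 \subset \PP^5$ is birational onto a smooth surface and contracts exactly $\alpha_1$ exceptional $(-1)$-curves, one for each unstable line of the twisted Tschirnhausen bundle $\mF = \mE(6)$, which is a Steiner bundle of the form $\mF_8$. Proposition \ref{prop:unstable-b-8} then yields $\alpha_1 \in \{0, 1, \ldots, 7\}$ unless $\mF$ is a Schwarzenberger bundle. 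The first task is to rule out the Schwarzenberger case by mimicking Claim \ref{twoclaim}: the relevant cohomological dimensions $h^0(\PP^2, S^2 \mF(-2))$, $h^0(\PP^2, \mF)$ and $h^0(\PP^2, S^3 \mF(-2))$ should again saturate, forcing every nonzero global section of $\mO_{\PP(\mF)}(3\xi - 2\ell)$ to split off a relative quadric, in contradiction with the irreducibility of $X$.

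Next, I would record the numerical data of the adjunction process starting from $D = H$. From $D^2 = 3$, $K_X D = 7$, $K_X^2 = -7$ the formulas of \S \ref{subsec:adjunction theory} produce $D_1^2 = 10$, $K_{X_1} D_1 = 0$, $K_{X_1}^2 = -7 + \alpha_1$ and, for the second round, $D_2^2 = 3 + \alpha_1$, $K_{X_2} D_2 = -7 + \alpha_1$, so that the arithmetic genus satisfies $g(D_2) = \alpha_1 - 1$. Non-negativity of this genus, valid since $D_2$ is very ample on the smooth surface $X_2$, excludes $\alpha_1 = 0$ and leaves $\alpha_1 \in \{1, \ldots, 7\}$.

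The core of the argument is then a case-by-case study. The cleanest case is $\alpha_1 = 7$: the identities $K_{X_1}^2 = 0$, $K_{X_1} D_1 = 0$ and $D_1^2 > 0$ together with the Hodge index theorem force $K_{X_1}$ to be numerically trivial, and combined with $p_g(X_1) = q(X_1) = 0$ and the Enriques--Kodaira classification this gives that $X_1$ is an Enriques surface, yielding case (VII.7). At the opposite end, $\alpha_1 = 1$ gives $g(D_2) = 0$, hence $|K_{X_2} + D_2|$ is empty, so by Theorem \ref{adjunction-thm}.A the target $X_2$ is either $\PP^2$ with $D_2 = \mO_{\PP^2}(2)$ (case (VII.1b)) or a rational normal scroll of degree $4$ in $\PP^5$; integrality and very-ampleness constraints on the reconstructed $H$ single out the Hirzebruch surfaces $\FF_n$ with $n \in \{0, 2\}$ (case (VII.1a)). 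For the intermediate values $\alpha_1 \in \{2, 3, 4, 5, 6\}$ the same principle applies: either the second adjunction is already terminal, the image $X_2$ belonging to one of the special classes of Theorem \ref{adjunction-thm}.B (Del Pezzo surface when $\alpha_1 = 2$, blow-ups of $\PP^2$ or of a Hirzebruch surface in the remaining cases), or a third adjunction must be iterated; the possible values of $\alpha_2$ then refine the list into the subcases (VII.4a/b/c) and (VII.5a/b). Once $X_n$ and the hyperplane class $D_n$ are pinned down, the divisor $H$ is reconstructed on $X$ by pulling $D_n$ back through the chain of blow-downs and subtracting the appropriate multiplicities at the points blown up at each stage.

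The main obstacle will be the intermediate values $\alpha_1 \in \{3, 4, 5\}$, where several terminal surfaces $X_2$ or $X_3$ are compatible with the numerical data; careful Hodge index and integrality arguments, in the same spirit as the exclusion of the scrolls $S(1,3)$ and $S(2,2)$ in the proof of Proposition \ref{prop.TypeIV}, will be needed to discard the spurious alternatives. The existence claims for the cases marked $(*)$ would be established by exhibiting explicit building sections, most of them computable via \texttt{Macaulay2} scripts as in the Appendix, with the notable exception of case (VII.7), which admits the intrinsic description as a Reye congruence of an Enriques surface (cf.\ Remark \ref{rmk:Enriques}).
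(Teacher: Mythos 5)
Your overall strategy is the same as the paper's: start the adjunction process at $D=H$ (legitimate by Proposition \ref{prop:adjunction} since $b=8$), record $D_1^2=10$, $K_{X_1}D_1=0$, $K_{X_1}^2=-7+\alpha_1$, run a case-by-case analysis on $\alpha_1$ via Theorem \ref{adjunction-thm} and the classification of low-degree surfaces, and get existence of the starred cases from Steiner bundles with a prescribed number of unstable lines plus a general building section checked by \texttt{Macaulay2}. Your adjunction numerics are correct, and your treatments of $\alpha_1=0$ (via $g(D_2)=\alpha_1-1<0$), $\alpha_1=1$ and $\alpha_1=7$ are sound. However, there are two genuine gaps. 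First, your bound $\alpha_1\le 7$ is made to rest on excluding the Schwarzenberger case by an analogue of Claim \ref{twoclaim}, whose key point (that the cohomological dimensions ``should again saturate'' for $b=8$) you do not verify and which is not obviously true. This detour is unnecessary: the paper obtains $0\le\alpha_1\le 7$ directly from the Hodge index theorem, i.e. from $10\,(-7+\alpha_1)\le 0$ for the pair $(D_1,K_{X_1})$ on $X_1$; alternatively, a birational morphism contracts only finitely many curves, so by Proposition \ref{prop:adjunction} the unstable lines are finitely many and Proposition \ref{prop:unstable-b-8} applies. As written, your derivation of the upper bound hangs on an unproved claim.

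Second, the core content of the statement — the exact values of $\alpha_2$, the subcases, and the exclusions for $\alpha_1\in\{2,\ldots,6\}$ — is asserted rather than derived; you explicitly defer it to ``careful Hodge index and integrality arguments'', but this is precisely where the paper's proof does its work. For instance, for $\alpha_1=3$ one needs both the Hodge index bound $\alpha_2\le 6$ and the inequality $(K_{X_2}+D_2)^2\ge 0$ giving $\alpha_2\ge 6$ to force the conic-bundle case; for $\alpha_1=5$ the subcase $\alpha_2=2$ is killed because $X_3$ would be a smooth quartic in $\PP^3$, contradicting $p_g(X)=0$; for $\alpha_1=6$ the subcase $\alpha_2=1$ is killed because the unique nondegenerate rational surface of degree $7$ in $\PP^4$ has $K^2=-2$; and the identification of the terminal surfaces producing (VII.3), (VII.4a), (VII.6) relies on the classification of embedded surfaces of small degree \cite{Io81,Io84}, not merely on Theorem \ref{adjunction-thm}. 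Without carrying out these eliminations the list is not established. Finally, your alternative existence argument for (VII.7) via Reye congruences is plausible but would still require checking that the induced degree-$3$ cover is a \emph{general} triple plane; the paper treats (VII.7) by the same Steiner-bundle/\texttt{Macaulay2} construction as the other starred cases, using the logarithmic bundle with seven unstable lines.
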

\begin{proof}
We have a birational morphism 
\begin{equation*}
\varphi_{|K_X+H|} \colon X \to X_1 \subset \PP^5
\end{equation*}
and an intersection matrix
\begin{equation*}
\left( \begin{array}{cc} (D_1)^2 & K_{X_1}D_1 \\
 K_{X_1}D_1 & (K_{X_1})^2\\
 \end{array} \right) =
\left( \begin{array}{cc} \; 10 & 0 \\
 0 &  -7+ \alpha_1 \\
 \end{array} \right).
\end{equation*}
By Hodge Index Theorem we infer $0 \leq \alpha_1 \leq 7$. Let us consider separately the different cases.\\

$\bullet \, \mathbf{\alpha_1=0.}$ The second adjunction map gives a pair $(X_2, \, D_2)$, such that the intersection matrix on the surface $X_2 \subset \PP^5$ is 
\begin{equation*}
\left( \begin{array}{cc}
(D_2)^2 & K_{X_2}D_2 \\
 K_{X_2}D_2 & (K_{X_2})^2\\
 \end{array} \right) =
\left( \begin{array}{cc} \; 3 & -7 \\
 -7 &  -7 + \alpha_2\\
 \end{array} \right).
\end{equation*}
This gives a contradiction, since a smooth surface of degree $3$ in $\PP^5$ is necessarily contained in a hyperplane. Hence the case $\alpha_1=0$ cannot occur. 

\bigskip \bigskip

$\bullet \, \mathbf{\alpha_1=1.}$  The second adjunction map gives a pair $(X_2, \, D_2)$, such that  the intersection    matrix on the surface $X_2 \subset \PP^5$ is 
\begin{equation*}
\left( \begin{array}{cc}
(D_2)^2 & K_{X_2}D_2 \\
 K_{X_2}D_2 & (K_{X_2})^2\\
 \end{array} \right) =
\left( \begin{array}{cc} \; 4 & -6 \\
 -6 &  -6 + \alpha_2\\
 \end{array} \right).
\end{equation*}
A smooth, linearly normal surface of degree $4$ in $\PP^5$ is either a rational scroll or the Veronese surface. In the former case we have $(K_{X_2})^2=8$, hence $\alpha_2=14$ and, using the classification of rational scrolls in $\PP^5$ (see the proof of Proposition \ref{prop.TypeIV}),
we get $(\textbf{VII.1a})$. In the latter case we have $(K_{X_2})^2=9,$ hence $\alpha_2=15$. This gives ($\textbf{VII.1b}$).   

\bigskip \bigskip

$\bullet \, \mathbf{\alpha_1=2.}$ The second adjunction map gives a pair $(X_2, \, D_2)$, such that the intersection matrix on the surface $X_2 \subset \PP^5$ is 
\begin{equation*}
\left( \begin{array}{cc}
(D_2)^2 & K_{X_2}D_2 \\
 K_{X_2}D_2 & (K_{X_2})^2\\
 \end{array} \right) =
\left( \begin{array}{cc} \; 5 & -5 \\
 -5 &  -5 + \alpha_2\\
 \end{array} \right).
\end{equation*}
In particular $X_2$ has degree $5$, hence it must be a Del Pezzo surface. So $(K_{X_2})^2=5$, that is $\alpha_2=10$. This gives ($\textbf{VII.2}$). \bigskip \bigskip

$\bullet \, \mathbf{\alpha_1=3.}$ The second adjunction map gives a pair $(X_2, \, D_2)$, such that the intersection matrix on the surface $X_2 \subset \PP^5$ is 
\begin{equation*}
\left( \begin{array}{cc}
(D_2)^2 & K_{X_2}D_2 \\
 K_{X_2}D_2 & (K_{X_2})^2\\
 \end{array} \right) =
\left( \begin{array}{cc} \; 6 & -4 \\
 -4 &  -4 + \alpha_2\\
 \end{array} \right).
\end{equation*}
The Hodge Index Theorem implies $\alpha_2 \leq 6$. On the other hand, Theorem \ref{adjunction-thm} implies $(K_{X_2}+D_2)^2 \geq 0$, hence $\alpha_2 \geq 6$. It follows $\alpha_2=6$, hence $(K_{X_2}+D_2)^2=0$. So $X_2$ is a conic bundle of degree $6$ and sectional genus $2$ in $\PP^5$, containing precisely $6$ reducible fibres because $(K_{X_2})^2=2$. It turns out that $X_2$ is the blow-up of $\PP^2$ at $7$ points, embedded in $\PP^5$ via the linear system 
\begin{equation*}
D_2=4L - 2E_1 - \sum_{i=2}^7 E_i,
\end{equation*}
see \cite{Io81}. This is case ($\mathbf{VII.3}$). 

\bigskip \bigskip

$\bullet \, \mathbf{\alpha_1=4.}$ The second adjunction map gives a pair $(X_2, \, D_2)$, such that the intersection    matrix on the surface $X_2 \subset \PP^5$ is 
\begin{equation*}
\left( \begin{array}{cc}
(D_2)^2 & K_{X_2}D_2 \\
 K_{X_2}D_2 & (K_{X_2})^2\\
 \end{array} \right) =
\left( \begin{array}{cc} \; 7 & -3 \\
 -3 &  -3 + \alpha_2\\
 \end{array} \right).
\end{equation*}
The Hodge Index Theorem implies $\alpha_2 \leq 4$, whereas the condition $(K_{X_2}+D_2)^2 \geq 0$ gives $\alpha_2 \geq 2$; then $2 \leq \alpha_2 \leq 4$. 

$\diamond$ If $\alpha_2=2$ then by \cite[p. 148]{Io84} it follows that $X_2$ is the blow-up at $9$ points of $\FF_n$, with $n \in \{0, \, 1, \, 2, \, 3 \})$,  and that
\begin{equation*} 
D_2 = 2\frc + (n+4) \frf - \sum_{i=1}^9 E_i.
\end{equation*}
This is case $(\textbf{VII.4a})$.

$\diamond$ If $\alpha_2=3$ then the third adjunction map gives a pair $(X_3, \, D_3)$ whose intersection matrix is 
\begin{equation*}
\left( \begin{array}{cc}
(D_3)^2 & K_{X_3}D_3 \\
 K_{X_3}D_3 & (K_{X_3})^2\\
 \end{array} \right) =
\left( \begin{array}{cc} \; 1 & -3 \\
 -3 &  \alpha_3\\
 \end{array} \right).
\end{equation*}
This implies $(X_3, \, D_3)= (\PP^2, \, \mO_{\PP^2}(1))$, so $\alpha_3 = 9$. This is case  $(\textbf{VII.4b})$.

$\diamond$ If $\alpha_2=4$ then $(X_2, \, D_2)$ is as in case $(6)$ of Theorem \ref{adjunction-thm}. This is  $(\textbf{VII.4c})$. 

\bigskip \bigskip

$\bullet \, \mathbf{\alpha_1=5.}$ The second adjunction map gives a pair $(X_2, \, D_2)$, such that the intersection   matrix on the surface $X_2 \subset \PP^5$ is 
\begin{equation*}
\left( \begin{array}{cc}
(D_2)^2 & K_{X_2}D_2 \\
 K_{X_2}D_2 & (K_{X_2})^2\\
 \end{array} \right) =
\left( \begin{array}{cc} \; 8 & -2 \\
 -2 &  -2 + \alpha_2\\
 \end{array} \right).
\end{equation*}
Then the Hodge Index Theorem implies $0 \leq \alpha_2 \leq 2$. 

$\diamond$  If $\alpha_2=0$ then the third adjunction map gives a pair $(X_3, \, D_3)$, where $X_3 \subset \PP^3$ and whose intersection matrix is 
\begin{equation*}
\left( \begin{array}{cc}
(D_3)^2 & K_{X_3}D_3 \\
 K_{X_3}D_3 & (K_{X_3})^2\\
 \end{array} \right) =
\left( \begin{array}{cc} \; 2 & -4 \\
 -4 &  -2 + \alpha_3\\
 \end{array} \right).
\end{equation*}
 Hence $(X_3, \, D_3)= (\PP^1 \times \PP^1, \, \mO_{\PP^1 \times \PP^1}(1, \, 1))$, so in particular $\alpha_3=10$. This is case $(\textbf{VII.5a})$. 

$\diamond$  If $\alpha_2=1$ then the third adjunction map gives a pair $(X_3, \, D_3)$, with $X_3 \subset \PP^3$ and whose intersection matrix is 
\begin{equation*}
\left( \begin{array}{cc}
(D_3)^2 & K_{X_3}D_3 \\
 K_{X_3}D_3 & (K_{X_3})^2\\
 \end{array} \right) =
\left( \begin{array}{cc} \; 3 & -3 \\
 -3 &  -1 + \alpha_3\\
 \end{array} \right).
\end{equation*}
Therefore $X_3= \PP^2(p_1, \ldots, p_6)$ is a smooth cubic surface, in particular $\alpha_3=4$ and $D_3= 3L - \sum_{i=1}^6 E_i$. This is case $(\textbf{VII.5b})$. 

$\diamond$  If $\alpha_2=2$ then the third adjunction map gives a pair $(X_3, \, D_3)$, with $X_3 \subset \PP^3$ and whose intersection matrix is 
\begin{equation*}
\left( \begin{array}{cc}
(D_3)^2 & K_{X_3}D_3 \\
 K_{X_3}D_3 & (K_{X_3})^2\\
 \end{array} \right) =
\left( \begin{array}{cc} \; 4 & -2 \\
 -2 &  \alpha_3\\
 \end{array} \right).
\end{equation*}
Therefore $X_3$ is a smooth quartic surface, a contradiction because we are assuming $p_g(X)=0$. This case cannot occur. 

\bigskip \bigskip

$\bullet \, \mathbf{\alpha_1=6.}$ The second adjunction map gives a pair $(X_2, \, D_2)$, such that the intersection   matrix on the surface $X_2 \subset \PP^5$ is 
\begin{equation*}
\left( \begin{array}{cc}
(D_2)^2 & K_{X_2}D_2 \\
 K_{X_2}D_2 & (K_{X_2})^2\\
 \end{array} \right) =
\left( \begin{array}{cc} \; 9 & -1 \\
 -1 &  -1 + \alpha_2\\
 \end{array} \right).
\end{equation*}
Then the Hodge Index Theorem implies $0 \leq \alpha_2 \leq 1$. 

$\diamond$  If $\alpha_2=0$ then the third adjunction map gives a pair $(X_3, \, D_3)$, with $X_3 \subset \PP^4$ and whose intersection matrix is 
\begin{equation*}
\left( \begin{array}{cc}
(D_3)^2 & K_{X_3}D_3 \\
 K_{X_3}D_3 & (K_{X_3})^2\\
 \end{array} \right) =
\left( \begin{array}{cc} \; 6 & -2 \\
 -2 &  -1+\alpha_3\\
 \end{array} \right).
\end{equation*}
Then $X_3$ is a smooth surface of degree $6$ and sectional genus $3$ in $\PP^4$. Looking at the classification given in \cite{Io81} we see that $X_3$ is a Bordiga surface, see Remark \ref{rmk:IV}, so $\alpha_3=0$ and
\begin{equation*}
D_3=4L - \sum_{i=1}^{10} E_i.
\end{equation*}
This gives case $\textbf{(VII.6)}$. 

$\diamond$  If $\alpha_2=1$ then the third adjunction map gives a pair $(X_3, \, D_3)$, with $X_3 \subset \PP^4$ and whose intersection matrix is 
\begin{equation*}
\left( \begin{array}{cc}
(D_3)^2 & K_{X_3}D_3 \\
 K_{X_3}D_3 & (K_{X_3})^2\\
 \end{array} \right) =
\left( \begin{array}{cc} \; 7 & -1 \\
 -1 &  \alpha_3\\
 \end{array} \right).
\end{equation*}
By Hodge Index Theorem we obtain $\alpha_3=0$, hence $(K_{X_3})^2=0$. This is a contradiction, because the unique non-degenerate, smooth rational surface of degree $7$ in $\PP^4$ has $K^2=-2$, see Remark \ref{rmk:V}. So this case does not occur. 

\bigskip \bigskip

$\bullet \, \mathbf{\alpha_1=7.}$ In this case the intersection matrix on the surface $X_1 \subset \PP^5$ is 
\begin{equation*}
\left( \begin{array}{cc}
(D_1)^2 & K_{X_1}D_1 \\
 K_{X_1}D_1 & (K_{X_1})^2\\
 \end{array} \right) =
\left( \begin{array}{cc} \; 10 & 0 \\
 0 &  0 \\
 \end{array} \right).
\end{equation*}
The Hodge Index Theorem implies that $K_{X_1}$ is numerically trivial. So $X_1$ is a minimal Enriques surface, and $X$ is the blow-up of $X_1$ at $7$ points. This yields 
$\textbf{(VII.7)}$. 

\bigskip 

The proof of the existence for the cases marked with $(*)$  goes as follows. We first choose
$\alpha_1 \in \{1,\ldots,7\}$. According to Proposition
\ref{prop:adjunction}, we need a rank two Steiner bundle $\mF$ on $\PP^2$ with a
resolution like \eqref{eq:M-VII} and having precisely $\alpha_1$ distinct unstable
lines. Bundles with these properties are described in Proposition \ref{prop:unstable-b-8}.

Then, we take $\PP(\mF)$ and we choose a sufficiently general
global section $\eta$ of $\mL = \mO_{\PP(\mF)}(3\xi-2\ell)$. We do this by looking directly
at the image $Y$ of $\mathfrak{q} \colon \PP(\mF) \to \PP^5$, namely we consider $\eta$ as a global section of $\mR(3)$ via the natural identification given by \eqref{eq:S3E}. In this setting, $Y$ is a scroll of degree $6$ in $\PP^5$ defined by the minors of
order $3$ of the $3 \times 4$ matrix of linear forms $N$ over $\PP^5$ obtained via the
construction of \S \ref{sub:proj.bundle}, i.e.
\begin{equation*}
\mO_{\PP^5}(-1)^4 \stackrel{N}{\longrightarrow} \mO_{\PP^5}^3,
\end{equation*}
and the zero locus of $\eta$ is a cubic hypersurface of $\PP^5$ containing the union of two surfaces $S_1$ and $S_2$ in $Y$, both obtained as the image via $\mathfrak{q}$ of a divisor belonging to $|\mO_{\PP(\mF)}(\ell)|$. 

Concretely, $S_1$ and $S_2$ lie in the
net generated by the rows of $N$, i.e.  they can be defined by the $2\times 2$ minors of $4\times 2$ matrices obtained taking random linear combinations of these rows. 

Now we compute the resolution of the
homogeneous ideal defining $S_1 \cup S_2$ in $\PP^5$, we take a general cubic in this ideal and we consider the residual surface $X_1$ in $Y$.
The image of the first adjunction map
\begin{equation*}
\varphi_{|K_X+H|} \colon X \to  \PP^5
\end{equation*}
is precisely $X_1,$ so that $X$ is the blow-up of $X_1$ at $\alpha_1$ points.

It remains to compute $\alpha_2$, or equivalently $(K_{X_2})^2$. To do
this, we observe that the second adjunction map of $X$ is defined by the restriction to $X_1$ of the linear system $|\mO_{Y}(2\xi -\ell)|$, and this in turn coincides with the
restriction to $X_1$ of the linear system generated by the six quadrics in the ideal defining $S_1$.

The image of $X_1$ via this linear system is the surface $X_2$, hence we compute 
$(K_{X_2})^2$ by taking the dual of the resolution of the homogeneous
ideal of $X_2$ in the target $\PP^5$. All this, together with the verification that 
 $X_1$ (and hence $X$) is smooth,
is done with the help of  \texttt{Macaulay2}.  
In the Appendix at the end of the paper we explain in detail how this computer-aided construction is performed. 
\end{proof}

\begin{remark}  \label{rmk:Alexander}
In \cite{Al88}, Alexander showed the existence of a non-special,
linearly normal surface of degree $9$ in $\PP^4$, obtained by
embedding the blow-up of $\PP^2$ at $10$ general points via the very
ample complete linear system  
\begin{equation*}
\bigg|13L - \sum_{i=1}^{10} 4 E_i \bigg|.
\end{equation*}
By using LeBarz formula, see \cite[Th\'{e}or\`{e}me 5]{LeB90}, we can see that Alexander
surface has precisely one $6$-secant line. Projecting from this line
to $\PP^2$, one obtains a birational model of a general triple cover;
it is immediate to see that this corresponds to case
$\textbf{(VII.6)}$ in Proposition \ref{prop.typeVII}.  
\end{remark}

\begin{remark} \label{rmk:Enriques}
Let us say something more about case (VII.7). Since $\alpha_1=7$, we
deduce that $\mF$ has $7$ unstable lines, hence it is a logarithmic
bundle (see Proposition \ref{prop:unstable-b-8}). In this situation,
the surface $X_1$ is a smooth Enriques surface of degree $10$ and
sectional genus $6$ in $\PP^5$, that is a so-called \emph{Fano
  model}. Actually, one can check that $X_1$ is contained into the
Grasmannian $\Gr(1, \, \PP^3)$ as a \emph{Reye congruence}, i.e. a
$2$-dimensional cycle of bidegree $(3, \, 7)$, see \cite[Theorem
4.3]{G93}. In particular, $X_1$ admits a family of $7$-secant planes,
and the projection from one of these planes provides a birational
model of the triple cover $f \colon X \to \PP^2$ (in fact, $X$ is the
blow-up of $X_1$ at $7$ points).      

For more details about Fano and Reye models, see \cite{Cos83, CV93}.
\end{remark}

\subsubsection{Some further considerations on
 triple planes of type VII} \label{subsec:further-cons}

We mentioned in the previous subsection that we are able to construct many, but not all cases of triple planes of type \ref{VII} (see Proposition \ref{prop.typeVII}). We conjecture that the remaining cases do not exist. More precisely, our expectation is that the values of $\alpha_1$ and $\alpha_2$
should necessarily satisfy the rule
\begin{equation*}
\alpha_2={{7-\alpha_1}\choose 2}.
\end{equation*}

Let us explain now what is the geometric evidence beyond our conjecture. 
The second adjunction map $\varphi_2 \colon X_1 \to X_2 \subset \PP^5$ 
 can be lifted to the map $\zeta \colon \PP(\mF) \to \PP^5$ associated
with the linear system $|\mO_{\PP(\mF)}(2\xi - \ell)|$. Note that
\begin{equation*}
H^0(\PP(\mF),\mO_{\PP(\mF)}(2\xi - \ell)) \simeq H^0(\PP^2,S^2\mF(-1))
\simeq \wedge^2 W^\vee,
\end{equation*}
where the last isomorphism is obtained taking global sections in the
second exterior power of the short exact sequence 
\begin{equation*}
 0 \to W^\vee \ts \mO_{\PP(V)}(-1) \to U \ts \mO_{\PP(V)} \to \mF \to 0
\end{equation*}
defining $\mF$ (see \eqref{eq:M-N}), namely 
\begin{equation*}
0 \to \wedge^2 W^\vee \ts \mO_{\PP^2}(-3) \to W^\vee \ts U \ts
\mO_{\PP^2}(-2) \to S^2U \ts \mO_{\PP^2}(-1) \to S^2 \mF(-1) \to 0.
\end{equation*}

One can show that the projective closure $Y'$ of the image of the map $\zeta \colon \PP(\mF) \dashrightarrow
\PP(\wedge^2 W^\vee)$ is contained in the Plücker quadric $\GG=\Gr(1,
\, \PP(W^\vee))$ and that $Y'$ is the degeneracy locus of a map on
$\GG$ defined by the tensor $\phi \in U \otimes V \otimes W$ considered 
in \S \ref{a}. More precisely, denoting by $\mU$ the
tautological rank two subbundle on $\GG$, once noted that
$H^0(\GG, \, \mU^\vee)=W$ we see that $\phi$ gives a morphism
\begin{equation*}
V^\vee \ts \mU \to U \ts \mO_{\GG}.
\end{equation*}

The variety $Y'$ is the vanishing locus of the determinant of this morphism,
so that $Y'$ can be expressed as a complete
intersection of the Plücker quadric and a cubic hypersurface in $\PP^5$. 

The locus where this
morphism has rank $\le 4$ is contained in the singular locus of $Y'$
and coincides with it for a general choice of $\mF$. By Porteous' formula, for such a general choice we expect that $Y'$ has $21$ singular points. One can see that these points are precisely the images of the sections of negative self-intersection of the Hirzebruch surfaces in $\PP(\mF)$  lying above the smooth
conics in $\PP^2$ where $\mF$ splits as $\mO_{\PP^1}(1) \oplus
\mO_{\PP^1}(7)$, once chosen an isomorphism to $\PP^1$ (it would be
natural to call these conics \emph{unstable conics}, and the argument above shows that there are
 in general $21$ of them).

Also, the indeterminacy locus of $\zeta$ is exactly the union of the
sections of negative self-intersection on the Hirzebruch surfaces
lying above the unstable lines of $\mF$. So,
$\alpha_1$ and $\alpha_2$ should depend only on $\mF$ and not on $X$,
and moreover $\alpha_1$ should determine $\alpha_2$. However, it
is not clear yet how the number of unstable lines determines the precise number of
unstable conics. 

\section{Moduli spaces} \label{sec:moduli}

\def\De{\mathrm{Def}}
\def\Aut{\mathrm{Aut}}
\def\PGL{\mathrm{PGL}}
\def\GL{\mathrm{GL}}
\def\FR{\mathfrak{R}}
\def\FT{\mathfrak{T}}
\def\FM{\mathfrak{M}}
\def\FN{\mathfrak{N}}

In this section we  describe some moduli problems related to our triple planes. For $b \in \{2, \, 3, \, 4 \}$ we set
\begin{equation*}
\mE_b:= \begin{cases} 
\mO_{\PP^2}(-1) \oplus \mO_{\PP^2}(-1) &\mbox{if } b=2 \\
\mO_{\PP^2}(-1) \oplus \mO_{\PP^2}(-2) &\mbox{if } b=3 \\
\mO_{\PP^2}(-2) \oplus \mO_{\PP^2}(-2) &\mbox{if } b=4, 
\end{cases}
\end{equation*}
whereas for $b \in \{5, \, 6, \, 7, \, 8\}$ we denote by
$\mF_b=\mE_b(b-2)$ a rank $2$ Steiner bundle on $\PP^2$ having
sheafified minimal graded free resolution of the form   
\begin{equation*}
  0  \to \mO_{\PP^2}(1-b)^{b-4} \to \mO_{\PP^2}(2-b)^{b-2} \to \mE_b \to 0.
  \end{equation*}
Then, for any $b \in \{2, \ldots, 8\}$, we define two spaces $\FN_b$ and $\FM_b$ as follows: 
\begin{equation*}
\FN_b=\left\{(\mE_b,\, \eta) \;  \Bigg| \minibox{$\; \eta \in \PP
    (H^0(\PP^2, \, S^3\mE_b^\vee \otimes \wedge^2 \mE_b))$ is the building section  \\
    of a general triple plane with $p_g=q=0$ \\
  and Tschirnhausen bundle $\mE_b$} \right\} / \simeq
\end{equation*}
\begin{equation*}
\FM_b= \left\{(\mE_b,\, \eta) \;  \Bigg| \minibox{$\; \eta \in \PP H^0(\PP^2, \, S^3\mE_b^\vee \otimes \wedge^2 \mE_b)$ and  $D_0(\eta)$ provides
  a general triple \\ \; plane with $p_g=q=0$ and Tschirnhausen bundle $\mE_b$} \right\} / \sim
\end{equation*}
where we set $(\mE_b, \, \eta) \simeq (\mE_b', \, \eta')$ if and only if there is an isomorphism $\Psi \colon \mE \to \mE'$ such that $\Psi^* \eta' = \eta$ and the following diagram commutes 
\begin{equation*} 
\begin{CD}
\mE_b  @>{\Psi} >> \mE'_b\\
@VVV  @VVV\\
\PP^2 @> {id}>> \PP^2.\\
\end{CD}
\end{equation*}
whereas $(\mE_b, \, \eta) \sim (\mE_b', \, \eta')$ if and only if there is an isomorphism $\Psi \colon \mE \to \mE'$ and an automorphism $\lambda \colon \PP^2 \to \PP^2$ such that $\Psi^* \eta' = \eta$ and the following diagram commutes 
\begin{equation*} 
\begin{CD}
\mE_b  @>{\Psi} >> \mE'_b\\
@VVV  @VVV\\
\PP^2 @> {\lambda}>> \PP^2.\\
\end{CD}
\end{equation*}
We have $\FM_b = \FN_b/ \mathrm{PGL}_3(\CC)$, because the equivalence  $(\mE_b, \, \eta) \simeq (\mE_b', \, \eta')$ is obtained from $(\mE_b, \, \eta) \sim (\mE_b', \, \eta')$ via the natural $\mathrm{PGL}_3(\CC)$-action on the base. Note that, with the terminology of \cite[Chapter 4]{HuyLehn10}, the
 pair $(\mE_b, \, \eta)$ consisting of the Tschirnhausen bundle and of
 the building section is a \emph{framed sheaf}. 

Given a general triple plane $f \colon X \to \PP^2$  branched over a
curve of degree $2b$, by Theorem \ref{thm triple:1} and \ref{teo:res}
we can functorially associate with $(X, \, f)$ a framed sheaf $(\mE_b,
\, \eta)$, and conversely.  In other words, considering the set of
framed sheaves $(\mE_b, \, \eta)$ up to the equivalence relation
$\simeq$ or $\sim$ defined above actually amounts to
consider the set of pairs $(X, \, f)$ up to the corresponding equivalence relation.  

Thus, from this point of view, $\mathfrak{M}_b$ can be identified with
the moduli space of the pairs $(X, \, f)$ up to isomorphisms, and
$\mathfrak{N}_b$ with the moduli space of the pairs $(X, \, f)$ up to
\emph{cover} isomorphisms.
 
In the sequel, we will use interchangeably the above notation $\FN_b$
and  $\FM_b$, with $b \in \{2, \ldots, 8\}$, and  $\FN_i$
and $\FM_i$, with $i \in \{\ref{I},\ldots,\ref{VII}\}$. In each case, the moduli space 
$\FN_b$ can be constructed as follows: 
\begin{itemize}
\item take the versal deformation space $\De(\mE_b)$ of $\mE_b;$
\item stratify $\De(\mE_b)$ in such a way that $H^0(\PP^2, \, S^3 \mE_b^\vee \otimes \wedge^2 \mE_b)$ has constant rank and consider the locally trivial projective bundle over each
  stratum whose fibres are given by $\PP H^0(\PP^2, \, S^3 \mE_b^\vee \otimes \wedge^2 \mE_b)$;
\item consider the quotient of this projective bundle by the natural action of the group $\Aut(\mE_b)$.
\end{itemize}
In order to obtain $\FM_b$, we must further take the quotient of the above moduli space by the natural action of $\PGL_3(\CC)$. In particular, the expected dimensions of $\FN_b$ and $\FM_b$ will be given by
\begin{equation} \label{eq:exp-dim}
\begin{split}
\textrm{exp-dim} \, \FN_b & = \dim \De(\mE_b) + h^0(\PP^2, \, S^3 \mE_b^\vee \otimes \wedge^2 \mE_b) - \dim \Aut(\mE_b), \\
\textrm{exp-dim} \, \FM_b & = \dim \De(\mE_b) + h^0(\PP^2, \, S^3 \mE_b^\vee \otimes \wedge^2 \mE_b) - \dim \Aut(\mE_b)-8.
\end{split}
\end{equation}
From now on, we will simply write $\mE$ instead of $\mE_b$ if no confusion can arise. 

\subsection{Moduli  of triple planes with decomposable
  Tschirnhausen bundle}

Let us first consider cases $\textrm{I, II, III}$. Here $\mE$ splits
as a sum of two line bundles and it is rigid.

\begin{theorem} \label{primimoduli}
The following holds$:$
\begin{enumerate}[itemsep=2pt, label=\bf{\roman*)}]
\item \label{moduli:I} the moduli space $\moduli_{\ref{I}}$ consists of a single point$;$
\item \label{moduli:II} the moduli space $\moduli_{\ref{II}}$ is
  unirational of dimension $7;$
\item \label{moduli:III} the moduli space $\moduli_{\ref{III}}$ is
  unirational of dimension $12.$
\end{enumerate}
\end{theorem}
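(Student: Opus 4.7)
The plan is to apply formula \eqref{eq:exp-dim} directly in the three cases and then to promote the resulting expected dimensions to actual structural statements via the explicit quotient construction of $\FN_b$ and $\FM_b$. The key simplification is that in cases \ref{I}, \ref{II}, and \ref{III} the Tschirnhausen bundle $\mE=\mE_b$ is a direct sum of line bundles on $\PP^2$ by Theorem \ref{teo:res}, so $\mathrm{Ext}^1(\mE,\mE)$ is a direct sum of groups of the form $H^1(\PP^2,\mO_{\PP^2}(k))$ and therefore vanishes. Hence $\mE$ is rigid, $\dim\De(\mE)=0$, and \eqref{eq:exp-dim} reduces to $\dim\FM_b=h^0(\PP^2, S^3\mE^\vee\otimes\wedge^2\mE)-\dim\Aut(\mE)-8$.

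I would then compute the two inputs in each case, using the standard identities $S^3(\mO_{\PP^2}(a)\oplus\mO_{\PP^2}(b))\simeq\bigoplus_{i=0}^3\mO_{\PP^2}((3-i)a+ib)$ and $\wedge^2(\mO_{\PP^2}(a)\oplus\mO_{\PP^2}(b))\simeq\mO_{\PP^2}(a+b)$. In case \ref{I} one finds $S^3\mE^\vee\otimes\wedge^2\mE\simeq\mO_{\PP^2}(1)^{\oplus 4}$ so $h^0=12$, and $\Aut(\mE)=\GL_2$ has dimension $4$. In case \ref{II}, $S^3\mE^\vee\otimes\wedge^2\mE\simeq\mO_{\PP^2}\oplus\mO_{\PP^2}(1)\oplus\mO_{\PP^2}(2)\oplus\mO_{\PP^2}(3)$ so $h^0=20$, while $\Aut(\mE)$ is the group of invertible elements of $\mathrm{End}(\mE)$, which has dimension $2+h^0(\PP^2,\mO_{\PP^2}(1))=5$. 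In case \ref{III}, $S^3\mE^\vee\otimes\wedge^2\mE\simeq\mO_{\PP^2}(2)^{\oplus 4}$ so $h^0=24$, and again $\Aut(\mE)=\GL_2$ has dimension $4$. Substituting gives expected dimensions $\dim\FM_I=12-4-8=0$, $\dim\FM_{II}=20-5-8=7$, and $\dim\FM_{III}=24-4-8=12$.

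To turn these into actual statements (unirationality and, in case \ref{I}, a single point), I construct $\FN_b$ as the geometric quotient of the Zariski-open subset $U\subseteq H^0(\PP^2, S^3\mE^\vee\otimes\wedge^2\mE)$ parametrising sections whose associated triple cover is a general triple plane with $p_g=q=0$, by the action of the connected linear algebraic group $\Aut(\mE)$; then $\FM_b=\FN_b/\PGL_3(\CC)$. Non-emptiness of $U$ follows from Corollary \ref{cor:Tsch1}. Since $U$ is irreducible (being open in an affine space) and both acting groups are connected and linear algebraic, $\FN_b$ and $\FM_b$ are irreducible and unirational. Combined with the dimension computation, this settles parts \ref{moduli:II} and \ref{moduli:III}, and shows that $\FM_I$ is an irreducible $0$-dimensional variety, i.e.\ a single reduced point, proving part \ref{moduli:I}.

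The hard part is verifying that $U$ actually has dimension equal to $h^0$ and that the $\Aut(\mE)\times\PGL_3(\CC)$-action has generic orbits of the full expected dimension $\dim\Aut(\mE)+8$, equivalently that the stabiliser of a generic pair $(\mE,\eta)$ is finite. This can be cross-checked directly against the classifications obtained earlier in Section \ref{sec:class}: Proposition \ref{prop.TypeI} realises every type-I triple plane as the projection of the cubic scroll $S(1,2)\subset\PP^4$ from a line, and both $\Gr(1,\PP^4)$ and $\Aut(S(1,2))$ have dimension $6$, consistent with $\dim\FM_I=0$; Proposition \ref{prop.TypeII} realises type-II surfaces as smooth cubic surfaces (a $4$-dimensional family) together with a choice of projection point in $\PP^3$ (adding $3$ parameters), consistent with $\dim\FM_{II}=7$; and Proposition \ref{prop.TypeIII} realises type-III surfaces as blow-ups of Hirzebruch surfaces at nine points, producing the expected $12$-dimensional family.
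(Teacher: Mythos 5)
Your numerical inputs are all correct ($h^0(\PP^2,S^3\mE^\vee\otimes\wedge^2\mE)=12,20,24$ and $\dim\mathrm{Aut}(\mE)=4,5,4$ in cases \ref{I}, \ref{II}, \ref{III}, with $\mE$ rigid), and the quotient construction does give irreducibility and unirationality exactly as in the paper. The genuine gap is the step you yourself label ``the hard part''. Since $\dim\moduli_b=\dim\PP H^0(\PP^2,S^3\mE^\vee\otimes\wedge^2\mE)-\dim(\text{generic orbit})$ and the orbit dimension is at most $(\dim\mathrm{Aut}(\mE)-1)+8$, the number produced by \eqref{eq:exp-dim} is only a \emph{lower} bound for the actual dimension: a positive-dimensional generic stabiliser would make $\moduli_b$ larger than expected, not smaller. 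Hence the equalities in \ref{moduli:II}, \ref{moduli:III}, and above all the single-point claim in \ref{moduli:I} (irreducible plus dimension $0$), require proving that the stabiliser of a generic $\eta$ under $(\mathrm{Aut}(\mE)/\CC^*)\times\mathrm{PGL}_3(\CC)$ is finite — in case \ref{I}, that this $11$-dimensional group has a dense orbit in $\PP^{11}$. Your cross-checks do not supply this, because they are themselves only dimension-consistency statements carrying the same unproved hypothesis: observing $\dim\Gr(1,\PP^4)=\dim\mathrm{Aut}(S(1,2))=6$ re-proves the trivial inequality $\dim\moduli_{\ref{I}}\ge 0$, and to conclude a single point you would still need the automorphism group of the scroll to act on general lines with finite stabiliser (equivalently, a dense orbit), which is the original question in different clothing. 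Likewise the count $(19+3)-15=7$ for pairs (cubic surface, projection point) modulo $\mathrm{PGL}_4(\CC)$ becomes a proof only after invoking that a general cubic surface has finitely many (indeed no nontrivial) linear automorphisms — a classical fact you never state, though it would close case \ref{II}. Only in case \ref{III} is your check essentially a complete argument (and it is the paper's: $2\cdot 9-\dim\mathrm{Aut}(\FF_0)=12$, the finiteness of the stabiliser of nine general points being obvious), provided one also remarks that $h^0(X,\mO_X(H))=3$, so the cover is determined by the pair $(X,H)$.

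For comparison, the paper settles \ref{moduli:I} and \ref{moduli:II} by a different route that bypasses any stabiliser analysis: it identifies the branch curves (the tricuspidal quartic, respectively the sextics $f_2^3+f_3^2=0$ with six cusps on a conic), counts their parameters, and then uses the computation of $\pi_1(\PP^2\setminus B)$ together with monodromy and the Grauert--Remmert theorem to show that each branch curve carries exactly one triple plane up to isomorphism; this yields simultaneously the single point in \ref{moduli:I} and the equality $\dim\moduli_{\ref{II}}=7$. To keep your approach you would need to add the missing finiteness statements, e.g.\ a normal form for a general element of $S^3(\CC^2)^\vee\otimes\wedge^2\CC^2\otimes H^0(\mO_{\PP^2}(1))$ under $\mathrm{GL}_2\times\mathrm{GL}_3$ in case \ref{I}, and the triviality of the automorphism group of a general cubic surface in case \ref{II}.
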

\begin{proof} 
As a preliminary step, note that in all these cases the bundle 
$S^3 \mE^\vee \otimes \wedge^2 \mE$ is globally generated. Therefore, 
Theorem \ref{thm triple:1} applies and shows that the moduli spaces
$\moduli_{\ref{I}}$, $\moduli_{\ref{II}}$ and 
$\moduli_{\ref{III}}$ are obtained as a quotient of a 
Zariski dense open subset of $H^0(\PP^2,\, S^3 \mE^\vee \otimes \wedge^2 \mE)$
by the action of some linear group, so that all of them are irreducible, 
unirational varieties.

Let us check \ref{moduli:I}. In this case, the branch curve $B \subset
\PP^2$ is a tricuspidal plane quartic curve, which is
unique up to projective transformations. By a topological monodromy
argument (see \cite[\S 58]{ST80}) and Grauert-Remmert extension
theorem (see \cite[XII.5.4]{SGA1}) this implies that the number of
triple planes of type \ref{I} up to isomorphisms equals the number of
group epimorphisms
\begin{equation*}   
\varrho \colon \pi_1(\PP^2- B) \to \mathfrak{S}_3
\end{equation*}
up to conjugation in $\mathfrak{S}_3$. Now, it is well-known that
\begin{equation*}
\pi_1(\PP^2- B)=\textrm{B}_3(\PP^1)= \langle \alpha, \, \beta \; | \; \alpha^3 = \beta^2 = (\beta \alpha)^2 \rangle,
\end{equation*}
see \cite[Chapter 4, Proposition 4.8]{Dim92}, and this group has a unique epimorhism $\varrho$ to
$\mathfrak{S}_3$ up to conjugation. In fact, $\varrho(\alpha)$ must be a $3$-cycle whereas $\varrho(\beta)$ 
must be a transposition, so we may assume 
\begin{equation*}
\varrho(\alpha)=(1 \, 2 \, 3), \quad \varrho(\beta)=(1 \, 2).
\end{equation*}
This proves that $\moduli_{\ref{I}}$ consists of a
single point.
\bigskip

Let us now analyze \ref{moduli:II}. Recall that in this case the
branch locus $B \subset \PP^2$ is a plane sextic curve with six cusps
lying on the same conic. Each of these curves can be written as 
\begin{equation} \label{eq:sextic}
(f_2)^3+(f_3)^2=0, 
\end{equation}
where $f_k$ denotes a homogeneous form of degree $k$, and the construction depends on 
\begin{equation*}
6+10-1-\dim \, \PGL_3(\CC)=7
\end{equation*} 
parameters. The same monodromy argument used in part 
\ref{moduli:I} shows that this also computes the effective dimension  $\dim \, \moduli_{\ref{II}}$. More precisely, we can see that every fixed curve $B$ of equation $\eqref{eq:sextic}$ is the branch locus of a unique triple cover up to isomorphisms, namely the one whose birational model is provided by the hypersurface
\begin{equation*}
z^3+bz+c=0, 
\end{equation*}
where  $b=-f_2 /\sqrt[3]{4}$ and $c= f_3 / \sqrt{-27}$. In fact, we have
\begin{equation*}
\pi_1(\PP^2-B)=(\ZZ/2 \ZZ) \ast (\ZZ/3 \ZZ) = \langle \alpha, \, \beta \; | \; \alpha^3 = \beta^2 = 1 \rangle, 
\end{equation*} 
see \cite[Chapter 4, Proposition 4.16]{Dim92}, and this group has a unique
epimorhism to $\mathfrak{S}_3$ up to conjugation.
\bigskip

We finally look at \ref{moduli:III}, where $\mE= \mO_{\PP^2}(-2) \oplus \mO_{\PP^2}(-2)$. The automorphism group of $\mE$ is isomorphic to $\GL_2(\CC)$.
Moreover
\begin{equation*}
h^0(\PP^2, \, S^3 \mE^\vee \otimes \wedge^2 \mE)= h^0 \big(\PP^2, \, \mO_{\PP^2}(2)^4 \big)= 24,
\end{equation*}
hence \eqref{eq:exp-dim} implies
\begin{equation*}
\textrm{exp-dim} \, \moduli_{\ref{III}}= 24-4-8=12.
\end{equation*}
This number coincides with the effective dimension $\dim \, \moduli_{\ref{III}}$. In fact,
in this case $X$ is the blow-up at $9$ points of $\FF_n$, with $n \in \{0, \, 1, \, 2, \,  3\}$.
The stratum of maximal dimension corresponds to the value of $n$ such that $\Aut(\FF_n) = H^0(\FF_n, \, T_{\FF_n})$ has minimal dimension, namely to $n=0$ 
for which we have
\begin{equation*}
\dim \, \moduli_{\ref{III}}= 2 \cdot 9 - \dim \, \Aut(\FF_n) = 18 -6 =12. 
\end{equation*}
\end{proof}

\subsection{Moduli of triple planes with stable
  Tschirnhausen bundle}

We now start the analysis of the cases \ref{IV}, \ldots, \ref{VII}, where $\mE$ is indecomposable.
Using the notation introduced in \S \ref{sec.general}, we will write $\mF = \mE(b-2)$, so that $\mF$ fits into the short exact sequence
\begin{equation*}
 0  \to \mO_{\PP^2}(-1)^{b-4} \to \mO_{\PP^2}^{b-2} \to  \mF \to 0.
\end{equation*}
Thus $\De(\mE) = \De(\mF)$ and
\begin{equation*}
 H^0(\PP^2, \, S^3 \mE^\vee \otimes \wedge ^2 \mE ) = H^0(\PP^2, \, S^3 \mF (6-b)).
\end{equation*}
The vector bundle $\mF$ is stable (Theorem \ref{teo:res}), so $\Aut(\mF) = \CC^*$; its deformation space $\De(\mF)$ is described for instance in \cite[Introduction]{Ca02}, and we have
\begin{equation*}
\dim \, \De(\mF) = 3(b-4)(b-2)-1 = (b-1)(b-5).
\end{equation*}
Then \eqref{eq:exp-dim} yields
\begin{equation} \label{eq:exp-dim-irr}
\begin{split}
\dim \, \FN_b= \textrm{exp-dim} \, \FN_b & = (b-1)(b-5) + h^0(\PP^2, \, S^3 \mF (6-b))-1, \\
\textrm{exp-dim} \, \FM_b & = (b-1)(b-5) + h^0(\PP^2, \, S^3 \mF (6-b))-9. 
\end{split}
\end{equation}
Furthermore, the equality $\textrm{exp-dim} \, \FM_b = \dim \, \FM_b$ holds if and only if $\textrm{PGL}_3(\CC)$ acts on $\FN_b$ with generically finite stabilizer.

\begin{theorem} \label{secondimoduli}
For $i \in \{\ref{IV}, \ref{V}, \ref{VI}\}$ the moduli space $\FN_i$ is rational and irreducible, while
$\moduli_i$ is unirational of dimension $\dim \, \FN_i-8,$ where
\begin{enumerate}[itemsep=2pt, label=\bf{\roman*)}]
\item  $\dim \, \FN_{\ref{IV}}=23;$
\item $\dim \, \FN_{\ref{V}}=24;$
\item $\dim \, \FN_{\ref{VI}}=23.$ 
\end{enumerate}
Moreover
the moduli space $\FN_{\ref{VII}}$ has at least seven irreducible
components, all unirational of dimension $20$, that are distinguished by the
number $\alpha_1 \in \{1, \ldots, 7\}$ of unstable lines for $\mF.$ 
\end{theorem}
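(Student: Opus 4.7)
For cases $i \in \{\ref{IV}, \ref{V}, \ref{VI}\}$, the bundle $\mF_b$ is stable and either rigid or of known deformation dimension, so I would apply \eqref{eq:exp-dim-irr}, computing $\dim\De(\mF_b)=(b-1)(b-5)$ together with $h^0(\PP^2,S^3\mF_b(6-b))$ case by case. The identifications $H^0(\PP^2,S^3\mE^\vee\otimes\wedge^2\mE)\simeq H^0(\PD^2,T_{\PD^2}(2))$ and $H^0(\PP^2,S^3\mE^\vee\otimes\wedge^2\mE)\simeq H^0(\PP^3,\mO_{\PP^3}(3))$ established in Propositions \ref{prop.TypeIV} and \ref{prop.TypeV}, together with the value $h^0(\PP^2,S^3\mF_7(-1))=12$ already computed in the proof of Claim \ref{twoclaim}, give $h^0=24,\,20,\,12$ respectively. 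Substituting yields $\dim\FN_{\ref{IV}}=23$, $\dim\FN_{\ref{V}}=24$, $\dim\FN_{\ref{VI}}=23$.

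Next, to establish rationality and irreducibility of $\FN_i$ for $i \in \{\ref{IV}, \ref{V}, \ref{VI}\}$, I would observe that in each of these three cases the moduli of Steiner bundles $\mF_b$ is a rational irreducible variety: in case \ref{IV} it reduces to a single point; in case \ref{V} it is the GIT quotient of the space of $4\times 2$ matrices of linear forms on $\PP^2$ by $\GL_4\times\GL_2/\CC^*$; in case \ref{VI} one restricts to the logarithmic stratum, which is birational to the moduli of six-line configurations in $\PP^2$. A dense open subset of the projective bundle with fibre $\PP H^0(\PP^2,S^3\mF_b(6-b))$ over this base parametrises triple planes, so $\FN_i$ is rational and irreducible. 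The moduli $\moduli_i = \FN_i/\PGL_3(\CC)$ is then unirational of dimension $\dim\FN_i-8$, since the $\PGL_3(\CC)$-action is free on a dense open subset.

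For case \ref{VII}, the strategy is to stratify $\De(\mF_8)$ by the number $\alpha_1 \in \{0,\ldots,7\}$ of unstable lines. Proposition \ref{prop:unstable-b-8} gives an explicit matrix parameterisation of the stratum $S_{\alpha_1}$ of bundles with exactly $\alpha_1$ unstable lines, showing it is irreducible; its dimension is $21-\alpha_1$, which follows from a codimension count on the incidence $\{(\mF_8, L) \colon L \text{ unstable}\}\subset \De(\mF_8)\times\PD^2$: imposing a splitting $\mO_L \oplus \mO_L(4)$ at a fixed line $L$ is a codimension-$3$ condition on $\mF_8$ (as a jump from the generic splitting $\mO_L(2)^{\oplus 2}$), so adding $k$ unstable lines costs codimension $3k-2k=k$ after varying the $L$'s in $\PD^2$. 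On $S_{\alpha_1}$ one must show $h^0(\PP^2,S^3\mF_8(-2))=\alpha_1$: by Lemma \ref{lem:base-locus} each unstable line $L$ contributes the negative section $\frc_L \subset \PP(\mF_8|_L)$ as a fixed component of $|\mO_{\PP(\mF_8)}(3\xi-2\ell)|$, and one verifies that these $\alpha_1$ fixed components yield exactly $\alpha_1$ independent global sections of $\mL$. Plugging into \eqref{eq:exp-dim-irr} then gives $\dim\FN_{\ref{VII},\alpha_1} = (21-\alpha_1)+\alpha_1-1 = 20$ for each $\alpha_1 \in \{1,\ldots,7\}$; each stratum is unirational thanks to the matrix parameterisation, and upper-semicontinuity of $\alpha_1$ ensures that the seven closures $\overline{S_{\alpha_1}}$ give seven distinct irreducible components of $\FN_{\ref{VII}}$.

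The main obstacle is proving the identity $h^0(\PP^2,S^3\mF_8(-2)) = \alpha_1$ on the stratum $S_{\alpha_1}$. A clean algebraic proof would analyse the filtration of $H^0(\PP(\mF_8),\mO_{\PP(\mF_8)}(3\xi-2\ell))$ induced by the fixed components $\frc_{L_1},\ldots,\frc_{L_{\alpha_1}}$ and show that the resulting residual linear systems yield precisely $\alpha_1$ independent sections. A more pragmatic alternative is to verify the equality case-by-case by means of the {\tt Macaulay2} computations already used in the proof of Proposition \ref{prop.typeVII} for the cases marked with $(*)$: these computations explicitly build the linear systems involved and read off the relevant cohomological dimensions.
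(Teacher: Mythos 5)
Your treatment of types \ref{IV}--\ref{VI} follows the paper's skeleton (the expected-dimension formula, the identifications giving $h^0=24,\,20,\,12$, and a projective bundle over a rational base), and the dimension counts $23,\,24,\,23$ are correct. But two points are asserted rather than proved. First, the equality $\dim \moduli_i=\dim \mathfrak{N}_i-8$ requires the $\mathrm{PGL}_3(\CC)$-action on $\mathfrak{N}_i$ to have generically finite stabilizers; you simply claim the action is free on a dense open subset, and this is precisely where the paper does its work: for \ref{IV} the stabilizer of a generic pair permutes the $13$ points of $Z=D_0(\eta)$, of which at most $4$ are collinear, so it injects into $\mathfrak{S}_{13}$; for \ref{V} it preserves the conic of unstable lines, the twisted cubic $C$ and the nine points of $S\cap C$, so it injects into $\mathfrak{S}_9$; for \ref{VI} it permutes the six unstable lines, so it injects into $\mathfrak{S}_6$. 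Second, rationality (not merely unirationality) of $\mathfrak{N}_{\ref{V}}$ does not follow formally from presenting the base as a quotient of $4\times 2$ matrices of linear forms by $\mathrm{GL}_4\times\mathrm{GL}_2$; the paper obtains it by identifying the moduli of $\mF_6$ with the family of smooth conics in $\PD^2$, an open subset of $\PP^5$.

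For type \ref{VII} the genuine gap is the equality $h^0(\PP^2,\,S^3\mF_8(-2))=\alpha_1$ at the generic point of each stratum, and neither of your suggestions delivers it. The claim that the $\alpha_1$ negative sections over the unstable lines, being fixed components of $|\mO_{\PP(\mF)}(3\xi-2\ell)|$, ``yield exactly $\alpha_1$ independent sections'' is not a valid inference: fixed curves constrain sections, they do not produce them. The pragmatic \texttt{Macaulay2} route, via semicontinuity, only bounds the generic value from \emph{above} by $\alpha_1$; without a lower bound the generic $h^0$ could be smaller and the component dimension would fall below $20$. The paper supplies exactly this lower bound in Proposition \ref{prop:S2-8}: one reduces $\mF$ along its unstable lines to get $0\to\mK\to\mF\to\bigoplus_i\mO_{L_i}\to 0$, identifies $\mK$, and uses Pieri's formula $\mF\otimes S^2\mF(-2)\simeq S^3\mF(-2)\oplus\mF(2)$ together with (semi)stability and Serre duality to kill $H^2(\PP^2,\mK\otimes S^2\mF(-2))$, producing a surjection $H^1(\PP^2,S^3\mF(-2))\to\bigoplus_i H^1(L_i,\mO_{L_i}(-2))$ and hence $h^0\ge\alpha_1$ since $\chi=0$; equality for the general member then follows from the computed examples plus semicontinuity. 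Finally, ``upper semicontinuity of $\alpha_1$'' does not show the seven closures are irreducible components of $\mathfrak{N}_{\ref{VII}}$ (in the bundle moduli the strata closures are nested, not disjoint); the paper instead argues that $\alpha_1$ and $\alpha_2$ are invariants of the triple cover, so by the list in Proposition \ref{prop.typeVII} the seven families are connected components, and it cites \cite[Theorem 5.6]{ancona-ottaviani:steiner} for irreducibility, unirationality and codimension $\alpha_1$ of the strata, which your heuristic splitting-jump count suggests but does not prove.
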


First of all we note that, as in the proof of Theorem \ref{primimoduli}, in cases
$\ref{IV}$ and $\ref{V}$ the bundle  
$S^3 \mE^\vee \otimes \wedge^2 \mE \simeq S^3 \mF(6-b)$ is globally
generated.
Indeed, in these cases $b\le 6$ and $\mF$ is globally generated, so
the same is true for $S^3 \mF$ and for $S^3 \mF(6-b)$.
Therefore, 
the spaces $\moduli_{i}$ 
and $\FN_{i}$ are irreducible 
 as soon as the parameter space of the bundle $\mE$, or
equivalently of $\mF$, is irreducible. Moreover, since
$\FN_{i}$ is an open subset of a projective bundle over such
parameter space, rationality of the latter will imply rationality of
the former, and also unirationality of $\FM_i$.

The proof of Theorem \ref{secondimoduli} is based on a case-by-case analysis, that will be done in 
\S \ref{subsec:moduli-IV}, \ref{subsec:moduli-V}, \ref{subsec:moduli-VI}, \ref{subsec:moduli-VII} below. Our strategy is to compute
$\dim \FN_i$ and to show that $\PGL_3(\CC)$ acts on $\FN_i$ with generically
finite stabilizers for all $i \in \{\ref{IV}, \, \ref{V}, \,\ref{VI},  \, \ref{VII}\}$, to prove that $\FN_i$ is rational and
irreducible for $i \in \{\ref{IV}, \, \ref{V}, \,\ref{VI}\}$, and finally to find at
least $7$ irreducible unirational components of $\FN_{\ref{VII}}$.

\subsubsection{Moduli of triple planes of type \ref{IV}} \label{subsec:moduli-IV}

\begin{proposition}
The moduli space $\FN_{\ref{IV}}$ is an open dense
subset of $\PP^{23}$, in particular it is irreducible and
rational. The space $\FM_{\ref{IV}}$ has dimension $15$.
\end{proposition}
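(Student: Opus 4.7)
The plan is to exploit the rigidity of the Tschirnhausen bundle in case \ref{IV} in order to identify $\FN_{\ref{IV}}$ with an open subset of a projective space, after which the claimed dimension of $\FM_{\ref{IV}}$ drops out of the usual $\PGL_3(\CC)$-quotient.

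The first step is a cohomology computation. Using the canonical identification
\begin{equation*}
H^0(\PP^2,\, S^3 \mE^{\vee} \otimes \wedge^2 \mE) \simeq H^0(\PD^2,\, T_{\PD^2}(2))
\end{equation*}
established in the proof of Proposition \ref{prop.TypeIV}\ref{IV-ii}, I would twist the Euler sequence on $\PD^2$ by $\mO_{\PD^2}(3)$ and take global sections to obtain
\begin{equation*}
h^0(\PD^2,\, T_{\PD^2}(2)) = 3\, h^0(\PD^2, \mO_{\PD^2}(3)) - h^0(\PD^2, \mO_{\PD^2}(2)) = 30 - 6 = 24.
\end{equation*}
Since $b=5$ kills the deformation term $(b-1)(b-5)$ in formula \eqref{eq:exp-dim-irr}, this already yields $\dim \FN_{\ref{IV}} = 24-1 = 23$.

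The second step is the identification of $\FN_{\ref{IV}}$ with an open subset of $\PP^{23}$. The twisted Tschirnhausen bundle $\mF \simeq T_{\PP^2}(-1)$ is rigid, so $\De(\mE)$ is a reduced point and $\Aut(\mE) \simeq \CC^{*}$ acts on $H^0(\PP^2, \, S^3\mE^\vee \otimes \wedge^2 \mE)$ by a non-trivial power of the scalar, therefore trivially on the projectivization $\PP^{23}$. Global generation of $S^3 \mE^\vee \otimes \wedge^2 \mE \simeq S^3 T_{\PP^2}(-1) \otimes \mO_{\PP^2}(1)$, combined with Theorem \ref{thm triple:1} and with the existence of triple planes of type \ref{IV} proved in Proposition \ref{prop.TypeIV}, guarantees that the locus of sections $\eta$ defining a general (in particular non-totally-ramified) triple cover is a non-empty open subset of $\PP^{23}$. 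Thus $\FN_{\ref{IV}}$ is canonically identified with this open subset, which is consequently dense, irreducible and rational.

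Finally, since $\FM_{\ref{IV}}= \FN_{\ref{IV}}/\PGL_3(\CC)$, I would conclude by checking that the $\PGL_3(\CC)$-action has generically finite stabilizers. For a general $\eta$, the associated surface $X$ is the blow-up of $\PP^2$ at a sufficiently general configuration of $13$ points (Proposition \ref{prop.TypeIV}\ref{IV-i}), whose automorphism group is trivial; the stabilizer in $\PGL_3(\CC)$ of the cover $(X,f)$ is therefore finite, giving $\dim \FM_{\ref{IV}} = 23-8 = 15$. The only mildly delicate point in this plan is the verification of this generic finiteness of the stabilizer; all other ingredients are either direct cohomological calculations on $\PD^2$ or direct appeals to the rigidity of $T_{\PP^2}(-1)$.
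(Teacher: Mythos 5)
Your first two steps coincide with the paper's own argument: the identification $H^0(\PP^2,\, S^3\mE^\vee\otimes\wedge^2\mE)\simeq H^0(\PD^2,\, T_{\PD^2}(2))\simeq\CC^{24}$, the rigidity/stability of $\mF=T_{\PP^2}(-1)$ (so that the pair is determined by $[\eta]$ alone), and the conclusion that $\FN_{\ref{IV}}$ is a dense open subset of $\PP^{23}$, hence irreducible and rational of dimension $23$, are exactly what the paper does (your Euler-sequence count is the right one, although the twist you need is by $\mO_{\PD^2}(2)$, not $\mO_{\PD^2}(3)$).

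The genuine gap is in the last step, which you yourself flag as the delicate point and then settle by asserting that for general $\eta$ the surface $X$ is the blow-up of $\PP^2$ at a ``sufficiently general'' configuration of $13$ points with trivial automorphism group. This does not stand as written: by Proposition \ref{prop.TypeIV} the $13$ points are never in general position — they impose only $12$ conditions on quartics, being the zero scheme $Z=D_0(\eta)$ of a section of $T_{\PD^2}(2)$ — so genericity results on automorphisms of blow-ups at general points do not apply to this constrained family, and ``trivial automorphism group'' is unsubstantiated. Moreover, even granting the (plausible, but unproved in your sketch) injection of the stabilizer $G_\eta\subset\PGL_3(\CC)$ into the automorphisms of $X$ compatible with $f$, what must really be excluded is a positive-dimensional subgroup of $\PGL_3(\CC)$ preserving $Z$; this is a statement about the geometry of the configuration $Z$, not about abstract automorphisms of $X$ (rational surfaces obtained by blowing up $\ge 9$ points in special position can even have infinite discrete automorphism groups). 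The paper supplies precisely this missing geometric input: since $T_{\PD^2}(2)|_L\simeq\mO_L(3)\oplus\mO_L(4)$ for every line $L$, the lifting argument of Lemma \ref{lem:unstable-line} shows that at most $4$ points of $Z$ lie on any line, hence $Z$ contains $4$ points in general linear position; any homography preserving $Z$ permutes its $13$ reduced points, the resulting homomorphism $G_\eta\to\mathfrak{S}_{13}$ is injective, so $G_\eta$ is finite and $\dim\FM_{\ref{IV}}=23-8=15$. Without an argument of this kind your dimension count for $\FM_{\ref{IV}}$ is incomplete.
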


\begin{proof}
Case $\ref{IV}$, i.e. $b=5$, was analyzed in Proposition \ref{prop.TypeIV}.
We have $\mF = T_{\PP^2}(-1)$ and a natural identification
\begin{equation*} 
H^0(\PP^2, \, S^3 \mF (1)) = H^0(\PD^2, \, T_{\PD^2}(2))=\CC^{24}.
\end{equation*}
Set $\PP^{23} = \PP H^0(\PD^2, \, T_{\PD^2}(2))$ and 
observe that the bundle $\mF$ is rigid, stable and unobstructed, so the 
moduli space consists of a single, reduced point. Consequently, the triple cover
$f \colon X \to \PP^2$ only depends on the section $\eta \in
H^0(\PD^2, \, T_{\PD^2}(2))$ or, better, on its proportionality class $[\eta]$, that lies 
in a Zariski dense open subset of $\PP^{23}$.

By \eqref{eq:exp-dim-irr} we have $\textrm{exp-dim} \, \FM_{\ref{IV}} =15$. 
It remains to show that $\textrm{exp-dim} \,\moduli_{\ref{IV}} = \dim  \, \moduli_{\ref{IV}}$ or, equivalently, that $\PGL_3(\CC)$
acts on $\PP^{23}=\PP H^0(\PD^2, \, T_{\PD^2}(2))$ with
generically finite stabilizer. Take a generic element $\eta \in \PP^{23}$ and let
$Z=D_0(\eta) \subset \PD^2$ be its vanishing locus and $G=G_{\eta} \subset
\PGL_3(\CC)$ its stabilizer. So $Z$ consists of $13$ reduced points
and we want to show that $G$ is finite. Every homography
in $G$ must preserve $Z$ and hence permute its $13$ points, so we
obtain a group homomorphism  
\begin{equation*}
\psi \colon G \to \mathfrak{S}_{13}. 
\end{equation*}
If $L \subset \mathbb{P}^2$ is a line, we have
\begin{equation} \label{eq:34}
T_{\PD^2}(2) | _L = \mO_{L}(3) \oplus \mO_L(4).  
\end{equation}  
Now set $Z':=Z \cap L$ and  $c:=\mathrm{length}(Z')$. Arguing as in part \ref{u:3} of Lemma \ref{lem:unstable-line}, we deduce the existence of a surjection $T_{\PD^2}(2) | _L \to \mO_L(7-c)$, and using 
\eqref{eq:34} this yields $c \leq 4$. So there are no more than $4$
points of $Z$ on a single line, hence the support of
$Z$ contains at least $4$ points in general linear position.   

Now, a homography in $\ker \psi$ must fix the subscheme $Z$
pointwise. Since a homography of the plane fixing at least $4$
points in general position is the identity, we have that $\psi$ is
injective. So $G$ is a subgroup of $\mathfrak{S}_{13}$, hence a finite
group. 
\end{proof}

\subsubsection{Moduli of triple planes of type \ref{V}} \label{subsec:moduli-V}

\begin{proposition}
  The moduli space $\FN_{\ref{V}}$ is a Zariski open dense subset of a $\PP^{19}$-bundle over $\PP^5$, in particular it is rational and irreducible of dimension $24$. The space $\FM_{\ref{V}}$ has dimension $16$.
\end{proposition}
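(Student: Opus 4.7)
The plan is to realize $\FN_{\ref{V}}$ as a Zariski open dense subset of a projective bundle whose base is a rational $5$-dimensional variety parametrizing the Tschirnhausen bundle $\mE$ and whose fibre of dimension $19$ parametrizes the building section $\eta$. This relies entirely on the geometric description supplied by Proposition \ref{prop.TypeV}\ref{V-2}.

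First, I identify the moduli space of bundles $\mF = \mE(4)$ up to isomorphism with a Zariski open subset of $\PP^5$. By \S \ref{b}, for every Steiner bundle $\mF$ of the form $\mF_6$ the scheme $\mW(\mF) \subset \PD^2$ of unstable lines is a smooth conic; this yields a morphism $\mF \mapsto \mW(\mF)$ from the $5$-dimensional coarse moduli space of such bundles to the parameter space $\PP^5 = \PP H^0(\PD^2, \mO_{\PD^2}(2))$ of plane conics, with image contained in the open subset of smooth conics. Using the explicit presentation $\mathfrak{q}_*\mO_{\sP}(\ell) \simeq \mI_{C/\PP^3}(2)$ of \eqref{eq:N-V} together with the trilinear flipping construction of Remark \ref{rem:trilinear-form}, the datum of a smooth conic allows one to reconstruct the twisted cubic $C \subset \PP^3 = \PP H^0(\PP^2,\mF)$ via the degree-two Veronese embedding, and hence the bundle $\mF$ itself. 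Since source and target are both irreducible of dimension $5$, this morphism is birational, giving the base $\PP^5$.

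Next, I construct the $\PP^{19}$-bundle. Over the open locus of smooth conics, the reconstruction above gives a flat family of bundles $\mF$, and by the identification \eqref{eq:S3E} we have
\begin{equation*}
  H^0(\PP^2, \, S^3\mE^\vee \otimes \wedge^2 \mE) \simeq H^0(\PP^3, \, \mO_{\PP^3}(3)),
\end{equation*}
a vector space of dimension $20$. Globalizing yields a rank $20$ vector bundle over (the image of) $\PP^5$, and its projectivization is the desired $\PP^{19}$-bundle. The subset $\FN_{\ref{V}}$ sits inside as the open dense locus corresponding to cubic surfaces $S \subset \PP^3$ that cut out a smooth, general triple cover via the procedure of Proposition \ref{prop.TypeV}\ref{V-2}. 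Rationality of $\FN_{\ref{V}}$ and the dimension count $\dim \FN_{\ref{V}} = 5 + 19 = 24$ (in agreement with \eqref{eq:exp-dim-irr}) follow immediately.

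Finally, the equality $\dim \FM_{\ref{V}} = 16$ will follow from \eqref{eq:exp-dim-irr} once we verify that the natural $\PGL_3(\CC)$-action on $\FN_{\ref{V}}$ has generically finite stabilizer. The stabilizer of a general pair $(\mE, \eta)$ is contained in the subgroup of $\PGL_3(\CC)$ preserving the branch curve $B \subset \PP^2$, which by Proposition \ref{branch-locus} is a plane curve of degree $12$ having exactly $33$ ordinary cusps and no further singularities. Since a very general plane curve of degree $\ge 4$ has trivial projective automorphism group, the stabilizer is finite for generic $(\mE, \eta)$, and the claim follows. The main obstacle in the whole argument is the birationality of $\mF \mapsto \mW(\mF)$: the direct implication is immediate, but the reverse implication, namely the uniqueness of a Steiner bundle of type $\mF_6$ with prescribed smooth conic of unstable lines, is the delicate point, which we handle invoking the classification results \cite[Proposition 6.8]{DK} and \cite[Proposition 2.2]{Val00b}.
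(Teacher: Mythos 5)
Your construction of $\FN_{\ref{V}}$ follows the paper's proof essentially verbatim: the base is the space of smooth conics $\mW(\mF)\subset\PD^2$ inside $\PP^5$ (the bundle $\mF_6$ being determined by its conic of unstable lines, by the same references \cite[Proposition 6.8]{DK}, \cite[Proposition 2.2]{Val00b} the paper uses), the fibre is $\PP^{19}=\PP H^0(\PP^3,\mO_{\PP^3}(3))$ via \eqref{eq:S3E}, and the count $5+19=24$ is the same. So the first statement is fine and not genuinely different from the paper.

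The gap is in your finiteness argument for the generic $\PGL_3(\CC)$-stabilizer, which is what the equality $\dim\FM_{\ref{V}}=16$ hinges on. You reduce to the stabilizer of the branch curve $B$ (correct) and then invoke the fact that a \emph{very general} plane curve of degree $\ge 4$ has trivial projective automorphism group. That fact does not apply here: the branch curves of type \ref{V} triple planes are degree-$12$ curves with $33$ ordinary cusps, a very special (high-codimension) family inside the $90$-dimensional space of all degree-$12$ curves, so a general member of this family is nowhere near a very general plane curve, and genericity in the family gives you no access to that statement. The concern is not hypothetical: special or singular curves can have positive-dimensional projective stabilizer (e.g.\ the cuspidal cubic is invariant under a $\mathbb{G}_m\subset\PGL_3(\CC)$), so an argument tailored to the family is required. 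The conclusion is nonetheless true and can be repaired in at least two ways: (a) the paper's route, namely the stabilizer preserves the conic $\mW(\mF)$, hence lies in a copy of $\mathrm{PGL}_2(\CC)$, and it preserves the nine distinct points $S\cap C$, giving an injection into $\mathfrak{S}_9$ (an element of $\mathrm{PGL}_2(\CC)$ fixing three points is the identity); or (b) note that $B$ is irreducible of geometric genus $22$ (its normalization is the ramification curve $R$, with $2g(R)-2=R^2+K_XR=39+3$), so the projective stabilizer of $B$ injects into $\mathrm{Aut}(R)$, which is finite since $g(R)\ge 2$; the injectivity holds because a homography acting trivially on $B$ fixes four points in general position. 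As written, though, the step ``very general curve $\Rightarrow$ finite stabilizer for generic $(\mE,\eta)$'' is a non sequitur and needs to be replaced by one of these arguments.
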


\begin{proof}
Case \ref{V}, i.e. $b=6$, was analyzed in Proposition \ref{prop.TypeV}.
The bundle $\mF=\mF_6$ is determined by its set of unstable lines,
which form a smooth conic $\mathscr{W}(\mF) \subset \PD^2$, so we can 
identify the moduli space of $\mF$ with the open subset $\mathscr{U} \subset \PP^5$
consisting of smooth conics via the Veronese embedding.
This is the base of our $\PP^{19}$-bundle.

Proposition \ref{prop.TypeV} (cf. also
\S \ref{a}) shows that, once
chosen the Tschirnhausen bundle $\mF$, we have a $4$-dimensional space
$U=H^0(\PP^2,\mF)$ and a corresponding projective space $\PP^3=\PP(U)$, together with a fixed twisted
cubic $C \subset \PP^3$ such that $\PP(\mF)$ is the blow-up of $\PP^3$
at $C$. Moreover, 
the building sections $\eta$ of the triple plane are in bijection with an
open dense subset of the space of cubic surfaces, in view of the identification
\begin{equation} \label{allcubics}
H^0(\PP^2, \, S^3 \mF(6-b)) = H^0(\PP^3, \, \mO_{\PP^3}(3)) = \CC^{20},
\end{equation}
so their proportionality classes belong to an open dense subset of $\PP^{19}=\PP H^0(\PP^2,\, S^3 \mF(6-b))$, and our claim about $\FN_{\ref{V}}$ is proven.

Now \eqref{eq:exp-dim-irr} yields $\textrm{exp-dim} \, \FM_{\ref{V}}=16$, so it only remains 
to show that $\PGL_3(\CC)$
acts on the set of pairs $(\mF, \, \eta)$ with
generically finite stabilizer. Let $G = G_{(\mF, \, \eta)} \subset
\PGL_3(\CC)$ be the stabilizer of the pair $(\mF, \, \eta)$. Then every element
$g \in G$ must fix $\mF$, and hence the conic $\mathscr{W}(\mF)$. By
\cite[p. 154]{FulHa91}, the subgroups of automorphisms of $\PP^n$ that preserves a
rational normal curve $C_n$ is precisely $\textrm{PGL}_2(\CC)$, so $G$
is a subgroup of a copy of $\textrm{PGL}_2(\CC)$ inside
$\textrm{PGL}_3(\CC)$. On the other hand, $g$ fixes $\eta \in
H^0(\PP^3, \, \mO_{\PP^3}(3))$, hence it fixes the cubic surface $S
\subset \PP^3$. 

Next, we have seen in Lemma \ref{contracts} (cf. also Remark \ref{rem:cubic}) 
that the image in $\PP^3$ of the negative sections lying above the lines of
$\mathscr{W}(\mF)$ is precisely the
twisted cubic $C$. The whole construction is therefore $g$-invariant, so $g$ must preserve the intersection $S \cap C$. 

Furthermore, the construction giving
rise to the $2\times 3$ matrix $N$ whose $2\times 2$ minors define $C$,
cf. \eqref{eq:N-V}, can be reversed in order to give back the matrix $M$ presenting
$\mF$, cf. \eqref{eq:M-V}. Since $M$ is generic, this implies that $N$ and $C$ are generic. In addition, by \eqref{allcubics} we also know that the cubic $S$ corresponding
to the building section $\eta$ can be chosen generically.
In particular, the intersection $S \cap C$ is
reduced for a general choice of our data, i.e. it consists of $9$ distinct points.

Summing up, we get a group homomorphism
\begin{equation*}
\psi \colon G \to \mathfrak{S}_9
\end{equation*} 
that must be injective since an element of $\mathrm{PGL}_2(\CC)$ fixing  
at least $3$  distinct points is necessarily the identity. 
So $G$ is a subgroup of $\mathfrak{S}_9$, hence a finite group.
\end{proof}

\subsubsection{Moduli of triple planes of type \ref{VI}} \label{subsec:moduli-VI}

We denote by $\Hilb_d(\PD^2)$ the Hilbert scheme of $0$-dimensional subschemes of length $d$ of $\PD^2$.

\begin{proposition}
The moduli space $\FN_{\ref{VI}}$ is a Zariski open dense subset of a
$\PP^{11}$-bundle over $\Hilb_6(\PD^2)$, in
particular it is a rational variety of dimension $23$. The moduli space $\FM_{\ref{VI}}$
 has dimension $15$.
\end{proposition}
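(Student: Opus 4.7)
The plan is to identify the parameter space of the Tschirnhausen bundle with an open dense subset $\mathscr{U} \subset \Hilb_6(\PD^2)$, and then exhibit $\FN_{\ref{VI}}$ as an open dense subset of a $\PP^{11}$-bundle over $\mathscr{U}$.

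First, by Proposition \ref{prop.TypeVI}\ref{loga}, the twisted Tschirnhausen bundle $\mF=\mF_7$ is a logarithmic bundle associated with $6$ lines in general position in $\PP^2$. Such a configuration of $6$ lines is in bijection (by duality) with a reduced $0$-dimensional subscheme $Z \subset \PD^2$ of length $6$ whose points are in general linear position, and the logarithmic bundle is determined up to isomorphism by this subscheme (cf. \cite{ancona-ottaviani:steiner, faenzi-matei-valles}). This identifies the parameter space of $\mF_7$ with an open dense $\mathscr{U} \subset \Hilb_6(\PD^2)$, which is irreducible and rational of dimension $12$, and gives the base of our bundle.

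Next, I would compute $h^0(\PP^2, \, S^3 \mF_7(-1)) = 12$, which combined with \eqref{eq:exp-dim-irr} yields the desired dimension count $\dim \FN_{\ref{VI}} = 12 + 12 - 1 = 23$. The vanishing of $h^i(\PP^2, \, S^3 \mF_7(-1))$ for $i>0$ can be obtained by tensoring the third symmetric power of the resolution \eqref{eq:M-VI} with $\mO_{\PP^2}(-1)$ and chasing cohomology (along the lines of the analogous computation made in the proof of Claim \ref{twoclaim}); the value $h^0 = 12$ then follows from Riemann-Roch. Since $H^0(\PP^2, \, S^3 \mF(-1))$ has constant rank on $\mathscr{U}$, it gives rise to a rank $12$ vector bundle whose projectivization $\PP^{11}$-bundle contains $\FN_{\ref{VI}}$ as an open dense subset consisting of those sections defining a smooth general triple plane. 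Rationality of $\FN_{\ref{VI}}$ follows from rationality of $\mathscr{U}$ together with the fact that a projective bundle over a rational base is rational.

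Finally, for $\FM_{\ref{VI}} = \FN_{\ref{VI}}/\PGL_3(\CC)$, the expected dimension \eqref{eq:exp-dim-irr} is $23 - 8 = 15$, and equality with the effective dimension requires showing that the $\PGL_3(\CC)$-action has generically finite stabilizer. Let $G = G_{(\mF, \, \eta)} \subset \PGL_3(\CC)$ be such a stabilizer for a generic pair. Any $g \in G$ must preserve the subscheme $Z \subset \PD^2$ of the $6$ dual points associated with the unstable lines of $\mF$, yielding a homomorphism $\psi \colon G \to \mathfrak{S}_6$. For a generic $Z$, the points are in general linear position, so $Z$ contains at least $4$ points in general position; since a homography of $\PP^2$ fixing $4$ points in general position is the identity, $\psi$ is injective, and hence $G$ is finite. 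The main obstacle I foresee is the bundle structure step, namely checking that the projectivization of the direct image sheaf is locally trivial over $\mathscr{U}$ rather than merely a Zariski-locally trivial fibration; this should follow from the constancy of $h^0$ plus standard semi-continuity and base-change arguments, but the verification is the most delicate part of the argument.
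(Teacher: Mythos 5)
Your argument matches the paper's proof essentially step for step: the base $\mathscr{U}\subset\Hilb_6(\PD^2)$ obtained from the unstable-lines/logarithmic description of $\mF_7$, the $\PP^{11}$ of proportionality classes of building sections with $h^0(\PP^2,\,S^3\mF(-1))=12$, the dimension count via \eqref{eq:exp-dim-irr}, and the finite-stabilizer argument through the injective homomorphism $\psi\colon G\to\mathfrak{S}_6$ (using that four of the six dual points are in general position) are exactly the paper's steps. The only minor variation is that you compute $h^0(\PP^2,\,S^3\mF(-1))=12$ by a cohomology chase on the symmetric-power resolution as in Claim \ref{twoclaim}, whereas the paper's proof quotes the identification $H^0(\PP^2,\,S^3\mF(-1))\simeq H^0(\PP^4,\,\mR(3))=\CC^{12}$ from Claim \ref{oneclaim}; both are valid and give the same count.
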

\begin{proof}
Case $\ref{VI}$ was analyzed in Proposition
\ref{prop.TypeVI}. We mentioned in \S \ref{b}, cf. case $b=7$ before 
Proposition \ref{prop:unstable-b-8}, that $\mF=\mF_7$ is a logarithmic
bundle, i.e. it has six unstable lines 
which are in general linear position,
and that these six lines in turn uniquely determine $\mF$. This identifies 
the moduli space of Steiner bundles of type $\mF_7$ as an open dense subset $\mathscr{U}$ of the  Hilbert scheme of six points of $\PD^2$. 

We have a direct image sheaf $\mR(3)$, fitting into  \eqref{eq.VI.5}, and a natural identification
\begin{equation*}
H^0(\PP^2, \, S^3 \mF(6-b))= H^0( \PP^4, \,
\mR(3)) = \CC^{12},
\end{equation*} 
see the proof of Claim \ref{oneclaim}. The sheaf $\mR(3)$ is supported on a
determinantal cubic threefold $Y \subset \PP^4$. In addition, the vanishing 
locus of a general global section of $\mR(3)$
is a Bordiga surface $X_1 \subset \PP^4$ and, moreover, the divisor $X =D_0(\eta) \subset \PP(\mF)$ is the blow-up of $X_1$ at the six nodes of $Y$. Summing up, the proportionality classes $[\eta]$ 
of building sections of triple covers of
type \ref{VI} lie in a dense open subset of $\PP^{11} = \PP H^0(\PP^2,
\, S^3 \mF(6-b))$, and this proves our claim about $\FN_{\ref{VI}}$.

We now consider the moduli space $\FM_{\ref{VI}}$.
First, \eqref{eq:exp-dim-irr} implies $\dim \FM_{\ref{VI}} =15$.
In order to conclude the proof, we must show that 
$\PGL_3(\CC)$
acts on the set of pairs $(\mF, \, \eta)$ with
generically finite stabilizer. Let $G = G_{(\mF, \, \eta)} \subset
\PGL_3(\CC)$ be the stabilizer of the pair $(\mF, \, \eta)$. Then every element
$g \in G$ must fix $\mF$, and hence the set of its six unstable
lines. Consequently, $g$ permutes the corresponding six points in
$\PD^2$, which are in general position. This in turn defines a group homomorphism 
\begin{equation*}
\psi \colon G \to \mathfrak{S}_{6},
\end{equation*}
which must be injective since a
homography of the plane that fixes at least $4$ points in general
position is the identity. So $G$ is a subgroup of $\mathfrak{S}_{6}$, hence a finite group.
\end{proof}

\subsubsection{Moduli of triple planes of type \ref{VII}} \label{subsec:moduli-VII}

Let us finally consider case \ref{VII},
i.e. $b=8$. We need the following preliminary result.

 \begin{proposition} \label{prop:S2-8}
 Assume $b=8$ and let $\mF:=\mF_8$ be a Steiner bundle with $\alpha_1$ unstable lines. Then
  \begin{equation} \label{eq:geq-alpha}
  h^0(\PP^2, \, S^3 \mF(-2)) \geq \alpha_1.
  \end{equation}
\end{proposition}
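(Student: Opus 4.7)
The approach is to compute $h^0(\PP^2, \, S^3\mF(-2))$ via the Koszul resolution induced by the Steiner resolution of $\mF$, and then to exploit the explicit shape of the presentation matrix $M$ imposed by the unstable lines.

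First I would set up the cohomological framework. Taking the third symmetric power of the Steiner resolution $0 \to \mO_{\PP^2}(-1)^4 \to \mO_{\PP^2}^6 \to \mF \to 0$ and twisting by $\mO_{\PP^2}(-2)$ yields the four-term resolution
\begin{equation*}
0 \to \mO_{\PP^2}(-5)^4 \to \mO_{\PP^2}(-4)^{36} \to \mO_{\PP^2}(-3)^{84} \to \mO_{\PP^2}(-2)^{56} \to S^3\mF(-2) \to 0.
\end{equation*}
A direct Riemann-Roch computation yields $\chi(\PP^2, \, S^3\mF(-2)) = 0$. Because $H^i(\mO_{\PP^2}(-j)) = 0$ for $i \in \{0, \, 1\}$ and $j \ge 3$, the hypercohomology spectral sequence of this resolution is concentrated in the row $q=2$, with values $24,108,84,0$ at the four spots; inspection then gives $h^2(\PP^2, \, S^3\mF(-2)) = 0$ and
\begin{equation*}
h^0(\PP^2, \, S^3\mF(-2)) = h^1(\PP^2, \, S^3\mF(-2)) = 84 - \mathrm{rk}(\delta),
\end{equation*}
where $\delta \colon H^2(\PP^2, \, \mO(-4)^{36}) \to H^2(\PP^2, \, \mO(-3)^{84})$ is the map induced by the middle Koszul differential. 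The inequality to prove thus becomes $\mathrm{rk}(\delta) \le 84 - \alpha_1$.

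The decisive input is Proposition \ref{prop:unstable-b-8}: when $\mF = \mF_8$ has $\alpha_1$ unstable lines $L_1,\ldots,L_{\alpha_1}$ with equations $H_1,\ldots,H_{\alpha_1}$, after a suitable linear change of coordinates $M$ has $\alpha_1$ rows each of which factors as $H_i \cdot v_i$ for a constant vector $v_i \in \CC^4$. In terms of the trilinear tensor $\phi \in U \otimes V \otimes W$ encoding $M$ (see Remark \ref{rem:trilinear-form}), this amounts to $\alpha_1$ rank-one conditions on specific slices of $\phi$. Each such factorization produces, through the explicit formula for the Koszul differential, a distinguished element in $\ker(\delta)$ that does not lie in the image of the preceding differential. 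The main obstacle will be to verify that the $\alpha_1$ classes so constructed remain linearly independent modulo this image; this should follow from the linear independence of $H_1,\ldots,H_{\alpha_1}$, guaranteed by the distinctness of the unstable lines, together with the fact that the affected rows of $M$ (equivalently, the rank-one slices of $\phi$) occupy disjoint positions, so that no nontrivial combination of them can be absorbed into a Koszul boundary. Once linear independence is established, $\mathrm{rk}(\delta) \le 84 - \alpha_1$ and the result follows.

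Geometrically, this matches the picture on $\PP(\mF)$: by the projection formula $H^0(\PP^2, \, S^3\mF(-2)) \simeq H^0(\PP(\mF), \, \mO(3\xi-2\ell))$, and by Lemma \ref{lem:base-locus} the base locus of this linear system contains the negative section $\mathfrak{c}_i \subset \PP(\mF|_{L_i}) \simeq \FF_4$ for each unstable line $L_i$. Since the $\mathfrak{c}_i$ lie in pairwise disjoint Hirzebruch surfaces, they contribute independently, consistent with the $\alpha_1$ extra sections predicted by the cohomological analysis.
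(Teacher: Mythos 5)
Your cohomological setup is correct: the symmetric-power (Eagon--Northcott) resolution of $S^3\mF(-2)$, the values $24,108,84,0$ in the $q=2$ row, the degeneration of the hypercohomology spectral sequence, and the resulting identities $h^2(\PP^2, S^3\mF(-2))=0$ and $h^0=h^1=84-\mathrm{rk}(\delta)$ are all right, so the statement is indeed equivalent to $\mathrm{rk}(\delta)\le 84-\alpha_1$. But that rank bound is the entire content of the proposition, and your argument for it is only a promise. You assert that each unstable line yields "a distinguished element in $\ker(\delta)$ that does not lie in the image of the preceding differential" without exhibiting it, and you yourself flag that the linear independence of these $\alpha_1$ classes modulo the boundaries is "the main obstacle", offering only the heuristic that the special rows of $M$ "occupy disjoint positions". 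That heuristic is not a proof, and it breaks down precisely in the extreme case $\alpha_1=7$: by Proposition \ref{prop:unstable-b-8}, for a logarithmic bundle only $b-2=6$ unstable lines correspond to rows of $M$ of the form $H_j\cdot(\text{constant vector})$, while the seventh line $H_{b-1}$ arises from a linear relation among all the rows, so there is no "disjoint position" producing its class. (Also, in your closing paragraph the surfaces $\PP(\mF|_{L_i})$ over distinct unstable lines are not pairwise disjoint — any two lines meet, and the corresponding Hirzebruch surfaces share the fibre over the intersection point — and base-locus containment gives no lower bound on $h^0$ in any case.)

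For comparison, the paper gets the needed "independence of the $\alpha_1$ contributions" not by hand inside a Koszul complex but structurally: it performs the reduction of $\mF$ along the unstable lines, $0\to\mK\to\mF\to\bigoplus_i\mO_{L_i}\to 0$, identifies $\mK$ explicitly for each value of $\alpha_1$ via Proposition \ref{Fsteiner}, tensors with $S^2\mF(-2)$ and uses Pieri's formula $\mF\otimes S^2\mF(-2)\simeq S^3\mF(-2)\oplus\mF(2)$ together with $S^2\mF(-2)|_{L_i}\simeq\mO_{L_i}(-2)\oplus\mO_{L_i}(2)\oplus\mO_{L_i}(6)$; the key input is the vanishing $H^2(\PP^2,\mK\otimes S^2\mF(-2))=0$, proved by a semistability/slope argument, which forces the surjection $H^1(\PP^2,S^3\mF(-2))\twoheadrightarrow\bigoplus_i H^1(L_i,\mO_{L_i}(-2))\simeq\CC^{\alpha_1}$ and hence $h^1\ge\alpha_1$, whence $h^0\ge\alpha_1$ since $\chi=0$ and $h^2=0$. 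Some substitute for that step — an actual construction of your classes, valid also in the logarithmic case, with a genuine independence argument — is what your proposal is missing.
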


\begin{proof}
  Let $L_1,\ldots,L_{\alpha_1}$ be the unstable lines of $\mF$. 
  We can perform the reduction of $\mF$ along
  such unstable lines,
  i.e., a sequence of elementary transformations of $\mF$ along the $L_i$, see
  \cite[\S 2.7 - 2.8]{DK} and \cite[Proposition 2.1]{Val00b}.
  This gives an exact sequence
  \begin{equation} \label{eq:reduction-alpha}
  0 \to \mK \to \mF \to \bigoplus_{i=1}^{\alpha_1} \mO_{L_i} \to 0,
  \end{equation}
  where $\mK$ is a vector bundle of rank $2$. 
  From \eqref{eq:reduction-alpha} we get $H^i(\PP^2,\mK(-1))=0$ for all $i$. Computing Chern classes and 
  applying Proposition \ref{Fsteiner} to $\mK$, we see that $\mK$ behaves according to the following table:
  \begin{equation}
    \label{K}
  \begin{tabular}[h]{c||c|c|c|c|c}
    $\alpha_1$ & $1, \, 2, \, 3$ & $4$ & $5$ & $6$ & $7$ \\
    \hline 
    $\mK$ & $\mF_{8-\alpha_1}$ & $\mO_{\PP^2}^2$ & $\mO_{\PP^2}(-1)
    \oplus \mO_{\PP^2}$ & $\mO_{\PP^2}(-1)^2$ &  $\Omega_{\PP^2}^1$ 
  \end{tabular}
  \end{equation}
Indeed, $\mK$ is a Steiner bundle for $\alpha_1=1, \,2,\,3, \,4$ (corresponding 
to the cases $b=7, \, 6, \, 5, \, 4$ in Proposition \ref{Fsteiner}).
  For $\alpha_1=5$ (the
  case $b=3$ in Proposition \ref{Fsteiner}) we have $\mK \simeq
  \mO_{\PP^2}(-1) \oplus \mO_{\PP^2}$. Finally, for $\alpha_1=6, \, 7$ (the cases $b=2, \, 1$  in Proposition
  \ref{Fsteiner}) we have that $\mK^\vee(-1)$ is a
  Steiner bundle respectively of the form $\mO_{\PP^2}^2$ for
  $\alpha_1=6$ or $T_{\PP^2}(-1)$ for $\alpha_1=7$, and hence $\mK \simeq \mO_{\PP^2}(-1)^2$ or $\mK \simeq \Omega_{\PP^2}^1$.

  From Pieri's formulas (cf. \cite[Corollary 2.3.5 p. 62]{W03}) we obtain
  \begin{equation} \label{pieri}
    \mF \otimes S^2 \mF(-2) \simeq S^3 \mF(-2) \oplus \wedge^2\mF \otimes \mF(-2) \simeq S^3 \mF(-2) \oplus \mF(2).
  \end{equation}
  Also, the fact that $L_i$ is unstable implies
  \begin{equation} \label{alpharette}
    S^2 \mF(-2) |_{L_i} \simeq \mO_{L_i}(-2) \oplus \mO_{L_i}(2) \oplus \mO_{L_i}(6).
  \end{equation}
  So, tensoring \eqref{eq:reduction-alpha} with $S^2 \mF(-2)$ we
  get
  \begin{equation} \label{eq:reduction-alpha-2}
    0 \to \mK \otimes S^2 \mF(-2) \to S^3 \mF(-2) \oplus \mF(2) \to \bigoplus_{i=1}^{\alpha_1} \left(
      \mO_{L_i}(-2) \oplus \mO_{L_i}(2) \oplus \mO_{L_i}(6) \right) \to 0.
  \end{equation}

  Twisting \eqref{eq:reduction-alpha} by $\mO_{\PP^2}(2)$ and taking
  cohomology we get $H^1(\PP^2, \, \mF(2))=0$. Now, since we are in characteristic $0$, the stability of $\mF$ implies that $S^2 \mF^\vee(-1)$ is semistable, of slope $-5$. On the other hand, by Table \eqref{K}, each summand of $\mK^\vee$
  is semistable (and $\mK$ is even stable for $\alpha_1 \ne 3, \, 4, \,5$) of slope between $-3/2$ (for $\alpha_1=1$) and $3/2$
  (for $\alpha_1=7$). In any case, all summands of $\mK^\vee \otimes S^2
  \mF^\vee(-1)$ are semistable of strictly negative slope, so using Serre duality we get
   \begin{equation*}
  H^2(\PP^2,\mK \otimes S^2 \mF(-2)) \simeq H^0(\PP^2,\mK^\vee \otimes S^2 \mF^\vee(-1))^\vee=0.
  \end{equation*}
  
  Therefore, taking cohomology in \eqref{eq:reduction-alpha-2} we obtain $H^2(\PP^2, \, S^3 \mF(-2))=0$ and 
  a surjection
  \begin{equation*}
    H^1(\PP^2, \, S^3 \mF(-2)) \to  \bigoplus_{i=1}^{\alpha_1} H^1(L_i, \, \mO_{L_i}(-2)) \to  0,
  \end{equation*}
  which in turn implies $h^1(\PP^2, \, S^3 \mF(-2)) \geq
  \alpha_1$. By Riemann-Roch theorem we have $\chi(\PP^2, \, S^3 \mF(-2)) = 0$, hence $h^0(\PP^2, \, S^3 \mF(-2)) \ge \alpha_1$, that is \eqref{eq:geq-alpha}.
\end{proof}

Let us now state the result concluding the proof of Theorem \ref{secondimoduli}.
\begin{proposition}
The moduli space $\FN_{\ref{VII}}$ has at least seven
connected, irreducible, unirational components, all of dimension $20$, that are distinguished by the number $\alpha_1 \in \{1, \ldots, 7\}$ of unstable lines for $\mF$.
\end{proposition}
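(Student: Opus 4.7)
The plan is to stratify $\FN_{\ref{VII}}$ by the number of unstable lines $\alpha_1$ of the Tschirnhausen bundle, and to exhibit, for each value $\alpha_1 \in \{1, \ldots, 7\}$, an irreducible unirational subfamily of dimension $20$. Since these seven subfamilies will be disjoint and non-empty, they will provide the seven required components.

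I would first analyze the moduli locus $\mathscr{S}_{\alpha_1}$ of isomorphism classes of Steiner bundles $\mF = \mF_8$ with exactly $\alpha_1$ unstable lines. By Proposition \ref{prop:unstable-b-8}, for $\alpha_1 \leq 6$ any such $\mF$ is presented by a matrix $M$ with explicit block structure, controlled by $\alpha_1$ linear forms $H_i$, the constants $a_{i,j}$, and a residual matrix $M'$, while for $\alpha_1 = 7$ the bundle is logarithmic and determined by a configuration of $7$ points in general linear position in $\PD^2$. In every case the parameter space is a Zariski open subset of an affine space, so $\mathscr{S}_{\alpha_1}$ is irreducible and unirational (in fact rational). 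Each imposed unstable line is a codimension-one condition on the ambient moduli space of Steiner bundles of type $\mF_8$, which has dimension $(b-1)(b-5) = 21$, yielding
\begin{equation*}
\dim \mathscr{S}_{\alpha_1} = 21 - \alpha_1.
\end{equation*}

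The second ingredient is the sharp equality $h^0(\PP^2, S^3 \mF(-2)) = \alpha_1$ for generic $\mF \in \mathscr{S}_{\alpha_1}$, refining Proposition \ref{prop:S2-8}. The lower bound $h^0 \geq \alpha_1$ is exactly what is proved there; for the reverse inequality I would trace through the long cohomology sequence associated with \eqref{eq:reduction-alpha-2} and reduce the problem to showing the vanishing $H^1(\PP^2, \mK \otimes S^2 \mF(-2)) = 0$ on the generic fibre, with $\mK$ given by the table \eqref{K}. For $\alpha_1 \in \{1,2,3,4\}$ the sheaf $\mK$ is trivial or a Steiner bundle, so the slope/semistability argument already used in the proof of Proposition \ref{prop:S2-8} can be upgraded to give this vanishing; for $\alpha_1 \in \{5,6,7\}$, where $\mK$ splits into line bundles or equals $\Omega^1_{\PP^2}$, each summand must be handled separately via Serre duality and slope considerations, possibly supplemented by a \texttt{Macaulay2} verification on a representative sample bundle in each stratum.

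Combining the two ingredients, over each stratum $\mathscr{S}_{\alpha_1}$ the building sections $\eta$ sweep out a Zariski locally trivial $\PP^{\alpha_1-1}$-bundle, a dense open subset of which corresponds to smooth triple planes by the relevant starred case of Proposition \ref{prop.typeVII}. This produces an irreducible, unirational family in $\FN_{\ref{VII}}$ of dimension $(21-\alpha_1) + (\alpha_1-1) = 20$. The seven such families are pairwise disjoint because $\alpha_1$ is a discrete invariant of the pair $(X,f)$: by Proposition \ref{prop:adjunction}, it coincides with the number of $(-1)$-curves contracted by the first adjunction map $\varphi_{|K_X+H|}$ of $X$, and hence depends only on the class $[(X,f)] \in \FN_{\ref{VII}}$; therefore each family is open in its closure and gives a distinct irreducible component. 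The hardest step is the sharpening of Proposition \ref{prop:S2-8}: the semistability arguments employed there only yield $H^2(\PP^2, \mK \otimes S^2 \mF(-2)) = 0$, and strengthening this to $H^1 = 0$ is delicate, particularly in the logarithmic case $\alpha_1 = 7$ where $\mK = \Omega^1_{\PP^2}$ prevents a direct slope-based argument and forces a more refined cohomological analysis.
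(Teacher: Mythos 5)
Your overall architecture is exactly the paper's: stratify by the number $\alpha_1$ of unstable lines, use that the stratum of Steiner bundles $\mF_8$ with $\alpha_1$ unstable lines is irreducible, unirational and of codimension $\alpha_1$ in the $21$-dimensional moduli space (the paper gets this from \cite[Theorem 5.6]{ancona-ottaviani:steiner}, which is precisely the content of your unproved assertion that each unstable line imposes one condition), fibre it by the $\PP^{\alpha_1-1}$ of building sections, get dimension $(21-\alpha_1)+(\alpha_1-1)=20$, and separate the families by the adjunction-theoretic invariance of $\alpha_1$. The genuine gap is in the step you yourself flag as the crux, and it is worse than ``delicate'': your proposed reduction to the vanishing $H^1(\PP^2,\mK\otimes S^2\mF(-2))=0$ cannot work in most cases. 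Indeed, from \eqref{eq:reduction-alpha-2} one gets
\begin{equation*}
\chi(\PP^2,\mK\otimes S^2\mF(-2))=\chi(\PP^2,S^3\mF(-2))+\chi(\PP^2,\mF(2))-9\alpha_1=0+24-9\alpha_1,
\end{equation*}
which is negative for every $\alpha_1\geq 3$; hence $H^1(\PP^2,\mK\otimes S^2\mF(-2))\neq 0$ there, and in particular the claimed upgrade of the slope argument already fails for $\alpha_1=3,4$, not only in the logarithmic case. What is actually needed is the weaker statement that the connecting map from $H^0$ of the torsion sheaf supported on the unstable lines onto $H^1(\PP^2,\mK\otimes S^2\mF(-2))$ is surjective (equivalently, that $H^1(\PP^2,S^3\mF(-2))\oplus H^1(\PP^2,\mF(2))$ injects into $\CC^{\alpha_1}$), and this is not accessible by semistability/slope considerations. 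The paper closes this gap by a different mechanism: it exhibits with {\tt Macaulay2} one explicit bundle in each stratum with $h^0(\PP^2,S^3\mF(-2))=\alpha_1$ and then combines the lower bound of Proposition \ref{prop:S2-8} with semicontinuity over the irreducible stratum. So the computer verification you mention as an optional supplement is not optional; it (or a genuinely new argument for the generic upper bound) is the proof of the fibre dimension $\alpha_1-1$, without which the dimension count $20$ is unjustified.

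There is a second, smaller gap concerning the word ``connected''. Separating the seven families only by $\alpha_1$ shows they are pairwise disjoint, but the proposition asserts they are connected components of $\FN_{\ref{VII}}$, and Proposition \ref{prop.typeVII} does not exclude the existence of covers of types (VII.1a), (VII.4a), (VII.4c), (VII.5a), which have the same values $\alpha_1\in\{1,4,5\}$ as starred cases but different $\alpha_2$. The paper rules out any interference from these by observing that $\alpha_2$ is likewise an invariant of the cover, so the components possibly containing the missing cases are disjoint from the seven constructed families; your conclusion ``each family is open in its closure and gives a distinct irreducible component'' is weaker than the statement being proved and should be completed by this $\alpha_2$ argument.
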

\begin{proof}
  Proposition \ref{prop.typeVII} shows the existence of seven families 
\begin{equation*}
\FN_{\ref{VII}}^1, \ldots, \FN_{\ref{VII}}^7
\end{equation*}  
  of triple planes, one for each value of the number $\alpha_1 \in \{1,\ldots,7\}$ of unstable lines of $\mF$. Such families are pairwise disjoint subsets of $\FN_{\ref{VII}}$, because $\alpha_1$ coincides with the number of
lines contracted by the first adjunction map of $X$, and this number
is an invariant of the triple cover. Moreover, all the cases missing
the star in Proposition \ref{prop.typeVII} have different values of
$\alpha_2$ than the covers belonging to the
$\FN_{\ref{VII}}^{\alpha_1}$. Since also $\alpha_2$ is an invariant of the triple cover, the connected components of $\FN_{\ref{VII}}$ possibly 
containing the missing cases are necessarily disjoint from all the $\FN_{\ref{VII}}^{\alpha_1}$. This shows that our seven families actually are seven connected components of $\FN_{\ref{VII}}$.  

Let us show now that such connected components are also irreducible and unirational. Consider  
the $21$-dimensional (rational) moduli space $\mathrm{M}_{\PP^2}(2, \, 4, \, 10)$  of rank-$2$ stable bundles on
$\PP^2$ with Chern classes $(4, \, 10)$ and having a Steiner-type
resolution, and let $\mathscr{U}^{\alpha_1} \subset
\mathrm{M}_{\PP^2}(2, \, 4, \, 10)$ be the stratum corresponding to
vector bundles having  $\alpha_1$ unstable lines. 
These strata are irreducible and unirational and their codimension is precisely
$\alpha_1$, see \cite[Theorem 5.6]{ancona-ottaviani:steiner}.

Our computations with {\tt Macaulay2} (cf. Appendix) show that  there
exist examples of bundles $\mF$ with $\alpha_1$ unstable lines and
satisfying 
\begin{equation} \label{eq:S^2=alpha1}
h^0(\PP^2, \, S^3\mF(-2))=\alpha_1. 
\end{equation}
So, by Proposition \ref{prop:S2-8} and semicontinuity, equality \eqref{eq:S^2=alpha1} holds for the
general member of the stratum $\mathscr{U}^{\alpha_1}$. 
Each $\FN_{\ref{VII}}^{\alpha_1}$ has an open dense subset which is an
open dense piece of a $\PP^{\alpha_1-1}$-bundle over
$\mathscr{U}^{\alpha_1}$, and as such it is an irreducible, unirational
variety.
For every $\alpha_1 \in \{1, \ldots, 7\}$, using \eqref{eq:S^2=alpha1} we obtain
\begin{equation*}
\dim \, \FN_{\ref{VII}}^{\alpha_1}  = \dim \mathscr{U}^{\alpha_1} + h^0(\PP^2, \, S^3\mF(-2)) -1 = (21- \alpha_1)+ \alpha_1 - 1 = 20.
\end{equation*}
 
Summing up, every 
$\FN_{\ref{VII}}^{\alpha_1}$ is a connected, irreducible, unirational
20-dimensional component of $\FN_{\ref{VII}}$.
\end{proof}

\begin{remark} \label{rmk:moduli-Alexander}
We could also give a geometric interpretation of the equality  $\dim
\, \FN_{\mathrm{VII}}^{\alpha_1} =20$ by using in each case the
explicit description of the surface $X$ provided by Proposition
\ref{prop.typeVII}. We will not develop this point here, and we will
limit ourselves to discussing as an example the case $\alpha_1=6$. In
this situation, we know that $X$ is isomorphic to the blow-up at six
points of an Alexander surface of degree $9$ in $\PP^4$, see Remark
\ref{rmk:Alexander}. Such points are the intersection of the Alexander
surface with its unique $6$-secant line, and they completely determine
the triple cover map $f \colon X \to \PP^2$. So the dimension of the
component $\FN_{\mathrm{VII}}^{6}$ equals the dimension of an open,
dense subset of $S^{10}(\PP^2)$, that is $20$.  
\end{remark}

\section*{Appendix: The computer-aided construction of triple planes}

Here we explain how we can use the Computer Algebra System {\tt Macaulay2} in order to  show the existence of general triple
planes in the cases marked with $(\ast)$ in Proposition \ref{prop.typeVII}. 
The computation can be performed either over $\QQ$ or over a prime
field (the latter being considerably faster).

\subsection*{The setup for adjunction}

Define the coordinate ring of $\PP^2$ and of $\PP^{b-3}= \PP^5$ needed for
the first adjunction map, together with
a second $\PP^5$ (the projectivization of the six-dimensional polynomial ring \verb|V|) that will be the target space for the second adjunction.

\begin{verbatim}
b = 8;
k = QQ;
T = k[x_0..x_2];
S = k[y_0..y_(b-3)];
R = T**S;
V = k[t_0..t_5];
\end{verbatim}

The command {\tt fliptensor} takes as input the matrix $M$ and gives as output the matrix $N$, cf. \S \ref{L}.
\begin{verbatim}
fliptensor := M->(Q = substitute(vars S,R) * (substitute(M,R));
    sub((coefficients(Q,(Variables=>{x_0,x_1,x_2})))_1,S));
\end{verbatim}

The $3$-fold scroll $Y\subset \PP^5$ is defined by the $3\times 3$
minors of $N$.
The command {\tt twosections} gives back the ideal of the union of two surface
sections $S_1$ and $S_2$ of the scroll $Y$, with
$S_i \in |\mO_Y(\ell)|$ and
$\mO_Y(\ell)=\mathfrak{p}^*\mO_{\PP^2}(1)$, cf. \S \ref{a} and \S
\ref{L}.
 Each of them is defined
by the $2\times 2$ minors of a random submatrix of $N$, obtained by
composing $N$ with a random matrix of scalars.

\begin{verbatim}
twosections := N->(A = random(S^{3:0},S^{3:0});
Nrandom = (transpose(N)*A);
N1 = submatrix(Nrandom, {0,1});
N2 = submatrix(Nrandom, {0,2});
IS1 = minors(2, N1);
IS2 = minors(2, N2);
I12 = intersect(IS1,IS2));
\end{verbatim}

The command {\tt cubicgenerator} takes a random cubic 
in the ideal of cubics of $Y$ through $S_1 \cup S_2$, and call $X_1$ the residual surface. This surface is precisely the image of the first adjunction map $\varphi_1 \colon X \to X_1 \subset \PP^5$, see the last part of the proof of Proposition \ref{prop.typeVII}.
\begin{verbatim}
cubicgenerator := I12 -> (SU = super basis(3,I12);
cubic = SU*random(S^{rank(source(SU)):0},S^{1:0});
ideal(cubic));
\end{verbatim}

\subsection*{The cases according to the number of unstable lines}

Here we define the Steiner bundle $\mF$ by giving its presentation matrix $M$.
More precisely, for any $\alpha_1 \in \{1, \ldots, 7\}$ we define a random Steiner bundle with $\alpha_1$ unstable lines.

\subsubsection*{The cases $1 \leq \alpha_1 \le 6$}
For $1 \leq \alpha_1 \le 6$, we
 put random coefficients in the layout of
 Proposition \ref{prop:unstable-b-8} in order to define $\mF$. 
The command
\verb|GenM| takes an integer \verb|a|, picks \verb|a| random linear forms,
multiplies each of them by a column matrix of size $4$ of random
scalars, and stacks them together with a random matrix of linear forms in order to obtain a matrix $M$ of size $4 \times 6$, given as output.

\begin{verbatim}
use T
GenM:=(a)->(
    for j from 0 to a-1 do 
        M_j=((random(T^{1},T^{0}))_(0,0))*random (T^{4:0},T^{1:0});
    Mcu = transpose M_0;
    for j from 1 to a-1 do Mcu=(Mcu||transpose(M_j));
    Mco = (random(T^{6-a:0},T^{4:-1}));
    ((transpose Mcu) | (transpose Mco)))
\end{verbatim}

We choose $\alpha_1$, define the Steiner sheaf as cokernel of
$M$ and check that
it is locally free of rank $2$.
\begin{verbatim}
for a from 1 to 6 do F_a = coker transpose map(T^{b-4:1},T^{b-2:0},GenM(a))
for a from 1 to 6 do print dim (minors(4,presentation F_a))
\end{verbatim}
The output of this is $0$ in all seven cases, so the sheaves are
locally free.

\subsubsection*{The case $\alpha_1 = 7$}

In this case $\mF$ is a logarithmic bundle, so its dual appears as the first syzygy of
the Jacobian map $\nabla f$ of partial derivatives of the product $f$ of the $7$
linear forms that define the $7$ unstable lines. In other words, we
have an exact sequence 
\begin{equation*}
0\to \mF^\vee \to \mO_{\PP^2}(1)^3 \xr{\nabla f} \mO_{\PP^2}(7),
\end{equation*}
cf. for instance \cite[(1.10)]{faenzi-matei-valles}. We choose 
these 7 lines 
randomly and define $\mF$ as the dual of $\ker(\nabla f)$.

\begin{verbatim}
f = 1_T; for j from 1 to 7 do f=f*(random(T^{1},T^{0}))_(0,0)
M=transpose ((res ker diff(vars T,f))).dd_1
MM = map(T^{b-4:1},T^{b-2:0},M);
dim minors(4,MM) == 0
F_7 = coker transpose MM;
\end{verbatim}

We check incidentally that the vanishing $H^0(\PP^2,S^2 \mF(-2))=0$
and the equality $h^0(\PP^2,S^3 \mF(-2))=\alpha_1$ hold true 
for all values of $\alpha_1$ (this fact was needed in the proof of
Proposition \ref{prop:S2-8} and \ref{prop:S2-8}).
\begin{verbatim}
for a from 1 to 7 do print(
    HH^0((sheaf (symmetricPower(2,F_a)))(-2)),
    rank HH^0((sheaf (symmetricPower(3,F_a)))(-2)))
\end{verbatim}
The output is $(0, \, a)$ with $a=\alpha_1 \in \{1, \, \ldots, 7 \}$.

\subsection*{Construction of the triple plane}

We take $\mF$ and extract the matrices $M$ and $N$.
\begin{verbatim}
for a from 1 to 7 do NN_a = fliptensor(presentation (F_a));
for a from 1 to 7 do IY_a = minors(rank target NN_a,NN_a);
for a from 1 to 7 do singY_a = ideal singularLocus variety IY_a;
\end{verbatim}
Singularity test: the only singular points of $Y$ are $\alpha_1$
points of multiplicity $6$. They all come from the locus where the
matrix $N$ defining $Y$ has rank at most $1$.

\begin{verbatim}
for a from 1 to 7 do I2Y_a = minors(rank (target NN_a)-1,NN_a);
for a from 1 to 7 do print (dim singY_a, degree singY_a)
for a from 1 to 7 do print (dim(singY_a:I2Y_a),degree(singY_a:I2Y_a))
\end{verbatim}
The output of the last command is $(1, \, a)$
where $a=\alpha_1$ goes from $1$ to $7$ in the seven cases, and means that $Y$
is singular precisely at the $a$ double points coming from the $a$
unstable lines.
Define now $X_1$ as a random cubic in the ideal of the union of two
surface sections of $Y$ from $|\mO_Y(\ell)|$. Perform a degree,
genus and singularity test. 

\begin{verbatim}
for a from 1 to 7 do II12_a = twosections(NN_a);
for a from 1 to 7 do IC3_a = cubicgenerator(II12_a);
for a from 1 to 7 do IX1_a = ((IC3_a + IY_a):II12_a);
for a from 1 to 7 do X1_a = variety(IX1_a);
for a from 1 to 7 do print(dim X1_a, degree X1_a,genera X1_a)
for a from 1 to 7 do (dim singularLocus X1_a)
\end{verbatim}

In all seven cases, the output of the penultimate command is {\tt{(2, 10, \{0, 6, 9\})}}.
which means
that $X$ is a surface of degree $10$ with sectional genera $(0, \, 6, \, 9)$.
The output of the last command is $-\infty$, i.e. $X$ is smooth. This
takes about $15$ minutes on a laptop if performed on a prime field.

\medskip

The second adjunction map of $\varphi_2 \colon X_1 \to X_2 \subset \mathbb{P}^5$ is defined by the restriction to $X_1$ of the linear system $|\mO_{Y}(2\xi -\ell)|$, and this in turn coincides with the
restriction to $X_1$ of the linear system generated by the six quadrics in the ideal defining $S_1$, see again  the proof of Proposition \ref{prop.typeVII}.

Having this in mind, we can finally compute the ideal of $X_2$ and its canonical sheaf $\omega_{X_2} = \mO_{X_2}(K_{X_2})$ in order to find $K_{X_2}^2$.
\begin{verbatim}
for a from 1 to 7 do X2_a = (ker map(S/(IX1_a),V,
        gens  minors(2,random(S^{2:0},S^{3:0})*NN_a)))
for a from 1 to 7 do omegaX2_a = (Ext^2(X2_a,V^{1:-6}))**(V/(X2_a));
for a from 1 to 7 do print(euler(dual omegaX2_a)-1)
\end{verbatim}

Here is the output of the last command, providing the value of
$K_{X_2}^2$ for all $\alpha_1 = a \in \{1, \ldots, 7\}$:
\begin{verbatim}
9, 5, 2, 0, -1, -1, 0
\end{verbatim}


\bibliographystyle{alpha}
\bibliography{bibliography}

\bigskip \bigskip

\noindent Daniele Faenzi, \\ Institut de Mathématiques de Bourgogne,
  UMR 5584  CNRS,\\
  Université de Bourgogne Franche-Comté, \\
  9 Avenue Alain Savary, BP 47870, \\
  21078 Dijon Cedex,
  France \\
\emph{E-mail address:} \verb|daniele.faenzi@u-bourgogne.fr| \\ \\

\noindent Francesco Polizzi, \\ Dipartimento di Matematica, Universit\`{a}
della Calabria, Cubo 30B, \\ 87036  Arcavacata di Rende, Cosenza, Italy\\
\emph{E-mail address:} \verb|polizzi@mat.unical.it| \\ \\

\noindent Jean Vall\`{e}s, \\Université de Pau et des Pays de l'Adour \\
  Avenue de l'Université - BP 576  \\ 64012 PAU Cedex, France \\
 \emph{E-mail address:} \verb|jean.valles@univ-pau.fr|

\end{document}